\theoremstyle{plain}
\newtheorem{theorem}{Theorem}[section]
\newtheorem{lemma}[theorem]{Lemma}
\newtheorem{corollary}[theorem]{Corollary}
\newtheorem{definition}{Definition}
\newtheorem{axiom}{Axiom}
\dedicatory{Est enim per contrapositionem conversio ut si dicas \\omnis homo animal est omne non animal non homo est.\\ \flushright \emph{Boethius}, \textit{de Syll. Cat.}}
\title{The  Principle of Open Induction on $[0,1]$ and the Approximate-Fan Theorem}
\author{Wim Veldman}
\address{Institute for Mathematics, Astrophysics and Particle Physics, Faculty of Science, Radboud University Nijmegen,
Postbus 9010, 6500 GL Nijmegen, the Netherlands}
\email{W.Veldman@science.ru.nl}
\date{}
\begin{document}

\begin{abstract}
 In the earlier papers \cite{veldman2009b}, \cite{veldman2011b} and \cite{veldman2011d} we collected statements that are, in a weak formal context,   equivalent to  Brouwer's Fan Theorem. This time, we do the same for the Principle of Open Induction on $[0,1]$ and the Approximate-Fan Theorem.
These principles follow from Brouwer's Thesis on Bars and imply the Fan Theorem. 
 \end{abstract}

\maketitle
\section{Introduction}
\subsection{} L.E.J.~Brouwer  wanted to do mathematics differently.\footnote{See \cite{veldman2021}.}
He restored the logical constants to their natural constructive meaning and  introduced some new principles or axioms. 
   His famous \textit{Continuity Theorem} claims that every function from the unit interval $[0,1]$ to the set $\mathcal{R}$ of the real numbers is uniformly continuous. This theorem states two things:
  a function  from  $[0,1]$ to  $\mathcal{R}$   is 
  continuous everywhere, and  a function from $[0,1]$ to $\mathcal{R}$ that is continuous everywhere is 
  continuous uniformly on $[0,1]$.
  The principle underlying the first statement now is called {\it   Brouwer's Continuity
  Principle} and the principle underlying the second statement is 
  {\it Brouwer's Fan Theorem}.
  
   The Fan Theorem asserts that every thin bar in Cantor  space $ 2^\omega$ is finite. The theorem extends to every subset of Baire space $\omega^\omega$, that, like $2^\omega$, is a \textit{fan}.
Brouwer derived the Fan Theorem  from  his {\em Thesis on Bars in $\omega^\omega$} although  a Thesis on Bars in $2^\omega$ would have been sufficient for the purpose at hand.    Brouwer's Thesis on Bars in $\omega^\omega$ is   a much stronger statement than the Fan Theorem.

The  \textit{Principle of Open Induction on $2^\omega$} asserts that every open subset of $\omega^\omega$ that is progressive in $2^\omega$ under the lexicographical ordering contains $2^\omega$.
   T.~Coquand, (see \cite{coquand} and \cite{veldman2004b}, Section 11), saw that this principle follows from Brouwer's Thesis on Bars in $\omega^\omega$.  The principle is a contraposition of the (classical) fact that a non-empty closed subset of $2^\omega$ must have a smallest element under the lexicographical ordering. 
  
  The \textit{Approximate-Fan Theorem} asserts that every thin bar in an {\it approximate} fan is {\it almost-finite}. Every fan is an approximate fan, but not conversely, and every finite subset of the set $\omega$ of the natural numbers is almost-finite, but not conversely. 
  
  We shall show that, in a weak formal context, the Principle of Open Induction on $2^\omega$ is a consequence of the Approximate-Fan Theorem and that the Approximate-Fan Theorem follows from the Thesis on Bars in $\omega^\omega$. The Principle of Open Induction on $2^\omega$ implies the Fan Theorem but not conversely.  Both the Principle of Open Induction on $2^\omega$ and the Approximate-Fan Theorem  have many equivalents.

  The important work done in \textit{classical reverse mathematics} and beautifully described in \cite{Simpson} has been a source of inspiration.  Our results belong  to {\em intuitionistic reverse mathematics} and are an intuitionistic counterpart to the Sections III.1, III.2, III.7 and part of Section V.1 from \cite{Simpson}. 
 
 I want to thank the referee of the first version of this paper for his  (or her) careful reading, his encouraging and useful comments and his good advice. I also want to thank J.R.~Moschovakis for her unstinting support and interest.
\subsection{The contents of the paper}
Apart from this introductory Section 1, the paper contains  Sections 2-13.

In Section 2, we introduce  $\mathsf{BIM}$, a formal system  for \textit{Basic Intuitionistic Mathematics} also used
in Veldman \cite{veldman2009b}, \cite{veldman2011b} and \cite{veldman2011d}. 

In Section 3, we introduce $\mathbf{FT}$,  the \textit{Fan Theorem},  and  $\mathbf{HB}$, the \textit{Heine-Borel Theorem}. We prove that $\mathsf{BIM}\vdash \mathbf{FT}\leftrightarrow \mathbf{HB}.$

In Section 4, we introduce   $\mathbf{BI}$, Brouwer's \textit{Principle of Induction on Bars in  $\omega^\omega$} and    $\mathbf{OI}([0,1])$, the \textit{Principle of Open Induction on $[0,1]$}

  We prove that $\mathsf{BIM}\vdash \mathbf{BI}\rightarrow \mathbf{OI}([0,1])$ and that $\mathsf{BIM}\vdash \mathbf{OI}([0,1])\rightarrow \mathbf{HB}$.

In Section 5, we introduce $\mathbf{Ded}$, \textit{Dedekind's Theorem} which says that {\it every infinite bounded and nondecreasing sequence of real numbers is convergent}. We show that this statement is constructively false and introduce $\overleftarrow{\mathbf{Ded}}$, a contraposition of $\mathbf{Ded}$. We  introduce $\overleftarrow{\mathbf{Ded}_{\omega^\omega}}$, a statement similar tot $\overleftarrow{\mathbf{Ded}}$, intended for  $\omega^\omega$ rather than for $\mathcal{R}$.  
We also introduce $\mathbf{OI}(2^\omega)$, the  \textit{Principle of Open Induction on Cantor space $2^\omega$}. 
We prove that $\mathsf{BIM}\vdash \mathbf{OI}([0,1])\leftrightarrow \overleftarrow{\mathbf{Ded}} \leftrightarrow \overleftarrow{\mathbf{Ded}_{\omega^\omega}} \leftrightarrow\mathbf{OI}(2^\omega)$.

In Section 6 we introduce $\mathbf{EnDec?!}$, a positive statement with the negative consequence
  that \textit{no enumerable subset of $\omega$ positively fails to be a decidable subset of $\omega$}. We prove that $\mathsf{BIM}\vdash \mathbf{OI}([0,1])\leftrightarrow  \mathbf{EnDec?!}$.

In Section 7, we introduce the statement $\mathbf{WF}_{KB}$: \textit{ for every decidable subset $B$ of $\omega$ that is a bar in $\omega^\omega$, the set of all (codes of) finite sequences below the bar $B$ is well-founded under the Kleene-Brouwer-ordering}. We prove: $\mathsf{BIM}\vdash \mathbf{OI}([0,1]) \leftrightarrow \mathbf{WF}_{KB}$.  As a corollary we obtain the result that, in $\mathsf{BIM}$, $\mathbf{OI}([0,1])$ proves the principle of transfinite induction on the ordinal number $\varepsilon_0$, and we conclude, using a result of Troelstra's, that $\mathsf{BIM} + \mathbf{FT} \nvdash\mathbf{OI}([0,1])$.

 In Section 8, we introduce \textit{approximate fans} and the \textit{Approximate-Fan Theorem} $\mathbf{AppFT}$. We prove that $\mathsf{BIM}\vdash \mathbf{BI}\rightarrow \mathbf{AppFT}$. 
 
 In Section 9, we introduce $\mathbf{BW}$, the \textit{Bolzano-Weierstrass Theorem} which says that {\it every infinite and bounded sequence of real numbers has a convergent subsequence}. As this statement is constructively false, we introduce $\overleftarrow{\mathbf{BW}}$, a contraposition of $\mathbf{BW}$, and also $\overleftarrow{\mathbf{BW}_{\omega^\omega}}$  and $(\overleftarrow{\mathbf{BW}_{\omega^\omega}})^+$,  two  similar statements for $\omega^\omega$ rather than for $\mathcal{R}$. We prove that $\mathsf{BIM}\vdash \mathbf{AppFT}  \leftrightarrow \overleftarrow{\mathbf{BW}} \leftrightarrow  \overleftarrow{\mathbf{BW}_{\omega^\omega}}\leftrightarrow (\overleftarrow{ \mathbf{BW}_{\omega^\omega}})^+ $.\\We observe that  $\mathsf{BIM}\vdash \overleftarrow{\mathbf{BW}} \rightarrow \overleftarrow{\mathbf{Ded}}$.

In Section 10, we  introduce $\mathbf{Asc}$,  {\it Ascoli's Lemma}, the constructively wrong statement  saying that {\it every infinite sequence of uniformly continuous functions from $[0,1]$ to $[0,1]$, all obeying the same modulus of uniform continuity,  has a convergent subsequence}. We also introduce $\overleftarrow{\mathbf{Asc}}$, a contraposition of $\mathbf{Asc}$.   We prove: $\mathsf{BIM}\vdash \mathbf{AppFT}\leftrightarrow \overleftarrow{\mathbf{Asc}}$.

In Section 11, we introduce $\mathbf{IRT}=\forall k>0[\mathbf{IRT}(k)]$,  the Intuitionistic Ramsey Theorem.   We prove that $\mathsf{BIM}\vdash \mathbf{AppFT} \leftrightarrow \mathbf{IRT}\leftrightarrow \mathbf{IRT}(3)$. We also prove that $\mathsf{BIM}\vdash \bigl(\mathbf{OI}([0,1])+\mathbf{IRT}(2)\bigr)\rightarrow\mathbf{AppFT}$.
We introduce  $\mathbf{PH}$, the {\it Paris-Harrington Theorem}. We prove that  $\mathsf{BIM}\vdash \mathbf{IRT}\rightarrow \mathbf{PH}$,  and thus find a second argument proving that $\mathsf{BIM} + \mathbf{FT} \nvdash \mathbf{AppFT}$.

 In Section 12, we introduce $\mathsf{MP}_1$,   \textit{Markov's Principle}: $\forall \alpha[\neg\neg\exists n[\alpha(n) =0]\rightarrow \exists n[\alpha(n) =0]]$. $\mathsf{MP}_1$ is intuitionistically not acceptable. We prove that  $\mathsf{BIM} +\mathsf{MP}_1\vdash \mathbf{OI}([0,1]) \rightarrow \mathbf{AppFT}$.  We introduce $\mathbf{\Sigma}^0_1$-$\mathbf{BI}$,  the restriction of $\mathbf{BI}$ to enumerable subsets of $\omega$. We introduce $\mathbf{\Sigma}^0_1$-$\mathbf{ND}$,  the principle that every enumerable subset $A$ of $\omega$ is \textit{nearly-decidable}, i.e. $\neg\neg\exists \beta \forall m[m\in A \leftrightarrow \beta(m) \neq 0]$. We   prove that  $\mathsf{BIM} +\mathsf{MP}_1\vdash \mathbf{OI}([0,1]) \leftrightarrow \mathbf{\Sigma}^0_1$-$\mathbf{BI}\leftrightarrow \mathbf{\Sigma}^0_1$-$\mathbf{ND}$.     Section 12 builds upon earlier work by R. Solovay and J.R. Moschovakis. 
 
 In Section 13, we have collected some notations and conventions and some basic facts. The reader may consult this Section whenever he feels the need to do so.
  \section{Basic Intuitionistic Mathematics $\mathsf{BIM}$}
\subsection{The axioms of $\mathsf{BIM}$}\label{SS:bimaxioms} The formal system $\mathsf{BIM}$ (for \textit{Basic Intuitionistic Mathematics}) that we now introduce has also been used in Veldman \cite{veldman2011b} and \cite{veldman2011d}.

 There are two kinds of variables, \textit{numerical} variables $m,n,p,\ldots$,
 whose intended range is the set  $\omega$ of the natural numbers, and \textit{function}
 variables $\alpha, \beta, \gamma, \ldots$, whose intended range is the set
 $\omega^\omega$ of
 all infinite sequences of natural numbers, that is, the set of all functions from $\omega$ to
 $\omega$. There is a numerical constant $0$. There are unary function
 constants $\mathit{Id}$, a name for the identity function, $\underline{0}$, a name for the zero function, and $S$, a name for
 the successor function, and $K$, $L$, names for the projection
 functions from $\omega\times\omega$ to $\omega$. There is one binary function symbol $J$, a name for the  (surjective) pairing
 function from $\omega$ tot $\omega\times\omega$.  From these symbols \textit{numerical terms} are formed in the usual 
 way. The basic terms are the numerical variables and the numerical constant and
 more generally, a term is obtained from earlier constructed terms by the use of
 a function symbol. The function constants $\underline 0$, $S$, $K$ and $L$ and the function variables are, in the beginning stage of the development of $\mathsf{BIM}$, the only 
 \textit{function 
 terms}. As the theory
 develops, names for operations on infinite sequences are introduced and
 more complicated function terms  appear. 
 
 There are two equality symbols, $=_0$ and $=_1$. The first symbol may be placed
 between numerical terms only and the second one between function terms only.
 When confusion seems improbable we simply write $=$ and not $=_0$ or $=_1$. A
 \textit{basic formula} is an equality between numerical terms or an equality
 between function terms. A \textit{basic formula in the strict sense} is an
 equality between numerical terms. We obtain the formulas of the theory from the
 basic formulas by using the connectives, the numerical quantifiers and the
 function quantifiers. 
 
 Theorems are obtained from the axioms by the rules of intuitionistic
 logic.
  
  The first axiom is the \textit{Axiom of Extensionality}.
 \begin{axiom}\label{ax:ext}
 
 $\forall \alpha \forall \beta [ \alpha =_1 \beta  \leftrightarrow \forall n [
 \alpha(n) =_0 \beta(n) ] ] $.
 \end{axiom}
  Axiom \ref{ax:ext}  guarantees that every formula will be provably
 equivalent to a formula built up by means of connectives and quantifiers from
 basic formulas in the strict sense.

The second axiom is the \textit{axiom on  the  function constants $Id$,  $\underline 0$, $S$,   $J$, $K$ and $L$}.  
\begin{axiom}\label{ax:const}
 $\forall n[Id(n) = n]\;\wedge\;\forall n [ \underline{0}(n) = 0] \;\wedge\;\forall n [ S(n) \neq 0 ] \;\wedge\; \forall m \forall n [ S(m) = S(n)
 \rightarrow m = n ] \;\wedge\;\forall m \forall n [ K\bigl(J(m,n)\bigr) = m \;\wedge\; L\bigl(J(m,n)\bigr) = n \;\wedge\; n = J\bigl(K(n), L(n)\bigr)]$.
 \end{axiom}

 Thanks to the presence of the pairing function we may treat binary, ternary and
 other non-unary operations on $\omega$ as unary functions. ``$\alpha(m,n,p)$"
 for instance will be an abbreviation of ``$\alpha\bigl(J(J(m,n),p)\bigr)$".
 
 We introduce the following notation: for each $n$, $n':= K(n)$ and $n'' := L(n)$, and for all $m, n$, $(m,n) := J(m, n)$. The last part of Axiom 2 now reads as follows: $\forall m \forall n[(m,n)'= m \;\wedge\; (m,n)'' =n\;\wedge \; n = (n', n'')]$. Given any $\alpha$, we let $\alpha'$ and $\alpha''$ be the elements of $\omega^\omega$ defined by: $\forall n[\alpha'(n) =\bigl(\alpha(n)\bigr)'\;\wedge\; \alpha''(n) =\bigl(\alpha(n)\bigr)'']$.

 \medskip
 The third axiom\footnote{This axiom underwent an improvement with respect to its version in \cite{veldman2011b}.} asks for the closure of the set $\omega^\omega$ under the operations \textit{composition, primitive recursion and unbounded search}.
 \begin{axiom}\label{ax:recop}

 $\forall \alpha \forall \beta \exists \gamma \forall n [ \gamma(n) =
 \alpha\bigl(\beta(n)\bigr) ]\;\wedge\;\forall \alpha \forall \beta \forall \gamma \exists \delta\forall n[\delta(n) = \gamma\bigl(\alpha(n), \beta(n)\bigr)]\;\wedge$
 
 $\forall p \forall \beta \exists \gamma[\gamma(0) = p \;\wedge\; \forall n[\gamma\bigl(S(n)\bigr)=\beta\bigl(n, \gamma(n)\bigr)]\;\wedge$
 
   $\forall \alpha \forall \beta \exists \gamma \forall m \forall n [ \gamma(m,0)
 = \alpha(m) \wedge \gamma \bigl(m,S(n)\bigr) = \beta\bigl(m,n,\gamma(m,n)\bigr) \;\wedge$
 
 $\forall \alpha [ \forall m \exists n [ \alpha(m,n) = 0 ] \rightarrow
 \exists \gamma \forall m [ \alpha\bigl(m, \gamma(m)\bigr) = 0\;\wedge\; \forall n<\gamma(m)[\alpha(m,n) \neq 0] ] ]$.
 
 \end{axiom}

We introduce $\circ$, \textit{composition}, as a binary operation on functions, with defining axiom: 
$\forall \alpha \forall \beta[\alpha\circ \beta(n) = \alpha\bigl(\beta(n)\bigr)]$.

 The fourth axiom is the \textit{Unrestricted Axiom Scheme of Induction:} 
 \begin{axiom}\label{ax:ind}
 
 For every formula $\phi = \phi(n)$ the universal closure of 
 the following formula is an axiom:
 \[(\phi(0) \wedge \forall n [ \phi(n) \rightarrow \phi(S(n))]) \rightarrow
 \forall n [\phi(n)]\]
 \end{axiom}

 The system consisting of the axioms mentioned up to now will be called $\mathsf{BIM}$.

\subsection{Extending $\mathsf{BIM}$}We mention some axioms and assumptions that may be studied in the context of $\mathsf{BIM}$.  \subsubsection{}\label{SSS:cp}

\textit{Brouwer's Unrestricted Continuity Principle}, $\mathbf{BCP}$:

\textit{For every subset $\mathcal{A}$ of $\omega^\omega\times \omega$, if $\forall \alpha \exists n[\alpha \mathcal{A} n]$, then $\forall \alpha \exists m \exists n\forall \beta[\overline \alpha m = \overline \beta m \rightarrow \beta \mathcal{A}n]$.}

\smallskip
(``$\alpha\mathcal{A}n$'' abbreviates ``$(\alpha,n) \in \mathcal{A}$''.)

Note that $\mathbf{BCP}$ is an \textit{axiom scheme} and not a single axiom. The revolutionary impact of $\mathbf{BCP}$ on the development of intuitionistic mathematics is treated in \cite{veldman2001}.

 \subsubsection{}\label{SSS:ct}\textit{Church's Thesis}, $\mathbf{CT}$:
\[\exists \tau \exists \psi \forall \alpha \exists e\forall n \exists z[\tau(e, n, z) \neq 0 \;\wedge\; \forall i<z[\tau(e, n, i) =0]\;\wedge\; \psi(z) = \alpha(n)].\]

$\mathbf{CT}$ contradicts $\mathbf{BCP}$, see, for instance, \cite[Proposition 6.7]{Troelstra-van Dalen}.

\subsubsection{}\label{SSS:AC00} The \textit{ Unrestricted First Axiom of Countable Choice}, $\mathbf{AC}_{0,0}$:
\begin{center}
\textit{For every subset $A$ of $\omega$:} $\forall n \exists m[nAm] \rightarrow \exists \gamma \forall n[nA\gamma(n)]$.
\end{center}

(``$nAm$'' abbreviates ``$(n,m) \in A$''.)

Also $\mathbf{AC}_{0,0}$ is an \textit{axiom scheme}.  

The formula
 $\forall \alpha [ \forall m \exists n [ \alpha(m,n) = 0 ] \rightarrow
 \exists \gamma \forall m [ \alpha\bigl(m, \gamma(m)\bigr) = 0 ] ]$ is called the \textit{Minimal Axiom
 of Countable Choice} $\mathbf{Min}$-$\mathbf{AC}_{0,0}$. $\mathbf{Min}$-$\mathbf{AC}_{0,0}$ follows from  Axiom 3. Another special case\footnote{For each $\alpha$, $E_\alpha:=\{m\mid \exists p[\alpha(p)=m+1]$\}, see Subsection \ref{SS:decen}.} is the
$\mathbf{\Pi}^0_1$-\textit{First Axiom of Countable Choice}, $\mathbf{\Pi}^0_1$-$\mathbf{AC}_{0,0}$: 
$$\forall \alpha[\forall n\exists m[m \notin E_{\alpha^n}] \rightarrow \exists \gamma \forall n[ \gamma(n) \notin E_{\alpha^n}]],$$ i.e. $$\forall \alpha[\forall n\exists m\forall p [\alpha^n(p) \neq m+1] \rightarrow \exists \gamma \forall n\forall p[ \alpha^n(p) \neq \gamma(n) +1]].$$
 A more cautious, almost \textit{innocent}, version  is the \textit{Weak} $\mathbf{\Pi}_1^0$-\textit{First Axiom of Countable Choice}, \textbf{Weak}-$\mathbf{\Pi}^0_1$-$\mathbf{AC}_{0,0}$:
$$ \forall \alpha [ \forall n \exists m \forall p \ge m[ \alpha(n,p) = 1]
 \rightarrow \exists \gamma \forall n \forall p \ge \gamma(n) [ \alpha(n,p) =
 1]]. $$
 This axiom may be used for proving that every fan is an explicit fan:\footnote{For the notions `fan' and explicit fan', see Subsection \ref{SS:fans}.}\begin{lemma}$\mathsf{BIM}+ \mathbf{Weak}$-$\mathbf{\Pi}^0_1$-$\mathbf{AC}_{0,0}\vdash \forall \beta[Fan(\beta) \rightarrow Fan^+(\beta)]$. \end{lemma}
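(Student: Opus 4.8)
\emph{Proof proposal.} The plan is to derive the implication $Fan(\beta)\rightarrow Fan^+(\beta)$ from a single instance of $\mathbf{Weak}$-$\mathbf{\Pi^0_1}$-$\mathbf{AC_{0,0}}$. The fan-law property $\forall s\exists n\forall m[\beta(s\ast\langle m\rangle)=0\rightarrow m\le n]$ asserts a bound on the immediate successor-labels of each node $s$, but only \emph{non-uniformly}; passing from ``$\forall s\exists n$'' to ``$\exists\gamma\forall s$'' is a choice step, and the content of the lemma is that the weak, ``almost innocent'' choice principle of \ref{SSS:AC00} already suffices for it.

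First I would fix $\beta$ with $Fan(\beta)$, so that $Spr(\beta)$ holds and $\forall s\exists n\forall m[\beta(s\ast\langle m\rangle)=0\rightarrow m\le n]$. Since the relation $\beta(x)\neq 0$ is decidable, Axiom~\ref{ax:recop} lets me define an $\alpha\in\mathcal{N}$ with $\alpha(s,p)=1$ if $\beta(s\ast\langle p\rangle)\neq 0$ and $\alpha(s,p)=0$ otherwise; here, as always, $\alpha(s,p)$ abbreviates $\alpha\bigl(J(s,p)\bigr)$, and since every natural number is of the form $J(n',n'')$ this defines $\alpha$ on all of $\mathbb{N}$. Next I would verify the hypothesis of $\mathbf{Weak}$-$\mathbf{\Pi^0_1}$-$\mathbf{AC_{0,0}}$ for this $\alpha$, i.e.\ $\forall s\exists m\forall p\ge m[\alpha(s,p)=1]$: given $s$, pick $n$ with $\forall m[\beta(s\ast\langle m\rangle)=0\rightarrow m\le n]$; as ``$\beta(s\ast\langle m\rangle)=0$'' and ``$m\le n$'' are both decidable, this implication may be contraposed to ``$m>n\rightarrow\beta(s\ast\langle m\rangle)\neq 0$'', whence $\forall p\ge n+1[\alpha(s,p)=1]$, so $m:=n+1$ works. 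Applying the choice principle then yields a $\gamma$ with $\forall s\forall p\ge\gamma(s)[\alpha(s,p)=1]$, that is, $\forall s\forall p\ge\gamma(s)[\beta(s\ast\langle p\rangle)\neq 0]$.

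It remains to read off $Fan^+(\beta)$ from this $\gamma$: for any $s$ and $m$ with $\beta(s\ast\langle m\rangle)=0$ it is impossible that $m\ge\gamma(s)$, so, decidability of the ordering on $\mathbb{N}$ being available in $\mathsf{BIM}$, $m<\gamma(s)$ and hence $m\le\gamma(s)$; together with $Spr(\beta)$ this is exactly $Fan^+(\beta)$, witnessed by the very $\gamma$ just produced, with no modification needed. I do not anticipate a genuine obstacle: the only points requiring care are the definability of $\alpha$ inside $\mathsf{BIM}$ (handled by Axiom~\ref{ax:recop}) and the passage between the implicational and the contrapositive form of the successor-bound, which is legitimate precisely because ``$\beta(x)\neq 0$'' is a decidable predicate. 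The unrestricted $\mathbf{AC_{0,0}}$ would of course settle the matter at once; the work lies entirely in recognising that the hypothesis hidden in $Fan(\beta)$ can be cast as an \emph{eventually}-statement of exactly the $\mathbf{\Pi^0_1}$ shape that $\mathbf{Weak}$-$\mathbf{\Pi^0_1}$-$\mathbf{AC_{0,0}}$ demands.
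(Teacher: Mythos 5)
Your proposal is correct and is essentially the paper's own argument, just written out in full: recast the fan condition in the eventually-form $\forall s\exists n\forall p\ge n[\beta(s\ast\langle p\rangle)\neq 0]$ (legitimate by decidability of $\beta$'s values), apply $\mathbf{Weak}$-$\mathbf{\Pi^0_1}$-$\mathbf{AC_{0,0}}$ to obtain $\gamma$, and read off $Fan^+(\beta)$ with that same $\gamma$ as witness. The paper's two-line proof does exactly this, leaving the coding of the instance and the contraposition implicit.
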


 \begin{proof} Let $\beta$ be given such that $Fan(\beta)$. Note: $\forall s \exists n \forall p \ge n[\beta (s\ast\langle p \rangle) = 0]$. Find $\gamma$ such that $\forall s  \forall p \ge \gamma(s)[\beta (s\ast\langle p \rangle) = 0]$ and note that $gamma$ shows that $\beta$ is an explict fan-law.  \end{proof}
\subsubsection{}\label{SSS:lpo}  The {\it Limited Principle of Omniscience}, $\mathbf{LPO}$: \[\forall \alpha[\exists n[\alpha(n) \neq 0] \;\vee\; \forall n[\alpha(n) = 0]].\]

From a constructive point view, $\mathbf{LPO}$ makes no sense. The following is a well-known counterexample in Brouwer's style. Let $d:\omega \rightarrow \omega$ be the decimal expansion of the real number $\pi$, i.e. $\pi = 3 +\sum_{n=0}^\infty d(n)\cdot 10^{-n-1}$.  Define $\alpha$ such that, for all $n$, $\alpha(n)\neq 0$ if and only if $\forall k<99[d(n+k)=9]$. We then have no proof of $\exists n[\alpha(n)\neq 0]$ and also no proof of $\forall n[\alpha(n)=0]$. This example shows that $\mathbf{LPO}$ is constructively \emph{unjustified}.

 In  $\mathsf{BIM} +\mathbf{BCP}$ one may prove  $\neg\mathbf{LPO}$, i.e. $\mathbf{LPO}$ is even \emph{contradictory} see, for instance, \cite{veldman2011d}. If we find that some given statement implies $\mathbf{LPO}$, we may conclude that this statement is unjustified, and, in the presence of $\mathbf{BCP}$, even contradictory. 

\section{The Fan Theorem and the Heine-Borel Theorem}
\subsection{Brouwer's argument for the Fan Theorem}\label{SS:fanth} 
Brouwer claims, in \cite{brouwer27} and \cite{brouwer54}, implicitly, as he jumps at once to a larger claim, see Subsection \ref{SS:bi}, that, for every subset $B$ of $2^{<\omega}$,  if $Bar_{2^\omega}(B)$, i.e. $\forall \alpha \in 2^\omega\exists n[\overline \alpha n \in B]$, then there exists a \textit{canonical} proof of this fact. The canonical  proof  is an arrangement  of statements  ``$Bar_{2^\omega\cap s}(B)$'', i.e. $\forall \alpha \in 2^\omega[s\sqsubset \alpha \rightarrow \exists n[\overline \alpha n \in B]]$,  where $s\in2^{<\omega}$. The conclusion of the proof is: $Bar_{2^\omega\cap \langle \;\rangle}(B)$, and  the proof uses only  steps of one of the following kinds: 
\begin{enumerate}[\upshape(i)]
 \item {\emph `starting points:'} 
 
 $s \in B$, \textit{and, therefore,}  $Bar_{2^\omega\cap s}(B)$,
\item {\emph `forward steps:'} 

$Bar_{2^\omega\cap s\ast \langle 0 \rangle}(B)$ and $Bar_{2^\omega\cap s\ast\langle 1\rangle}(B)$, \textit{and, therefore,} $Bar_{2^\omega\cap s}(B)$,
\item {\emph `backward steps:'} 

$Bar_{2^\omega\cap s}(B)$, \textit{and, therefore,} $Bar_{2^\omega\cap s\ast\langle 0\rangle}(B)$, and:

 $Bar_{2^\omega\cap s}(B)$, \textit{and, therefore,} $Bar_{2^\omega\cap s\ast\langle 1 \rangle}(B)$.
\end{enumerate}
One may prove that one can do without the backward steps.

\begin{definition}Let $B$ be a subset of $2^{<\omega}$.  

$B$ is  \emph{inductive in $2^{<\omega}$} if and only if  $\forall s \in 2^{<\omega}[\forall i<2[ s\ast\langle i \rangle \in B] \rightarrow s \in B]]$. 

 $B$ is \emph{monotone in $2^{<\omega}$} if and only if $\forall s \in 2^{<\omega}\forall i<2[ s \in B
 \rightarrow s\ast\langle i \rangle \in B]] $.\end{definition} 

\begin{definition} $\mathbf{BI}_{2^\omega}$, the \emph{Principle of Bar Induction in Cantor space} is the following statement:  

\textit{For all  $B\subseteq\omega$, if $Bar_{2^\omega}(B)$ and $B$ is monotone and inductive in $2^{<\omega}$, then $ \langle \;\rangle \in B$}.

$\mathbf{BI}_{2^\omega}$ may be added to $\mathsf{BIM}$ as an axiom scheme.
 We introduce two special cases of this axiom scheme:

\noindent $\mathbf{\Delta}^0_1$-$\mathbf{BI}_{2^\omega}$: 
 $\forall \alpha[Bar_{2^\omega}(D_\alpha) \;\wedge\; \forall s  \in 2^{<\omega}[s \in D_\alpha \leftrightarrow \forall i < 2[s\ast\langle i \rangle \in D_\alpha]])\rightarrow \langle \;\rangle \in D_\alpha]$,
  and
  
 \noindent $\mathbf{\Sigma}^0_1$-$\mathbf{BI}_{2^\omega}$: 
 $\forall \alpha[Bar_{2^\omega}(E_\alpha) \;\wedge\; \forall s  \in 2^{<\omega}[s \in E_\alpha \leftrightarrow \forall i < 2[s\ast\langle i \rangle \in E_\alpha]])\rightarrow \langle \;\rangle \in E_\alpha]$.
 
 \smallskip
We also introduce the \emph{Fan Theorem (for $2^\omega$)}:

$\mathbf{FT}=\mathbf{FT}_{2^\omega}$:
$\forall \alpha[Bar_{2^\omega}(D_\alpha) \rightarrow \exists n[Bar_{2^\omega}(D_{\overline \alpha n}]]$.
\end{definition}
Let $B$ be a subset of  $2^{<\omega}$ such that $Bar_{2^\omega}(B)$ and  $B$ is both inductive and monotone in $2^{<\omega}$. Take the canonical proof of ``$Bar_{2^\omega}(B)$'' and replace in this proof every statement ``$Bar_{2^\omega \cap s}(B)$'' by the statement: ``$s \in B$''.  We then obtain a proof of: ``$\langle \;\rangle \in B$''.

This argument justifies $\mathbf{BI}_{2^\omega}$.

\begin{theorem}\label{T:fantheorem}

\begin{enumerate}[\upshape (i)]
\item $\mathsf{BIM}\vdash \mathbf{\Delta}^0_1$-$\mathbf{BI}_{2^\omega}$.
\item $\mathsf{BIM}\vdash \mathbf{\Sigma}^0_1$-$\mathbf{BI}_{2^\omega}\leftrightarrow \mathbf{FT}_{2^\omega}$.\end{enumerate}
\end{theorem}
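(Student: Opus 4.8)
For part (i), the plan is to show that $\mathbf{Dec}\text{-}\mathbf{BI}_\mathcal{C}$ is already provable in $\mathsf{BIM}$, using the decidability of $D_\alpha$ together with the Axiom Scheme of Induction (Axiom 4). Let $\alpha$ be given with $Bar_\mathcal{C}(D_\alpha)$ and $\forall s\in Bin[s\in D_\alpha\leftrightarrow \forall i<2[s\ast\langle i\rangle\in D_\alpha]]$. I would argue contrapositively, or more directly by a least-counterexample argument: suppose $\langle\;\rangle\notin D_\alpha$; then by the inductive clause there is $i_0<2$ with $\langle i_0\rangle\notin D_\alpha$, and continuing we build, by the Minimal Axiom of Countable Choice (which follows from Axiom 3), a path $\gamma\in\mathcal{C}$ with $\forall n[\overline\gamma n\notin D_\alpha]$, contradicting $Bar_\mathcal{C}(D_\alpha)$. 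The decidability of membership in $D_\alpha$ is essential here: at each stage we decide whether $\overline\gamma n\ast\langle 0\rangle\in D_\alpha$, and if so pass to $1$; this case distinction is legitimate because $D_\alpha$ is a decidable set, and it is exactly what would fail for a merely enumerable bar.

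For part (ii), I would prove the two implications separately. For $\mathbf{FT}\rightarrow\mathbf{\Sigma^0_1}\text{-}\mathbf{BI}_\mathcal{C}$: given $\alpha$ with $Bar_\mathcal{C}(E_\alpha)$ and the inductive-equivalence clause for $E_\alpha$, apply $\mathbf{FT}$ to get $n$ with $Bar_\mathcal{C}(E_{\overline\alpha n})$. Now $E_{\overline\alpha n}$ is a \emph{finite}, hence decidable, subset of $\mathbb{N}$, so one can run the canonical (finitary) bar-induction argument — formally, induction on the length of the longest element of $E_{\overline\alpha n}$, or better, on a bound for that finite set — to conclude $\langle\;\rangle\in E_{\overline\alpha n}\subseteq E_\alpha$. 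The slight subtlety is that the inductive clause is stated for $E_\alpha$, not $E_{\overline\alpha n}$, so one first "closes off" the finite bar $E_{\overline\alpha n}$ downward inside $Bin$ using the given equivalence for $E_\alpha$, checking at each downward step that the relevant nodes are already in $E_\alpha$ (they are decidably in the finite set $E_{\overline\alpha n}$ once we are below the bar); this propagates membership from the bar down to $\langle\;\rangle$ in $E_\alpha$.

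For the converse $\mathbf{\Sigma^0_1}\text{-}\mathbf{BI}_\mathcal{C}\rightarrow\mathbf{FT}$: given $\alpha$ with $Bar_\mathcal{C}(D_\alpha)$, I would define an $\alpha^\ast$ enumerating the set $B^\ast$ of all $s\in Bin$ that are "inductively barred" — that is, $B^\ast$ is the smallest set containing $\{s\in Bin: \exists n\le\mathit{length}(s)[\overline s n\in D_\alpha]\}$ and closed under the inductive clause — and arrange that $B^\ast$ is enumerable (using unbounded search / the closure properties of $\mathcal{N}$ from Axiom 3 to enumerate finite "derivations"). Then $B^\ast$ satisfies the hypotheses of $\mathbf{\Sigma^0_1}\text{-}\mathbf{BI}_\mathcal{C}$: it is a bar (it contains the $\mathcal{C}$-bar $D_\alpha$ restricted to $Bin$) and it satisfies the equivalence clause by construction (monotonicity in the outward direction comes from $D_\alpha$ being inherited along extensions, which one may assume after replacing $\alpha$ by a monotone variant). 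Hence $\langle\;\rangle\in B^\ast$, and since membership in $B^\ast$ is witnessed by a finite derivation mentioning only finitely many values of $\alpha$, one extracts an $n$ with $Bar_\mathcal{C}(D_{\overline\alpha n})$, giving $\mathbf{FT}$.

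The main obstacle I anticipate is the bookkeeping in the converse direction: correctly defining the enumerable "inductive closure" $B^\ast$ so that it provably satisfies the biconditional clause (\emph{both} directions — the nontrivial one being that $s\in B^\ast$ whenever both children are, which is immediate from the closure definition, and the reverse, that a parent in $B^\ast$ forces the children, which requires that $B^\ast$ was built monotonically, forcing the preliminary normalization of $\alpha$ to a monotone bar). Keeping the finiteness-of-derivation argument honest — so that $\langle\;\rangle\in B^\ast$ really does yield a single $n$ bounding all the $\alpha$-information used — is where the real care is needed; everything else is the standard Brouwerian canonical-proof manipulation sketched in Subsection~\ref{SS:fanth}.
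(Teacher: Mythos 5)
Your proof of (i) is essentially the paper's: since $D_\alpha$ is decidable one can, assuming $\langle\;\rangle\notin D_\alpha$, define by recursion the (leftmost) path in $\mathcal{C}$ all of whose initial segments avoid $D_\alpha$, contradicting $Bar_\mathcal{C}(D_\alpha)$; you do not even need $\mathbf{Min}$-$\mathbf{AC_{0,0}}$, as the choice of $i<2$ at each stage is the least one and the recursion is available from Axiom 3.

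For (ii) your argument is sound in outline but differs from the paper in one direction and glosses a step in the other. In the direction $\mathbf{\Sigma^0_1}$-$\mathbf{BI}_\mathcal{C}\rightarrow\mathbf{FT}$ you take $B^\ast$ to be the inductive closure of the monotonized bar, enumerated via codes of finite derivations; the paper instead takes $E_\beta$ to be the set of all $s\in Bin$ such that for some $m$ every $t\in Bin$ with $s\sqsubseteq t$ and $\mathit{length}(t)=m$ has an initial part in $D_\alpha$. The two sets coincide, but the paper's definition is a single existential quantifier over a decidable matrix, so its enumerability, its monotonicity and inductivity, and the extraction from $\langle\;\rangle\in E_\beta$ of a uniform depth (hence of $n$ with $Bar_\mathcal{C}(D_{\overline\alpha n})$) are immediate, whereas your derivation-coding version carries exactly the bookkeeping you flag yourself (verifying the biconditional for the closure and pulling a single bound out of a derivation). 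Both routes work. In the direction $\mathbf{FT}\rightarrow\mathbf{\Sigma^0_1}$-$\mathbf{BI}_\mathcal{C}$, your opening move, ``apply $\mathbf{FT}$ to get $n$ with $Bar_\mathcal{C}(E_{\overline\alpha n})$'', is not literally licensed: $\mathbf{FT}$ as formulated concerns decidable bars $D_\alpha$ only. You must first pass to the decidable bar $D_\beta$, where $\beta(s)\neq 0$ iff $s\in Bin$ and some initial part of $s$ is enumerated into $E_\alpha$ with witnesses below $s$ (this is the paper's move), apply $\mathbf{FT}$ to that, and read off a level $m$ such that every $s\in Bin$ of length $m$ has an initial part $\overline s q$ with $q\le m$ in $E_\alpha$; your backwards induction down to $\langle\;\rangle$, using the monotonicity and inductivity of $E_\alpha$ as you describe, then coincides with the paper's finishing argument. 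With that conversion supplied, the proposal is correct.
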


\begin{proof}
(i)  Let $\alpha$ be given such that $Bar_{2^\omega}(D_\alpha)$ and $\forall s \in 2^{<\omega}[s \in D_\alpha\leftrightarrow \forall i<2[s\ast\langle i \rangle \in D_\alpha]]$. 
Assume $\langle \; \rangle \notin D_\alpha$. Define $\gamma$  such that $\forall n[\gamma(n)=\mu i<2[\overline \gamma n \ast\langle i \rangle \notin D_\alpha]$. Note: $\gamma \in 2^\omega$ and  $\forall n[\overline \gamma n \notin D_\alpha]$. Contradiction. 
 Conclude that $\langle \; \rangle \in D_\alpha$.

(ii). First, assume $\mathbf{\Sigma}^0_1$-$\mathbf{BI}_{2^\omega}$. 
Let $\alpha$ be given such that  $Bar_{2^\omega}(D_\alpha)$.
Define $\beta$ such that, for each $n$, \textit{if} $ n'\in 2^{<\omega}$ and $length(n')\le n''$ and $\forall t\in 2^{<\omega}\cap \omega^{n''}[n'\sqsubseteq t \rightarrow \exists s \sqsubseteq  t[s\in D_\alpha]]$, {\it then $\beta(n) = n'+1$}, and, \textit{if not}, then  $\beta(n) = 0$. 
Note that $E_\beta=\{u\in 2^{<\omega}\mid\exists m\ge length(u)\forall t\in
2^{<\omega}\cap \omega^m[u \sqsubseteq t\rightarrow \exists s\sqsubseteq t[s\in D_\alpha]]\}$. 
Note that $D_\alpha\subseteq E_\beta$ and  $E_\beta$ is both monotone and inductive in $2^{<\omega}$. 
 Conclude that $\langle \; \rangle \in E_\beta$, and find $n$ such that $\beta(n)= \langle\;\rangle +1 = 1$. 
 Note that $n'= 0=\langle\;\rangle $ and $length(n')=0$ and $\forall t\in 2^{<\omega}\cap\omega^{n''}\exists s\sqsubseteq t[s\in D_\alpha]$, and, therefore, $\exists m[Bar_{2^\omega}(D_{\overline \alpha m})]$. 
 Conclude that $\mathbf{FT}$ holds.

Now assume $\mathbf{FT}$. 
Let $\alpha$ be given such that $\mathit{Bar}_{2^\omega}(E_\alpha)$  and $\forall s \in 2^{<\omega}[s\in E_\alpha\leftrightarrow \forall i<2[s\ast\langle i \rangle \in E_\alpha]]$. 
Define $\beta$ such that, for each $s$,  $\beta(s)\neq 0$ if and only if $s \in 2^{<\omega}$ and $\exists p<s\exists m< s[\alpha(p) = \overline s m + 1])]$. Note that $Bar_{2^\omega}(D_\beta)$.  Using $\mathbf{FT}$, find $p$ such that $Bar_{2^\omega}(D_{\overline \beta p})$. Find $m:=\max\{\mathit{length}(s)\mid s\in D_{\overline \beta p}\}$.   Note that $\forall s\in 2^{<\omega}\cap\omega^m\exists q\le m[\overline s q \in E_\alpha]$. We now prove that  $\forall k \le m\forall s \in 2^{<\omega}\cap\omega^k \exists q \le k[\overline s q \in E_\alpha]]$ and do so by backwards induction, starting from the case $k = m$. Suppose $k+1 \le m$ and $\forall s \in 2^{<\omega}\cap\omega^{k+1}\exists q \le k+1[\overline s q \in E_\alpha]]$. Assume that  $s\in2^{<\omega}\cap\omega^k$.  As $\forall i<2\exists t\sqsubseteq s\ast\langle i \rangle[s\ast\langle i \rangle\in E_\alpha]$, one may  distinguish two cases. \textit{Either} $\exists q\le k$[$\overline s q \in E_\alpha]$, \textit{or}  $\forall i<2[s\ast\langle i \rangle \in E_\alpha]$, and, therefore, $s \in E_\alpha$. In both cases  $\exists q \le k[\overline s q \in E_\alpha]$.
We thus see that $\langle \; \rangle \in E_\alpha$.
Conclude that $\mathbf{\Sigma}^0_1$-$\mathbf{BI}_{2^\omega}$ holds.
\end{proof} 

There are many equivalents of  $\mathbf{FT}$, see \cite{veldman2009b}, \cite{veldman2011b} and \cite{veldman2011d}.  
 \subsection{Extending  the Fan Theorem to arbitrary fans}\label{SS:extfant}

  $2^\omega$ is a fan, but it is not the only one. The notions \textit{spread, fan} and \textit{explicit fan} are introduced in Subsection \ref{SS:fans}.  
In Subsection \ref{SSS:AC00}, we saw that, in
  $\mathsf{BIM}+ \mathbf{Weak}$-$\mathbf{\Pi}^0_1$-$\mathbf{AC}_{0,0}$, one may prove $\forall \beta[Fan(\beta) \rightarrow Fan^+(\beta)]$, i.e. every fan is an explicit fan.

  \begin{definition} The following two statements are versions of the  \emph{Extended Fan Theorem},
  
   \noindent
$\mathbf{FT_{ext}}: \;\forall \beta \forall \alpha[\bigl(\mathit{Fan^+}(\beta) \;\wedge\; Bar_{\mathcal{F}_\beta}(D_\alpha)\bigr) \rightarrow \exists n[Bar_{\mathcal{F}_\beta}(D_{\overline \alpha n})]],$
and

  \noindent
$\mathbf{FT_{ext}^+}: \;\forall \beta \forall \alpha[\bigl(\mathit{Fan}(\beta) \;\wedge\; Bar_{\mathcal{F}_\beta}(D_\alpha)\bigr) \rightarrow \exists n[Bar_{\mathcal{F}_\beta}(D_{\overline \alpha n})]].$ \end{definition}

$\mathbf{FT_{ext}}$ and $\mathbf{FT_{ext}}^+$ extend $\mathbf{FT}$ from Cantor space $\mathcal{C}$ tot arbitrary (explicit) fans.
 \begin{theorem}\label{T:generalfant1} $\mathbf{BIM}\vdash \mathbf{FT}\rightarrow \mathbf{FT_{ext}}$.\end{theorem}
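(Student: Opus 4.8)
The plan is to reduce a bar-theorem for an explicit fan $\mathcal{F}_\beta$ to the Fan Theorem for Cantor space $\mathcal{C}$ by means of a retraction. Assume $\mathbf{FT}$, let $\beta$ be an explicit fan-law with modulus $\gamma$ (so $\forall s\forall m[\beta(s\ast\langle m\rangle)=0\rightarrow m\le\gamma(s)]$), and let $\alpha$ be such that $Bar_{\mathcal{F}_\beta}(D_\alpha)$. The key step is to construct a continuous retraction $r\colon\mathcal{C}\to\mathcal{F}_\beta$, that is, a map that is the identity on $\mathcal{F}_\beta$ and sends arbitrary binary sequences into $\mathcal{F}_\beta$. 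Concretely, I would define, by primitive recursion, a function on finite sequences: given $s\in\mathit{Bin}$ with value already defined on $\overline s(\mathit{length}(s)-1)$, append the number $\min(s(\mathit{length}(s)-1),\,m_s)$ where $m_s$ is the largest admissible successor at the current position in $\mathcal{F}_\beta$ (which exists and is $\le\gamma$ of the current position because $\beta$ is an explicit fan-law, and is found by bounded search). Since $\mathcal{F}_\beta$ is inhabited whenever we are doing anything nontrivial (if $\beta(\langle\;\rangle)\neq 0$ the statement is vacuous), this recursion is well-defined in $\mathsf{BIM}$ and yields, via Axiom \ref{ax:recop}, a function $\rho\in Fun$ with $\rho\colon\mathcal{C}\to\mathcal{F}_\beta$ and $\rho|\delta=\delta$ for every $\delta\in\mathcal{F}_\beta$.

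Next I would pull the bar $D_\alpha$ back along $\rho$. Because $\rho$ is given by a finite-information rule, there is (again by the closure axioms) a function $\zeta$ such that $D_\zeta$ consists exactly of those $u\in\mathit{Bin}$ for which $\rho$, applied to any extension of $u$, has already produced an initial segment lying in $D_\alpha$; more carefully, one sets $u\in D_\zeta$ iff the finite sequence that $\rho$ computes from $u$ has an initial part in $D_\alpha$. Then $Bar_\mathcal{C}(D_\zeta)$: given $\delta\in\mathcal{C}$, we have $\rho|\delta\in\mathcal{F}_\beta$, so $\exists n[\overline{\rho|\delta}\,n\in D_\alpha]$ by hypothesis, and this initial segment is produced from some finite $\overline\delta k$, so $\overline\delta k\in D_\zeta$. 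Apply $\mathbf{FT}$ to $\zeta$ to obtain $N$ with $Bar_\mathcal{C}(D_{\overline\zeta N})$, i.e. a uniform bound: every $\delta\in\mathcal{C}$ meets $D_\zeta$ within its first $N$ values.

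Finally I would transfer this uniform bound back to $\mathcal{F}_\beta$. The depth $N$ of the recursion for $\rho$ only consults the first $N$ values of its input and only reads finitely many values of $\alpha$ (those bounded by the finitely many sequences reachable in $N$ steps inside the fan, which are explicitly bounded using $\gamma$); let $n$ be large enough that $\overline\alpha n$ contains all the relevant values of $\alpha$ and all the relevant values of $\beta$ used in the first $N$ steps. I claim $Bar_{\mathcal{F}_\beta}(D_{\overline\alpha n})$: given $\delta\in\mathcal{F}_\beta\subseteq\mathcal{C}$, uniformity gives $k\le N$ with $\overline\delta k\in D_\zeta$, hence $\rho$ computes from $\overline\delta k$ an initial part in $D_\alpha$; but $\rho|\delta=\delta$, so that initial part is some $\overline\delta m$ with $m$ bounded in terms of $N$ and $\gamma$, and the witness from $D_\alpha$ already appears in $\overline\alpha n$. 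Hence $\exists m[\overline\delta m\in D_{\overline\alpha n}]$, establishing $\mathbf{FT_{ext}}$.

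The main obstacle is bookkeeping rather than conceptual: one must verify carefully that the retraction $\rho$ is genuinely continuous and total on $\mathcal{C}$ inside $\mathsf{BIM}$ (which needs $\mathcal{F}_\beta$ to be inhabited — harmless, since otherwise $\mathbf{FT_{ext}}$ holds trivially — and needs the explicit modulus $\gamma$, which is exactly why this argument wants $\mathit{Fan^+}$ and not merely $\mathit{Fan}$), and that the ``relevant finite information'' used in the first $N$ steps of $\rho$ can be bounded explicitly so that a single $n$ works. The passage $\mathbf{FT_{ext}}$ to $\mathbf{FT_{ext}^+}$ would require $\mathbf{Weak}\text{-}\mathbf{\Pi^0_1}\text{-}\mathbf{AC_{0,0}}$ (to upgrade $\mathit{Fan}$ to $\mathit{Fan^+}$) and so is presumably not part of this theorem; here we prove only the $\mathit{Fan^+}$ version, as stated.
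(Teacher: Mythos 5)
There is a genuine gap, and it is at the heart of your construction: you treat $\mathcal{F}_\beta$ as if it were a subset of $\mathcal{C}$. An explicit fan-law $\beta$ bounds the branching at a node $s$ by $\gamma(s)$, but $\gamma(s)$ can be any natural number, so elements of $\mathcal{F}_\beta$ are infinite sequences of (possibly large) naturals, not binary sequences. Consequently a ``retraction $r\colon\mathcal{C}\to\mathcal{F}_\beta$ that is the identity on $\mathcal{F}_\beta$'' does not even typecheck, and your later step ``given $\delta\in\mathcal{F}_\beta\subseteq\mathcal{C}$'' is false. The concrete rule you propose makes the problem visible: you read one bit of the binary input and append $\min\bigl(s(\mathit{length}(s)-1),m_s\bigr)$, so every value you ever append is $\le 1$; the image of $\rho$ is (essentially) the binary part of $\mathcal{F}_\beta$, the identity property $\rho|\delta=\delta$ fails for any $\delta$ in the fan with an entry $\ge 2$, and $\rho$ is not surjective. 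With neither surjectivity nor the identity-on-$\mathcal{F}_\beta$ property, the transfer at the end collapses: applying $\mathbf{FT}$ to the pulled-back bar only gives a uniform bound on $\rho[\mathcal{C}]$, a proper subfan, so you cannot conclude $Bar_{\mathcal{F}_\beta}(D_{\overline\alpha n})$.

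The underlying idea of transferring the bar along a continuous map between $\mathcal{C}$ and $\mathcal{F}_\beta$ is sound, but the map has to go the other way conceptually: one codes the fan \emph{into} Cantor space rather than retracting Cantor space onto the fan. This is what the paper does: it uses the unary coding $D(\langle\;\rangle)=\langle\;\rangle$, $D(s\ast\langle n\rangle)=D(s)\ast\underline{\overline 0}n\ast\langle 1\rangle$, defines a decidable set $D_\delta$ in $\mathcal{C}$ consisting of the codes $D(s)$ of elements $s$ of $D_\alpha$ together with ``escape'' sequences $D(s)\ast\overline{\underline 0}i$ with $i>\gamma(s)$ (this is exactly where $\mathit{Fan}^+$ is used), proves $Bar_\mathcal{C}(D_\delta)$ by following each $\varepsilon\in\mathcal{C}$ with a companion $\varepsilon^\ast\in\mathcal{F}_\beta$, applies $\mathbf{FT}$, and translates the resulting bound back via monotonicity of $D$. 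A repair of your plan in the same spirit would be a continuous \emph{surjection} of $\mathcal{C}$ onto $\mathcal{F}_\beta$ (e.g.\ reading, at node $s$, a fixed-length binary block with $2^{k}>\gamma(s)$ to select an admissible successor), after which your pull-back/push-forward argument does work; but as written, the retraction and the identity claim on $\mathcal{F}_\beta$ are not available, so the proof does not go through.
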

\begin{proof} The proof may be found in \cite{troelstra} and \cite{veldman2011b}, but we find it useful to give it here.     

Assume $\mathbf{FT}$. Let $\beta, \alpha$  be given such that $Fan^+(\beta)$ and $Bar_{\mathcal{F}_\beta}(D_\alpha)$. One may assume that $\beta(\langle\;\rangle)=0$. Find $\gamma$ such that, for each $s$, if $\beta(s) = 0$, then $\gamma(s)$ is the greatest  $n$ such that $\beta(s\ast\langle n \rangle) = 0$.

For each $s$, we define $\delta(s)$ in  $2^{<\omega}$, as follows, by induction on $\mathit{length}(s)$.
		$\delta(\langle \; \rangle)=\langle \; \rangle$ and, for all $s$, for all $n$, $\delta(s\ast\langle n \rangle) = \delta(s) \ast \underline{\overline 0}n\ast\langle 1 \rangle$.  
		
		We then define $\eta$ such that, for each $t$, $\eta(t) \neq 0$ if and only if there exist $s,i$  such that   $t=\delta(s)\ast \overline{\underline 0 }i$ and either: $\beta(s)\neq 0$ or: $\beta(s)=0$ and $i>\gamma(s)$  or: $\alpha(s) \neq 0$.   
		
		We now prove that $Bar_{2^\omega}(D_\eta)$.  Assume $\varepsilon \in 2^\omega$.  Define $\varepsilon^\ast$  as follows, by induction. For each $n$, {\it if} there exists $j\le \gamma(\overline{\varepsilon^\ast}n)$ such that   $\beta(\overline{\varepsilon^\ast}n\ast\langle j \rangle) = 0$ and $ \delta(\overline{\varepsilon^\ast}n\ast\langle j \rangle)\sqsubset \varepsilon$, then $\varepsilon^\ast(n)$ is the least such $j$, and  \textit{if no such $j$ exists}, then  $\varepsilon^\ast(n)=\gamma(\overline{\varepsilon^\ast} n)$. Note: $\varepsilon^\ast \in \mathcal{F}_\beta$ and find $n$ such that $\overline{\varepsilon^\ast}n \in D_\alpha$.  \textit{Either} $\delta(\overline{\varepsilon^\ast}n)\sqsubset \varepsilon$, \textit{or}  there exist $s, i$ such $\beta(s) = 0$ and $\delta(s)\ast\overline{\underline 0}i\sqsubset \varepsilon$ and $i> \gamma(s)$ or $\alpha(s)\neq 0$.  In all three cases, $\exists m[\overline \varepsilon m \in D_\eta]$. Conclude that $D_\eta$ is a bar in $2^\omega$.
		
		 Using $\mathbf{FT}$, find $m$ such that $Bar_{2^\omega}(D_{\overline \eta m})$. Find $p$ such that, for each $s$, if $s\ge p$, then $\delta(s) \ge m$. Conclude: $Bar_{\mathcal{F}_\beta}(D_{\overline \alpha p})$.
		
		Conclude $\mathbf{FT_{ext}}$. \end{proof}
		
\subsection{Extending $\mathbf{FT}$ and extending $\mathbf{WKL}$.}\label{SS:ftwkl} The following statement,  \textit{Weak K\"onig's Lemma} $\mathbf{WKL}$, is studied in classical reverse mathematics and constructively false:
\[\forall \alpha[ \forall m 
\exists s \in 2^{<\omega}[\mathit{length}(s) = m \;\wedge\; \forall n \le m[\alpha(\overline s n) = 0]] \rightarrow \exists \gamma \in 2^\omega \forall n[\alpha(\overline \gamma n) = 0] ].\]
  $\mathbf{FT}$  is a contraposition of  $\mathbf{WKL}$.  

Extending  $\mathbf{WKL}$ from subtrees of $2^{<\omega}$ to arbitrary finitely branching trees, one obtains \textit{K\"onig's Lemma} $\mathbf{KL}$:
\[\forall \beta \forall \alpha[\bigl(Fan(\beta)\;\wedge\; 
\forall m \exists s[\mathit{length}(s) = m \;\wedge\; \beta(s) = 0 \;\wedge\; \forall n \le m[\alpha(\overline s n) = 0]]\bigr) \rightarrow\]\[ \exists \gamma \in \mathcal{F}_{\beta} \forall n[\alpha(\overline \gamma n) = 0] ].\] 
$\mathbf{FT^+_{ext}}$  is a contraposition of  $\mathbf{KL}$. 

Restricting $\mathbf{KL}$ to \textit{explicitly} finitely branching trees, one obtains \textit{Bounded K\"onig's Lemma} $\mathbf{BKL}$:
\[\forall \beta \forall \alpha [\bigl(\mathit{Fan^+}(\beta) \;\wedge\; 
\forall m \exists s[\mathit{length}(s) = m \;\wedge\; \beta(s) = 0 \;\wedge\; \forall n \le m[\alpha(\overline s n) = 0]]\bigr) \rightarrow\]\[ \exists \gamma \in \mathcal{F}_{\beta } \forall n[\alpha(\overline \gamma n) = 0] ]].\] 
$\mathbf{FT_{ext}}$  is a contraposition of  $\mathbf{BKL}$.
 
In the classical formal context of $\mathsf{RCA_0}$, $\mathbf{KL}$ is equivalent to $\mathsf{ACA_0}$ and definitely stronger than $\mathbf{WKL}$.  $\mathbf{WKL}$ and $\mathbf{BKL}$, on the other hand, are equivalent, see \cite[Lemma IV.1.4, page 130]{Simpson} and our Theorem \ref{T:generalfant1}.

We must conclude that  \textbf{Weak}-$\mathbf{\Pi}^0_1$-$\mathbf{AC}_{0,0}$, if introduced in the classical context of $\mathsf{RCA}_0$, would bring us from $\mathbf{WKL}$ to $\mathbf{KL}$ and thus, in this context,  would have serious consequences. 

In the intuitionistic context of $\mathsf{BIM}$, however, quantifiers are read constructively, so the assumption from which \textbf{Weak}-$\mathbf{\Pi}^0_1$-$\mathbf{AC}_{0,0}$ draws its conclusion is, if it would be possible to say so, stronger than in the classical context. The intuitionistic step from $\mathbf{FT_{ext}}$ to $\mathbf{FT^+_{ext}}$ seems to be a more innocent step than the classical step from $\mathbf{BKL}$ to $\mathbf{KL}$.

\subsection{Strong bars and thin bars}
 
 \begin{definition}\label{D:strongbar}

  For every $\beta$, for every $B\subseteq\omega$,  \emph{$B$ is a strong bar in $\mathcal{F}_\beta$}, notation: $Strongbar_{\mathcal{F}_\beta}(B)$,  if and only if $\forall \zeta \in [\omega]^\omega[\forall n[\beta\bigl(\zeta(n) \bigr)=0 ] \rightarrow \exists m [\overline{\zeta(n)}m \in B]]$. \end{definition}

Note that $B$ is a strong bar in $\mathcal{F}_\beta$ if and only if  every increasing sequence of finite sequences admitted by $\beta$ contains an element that \emph{`meets'} $B$.

 We now prove that, for every  explicit fan $\mathcal{F}$, for every decidable subset $B$ of $\omega$, $B$  is a strong bar in $\mathcal{F}$ if and only if $B$ has a finite subset that is a bar in $\mathcal{F}$.  
  
\begin{lemma}\label{L:fankl1} $\mathsf{BIM} \vdash \forall \beta[Fan^+(\beta) \rightarrow\forall \alpha[Strongbar_{\mathcal{F}_\beta}(D_\alpha) \leftrightarrow \exists n[Bar_{\mathcal{F}_\beta}(D_{\overline \alpha n}]]]$.
\end{lemma}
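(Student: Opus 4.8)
The plan is to reduce a strong bar in an explicit fan $\mathcal{F}_\beta$ to an ordinary bar in Cantor space, so that we may apply $\mathbf{FT}$ (or rather its already-established extension $\mathbf{FT_{ext}}$, Theorem \ref{T:generalfant1}) together with the fact that the intended domain is compact. First I would unwind the definition of $Strongbar_{\mathcal{F}_\beta}(D_\alpha)$: it says that every strictly increasing sequence $\zeta$ in $[\omega]^\omega$ meets one of two fates at some finite stage $n$ — either $\zeta(n)$ already codes a finite sequence that is \emph{not} admissible for $\beta$, i.e.\ $\beta(\zeta(n)) \neq 0$, or some initial segment $\overline{\zeta(n)}m$ lies in $D_\alpha$. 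The point of the increasing-sequence quantifier is exactly that the sequence of codes $\zeta(0) \sqsubset \zeta(1) \sqsubset \cdots$ traces out a path through $\mathbb{N}$, so that a strong bar is genuinely a bar along such paths, and thus in particular along every branch of the fan $\mathcal{F}_\beta$.

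The key step is the following observation: since $Fan^+(\beta)$, there is $\gamma$ with $\beta(s\ast\langle m\rangle)=0 \rightarrow m \le \gamma(s)$ for all $s,m$, so the admissible finite sequences form an \emph{explicitly} finitely branching tree. I would then argue that $Strongbar_{\mathcal{F}_\beta}(D_\alpha)$ implies the ordinary bar statement $Bar_{\mathcal{F}_\beta}(D_\alpha)$: given $\varepsilon \in \mathcal{F}_\beta$, build the increasing sequence $\zeta$ by $\zeta(n) := \overline\varepsilon(n+1)$ (or an increasing reindexing thereof via the pairing to guarantee strict increase of the \emph{codes}, using that $\overline\varepsilon(n)$'s codes are strictly increasing in $n$ once lengths increase — this is where a small routine calculation with the coding from Subsection \ref{SS:fs} is needed). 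Since $\varepsilon \in \mathcal{F}_\beta$ we have $\beta(\overline\varepsilon n)=0$ for all $n$, so the first disjunct in the strong-bar clause is never triggered; hence the second disjunct fires, giving $m$ with $\overline\varepsilon m \in D_\alpha$. Thus $Bar_{\mathcal{F}_\beta}(D_\alpha)$. Now apply $\mathbf{FT_{ext}}$ (available since we are only claiming the implication $\mathbf{FT}\rightarrow\cdots$ and the lemma is stated in $\mathsf{BIM}$ — wait: the lemma is proven \emph{in} $\mathsf{BIM}$ alone, so I cannot invoke $\mathbf{FT}$).

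Because the lemma is a theorem of plain $\mathsf{BIM}$, the real argument must avoid $\mathbf{FT}$ and instead extract the uniform bound directly from $Strongbar_{\mathcal{F}_\beta}(D_\alpha)$, and this is the main obstacle. The route I would take: use $Fan^+(\beta)$ to get, for each $n$, an explicit bound $\delta(n)$ on all admissible $s$ with $\mathit{length}(s)\le n$ (the displayed equivalence in Subsection \ref{SS:fans}). Then define a single sequence $\zeta^\ast \in [\omega]^\omega$ that, in a coded way, runs through \emph{all} admissible finite sequences in Kleene--Brouwer or length-lexicographic order, padding by strictly increasing codes; feed $\zeta^\ast$ into the strong-bar hypothesis to obtain a stage $n_0$ by which every admissible branch of length $\le$ some bound $p$ has already met $D_\alpha$. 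More carefully, I would show $\forall \zeta \in [\omega]^\omega \exists n[\cdots]$ combined with the explicit bounding function lets one define $\alpha^\ast$ with $\forall m \exists n[\alpha^\ast(m,n)=0]$ coding "admissible sequence $m$ has an initial part in $D_\alpha$ by stage $n$", apply the Minimal Axiom of Countable Choice $\mathbf{Min}$-$\mathbf{AC_{0,0}}$ (which follows from Axiom \ref{ax:recop}), and then use König-style bounding \emph{within} the explicit fan — which is constructively legitimate precisely because the branching is explicitly bounded — to collapse the per-branch stages into one uniform $n$ with $Bar_{\mathcal{F}_\beta}(D_{\overline\alpha n})$. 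The delicate point, and the one I expect to consume most of the proof, is arranging the single "diagonal" increasing sequence $\zeta^\ast$ so that the strong-bar clause applied to it yields information about \emph{every} branch simultaneously rather than just one; this is where the strength of the "strong bar" formulation over an ordinary bar is actually used, and it is what makes the lemma provable without $\mathbf{FT}$.
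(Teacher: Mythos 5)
There is a genuine gap. Your first route (derive $Bar_{\mathcal{F}_\beta}(D_\alpha)$ and invoke $\mathbf{FT_{ext}}$) you rightly retract, since the lemma is a theorem of plain $\mathsf{BIM}$; but the replacement you sketch does not close the hole that you yourself flag at the end. If $\zeta^\ast$ merely enumerates \emph{all} admissible finite sequences in some increasing coding, then $Strongbar_{\mathcal{F}_\beta}(D_\alpha)$ applied to $\zeta^\ast$ produces only one index $n$ at which the $n$-th enumerated sequence is either inadmissible or has an initial part in $D_\alpha$; that is information about a single node and yields no uniform level. And the proposed repair --- $\mathbf{Min}$-$\mathbf{AC_{0,0}}$ followed by ``K\"onig-style bounding within the explicit fan, legitimate because the branching is explicitly bounded'' --- is exactly the passage from $Bar_{\mathcal{F}_\beta}(D_\alpha)$ to $\exists n[Bar_{\mathcal{F}_\beta}(D_{\overline \alpha n})]$, i.e.\ $\mathbf{FT_{ext}}$ itself, which is not provable in $\mathsf{BIM}$: explicitly bounded branching does not make that collapse constructively available (if it did, the whole Fan Theorem would be a theorem of $\mathsf{BIM}$).

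The missing idea, which is how the paper argues, is to make the choice of $\zeta(n)$ \emph{adversarial} rather than exhaustive, using the explicit bound. Since $Fan^+(\beta)$ gives $\delta$ with: every admissible $s$ of length $\le n$ has code $<\delta(n)$, it is decidable whether there exists an admissible $s$ with $\mathit{length}(s)=n$ and $\neg\exists m\le n[\overline s m \in D_\alpha]$; define $\zeta(n)$ to be such a ``counterexample'' of length $n$ whenever one exists, and an arbitrary admissible sequence of length $n$ otherwise. Then the first disjunct of the strong-bar clause never fires (each $\zeta(n)$ is admissible), so the hypothesis yields $n$ with $\exists m[\overline{\zeta(n)}m \in D_\alpha]$, and by the very choice of $\zeta(n)$ there is then \emph{no} counterexample at level $n$: every admissible sequence of length $n$ has an initial part in $D_\alpha$, and since those initial parts have codes $<\delta(n)$ one concludes $Bar_{\mathcal{F}_\beta}(D_{\overline \alpha \delta(n)})$. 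A single application of the strong-bar hypothesis thus delivers information about all branches simultaneously, which is precisely the point you left open; the remaining matters (arranging $\zeta$ to be strictly increasing in code, and the trivial case of an empty fan) are routine.
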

\begin{proof} Let $\beta$ be given such that $Fan^+(\beta)$. Find $\delta$ such that, for each $n$, for each $s$ in $\omega^n$ , if $\beta(s) = 0$, then $s< \delta(n)$. 

(i) Let $\alpha$ be given such that $Strongbar_{\mathcal{F}_\beta}(D_\alpha)$.  Define $\zeta$ such that, for every $n$, $\beta\bigl(\zeta(n)\bigr)=0 $ and, if  there exists $ s<\delta(n)$ such that 
$s\in \omega^n$ and  $\neg \exists m \le n[\overline s m \in D_\alpha]$, then  $\zeta(n)$ is the least such $s$.  Find $\eta$ in $[\omega]^\omega$ such that $\zeta\circ \eta \in [\omega]^\omega$. Find $n$ such that $\exists m [\overline{\zeta\circ\eta(n)}m \in D_\alpha]$  and conclude that $\forall s \in \omega^{\eta(n)}[\beta(s)=0 \rightarrow \exists m\le \eta(n)[\overline s m \in D_\alpha]]$ and that $Bar_{\mathcal{F}_\beta}(D_{\overline \alpha \delta\circ\eta(n)})$. 

\smallskip
(ii) Assume $\exists n[Bar_{\mathcal{F}_\beta}(D_{\overline \alpha n})]$. Find $n$ such that $Bar_{\mathcal{F}_\beta}(D_{\overline \alpha n})$. Find $p$ such that, for each $s$ in $D_{\overline \alpha n}$, $length(s)< p$. Let $\zeta$ in $[\omega]^\omega$ be given such that $\forall m[\beta\bigl(\zeta(m)\bigr)=0]$.  Consider $\zeta(p)$ and note that $\zeta(p)\ge p$.  Conclude that   $\exists q\le length\bigl(\zeta(p)\bigr)[\overline{\zeta(p)}q \in D_\alpha]$. 
  Conclude that $\forall \zeta \in [\omega]^\omega[\forall m[\beta\bigl(\zeta(m)\bigr)=0]\rightarrow  \exists p\exists q[\overline{\zeta(p)}q \in D_\alpha]]$, i.e.  $Strongbar_{\mathcal{F}_\beta}(D_\alpha)$.  
  \end{proof}

\begin{definition}\label{D:thinbar} $B\subseteq \omega$ is \emph{thin} if and only if $\forall p \in B\forall q\in B[p\neq q \rightarrow p\perp q]$. For every $B\subseteq \omega$, for every $\mathcal{F}\subseteq \omega^\omega$, \emph{$B$ is a thin bar in $\mathcal{F}$}, notation: $Thinbar_\mathcal{F}(B)$, if and only if $B$ is thin and $Bar_\mathcal{F}(B)$. \end{definition} We now prove that $\mathbf{FT_{ext}}$ is, in $\mathsf{BIM}$,  equivalent to the statement that, in an explicit fan, every thin bar is finite:      
      \begin{theorem}\label{T:ftext}
The following statements are equivalent in $\mathsf{BIM}$:
\begin{enumerate}[\upshape (i)]

\item

$\mathbf{FT_{ext}}$: $\forall \beta[\mathit{Fan}^+(\beta) \rightarrow \forall \alpha[Bar_{\mathcal{F}_\beta}(D_\alpha)\rightarrow \exists n[Bar_{\mathcal{F}_\beta}(D_{\overline \alpha n})]]$.

\item $\forall \beta[\mathit{Fan}^+(\beta) \rightarrow  \forall \alpha[Thinbar_{\mathcal{F}_\beta}(D_\alpha)\rightarrow \exists n\forall m >n[\beta(m)= 0\rightarrow \alpha(m)=0]]$.
\end{enumerate}
\end{theorem}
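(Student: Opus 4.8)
The plan is to prove the equivalence $(i)\leftrightarrow(ii)$ by two implications.

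For $(i)\rightarrow(ii)$, let $\beta$ be an explicit fan-law and let $\alpha$ be such that $Thinbar_{\mathcal{F}_\beta}(D_\alpha)$. Since a thin bar is in particular a bar, $(i)$ gives $n$ with $Bar_{\mathcal{F}_\beta}(D_{\overline\alpha n})$. I claim that for all $m\ge n$ we have $\alpha(m)=0$, i.e. $m\notin D_\alpha$. Suppose otherwise, $\alpha(m)\ne 0$ for some $m\ge n$, so $m\in D_\alpha$. Since $D_{\overline\alpha n}$ is already a bar in $\mathcal{F}_\beta$, every $\gamma\in\mathcal{F}_\beta$ has an initial segment $\overline\gamma k\in D_{\overline\alpha n}\subseteq D_\alpha$ with $k<n$. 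Now I want to produce some $\gamma\in\mathcal{F}_\beta$ with $m=\overline\gamma j$ for suitable $j$, so that both $m\in D_\alpha$ and $\overline\gamma k\in D_\alpha$ with $\overline\gamma k\sqsubset \overline\gamma j = m$ and $\overline\gamma k\ne m$; this contradicts thinness. The point requiring care: $m$ need not be of the form $\overline\gamma j$ for any $\gamma\in\mathcal{F}_\beta$ — $m$ need not even lie on the spread $\mathcal{F}_\beta$, and even if it does, $m$ need not extend to an infinite path. But if $m$ is \emph{not} a node of the spread, or doesn't extend to a path, I can simply note that such $m$ can be harmlessly assumed absent from a thin bar, or rather I argue directly: if $\alpha(m)\ne 0$ and $m$ lies on $\mathcal{F}_\beta$ (hence, since $\beta$ is a spread-law, extends to some path $\gamma\in\mathcal{F}_\beta$), that path meets $D_{\overline\alpha n}$ at a proper initial segment of $m$, contradicting $Thinbar$; and if $m$ does not lie on $\mathcal{F}_\beta$, then $m$ is irrelevant (such an $m$ is never an initial segment of any $\gamma\in\mathcal{F}_\beta$, but it could still spuriously be in $D_\alpha$). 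The clean fix is to observe that without loss of generality $\alpha$ only decides nodes of the spread into $D_\alpha$ — or, more simply, to prove $(ii)$ in the equivalent form $\exists n\forall m>n[m\in D_\alpha\rightarrow m$ is not a node of $\mathcal{F}_\beta]$ is too weak; I will instead replace $\alpha$ at the outset by $\alpha^\ast$ with $D_{\alpha^\ast}=\{s\in D_\alpha\mid \beta(s)=0\}$, note $Thinbar_{\mathcal{F}_\beta}(D_{\alpha^\ast})$ and $Bar_{\mathcal{F}_\beta}(D_{\alpha^\ast})\leftrightarrow Bar_{\mathcal{F}_\beta}(D_\alpha)$, run the argument for $\alpha^\ast$, and translate back.

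For $(ii)\rightarrow(i)$, let $\beta$ be an explicit fan-law and $\alpha$ such that $Bar_{\mathcal{F}_\beta}(D_\alpha)$. I first replace $D_\alpha$ by its \emph{thinning}: define $\alpha^\ast$ so that $s\in D_{\alpha^\ast}$ iff $s\in D_\alpha$, $\beta(s)=0$, and no proper initial segment of $s$ lies in $D_\alpha$. This is a decidable condition (bounded search over initial segments of $s$), so $\alpha^\ast$ exists by Axiom 3, and $Thinbar_{\mathcal{F}_\beta}(D_{\alpha^\ast})$ holds: it is a bar because along any $\gamma\in\mathcal{F}_\beta$ the least $k$ with $\overline\gamma k\in D_\alpha$ gives $\overline\gamma k\in D_{\alpha^\ast}$, and it is thin by construction. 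Apply $(ii)$ to $\alpha^\ast$ to get $n$ with $\forall m>n[\alpha^\ast(m)=0]$, i.e. $D_{\alpha^\ast}\subseteq\{s\mid s\le n\}$. Hence $D_{\overline{\alpha^\ast}\,(n+1)}=D_{\alpha^\ast}$ is already a bar in $\mathcal{F}_\beta$, so $\mathcal{F}_\beta\subseteq\mathcal{G}_{D_{\alpha^\ast}}$. Finally I must transfer this finite bar back to $D_\alpha$: every $s\in D_{\alpha^\ast}$ satisfies $s\in D_\alpha$, so $D_{\alpha^\ast}\subseteq D_\alpha$; since $D_{\alpha^\ast}$ is a finite subset of $D_\alpha$ and a bar in $\mathcal{F}_\beta$, there is $p$ with $D_{\alpha^\ast}\subseteq D_{\overline\alpha p}$ (take $p$ larger than every element of $D_{\alpha^\ast}$, using that for each such finite $s\in D_\alpha$ there is a stage of $\alpha$ witnessing $\alpha(s)\ne 0$ — bounded search again), and then $Bar_{\mathcal{F}_\beta}(D_{\overline\alpha p})$, which is $(i)$.

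The main obstacle is the bookkeeping in $(i)\rightarrow(ii)$: $(ii)$ as literally stated ($\exists n\forall m>n[\alpha(n)=0]$, presumably a typo for $\alpha(m)=0$) asserts something about \emph{all} large codes $m$, including those not on the spread, so one genuinely needs the passage to $\alpha^\ast$ (restricting to spread-nodes) before the thinness argument bites; without it the statement is simply false, since $\alpha$ may be nonzero on arbitrarily large non-nodes. Once that normalization is in place, both directions are routine applications of spread-law closure (every node extends to a path), the decidability of $\beta$ and of $\sqsubseteq$, and the finite-bar manipulations already used in the proof of Theorem~\ref{T:generalfant1}. I expect the write-up to be short, with the only delicate point being the explicit verification that replacing $\alpha$ by $\alpha^\ast$ preserves $Bar_{\mathcal{F}_\beta}$ in one direction and $Thinbar_{\mathcal{F}_\beta}$ in the other.
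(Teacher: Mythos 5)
Your two implications are essentially the paper's own proof. For (ii)$\Rightarrow$(i) the paper defines exactly your thinning ($\gamma(s)\neq 0$ iff $\alpha(s)\neq 0$ and no proper initial part of $s$ is in $D_\alpha$), applies (ii), and then uses that $D_{\overline\gamma\, (n+1)}\subseteq D_{\overline\alpha \,(n+1)}$, a superset of a bar being a bar; your remark about needing ``a stage of $\alpha$ witnessing $\alpha(s)\neq 0$'' is superfluous, since $D_\alpha$ is decided by $\alpha$ and $s\in D_{\overline\alpha p}$ asks only that $s<p$ and $\alpha(s)\neq 0$, so $p:=n+1$ already works. For (i)$\Rightarrow$(ii) the paper simply writes: find $n$ with $Bar_{\mathcal{F}_\beta}(D_{\overline\alpha n})$, and, $D_\alpha$ being thin, $D_{\overline\alpha n}=D_\alpha$; your extend-the-node-to-a-path argument is precisely the justification of that equality which the paper leaves unsaid.

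You are also right about the delicate point: that equality, and hence (ii) as literally printed, presupposes that every member of $D_\alpha$ is a node of the fan. Indeed $D_\alpha=\{\langle 0\rangle,\langle 1\rangle\}\cup\{\langle 2,k\rangle\mid k\in\mathbb{N}\}$ is a decidable thin bar in $\mathcal{C}$ that is not finite, so (ii) needs the extra hypothesis $\forall s[s\in D_\alpha\rightarrow\beta(s)=0]$, exactly the convention the paper does make explicit in $\mathbf{AlmFT}$ and $\mathbf{AppFT}$ and uses tacitly here. However, your proposed repair is overstated at one point: after passing to $\alpha^\ast$ with $D_{\alpha^\ast}=\{s\in D_\alpha\mid \beta(s)=0\}$ you cannot ``translate back'' to $\exists n\forall m>n[\alpha(m)=0]$ — nothing bounds the non-nodes in $D_\alpha$, as your own observation shows. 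The correct conclusion is that (ii) must be read with the node hypothesis (or with its conclusion restricted to nodes), and under that reading your argument coincides with the paper's; the translate-back step should simply be dropped.
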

\begin{proof} (i) $\Rightarrow$ (ii). Let $\beta, \alpha$ be given such that $Fan^+(\beta)$ and $Thinbar_{\mathcal{F}_\beta}(D_\alpha)$. Using (i), find $n$ such that $Bar_{\mathcal{F}_\beta}(D_{\overline \alpha n})$. Note that $D_\alpha$ is  thin and that  $\forall s[\beta(s)=0\rightarrow (s\in D_{\overline \alpha n} \leftrightarrow s \in D_\alpha)]$ and $\forall m \ge n[\beta(m)=0\rightarrow \alpha(m) = 0]$. 

\smallskip
  (ii) $\Rightarrow$ (i).  Let $\beta, \alpha$ be given such that $Fan^+(\beta)$ and $Bar_{\mathcal{F}_\beta}(D_\alpha)$. Define $\gamma$ such that, for each $s$, $\gamma(s) \neq 0$ if and only if $\alpha(s) \neq 0$ and $\neg \exists m<\mathit{length}(s)[\alpha(\overline s m)\neq 0]$.   Note: $D_\gamma \subseteq D_\alpha$ and $Thinbar_{\mathcal{F}_\beta}(D_\gamma)$. Using (ii), find $n$ such that $\forall m>n[\beta(m)=0\rightarrow\gamma(m) =0]$ and  conclude $Bar_{\mathcal{F}_\beta}(D_{\overline \gamma n})$  and  $Bar_{\mathcal{F}_\beta}(D_{\overline \alpha n})$.\end{proof}

\subsection{The Heine-Borel Theorem}
We will use some notions introduced in Subsections \ref{SS:reals} and \ref{SS:cov}.\begin{definition}\label{D:hb} For every subset $\mathcal{X}$  of $\mathcal{R}$, for every subset  $C$  of $\mathbb{S}$,  \emph{$C$ covers $\mathcal{X}$}, notation: $Cov_\mathcal{X}(C)$, if and only if $\forall \delta \in\mathcal{X}\exists n\exists s\in C[\delta(n) \sqsubset_\mathbb{S} s]$.

 For every $s$ in $\mathbb{S}$, $L(s) := ( s', \frac{s'+_\mathbb{Q}s''}{2} )$ and $R(s):= ( \frac{s'+_\mathbb{Q}s''}{2}, s'' )$.  $L(s)$ is  \emph{the left half of $s$} and $R(s)$ is \emph{the right half of $s$}.

  $B$ is  a mapping  from $2^{<\omega}$ to $\mathbb{S}$ such that $B(\langle\;\rangle) = (0_\mathbb{Q}, 1_\mathbb{Q})$, and for each $c$ in $2^{<\omega}$, $B(c\ast\langle 0 \rangle) = L\bigl(B(c)\bigr)$ and $B(c\ast\langle 1 \rangle) = R\bigl(B(c)\bigr)$.

For each $k$,  $\underline k$ is the element $\alpha$ of $\omega^\omega$ such that $\forall n[\alpha(n) = k]$. 

 $\mathbb{Q}_2$ is the set of (code numbers of) \emph{binary rationals} $\frac{m}{2^n}$, where $m \in \mathbb{Z}$ and $n \ge 0$.

\smallskip
The \emph{Heine-Borel Theorem} is the statement: 
\[\mathbf{HB}: \;\forall \alpha[Cov_{[0,1]}(D_\alpha) \rightarrow \exists m [Cov_{[0,1]}(D_{\overline \alpha m})]].\]\end{definition}
  \begin{theorem}\label{T:hbft}$\mathsf{BIM}\vdash \mathbf{FT} \leftrightarrow \mathbf{HB}$.  

  \end{theorem}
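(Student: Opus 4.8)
The plan is to prove the two implications $\mathbf{FT}\rightarrow\mathbf{HB}$ and $\mathbf{HB}\rightarrow\mathbf{FT}$ by transporting bars between $\mathcal{C}$ and the unit interval $[0,1]$ along the standard dyadic map $\rho:\mathcal{C}\rightarrow[0,1]$ that is introduced just before the statement. The map $\rho$ sends a binary sequence $\gamma$ to the real number whose $n$-th approximation is $\mathit{double}_\mathbb{S}\bigl(B(\overline\gamma n)\bigr)$, and it is (uniformly) continuous and surjective onto $[0,1]$; the role of the ``doubling'' is to make the images of the two halves $B(c\ast\langle 0\rangle)$ and $B(c\ast\langle 1\rangle)$ overlap, so that a finite covering of $[0,1]$ by rational segments pulls back to a ``bar'' condition on $\mathit{Bin}$, and conversely.

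For $\mathbf{FT}\rightarrow\mathbf{HB}$: assume $\mathbf{FT}$ and let $\alpha$ be given with $Cov_{[0,1]}(D_\alpha)$. First I would define, from $\alpha$, a function $\beta$ so that $s\in D_\beta$ exactly when $s\in\mathit{Bin}$ and the basic interval $\mathit{double}_\mathbb{S}\bigl(B(s)\bigr)$ is contained in some segment listed in $D_\alpha$ (a decidable condition, by the observations in the Coverings subsection). Then I would check $Bar_\mathcal{C}(D_\beta)$: given $\gamma\in\mathcal{C}$, the real $\rho|\gamma$ is covered, so some approximation $(\rho|\gamma)(n)=\mathit{double}_\mathbb{S}\bigl(B(\overline\gamma n)\bigr)$ sits inside a segment of $D_\alpha$, whence $\overline\gamma n\in D_\beta$. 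Apply $\mathbf{FT}$ to get $m$ with $Bar_\mathcal{C}(D_{\overline\beta m})$; this means every node of length (roughly) the maximal length occurring in $D_{\overline\beta m}$ already has an ancestor in $D_{\overline\beta m}$, so finitely many of the segments $\mathit{double}_\mathbb{S}\bigl(B(s)\bigr)$, $s\in D_{\overline\beta m}$, already cover $[0,1]$; tracing each such $s$ back to the clause in $D_\alpha$ that put $s$ into $D_\beta$, I obtain a bound $m'$ with $Cov_{[0,1]}(D_{\overline\alpha m'})$, which is $\mathbf{HB}$.

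For $\mathbf{HB}\rightarrow\mathbf{FT}$: assume $\mathbf{HB}$ and let $\alpha$ be given with $Bar_\mathcal{C}(D_\alpha)$. Now I would push the bar forward along $\rho$: define $\alpha^\ast$ so that $D_{\alpha^\ast}$ consists of all rational segments $s\in\mathbb{S}$ that are contained, together with a safety margin, in the ``tube'' around the image $\rho[\mathcal{C}\cap t]$ of some basic cylinder $\mathcal{C}\cap t$ with $t\in D_\alpha$ — concretely, segments $s$ with $\overline s\subseteq\mathit{double}_\mathbb{S}\bigl(B(t)\bigr)$ for some $t\in D_\alpha$. One checks $Cov_{[0,1]}(D_{\alpha^\ast})$ using surjectivity of $\rho$ and the overlap property of the doubled intervals. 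Apply $\mathbf{HB}$ to get $m$ with $Cov_{[0,1]}(D_{\overline{\alpha^\ast}m})$, i.e. a finite subcover; the finitely many $t\in D_\alpha$ responsible for these segments already bar $\mathcal{C}$, because every $\gamma\in\mathcal{C}$ has its image covered and hence passes through one of those $t$; taking $n$ beyond the largest such $t$ gives $Bar_\mathcal{C}(D_{\overline\alpha n})$, which is $\mathbf{FT}$.

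\textbf{Main obstacle.} The routine parts are the definitions of $\beta$, $\alpha^\ast$ and the extraction of a length bound from a bar of a finite set; the real care is needed at the overlap estimates — verifying that the doubled basic intervals $\mathit{double}_\mathbb{S}\bigl(B(c\ast\langle 0\rangle)\bigr)$ and $\mathit{double}_\mathbb{S}\bigl(B(c\ast\langle 1\rangle)\bigr)$ cover $\mathit{double}_\mathbb{S}\bigl(B(c)\bigr)$, so that coverings and bars genuinely translate back and forth without losing endpoints. I expect that is where one must be most attentive to the $\mathbb{S}$-arithmetic (the strict-inclusion relation $\sqsubset_\mathbb{S}$ versus $\approx_\mathbb{S}$, and the finite-covering criterion for $\overline s$ recorded in the Coverings subsection), and it is the step most likely to require the doubling trick to have been set up exactly right.
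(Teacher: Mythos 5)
Your $\mathbf{FT}\rightarrow\mathbf{HB}$ half is essentially the paper's argument (the paper defines $\beta$ by ``$c\in\mathit{Bin}$ and $\exists s\le c[s\in D_\alpha\;\wedge\;B(c)\sqsubset_\mathbb{S}s]$'', the bound $s\le c$ doing the bookkeeping you do by tracing witnesses), and the care you flag about $\sqsubset_\mathbb{S}$ versus $\approx_\mathbb{S}$ is indeed where the remaining work lies. The $\mathbf{HB}\rightarrow\mathbf{FT}$ half, however, has a genuine gap at its last step. From ``$\rho|\gamma$ is covered by a segment $s$ with $\overline s$ inside $\mathit{double}_\mathbb{S}\bigl(B(t)\bigr)$ for some $t\in D_\alpha$'' you infer ``$\gamma$ passes through $t$''. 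That inference is false: take $t=\langle 1\rangle$, so $\mathit{double}_\mathbb{S}\bigl(B(t)\bigr)=(\frac{1}{4},\frac{5}{4})$, and $\gamma=\langle 0\rangle\ast\underline 1$; then $\rho|\gamma=_\mathcal{R}\frac{1}{2}$ is covered by a small segment around $\frac{1}{2}$ whose closure lies inside $(\frac{1}{4},\frac{5}{4})$, yet $\gamma$ does not pass through $\langle 1\rangle$. The defect is not repairable by tweaking margins: if, say, $D_\alpha$ contains $\langle 0,0\rangle$ and $\langle 1\rangle$ together with many long sequences extending $\langle 0,1\rangle$, then the doubled intervals of $\langle 0,0\rangle$ and $\langle 1\rangle$ alone already cover $[0,1]$, so the finite subcover delivered by $\mathbf{HB}$ may be certified entirely by these two nodes, which do not bar $\mathcal{C}$ (the sequence $\langle 0,1\rangle\ast\underline 0$ misses both); your bound $n$ ``beyond the largest responsible $t$'' then fails to give $Bar_\mathcal{C}(D_{\overline\alpha n})$. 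The underlying obstruction is that a point of $[0,1]$ near a dyadic boundary is approached by binary sequences on either side, so a covering built from tubes around cylinders of \emph{single} bar elements forgets exactly the information needed to bound the bar.

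This is precisely what the paper's construction is designed to overcome: the covering $D_\beta$ it builds from the bar consists of segments around dyadic rationals (plus the two endpoints), and membership of such a segment is certified by bar elements along \emph{both} binary expansions of that rational, i.e.\ along $c\ast\langle 0\rangle\ast\underline 1$ and $c\ast\langle 1\rangle\ast\underline 0$ (clauses (1)--(3) in the paper's definition of $\beta$). With that, an unbarred node $c$ with $c>m$ produces a concrete uncovered point, namely the midpoint $\frac{B'(c)+_\mathbb{Q}B''(c)}{2}$ of $B(c)$, contradicting $Cov_{[0,1]}(D_{\overline\beta m})$ and yielding $Bar_\mathcal{C}(D_{\overline\alpha m})$. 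You would need to replace your $\alpha^\ast$ by a construction of this two-sided kind (and also address two smaller points: decidability of your $D_{\alpha^\ast}$, where the unbounded $\exists t$ must be bounded as the paper bounds its witnesses, and the appeal to surjectivity of $\rho$, which the paper avoids by an explicit tracking argument).
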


\begin{proof} Assume $\mathbf{FT}$. Let $\alpha$ be given such that $Cov_{[0,1]}(D_\alpha)$. Define $\beta$  such that $\forall c [\beta(c) \neq 0\leftrightarrow (c \in 2^{<\omega} \;\wedge\;\exists s\le c[s \in D_\alpha\;\wedge\; B(c) \sqsubset_\mathbb{S} s]]$. Note that $Bar_{2^\omega}(D_\beta)$.  Find $m$ such that $Bar_{2^\omega}(D_{\overline \beta m})$.  Conclude that $Cov_{[0,1]}(D_{\overline \alpha m})$. 

Conclude that $\forall \alpha[Cov_{[0,1]}(D_\alpha) \rightarrow \exists m [Cov_{[0,1]}(D_{\overline \alpha m})]]$, i.e. $\mathbf{HB}$.

\smallskip Now assume $\mathbf{HB}$. Let $\alpha$  be given such that  $Bar_{2^\omega}(D_\alpha)$. Define $\beta$  such that,  for all $s$, $\beta(s) \neq 0$ if and only if $s \in \mathbb{S}$ and either \begin{enumerate}[\upshape (1)]  \item $\exists n < s[\alpha(\overline{\underline 0}n) \neq 0 \;\wedge \; s'<_\mathbb{Q}0_\mathbb{Q} <_\mathbb{Q}s''<_\mathbb{Q} B''(\overline{\underline 0}n)]$, or\item $\exists m< s[\alpha(\overline{\underline 1}m) \neq 0 \;\wedge \;  B'(\overline{\underline 1}m)<_\mathbb{Q} s'<_\mathbb{Q} 1_\mathbb{Q} <_\mathbb{Q} s'']$, or \item$ \exists c< s\exists m< s\exists n< s[c\in 2^{<\omega} \;\wedge\; \alpha(\overline{c\ast\langle 0\rangle\ast\underline{ 1}}m)\neq 0  \;\wedge\; \alpha(\overline{c\ast\langle 1\rangle\ast\underline{ 0}}n)\neq 0\;\wedge\;  B'(\overline{c\ast\langle 0\rangle \ast\underline{ 1}}m)<_\mathbb{Q} s'<_\mathbb{Q}s''<_\mathbb{Q}   B''(\overline{c\ast\langle 1\rangle \ast\underline{ 0}}n)]$.\end{enumerate}

 We 
\textit{first} show that  $\forall q \in \mathbb{Q}_2\cap[0,1]\exists s \in D_\beta[ s'<_\mathbb{Q} q <_\mathbb{Q} s'']$ and we do so in three steps.

First consider $0_\mathbb{Q}$. Find $n$ such that $\overline{ \underline 0 }n \in D_\alpha$. Find $s$ in $\mathbb{S}$ such that $n<s$ and $s'<_\mathbb{Q}0_\mathbb{Q} <_\mathbb{Q}s''<_\mathbb{Q} B''(\overline{\underline 0}n)$. Note that $s \in D_\beta$ and that $s'<_\mathbb{Q}  0_\mathbb{Q} <_\mathbb{Q} s''$.

Then consider $1_\mathbb{Q}$. Find $n$ such that $\overline{ \underline 1 }n \in D_\alpha$. Find $s$ in $\mathbb{S}$ such that $m<s$ and $B'(\overline{\underline 1}m )<_\mathbb{Q}s'<_\mathbb{Q} 1_\mathbb{Q} <_\mathbb{Q}s''$. Note that  $s \in D_\beta$ and that $s'<_\mathbb{Q}  1_\mathbb{Q} <_\mathbb{Q} s''$.

Finally, assume $n>0$ and $0 < m < 2^n$ and $m$ is odd and consider $\frac{m}{2^n}$. Find $c$ in $2^{<\omega}$ such that $\mathit{length}(c) = n$ and  $c(n-1) = 1$ and $\sum\limits_{i<n}\frac{c(i)}{2^{i+1}} = \frac{m}{2^n}$. Note that $B(c)= (\frac{m}{2^n}, \frac{m+1}{2^n})$.  Find $p,r$ such that $\overline{\overline c (n-1) \ast \langle 0 \rangle \ast \underline 1}p\in D_\alpha$ and $\overline{c\ast \underline 0} r \in D_\alpha$.  Find $s$ in $\mathbb{S}$ such $\overline c(n-1) <s$ and $p<s$ and $r< s$ and  $B'(\overline{\overline c(n-1)\ast\langle 0\rangle \ast\underline 1}p)<_\mathbb{Q} s'<_\mathbb{Q} \frac{m}{2^n}<_\mathbb{Q} s''<_\mathbb{Q}   B''(\overline{c \ast\underline 0}r)$.  Note that $s \in D_\beta$ and $s'<_\mathbb{Q} \frac{m}{2^n} <_\mathbb{Q} s''$.

Using the above, find $\zeta$ in $\omega^\omega$ such that, for each $q$ in $\mathbb{Q}_2$, if $0_\mathbb{Q} \le_\mathbb{Q} q \le_\mathbb{Q} 1_\mathbb{Q}$, then $\zeta(q)$ is the least $s$ in $D_\beta$ such that $s'<_\mathbb{Q} q <_\mathbb{Q} s''$.

 Let $\delta$ be an element of $[0,1]$. Let $QED$ \footnote{\textit{quod \emph{est} demonstrandum, what  still has to be  proven}} denote the proposition:
$\exists s \in \mathbb{S}\exists m[s \in D_\beta \;\wedge \; \delta(m) \sqsubset_\mathbb{S} s]$.
We define $\gamma$ in $2^\omega$, by recursion, as follows.  
Let $m_0$ be the first $m$ such that {\it either} $\delta(m) \sqsubset_\mathbb{S} \zeta(0_\mathbb{Q})$  {\it or} $\delta(m) \sqsubset_\mathbb{S} \zeta(1_\mathbb{Q})$ {\it or} $\delta(m) \sqsubset_\mathbb{S} (0_\mathbb{Q}, 1_\mathbb{Q})$, and, therefore, 
\textit{either}   $QED$, \textit{or}   $\delta(m_0) \sqsubset_\mathbb{S} (0_\mathbb{Q}, 1_\mathbb{Q}) = B(\langle \; \rangle) = B(\overline \gamma 0)$.
Now let $n$ be given  and assume  we defined, for each $i<n$,  $\gamma(i)$, and ensured that either  $QED$ or  $\exists m[\delta(m) \sqsubset_\mathbb{S} B(\overline \gamma n)]$.  
 We have to define $\gamma(n)$. Consider $s:=B(\overline \gamma n)$ and find  $t:=\zeta( \frac{s'+_\mathbb{Q} s''}{2})$. Let $m_1$ be the first $m$ such that {\it either} $\delta(m) \sqsubset_\mathbb{S} t$ {\it or} $\delta(m) \sqsubset_\mathbb{S} L(s)$ {\it or}  $\delta(m) \sqsubset_\mathbb{S} R(s) $, and, therefore, {\it either}  $QED$ {\it or} $\delta(m) \sqsubset_\mathbb{S} L(s)$ {\it or}  $\delta(m) \sqsubset_\mathbb{S} R(s) $.
Define $\gamma(n) := 0$ if and only if $\delta(m_1) \sqsubset_\mathbb{S} L(s)$, and note that 
 \textit{either} $QED$, \textit{or} $\gamma(n) =0$ and  $\delta(m_1) \sqsubset_\mathbb{S} L(s)$, \textit{or}
 $\gamma(n) =1$ and  $\delta(m_1) \sqsubset_\mathbb{S} R(s)$, that is, {\it either} $QED$ {\it or} $\exists m[\delta(m) \sqsubset_\mathbb{S} B\bigl((\overline \gamma(n+1)\bigr)]$.
This completes the definition of $\gamma$. Now find $n$ such that $\overline \gamma n \in  D_\alpha$. Note that \textit{either $QED$ or $\exists m[\delta(m) \sqsubset_\mathbb{S} B(\overline \gamma n)]$}. In the latter case, find $m, s$ such that $s \in \mathbb{S}$ and $ n < s$ and $B'(\overline \gamma n) <_\mathbb{Q} s'<_\mathbb{Q} \delta'(m) <_\mathbb{Q} \delta''(m) <_\mathbb{Q} s'' <_\mathbb{Q}B''(\overline \gamma n)$. Note that $s \in D_\beta$, and $\delta( m) \sqsubset_\mathbb{S} s$, and, therefore,
 $QED$.  Conclude $QED$. 

We thus see that $\forall \delta \in [0,1] \exists s \in D_\beta\exists m[\delta)m) \sqsubset_\mathbb{S} s]$, i.e. $Cov_{[0,1]}(D_\beta)$.  
Applying $\mathbf{HB}$, we  find $m$ such that $Cov_{[0,1]}(D_{\overline \beta m})$. Assume we find $c$ in $2^{<\omega}$ such that $c>m$ and $\neg \exists k\le\mathit{length}(c)[\overline c k \in D_{\overline \alpha m}]$. Then $\neg \exists s \in D_{\overline \beta m}[ s'<_\mathbb{Q} \frac{B'(c) +B''(c)}{2}<_\mathbb{Q} s'']$. Contradiction. Conclude that $Bar_{2^\omega}(D_{\overline \alpha m})$.

 Conclude that $\forall\alpha[Bar_{2^\omega}(D_\alpha)\rightarrow \exists m[Bar_{2^\omega}(D_{\overline \alpha m}]]$, i.e.  $\mathbf{FT}$.

\end{proof}
The equivalence of $\mathbf{FT}$ and $\mathbf{HB}$ is also proven in \cite{loeb} and \cite{veldman2011b}.
    $\mathbf{HB}$ is  equivalent, in $\mathsf{BIM}$, to the statement:
$\forall \alpha[Cov_{[0,1]}(E_\alpha) \rightarrow \exists m[Cov_{[0,1]}(E_{\overline \alpha m}]]$, see \cite[Corollary 9.8(xii)]{veldman2011b}.

\section{Bar Induction and Open Induction on $[0,1]$}
   
\subsection{Brouwer's argument for the principle of Bar Induction}\label{SS:bi} 

Let $B$ be a subset of $\omega$ such that $Bar_{\omega^\omega}(B)$, i.e. $\forall \alpha \exists n[\overline \alpha n \in B]$.  Brouwer claims, see \cite{brouwer27} and \cite{brouwer54}, that there must exist a \textit{``canonical''} proof of this fact. The canonical  proof  is an arrangement  of statements of the form ``$Bar_{\omega^\omega\cap s}(B)$'', i.e. $\forall \alpha[s\sqsubset \alpha \rightarrow \exists n[\overline \alpha n \in B]]$.   The conclusion of the proof is: $Bar_{\omega^\omega\cap \langle \;\rangle}(B)$, and  the proof uses only  steps of one of the following kinds:

\begin{enumerate}[\upshape(i)]
 \item { \emph `starting points:'}
 
  $s\in B$, \textit{and, therefore,} $Bar_{\omega^\omega\cap s}(B)$.
\item {\emph `forward steps:'} 

$Bar_{\omega^\omega\cap s\ast \langle 0 \rangle}(B), Bar_{\omega^\omega\cap s\ast \langle 1\rangle}(B), Bar_{\omega^\omega\cap s\ast\langle 2 \rangle }(B), \ldots$, 

\textit{and, therefore,} $Bar_{\omega^\omega\cap s}(B)$
\item {\emph `backward steps:'} 

$Bar_{\omega^\omega\cap s}(B)$, \textit{and, therefore,} $Bar_{\omega^\omega\cap s\ast\langle n \rangle}(B)$. 
\end{enumerate}

Unlike in the special case considered in Subsection \ref{SS:fanth},  backward steps can not be missed, see, for instance, \cite{veldman2006b}.

Note that the forward steps have \emph{infinitely many premises} and that the canonical proof can not be visualized as a finite tree. 
\begin{definition} $B\subseteq \omega$  is  \emph{inductive} if and only if $\forall s[\forall n[ s\ast\langle n \rangle \in B]\rightarrow s \in B]$ and   \emph{monotone} if and only if $\forall s\forall n[s \in B \rightarrow s\ast\langle n \rangle \in B]$. 

\smallskip
$\mathbf{BI}$, the \emph{Principle of Induction on Bars in $\omega^\omega$}, is the following statement: 

{\it For all  $B\subseteq\omega$, if $Bar_{\omega^\omega}(B)$  and $B$ is monotone and inductive, then $\langle \; \rangle \in B$.}

\smallskip
$\mathbf{BI}$ may be added to $\mathsf{BIM}$ as an axiom scheme.
 The \emph{Principle of Induction on Enumerable Bars in Baire space} is the following restricted statement:
 
\smallskip $\mathbf{\Sigma}^0_1$-$\mathbf{BI}: \forall \alpha[(Bar_{\omega^\omega}( E_\alpha) \;\wedge\; \forall s[s\in E_\alpha \leftrightarrow \forall n[s\ast\langle n \rangle \in E_\alpha]])\rightarrow 0 \in E_\alpha].$\end{definition}

Let $B\subseteq \omega $ be given such that $Bar_{\omega^\omega}(B)$ and $B$ is inductive and monotone.
If we replace, in a canonical proof of ``$Bar_{\omega^\omega}(B)$'', every statement of the form ``$Bar_{\omega^\omega\cap s}(B)$'' by the statement ``$s \in B$'', we obtain a proof of ``$\langle \;\rangle \in B$''. The intuitionistic mathematician therefore accepts $\mathbf{BI}$.

   \subsection{The Principle of Open Induction on the unit interval $[0,1]$}
We will make use of some definitions in Subsections \ref{SS:reals} and \ref{SS:open}. \begin{definition}\label{D:progressive}   $\mathcal{A}\subseteq\mathcal{R}$ is  \emph{progressive in $[0,1]$} if and only if $\forall \gamma \in [0,1][[0,\gamma )\subseteq \mathcal{A}]\rightarrow \gamma \in \mathcal{A}]$.
  
  \smallskip $\mathbf{OI}([0,1])$,   the  \emph{Principle of Open Induction on $[0,1]$} is the following statement:
  
  {\it For every open subset $\mathcal{H}$ of $\mathcal{R}$, if $\mathcal{H}$ is progressive in $[0,1]$, then $[0,1]\subseteq \mathcal{H}$.}

\end{definition}
For each $s$ in $\mathbb{S}$,  $\overline s := [(s')_\mathcal{R}, (s'')_\mathcal{R}]$, see Subsection \ref{SS:reals}.
\begin{theorem}\label{T:ebioi01} $\mathsf{BIM}\vdash \mathbf{\Sigma}^0_1$-$\mathbf{BI} \rightarrow \mathbf{OI}({[}0,1{]}).$
 
\end{theorem}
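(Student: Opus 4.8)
The plan is to prove $\mathbf{\Sigma^0_1}\text{-}\mathbf{BI}\rightarrow\mathbf{OI}([0,1])$ by contraposition of the intuition behind open induction: given $\alpha$ such that $\mathcal{H}_\alpha$ is progressive in $[0,1]$, I want to build an enumerable bar $B$ in $\mathcal{N}$ that is monotone and inductive, and such that $0\in E_\alpha$ (suitably reformulated) forces $[0,1]\subseteq\mathcal{H}_\alpha$. Concretely, the map $\rho:\mathcal{C}\to[0,1]$ defined in Subsection~\ref{SS:fanth} (via the halving maps $B$ and $\mathit{double}_\mathbb{S}$) lets me transport the open set $\mathcal{H}_\alpha$ on $[0,1]$ to an open set $\mathcal{G}$ on $\mathcal{C}$: put a finite binary sequence $c$ into the ``covered'' set when the dyadic interval $\mathit{double}_\mathbb{S}(B(c))$ — or rather the sub-interval $\overline{B(c)}$ that $\rho$ pins down — is contained in some rational segment $s\in D_\alpha$. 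Since $D_\alpha$ is enumerable, this covered set is enumerable as a subset of $\mathbb{N}$; and since $\rho$ is continuous and surjective onto $[0,1]$, ``$[0,1]\subseteq\mathcal{H}_\alpha$'' becomes ``the covered set is a bar in $\mathcal{C}$.''

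Next I would form the \emph{monotone inductive closure} $B$ of this covered set: close it under the clause $\forall i<2[c\ast\langle i\rangle\in B]\rightarrow c\in B$ (inductivity downwards in the binary tree) and under $c\in B\rightarrow c\ast\langle i\rangle\in B$ (monotonicity), exactly as in the proof of Theorem~\ref{T:fantheorem}(ii) where such a closure was built explicitly and shown to be enumerable. The key point is that progressivity of $\mathcal{H}_\alpha$ under the lexicographic/interval ordering translates into: if $c\ast\langle 0\rangle$ is eventually covered — i.e.\ $[0,B'(c\ast\langle 1\rangle))$-fragment is covered — then once we also know $c\ast\langle 1\rangle\in B$ we get $c\in B$ by the inductive clause, but to \emph{start} the induction at a node $c$ we need the left neighbour to already be covered, and that is precisely what ``$[0,\gamma)\subseteq\mathcal{H}_\alpha$'' supplies at the real number $\gamma$ corresponding to the left endpoint of $c$. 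So the progressivity hypothesis is exactly the extra ingredient that makes $B$ a bar in $\mathcal{C}$, hence (pulling back through $\rho$ once more) a bar in $\mathcal{N}$ after the routine padding that turns a bar in $\mathcal{C}$ into one in $\mathcal{N}$.

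Then I apply $\mathbf{\Sigma^0_1}\text{-}\mathbf{BI}$: $B$ (or its image $E_{\alpha^\ast}$ under a suitable coding) is an enumerable bar in $\mathcal{N}$, monotone and inductive, so $\langle\;\rangle\in B$. Unwinding the closure, $\langle\;\rangle\in B$ means that the \emph{whole} interval $[0,1]$ is covered by finitely-many-generations' worth of the covered set, and in particular every $\gamma\in[0,1]$ lies in $\mathcal{H}_\alpha$; this gives $[0,1]\subseteq\mathcal{H}_\alpha$, which is the conclusion of $\mathbf{OI}([0,1])$.

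The main obstacle I expect is the bookkeeping connecting \emph{lexicographic progressivity of an open subset of the reals} with \emph{downward inductivity in the binary tree}. One must be careful that when $\mathcal{H}_\alpha$ covers $[0,\gamma)$ for $\gamma$ a left endpoint of the dyadic interval coded by $c$, this really does deliver ``$c$'s left sibling branch is in $B$'' in the formal sense — the overlaps $\approx_\mathbb{S}$ between adjacent rational segments and the fact that $\rho$ uses the \emph{doubled} interval $\mathit{double}_\mathbb{S}(B(c))$ rather than $B(c)$ itself both have to be handled so that a point on the shared boundary is genuinely covered from the correct side. A secondary, but purely routine, point is checking that the monotone inductive closure of an enumerable set is again enumerable and that a bar in $\mathcal{C}$ extends to a bar in $\mathcal{N}$; both are done essentially as in the proof of Theorem~\ref{T:fantheorem} and Theorem~\ref{T:hbft}.
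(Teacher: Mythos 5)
There is a genuine gap, and it sits at the heart of your argument: the claim that progressivity of $\mathcal{H}_\alpha$ makes the monotone--inductive closure $B$ of your ``covered'' set a bar in $\mathcal{C}$. To show that an arbitrary $\gamma$ in $\mathcal{C}$ hits $B$, you would have to apply progressivity at the real number $x=\rho|\gamma$, and for that you need the antecedent $[0,x)\subseteq\mathcal{H}_\alpha$. You write that this ``is precisely what `$[0,\gamma)\subseteq\mathcal{H}_\alpha$' supplies'', but nothing supplies it: it is the hypothesis of the progressivity clause, not a consequence of it, and for an arbitrary branch of the plain dyadic subdivision tree there is no way to establish it --- it is exactly the statement the whole theorem is trying to prove. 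Moreover the step cannot be repaired within $\mathsf{BIM}$: your closure $B$ is enumerable (as in the proof of Theorem \ref{T:fantheorem}), so if ``$\mathcal{H}_\alpha$ progressive $\rightarrow$ $B$ is a bar in $\mathcal{C}$'' were provable in $\mathsf{BIM}$, then $\mathbf{\Sigma^0_1}$-$\mathbf{BI}_\mathcal{C}$, which by Theorem \ref{T:fantheorem} is equivalent to $\mathbf{FT}$, would already give $\langle\;\rangle\in B$ and hence $[0,1]\subseteq\mathcal{H}_\alpha$; that would make $\mathbf{OI}([0,1])$ a consequence of $\mathbf{FT}$, contradicting Corollary \ref{Cor:incomplete} (together with Corollary \ref{Cor:sumup}). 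So any proof that only ever uses bar induction over the binary tree is doomed.

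What is missing is the adaptive device of Coquand's argument, which is why the paper works in $\mathcal{N}$ rather than $\mathcal{C}$: one assigns to each finite sequence $s$ a rational segment $\delta(s)$, and the \emph{child index} $n$ is used as a stage of the enumeration --- $\delta(s\ast\langle n\rangle)$ is the right half of $\delta(s)$ exactly when the finite set $D_{\overline\alpha n}$ covers the closure of the left half, and the left half otherwise. Because one only moves rightwards on the strength of finite, decidable evidence, one can prove by induction on $\mathit{length}(s)$ the invariant $[0,(\delta'(s))_\mathcal{R})\subseteq\mathcal{H}_\alpha$; along any branch $\varepsilon$ of $\mathcal{N}$ the segments shrink to a point $\gamma$ whose left part is covered, so progressivity \emph{can} be invoked at $\gamma$, and that is what makes the enumerable set $\{s \mid D_\alpha \mbox{ covers } \overline{\delta(s)}\}$ a monotone and inductive bar to which $\mathbf{\Sigma^0_1}$-$\mathbf{BI}$ applies. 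Your fixed, non-adaptive dyadic tree carries no such invariant, and the ``left neighbour is already covered'' information you want at each node is a $\Sigma^0_1$ fact about the full set $D_\alpha$ that cannot be decided at a node of a finitely branching tree; the infinite branching of Baire space is what converts it into a decidable question about $D_{\overline\alpha n}$. The boundary/overlap bookkeeping you mention is a real but minor issue by comparison.
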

\begin{proof} Assume $\mathbf{\Sigma}^0_1$-$\mathbf{BI}$. 
 Let $\alpha$ be given such that  $\forall \gamma \in [0,1][\;[0,\gamma )\subseteq \mathcal{H}_\alpha\rightarrow \gamma \in \mathcal{H}_\alpha]$. 
 Define $\delta$ such that
$\delta(\langle \; \rangle) = ( 0_\mathbb{Q}, 1_\mathbb{Q})$, and,  for each $s$, for each $n$, 
\textit{if}   $[0,\frac{1}{2}(\delta'(s)+_\mathbb{Q}\delta''(s))\bigr)\subseteq \mathcal{H}_{\overline \alpha n}]$, then  $\delta(s\ast\langle n \rangle) = R\bigl(\delta(s)\bigr)$, and, \textit{if not}, then $\delta(s\ast\langle n \rangle) = L\bigl(\delta(s)\bigr)$.
We prove that  for each $s$,  $\exists p[\;[0,\delta'(s)\bigr)\subseteq\mathcal{H}_{\overline\alpha p}]$ and we do so by induction on $length(s)$.
First note: $\delta'(\langle \; \rangle) =_\mathbb{Q} 0_\mathbb{Q}$ and $[0_\mathcal{R}, 0_\mathcal{R}) = \emptyset\subseteq \mathcal{H}_{\overline \alpha 0}$.  
Now let $s,p$ be given such that $[0, \delta'(s)\bigr)\subseteq \mathcal{H}_{\overline\alpha p}$.
Let also $n$ be given.
If $\delta(s\ast \langle n \rangle) = L\bigl(\delta(s)\bigr)$, then $[0, \delta'(s\ast\langle n \rangle)\bigr) = [0, \delta'(s)\bigr)\subseteq \mathcal{H}_{\overline \alpha p}$, and, if $\delta(s\ast \langle n \rangle) = R\bigl(\delta(s)\bigr)$, then $[0, \delta'(s\ast\langle n\rangle)=[0,\frac{1}{2}(\delta'(s)+_\mathbb{Q}\delta''(s))\bigr)\subseteq \mathcal{H}_{\overline \alpha n}$. 
  Conclude that $\forall n\exists p[\;[0, \delta'(s\ast\langle n \rangle)\bigl
  ) \subseteq\mathcal{H}_{\overline \alpha p}]$. 
 We thus see that $\forall s\exists p[\;[0, \delta'(s)\bigr)\subseteq \mathcal{H}_{\overline \alpha p}]$.

Define  $\beta$ be such that, for each $n$, 
\textit{if}  $\overline{\delta(n'')}\subseteq\mathcal{H}_{\overline\alpha n'}$, then  $\beta(n) = n''+1$, and,
\textit{if not},  then $\beta(n) = 0$.
Note that $\forall s[s\in E_\beta\leftrightarrow \exists n[\overline{\delta(s)}\subseteq \mathcal{H}_{\overline\alpha n}]]$.

We first prove that $\mathit{Bar}_{\omega^\omega}(E_\beta)$. 
Let $\varepsilon$ be given.  Define $\gamma$ such that  $\forall n[\gamma(n) = \mathit{double}_\mathbb{S}\bigl(\delta(\overline \varepsilon n)\bigr)]$. Note that  $\gamma \in [0,1]$ and $\forall \eta \in [0,\gamma)\exists n [\eta <_\mathcal{R} \gamma'(n)<_\mathcal{R} \delta'(\overline \varepsilon n)]$. Conclude that  $[0,\gamma)\subseteq\mathcal{H}_\alpha$ and $\gamma \in\mathcal{H}_\alpha$. Find  $n, t$  such that $\alpha(t) \neq 0$ and $\gamma(n) \sqsubset_\mathbb{S} t$ and note that $ \delta(\overline \varepsilon n)\sqsubset_\mathbb{S} \gamma(n) \sqsubset_\mathbb{S} t$ and that $\overline \varepsilon n \in E_\beta$.
Conclude that $\forall\varepsilon\exists n[\overline \varepsilon n\in E_\beta]$, i.e.   $Bar_{\omega^\omega}(E_\beta)$.
Note that $\forall s[s\in E_\beta\rightarrow \forall n[s\ast\langle n \rangle \in E_\beta]]$, i.e. $E_\beta$ is monotone.
We now prove that $E_\beta$ is inductive. Let $s$ be given such that $\forall n[s\ast\langle n \rangle \in E_\beta]$. Find $n$ such that $\beta(n) = s\ast \langle 0 \rangle + 1$ and note that $n'' = s\ast\langle 0 \rangle$ and  $\overline{\delta(s\ast\langle 0 \rangle)}=\overline{L\bigl(\delta(s)\bigr)}\subseteq\mathcal{H}_{\overline \alpha n'}$.  
Conclude that $\delta(s\ast\langle n' \rangle) = R\bigl(\delta(s)\bigr)$.
 Find $p$ such that  $\overline{\delta(s\ast\langle n'\rangle)}\subseteq\mathcal{H}_{\overline \alpha p}$ and define $q:=\max(n',p)$. 
 Conclude that   $\overline{\delta(s)}\subseteq\mathcal{H}_{\overline\alpha q}$ and $s \in E_\beta$.
We thus see that $\forall s[\forall n[s\ast\langle n \rangle \in E_\beta]\rightarrow s \in E_\beta]$, i.e. $E_\beta$ is inductive. 
Using $\mathbf{\Sigma}^0_1$-$\mathbf{BI}$, we conclude that $\langle \; \rangle \in E_\beta$, i.e.    $\exists n[\overline{\delta(\langle\;\rangle)}=[0,1]\subseteq \mathcal{H}_{\overline\alpha n}\subseteq\mathcal{H}_\alpha]$.
\end{proof}

       The above argument is due to Th. Coquand, see \cite{coquand}.

\begin{theorem}\label{T:oihb} $\mathsf{BIM}\vdash\mathbf{OI}({[}0,1{]}) \rightarrow \mathbf{HB}$.
     
\end{theorem}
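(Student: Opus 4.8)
The plan is to derive $\mathbf{HB}$ from $\mathbf{OI}([0,1])$ by showing that, given any $\alpha$ with $Cov_{[0,1]}(D_\alpha)$, the set $\mathcal{A}$ of real numbers $\gamma$ in $[0,1]$ such that some finite initial segment $D_{\overline\alpha m}$ already covers $[0,\gamma]$ is (i) open, i.e.\ of the form $\mathcal{H}_\beta$ for a suitable $\beta$ read off primitive-recursively from $\alpha$, and (ii) progressive in $[0,1]$; then $\mathbf{OI}([0,1])$ yields $[0,1]\subseteq\mathcal{A}$, and in particular $1_\mathcal{R}\in\mathcal{A}$, so some $D_{\overline\alpha m}$ covers all of $[0,1]$, which is exactly the conclusion of $\mathbf{HB}$.

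First I would set up the open set carefully. The natural candidate ``$[0,\gamma]$ is covered by finitely many segments from $D_\alpha$'' must be turned into an open condition on $\gamma$, so instead I would use the condition ``$[0,\gamma]$ together with a little room to the right is covered'', that is, there exist $m$ and a segment $s\in D_{\overline\alpha m}$ with $\gamma$ lying strictly inside $s$, \emph{and} $D_{\overline\alpha m}$ covers $[0,s']$ (here using the decidability, noted in the Coverings subsection, of whether a finite $C\subseteq\mathbb{S}$ covers a given $\overline t$). This is manifestly an open condition on $\gamma$: if it holds for $\gamma$ it holds for every real close enough to $\gamma$, with the same witnessing $m$ and $s$. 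So I can primitive-recursively define $\beta$ with $\mathcal{H}_\beta=\mathcal{A}$: put $s$ into $D_\beta$ exactly when $s\in\mathbb{S}$ and, for some $m\le s$, $D_{\overline\alpha m}$ covers $[0,s']$ and every segment in $D_{\overline\alpha m}$ ``meeting'' $s$ from the left chains to $s$ — the precise bookkeeping mimics the $B$-construction in the proof of Theorem \ref{T:hbft}.

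Next I would verify progressivity. Suppose $\gamma\in[0,1]$ and $[0,\gamma)\subseteq\mathcal{H}_\beta$. Since $D_\alpha$ covers $[0,1]$, pick $n,t$ with $t\in D_\alpha$ and $\gamma(n)\sqsubset_\mathbb{S}t$; so there is a rational $q<_\mathbb{Q}\gamma$ with $q$ still inside $t$, in fact with $[q_\mathcal{R},\gamma]\subseteq\overline t$. Because $q_\mathcal{R}\in[0,\gamma)\subseteq\mathcal{H}_\beta$, there is $m$ such that $D_{\overline\alpha m}$ covers $[0,q_\mathcal{R}]$ (reading this off from the definition of $\beta$); enlarging $m$ so that $t\in D_{\overline\alpha m}$ as well, the finite set $D_{\overline\alpha m}$ covers $[0,q_\mathcal{R}]\cup\overline t\supseteq[0,\gamma']$ for a rational $\gamma'$ slightly larger than $\gamma$, and $\gamma$ sits strictly inside $t\in D_{\overline\alpha m}$; hence $\gamma\in\mathcal{H}_\beta$. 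The one subtlety here is at $\gamma=1_\mathcal{R}$ (``a little room to the right'' is not available), so the condition should really be phrased as ``$D_{\overline\alpha m}$ covers $[0,r]$ for some rational $r\ge\gamma$'', which is what makes $1_\mathcal{R}\in\mathcal{H}_\beta$ deliver $Cov_{[0,1]}(D_{\overline\alpha m})$.

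The main obstacle, then, is purely a matter of getting the open-set definition of $\beta$ right so that it is genuinely of the form $\mathcal{H}_\beta$ (a positive, enumerable ``there exists a covering finite prefix'' statement), while still being strong enough that membership of $1_\mathcal{R}$ forces a uniform finite subcover of all of $[0,1]$; the arithmetic of rational segments and the chaining of overlapping segments is routine, exactly the kind of manipulation already carried out in the proof of Theorem \ref{T:hbft}, and I would reuse that machinery rather than redo it. Once $\mathcal{H}_\beta$ is verified open and progressive, $\mathbf{OI}([0,1])$ gives $[0,1]\subseteq\mathcal{H}_\beta$, and evaluating at the point $1_\mathcal{R}$ finishes the proof of $\mathbf{HB}$.
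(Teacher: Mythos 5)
Your strategy coincides with the paper's own proof: you define the open set of those $\gamma$ for which some finite prefix $D_{\overline\alpha m}$ of the enumerated cover already covers an initial interval reaching strictly past $\gamma$, you prove it progressive in $[0,1]$ by combining a segment of $D_\alpha$ around $\gamma$ with a finite subcover obtained from a point of $[0,\gamma)$, and you then apply $\mathbf{OI}([0,1])$ and evaluate at $1_\mathcal{R}$. The bookkeeping you defer to the machinery of Theorem \ref{T:hbft} (chaining of overlapping rational segments, decidability of whether a finite $C\subseteq\mathbb{S}$ covers a given $\overline u$) is exactly where the paper spends its effort, and your worry about the right endpoint is harmless, since members of $\mathbb{S}$ may extend beyond $1_\mathbb{Q}$.

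There is, however, one step that fails constructively as written: in the progressivity argument you ``pick a rational $q<_\mathbb{Q}\gamma$ with $q$ still inside $t$'' and then use $q_\mathcal{R}\in[0,\gamma)\subseteq\mathcal{H}_\beta$. This needs $0_\mathbb{Q}\le_\mathbb{Q} q$ as well as $q_\mathcal{R}<_\mathcal{R}\gamma$, and such a $q$ need not exist: if the covering segment $t$ has $t'<_\mathbb{Q}0_\mathbb{Q}$ and $\gamma$ is not known to be positive, then $[0,\gamma)$ cannot be shown to be inhabited, let alone to contain a rational lying inside $t$ (for $\gamma=0_\mathcal{R}$ it is empty). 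The paper repairs precisely this point by a decidable case distinction on the rational $t'$, its Cases (1) and (2): if $t'<_\mathbb{Q}0_\mathbb{Q}$, then $t$ by itself already covers $\overline{(0_\mathbb{Q},\gamma''(n))}$, so $\gamma$ lands in the open set without any appeal to $[0,\gamma)$; if $t'\ge_\mathbb{Q}0_\mathbb{Q}$, then $q:=t'$ works, since $0_\mathbb{Q}\le_\mathbb{Q}t'<_\mathbb{Q}\gamma'(n)$ and $(\gamma'(n))_\mathcal{R}<_\mathcal{R}\gamma$. With that case split inserted (and the corresponding convention in the coding of your $\beta$ for segments $s$ with $s'\le_\mathbb{Q}0_\mathbb{Q}$, where $[0,s']$ is not of the form $\overline u$ with $u\in\mathbb{S}$), your argument goes through and is essentially the paper's proof.
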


\begin{proof} Assume $\mathbf{OI}([0,1])$.
Let $\beta$ be given such that $D_\beta$ covers $[0,1]$.   Define $\varepsilon$  such that $\forall s\in\mathbb{S}[\varepsilon(s) \neq 0\leftrightarrow \bigl(0_\mathbb{Q} <_\mathbb{Q} s''\;\wedge\;D_{\overline \beta s}\;\mathrm{covers}\; [ 0, s'']\bigr)]$.    
Note that $\forall \gamma\in [0,1][\gamma\in\mathcal{H}_\varepsilon\leftrightarrow \exists n[D_{\overline\beta n}\;\mathrm{covers}\;[0,\gamma]\;]]$. We  prove that $\forall\gamma\in[0,1][\;[0,\gamma)\subseteq\mathcal{H}_\varepsilon\rightarrow \gamma \in \mathcal{H}_\varepsilon]$. 
 Let $\gamma$ in $[0,1]$ be given such that $[0, \gamma)\subseteq \mathcal{H}_\varepsilon$.  Find $s, n$ such that
$s \in D_\beta$ and $\gamma(n) \sqsubset_\mathbb{S} s$ and  distinguish two cases. 

\textit{Case (1)}.  $s'<_\mathbb{Q} 0_\mathbb{Q}$. Note that $D_{\overline{\beta}( s+1)}$ covers  $[0, \gamma''(n)]$. Find $m>n$ such that $t := \gamma(m) >s$.  Note that $\gamma(m+1) \sqsubset_\mathbb{S} t$ and $D_{\overline \beta t}$ covers $[0, t'']$. Conclude that $t \in D_\varepsilon$ and $\gamma \in \mathcal{H}_\varepsilon$. 

\textit{Case (2)}.  $s'\ge_\mathbb{Q}0_\mathbb{Q}$. Note that $0_\mathcal{R}\le_\mathcal{R} (s')_\mathcal{R} <_\mathcal{R} \gamma$, and, therefore, $(s')_\mathcal{R} \in \mathcal{H}_\varepsilon$. Find $t, n$ such that $t \in D_\varepsilon$ and $(s')_\mathcal{R}(n) \sqsubset_\mathbb{S} t$. Note that $D_{\overline \beta (t+1)}$ covers $[ 0, t'' ]$.
Define $q := \max(s+1, t+1)$ and note that $t\in D_{\overline \beta q}$ and  $D_{\overline \beta q}$ covers $[ 0, \gamma''(n)]$. Find $m>n$ such that $u := \gamma(m) >q$. Note that  $\gamma(m+1) \sqsubset_\mathbb{S} u$ and $D_{\overline \beta u}$ covers $[0, u'']$. Conclude that $u \in D_\varepsilon$ and $\gamma \in \mathcal{H}_\varepsilon$.

We thus see that $\mathcal{H}_\varepsilon$ is progressive in $[0,1]$. Using $\mathbf{OI}([0,1])$,  conclude that $1_\mathcal{R}\in\mathcal{H}_\varepsilon$, and that there exists $n$ such that $D_{\overline \beta n}$ covers $[0,1]$.
Conclude that, for each $\beta$, if $D_\beta$ covers $[0,1]$, then there exists $n$ such that $D_{\overline \beta n}$ covers $[0,1]$, i.e.  $\mathbf{HB}$. 
\end{proof}

  \subsection{Borel's first proof}\label{SS:bfp}    One finds Borel's first proof of the Heine-Borel Theorem  in his \textit{th\`ese} from 1895, see \cite{borel1}.  One might paraphrase his (classical) argument as follows.\footnote{In \cite{borel2}, one finds another proof, more like the one most of us are used to.} \\Assume $D\subseteq\mathbb{S}$  covers $[0,1]$. We define a mapping $f$ from the first uncountable ordinal  $\omega_1$ to $\mathbb{Q}$ such that, \begin{enumerate}[\upshape (i)] \item for all $\alpha, \beta$ in $\omega_1$, if $\alpha < \beta$, and $f(\alpha) <_\mathbb{Q} 1$,  then $f(\alpha) <_\mathbb{Q} f(\beta)$, and, \item for every $\alpha$ in $\omega_1$, there is a finite  subset of $D$ covering $[0, f(\alpha)]$. \end{enumerate}
 Define $f(0) := 0$. Note that there exists indeed a finite subset of $D$ covering $[0,f(0)]=[0,0]=\{0\}$. Let $\alpha$ be an element of $\omega_1$ such that, for every $\beta <\alpha$, $f(\beta)$ has been defined already.
Calculate $\gamma := \sup \{f(\beta)\mid\beta < \alpha\}$. 
 If $\gamma>_\mathcal{R} 1$, let $\beta_0$ be the least $\beta$ such that $f(\beta) >_\mathbb{Q} 1$ and define $f(\alpha):=f(\beta_0)$.
 If $\gamma\le_\mathcal{R} 1$,  
 find $s$ in $D$ and  $n$ in $\omega$ such that $\gamma(n) \sqsubset_\mathbb{S} s$.  Find $\beta < \alpha$ such that $s'< f(\beta)$. Find a finite subset $D'$ of $D$ covering $[0, f(\beta)]$. Define $f(\alpha) := \bigl(\gamma(n)\bigr)''$. Note that the finite set  $D'\cup\{s\}$ covers $[0, f(\alpha)]$.

 Now observe there must exist $\alpha <\omega_1$ such that $f(\alpha)>_\mathbb{Q}1$, because, if there is no such $\alpha$, $f$ is an injective map from the uncountable set $\omega_1$ into the countable set $\mathbb{Q}$.
It follows that there exists a finite subset of $D$ covering $[0,1]$.

\subsection{Achilles} In his argument for the Heine-Borel Theorem sketched in Subsection \ref{SS:bfp},  Borel actually uses and proves the principle of open induction on $[0,1]$.  His indirect argument of course is not convincing from a constructive point of view. We are confronted with the following question.  If Achilles starts from $0$ and makes a step of positive length in the direction of $1$ from every position that he reaches, and if he also reaches the limit of every convergent sequence of reached positions, 
 can we see that  he will arrive at $1$? 

Achilles
first will arrive at $f(1) > f(0)=0$, then, unless $f(1) \ge 1$,  at $f(2)> f(1)$, and then, unless $f(2) \ge 1$, at $f(3) \ge f(2)$, $\ldots$, and then, taking a bold leap, at $p:=\lim_{n\rightarrow \infty} f(n)$,  a position  beyond every $f(n)$. If $p\le 1$, one may find a rational $q>p$ that is reached by Achilles and define $f(\omega):=q$. If $p>1$, one defines $f(\omega) = f(n_0)$, where $n_0$ is the least $n$ such that $f(n)>1$.  \\ It is not so easy, alas,  to calculate $p:=\lim_{n\rightarrow \infty}f(n)$.\footnote{Dedekind's Theorem, i.e.  the theorem that every bounded infinite and monotone sequence of real numbers converges is not true constructively, see Subsection \ref{SS:dt}.} And even if one finds $p$, it may be difficult to decide: $p< 1$ or $1\le p$.   It thus is not clear how to actually find $f(\omega)$.  And if one should find it, and tries to  continue the process, many more problems of the same kind will arise.
       
       How might Achilles convince himself \textit{constructively} that he will reach the point $1$?
       He would first have to convince himself that he will reach $\frac{1}{2}$.
        The problem of finding out if and when he reaches $\frac{1}{2}$, however,  does not seem easier than the problem of finding out if and when he reaches $1$.
        
        Achilles will be  very surprised and happy seeing Theorem \ref{T:ebioi01},  Brouwer's prediction  that he will indeed arrive at $1$.
      \subsection{A stronger formulation}\begin{definition} The   \emph{Stronger Principle of Open Induction on $[0,1]$}  is the following statement.

      $\mathbf{OI^+}([0,1]): \forall \alpha[\forall \gamma \in [0,1][[0,\gamma)\subseteq \mathcal{H}_\alpha
\rightarrow \gamma \in \mathcal{H}_\alpha]\rightarrow\exists n  [[0,1]\subseteq \mathcal{H}_{\overline \alpha n}]].$ \end{definition}

From  the proof of Theorem \ref{T:ebioi01}, one may conclude that $\mathsf{BIM}\vdash\mathbf{\Sigma}^0_1$-$\mathbf{BI} \rightarrow  \mathbf{OI^+}([0,1])$   and not only that $\mathsf{BIM}\vdash \mathbf{\Sigma}^0_1$-$\mathbf{BI} \rightarrow  \mathbf{OI}([0,1])$. Theorem \ref{T:oihb} makes one see that $  \mathsf{BIM}\vdash\mathbf{OI}([0,1]) \leftrightarrow  \mathbf{OI^+}([0,1])$.

\section{The contraposition of Dedekind's Theorem}

 \subsection{Preliminaries}       \begin{definition}$[\omega]^\omega$ is the set of all $\zeta$ such that $\forall n[ \zeta(n) < \zeta(n+1)]$.
       
       $\mathbb{Q}^\omega$ is the set of all $\gamma$  such that $\forall n[\gamma(n) \in \mathbb{Q}]$.

$\gamma \in \mathbb{Q}^\omega$
     \emph{converges} if and only if $\forall n\exists m\forall p \ge m[|\gamma(m+p) -_\mathbb{Q} \gamma(m)| \le_\mathbb{Q} \frac{1}{2^n}]$.
     
      $\gamma\in \mathbb{Q}^\omega$ \emph{converges explicitly}
 if and only if $\exists \delta\forall n \forall p[|\gamma\bigl(\delta(n) + p\bigr) - \gamma\circ\delta(n)| \le_\mathbb{Q} \frac{1}{2^n}]$. 
  
  $\gamma \in \mathbb{Q}^\omega$ \emph{has a converging subsequence} if and only if \[\exists\zeta \in[\omega]^\omega\forall n[ |\gamma\circ\zeta(n+1) -_\mathbb{Q} \gamma\circ\zeta(n)| \le_\mathbb{Q}\frac{1}{2^n}],\] and  
 \emph{positively fails to have a converging subsequence} if and only if
        \[\forall  \zeta \in[\omega]^\omega\exists n[ |\gamma\circ \zeta(n+1) -_\mathbb{Q} \gamma\circ\zeta(n)| >_\mathbb{Q} \frac{1}{2^n}].\]
   \indent     $\gamma\in \mathbb{Q}^\omega$ is  \emph{non-decreasing} if and only if $\forall n[\gamma(n) \le_\mathbb{Q} \gamma(n+1)]$. 
        
        If $\gamma\in \mathbb{Q}^\omega$ is non-decreasing, then $\gamma$ \emph{positively fails to converge} if and only if  \[\forall  \zeta \in[\omega]^\omega\exists n[ |\gamma\circ \zeta(n+1) -_\mathbb{Q} \gamma\circ\zeta(n)| >_\mathbb{Q} \frac{1}{2^n}]\] \end{definition}$\mathsf{BIM}$ proves that $\gamma \in \mathbb{Q}^\omega $ converges explicitly if and only if $\exists \delta \in \mathcal{R}[\lim_{n \rightarrow \infty} \gamma(n) = \delta]$, i.e. $\exists \delta \in \mathcal{R} \forall n \exists m \forall p\ge m[|\delta -_\mathcal{R} \bigl(\gamma(m+p)\bigr)_\mathcal{R}| <_\mathcal{R} \frac{1}{2^n}]$. Using  $\mathbf{Weak}$-$\mathbf{\Pi}^0_1$-$\mathbf{AC}_{0,0}$\footnote{See Subsection \ref{SSS:AC00}.} one may prove in $\mathsf{BIM}$ that, if $\gamma\in \mathbb{Q}^\omega$ converges, then $\gamma$ converges explicitly.
  $\mathsf{BIM}$ also proves that,  if $\gamma\in \mathbb{Q}^\omega$ is non-decreasing, then  $\gamma$   converges explicitly if and only if $\gamma$ has a converging subsequence. 

\begin{definition}Let $ \varepsilon$ be an element of $\mathbb{Q}^\omega$ such that, for each $n$, $\varepsilon(n) >_\mathbb{Q} 0_\mathbb{Q}$. We let $\mathit{Sum}(\varepsilon)$ be the element of $\mathbb{Q}^\omega$ such that,  for each $n$,  $\bigl(\mathit{Sum}(\varepsilon)\bigr)(n) =_\mathbb{Q} \sum_{i=0}^{i=n}\varepsilon(i)$.   \\$\varepsilon$ is called \emph{(explicitly) summable} if and only if  $\mathit{Sum(\varepsilon)}$  converges (explicitly). \end{definition}      
\begin{lemma}\label{L:ftconvprep} Let $\gamma$ be an element of $\mathbb{Q}^\omega$ and let $\lambda,\varepsilon$ be explicitly summable elements of $\mathbb{Q}^\omega$ such that, for all $n$, $\varepsilon(n) >_\mathbb{Q} 0_\mathbb{Q}$ and $\lambda(n) >_\mathbb{Q} 0_\mathbb{Q}$.
 $\mathsf{BIM}$ proves the following.
\begin{enumerate}[\upshape (i)]
\item If $\exists \zeta\in [\omega]^\omega\forall n[| \gamma\circ\zeta(n+1) -_\mathbb{Q} \gamma\circ\zeta(n)| <_\mathbb{Q} \varepsilon(n)]$, \\then $\exists\zeta\in [\omega]^\omega\forall n[ |\gamma\circ \zeta(n+1) -_\mathbb{Q}\gamma\circ\zeta(n)| <_\mathbb{Q} \lambda(n)]$.
\item If $\forall\zeta\in [\omega]^\omega\exists n[ |\gamma\circ\zeta(n+1) -_\mathbb{Q}\gamma\circ\zeta(n)| >_\mathbb{Q} \lambda(n)]$, \\then $\forall\zeta\in [\omega]^\omega\exists n[ |\gamma\circ\zeta(n+1) -_\mathbb{Q} \gamma\circ\zeta(n)| >_\mathbb{Q} \varepsilon(n)]$.

\end{enumerate}\end{lemma}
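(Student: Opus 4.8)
The plan is to handle both parts of the lemma by one and the same subsequence-thinning construction. For $\zeta\in[\omega]^\omega$ write $d_\zeta(n):=|\gamma\circ\zeta(n+1)-_\mathbb{Q}\gamma\circ\zeta(n)|$. The basic observation, used throughout, is the telescoping estimate $d_{\zeta\circ\sigma}(m)\le_\mathbb{Q}\sum_{n=\sigma(m)}^{\sigma(m+1)-1}d_\zeta(n)$, valid for all $\zeta,\sigma\in[\omega]^\omega$ and all $m$, because $\gamma\circ\zeta(\sigma(m+1))-_\mathbb{Q}\gamma\circ\zeta(\sigma(m))$ telescopes along the sequence $\gamma\circ\zeta$; here one also uses that $[\omega]^\omega$ is closed under composition, so $\zeta\circ\sigma\in[\omega]^\omega$.

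The heart of the argument is a single strictly increasing $\sigma\in[\omega]^\omega$, built once and for all from $\varepsilon$ and $\lambda$, with the property that for every $m$ and every $b\ge\sigma(m)$ one has $\sum_{n=\sigma(m)}^{b}\varepsilon(n)\le_\mathbb{Q}\lambda(m)$. To obtain it: since $\varepsilon$ is positively summable, I fix $\rho$ with $\forall k\forall p[|\mathit{Sum}(\varepsilon)(\rho(k)+p)-_\mathbb{Q}\mathit{Sum}(\varepsilon)(\rho(k))|\le_\mathbb{Q}\frac1{2^k}]$; as $\varepsilon(n)>_\mathbb{Q}0$ for every $n$, the sequence $\mathit{Sum}(\varepsilon)$ is non-decreasing, so this says $\sum_{n=\rho(k)+1}^{b}\varepsilon(n)\le_\mathbb{Q}\frac1{2^k}$ for every $b\ge\rho(k)$. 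For each $m$, using that $\lambda(m)>_\mathbb{Q}0$, I compute by unbounded search the least $k_m$ with $\frac1{2^{k_m}}\le_\mathbb{Q}\lambda(m)$, and define $\sigma$ by primitive recursion: $\sigma(0):=\rho(k_0)+1$ and $\sigma(m+1):=\max\bigl(\sigma(m)+1,\,\rho(k_{m+1})+1\bigr)$. All of this is available in $\mathsf{BIM}$ (primitive recursion, unbounded search, closure of $\mathcal{N}$ under composition). Then $\sigma$ is strictly increasing, and for $b\ge\sigma(m)\ge\rho(k_m)+1$, discarding the positive terms $\varepsilon(\rho(k_m)+1),\dots,\varepsilon(\sigma(m)-1)$ gives $\sum_{n=\sigma(m)}^{b}\varepsilon(n)\le_\mathbb{Q}\sum_{n=\rho(k_m)+1}^{b}\varepsilon(n)\le_\mathbb{Q}\frac1{2^{k_m}}\le_\mathbb{Q}\lambda(m)$, as required.

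For (i), given $\zeta\in[\omega]^\omega$ with $\forall n[d_\zeta(n)<_\mathbb{Q}\varepsilon(n)]$, I would show that $\zeta\circ\sigma$ is the desired subsequence: for each $m$ the index set $\{\sigma(m),\dots,\sigma(m+1)-1\}$ is nonempty (as $\sigma(m+1)>\sigma(m)$), so the telescoping estimate together with termwise strictness yields $d_{\zeta\circ\sigma}(m)\le_\mathbb{Q}\sum_{n=\sigma(m)}^{\sigma(m+1)-1}d_\zeta(n)<_\mathbb{Q}\sum_{n=\sigma(m)}^{\sigma(m+1)-1}\varepsilon(n)\le_\mathbb{Q}\lambda(m)$. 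For (ii), let $\zeta\in[\omega]^\omega$ be arbitrary; applying the hypothesis to $\zeta\circ\sigma$ produces $m$ with $d_{\zeta\circ\sigma}(m)>_\mathbb{Q}\lambda(m)$, hence $\sum_{n=\sigma(m)}^{\sigma(m+1)-1}d_\zeta(n)\ge_\mathbb{Q}d_{\zeta\circ\sigma}(m)>_\mathbb{Q}\lambda(m)\ge_\mathbb{Q}\sum_{n=\sigma(m)}^{\sigma(m+1)-1}\varepsilon(n)$. It then remains to pass from this strict inequality between two finite sums of rationals to the existence of a single index $n$ in the range with $d_\zeta(n)>_\mathbb{Q}\varepsilon(n)$.

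That last passage is the only delicate point, and I expect it to be the main (though mild) obstacle: it is not a Markov-style inference, because the quantifier ranges over a finite, explicitly bounded, decidable set. So a finite search over $n\in\{\sigma(m),\dots,\sigma(m+1)-1\}$ either exhibits the desired $n$ directly, or certifies $d_\zeta(n)\le_\mathbb{Q}\varepsilon(n)$ for every such $n$, which by the trivial finite induction ``$\forall n(a_n\le_\mathbb{Q}b_n)\Rightarrow\sum a_n\le_\mathbb{Q}\sum b_n$'' contradicts the strict inequality just obtained; so the first alternative holds and (ii) follows. (Note that summability of $\lambda$ is never used: only $\lambda(n)>_\mathbb{Q}0$ for each $n$ is needed.) Everything else is routine bookkeeping with moduli of convergence and with the arithmetic of $\mathbb{Q}$.
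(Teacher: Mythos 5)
Your proposal is correct and takes essentially the same route as the paper's proof: it builds a single increasing sequence of indices from the positive-summability modulus of $\varepsilon$ together with $\lambda(m)>_\mathbb{Q}0_\mathbb{Q}$, so that every block-sum of $\varepsilon$ starting at such an index is bounded by $\lambda(m)$, then uses telescoping for (i) and, for (ii), applies the hypothesis to the thinned subsequence and extracts the witnessing index by a finite decidable search. The only differences are cosmetic: you compose with $\sigma$ directly where the paper inserts an auxiliary $\eta$, and you make explicit the construction of the index sequence and the final finite-search step, which the paper leaves implicit.
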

\begin{proof} Let $\lambda, \varepsilon$ be explicitly summable elements of $\mathbb{Q}^\omega$. 
 Find $\delta$ in $[\omega]^\omega$ such that $\forall n \forall p[\sum_{i=\delta(n)}^{i = \delta(n) + p} \varepsilon(i) \le_\mathbb{Q} \lambda(n)]$. Note that $\forall n[\varepsilon(n)>_\mathbb{Q} 0_\mathbb{Q}]$ and conclude that
 
 \noindent $\forall n \forall p \forall q[\sum_{i=\delta(n) + p}^{i = \delta(n) +p+q}\varepsilon(i) <_\mathbb{Q} \lambda(n)]$. 
 
 \smallskip
 (i) Let $\zeta$ be an element of $[\omega]^\omega$ such that $\forall n[ |\gamma\circ\zeta(n+1) -_\mathbb{Q} \gamma\circ\zeta(n)| <_\mathbb{Q} \varepsilon(n)]$.  Find $\eta$ in $[\omega]^\omega$ such that, for each $n$, $\zeta\bigl(\eta(n)\bigr) > \delta(n)$ and define $\zeta^\ast = \zeta \circ \eta$. Note that, for each $n$, $|\gamma \circ\zeta^\ast(n+1)-\gamma\circ\zeta^\ast(n)|= |\gamma \circ\zeta\circ \eta(n+1)-\gamma\circ\zeta\circ \eta(n)|\le_\mathbb{Q} \sum_{i=\eta(n)}^{i=\eta(n+1) -1}|\gamma\circ\zeta(i+1) -_\mathbb{Q} \gamma \circ \zeta(i)| \le_\mathbb{Q} \sum_{i=\eta(n)}^{i=\eta(n+1) -1}\varepsilon(i)<_\mathbb{Q} \lambda(n)$. 

\smallskip
(ii) Assume $ \forall\zeta\in [\omega]^\omega\exists n[ |\gamma\circ\zeta(n+1) -_\mathbb{Q}\gamma\circ\zeta(n)| >_\mathbb{Q} \lambda(n)]$.  Let $\zeta$ be an element of $[\omega]^\omega$.  Find $\eta$ in $[\omega]^\omega$ such that, for each $n$, $\zeta\bigl(\eta(n)\bigr) > \delta(n)$ and define $\zeta^\ast = \zeta\circ \eta$. Find $n$ such that $|\gamma\circ\zeta^\ast(n+1) -_\mathbb{Q} \gamma\circ\zeta^\ast(n)|>_\mathbb{Q} \lambda(n)$. Conclude that $\sum_{i=\eta(n)}^{i=\eta(n+1)-1} |\gamma\circ\zeta(i+1) -_\mathbb{Q} \gamma\circ\zeta(i)| >_\mathbb{Q} \lambda(n) >_\mathbb{Q} \sum_{i=\eta(n)}^{i = \eta(n+1)-1} \varepsilon(i)$ and find $i$ such that $\eta(n) \le i < \eta(n+1)$ and $\gamma\circ\zeta(i+1) -_\mathbb{Q} \gamma\circ\zeta(i)| >_\mathbb{Q} \varepsilon(i)$.
\end{proof}

\subsection{Dedekind's Theorem fails constructively}\label{SS:dt}  R. Dedekind wrote his \cite{dedekind} in order to justify the \textit{``intuitively clear''} statement:
 \textit{Every infinite sequence  of reals that is bounded and nondecreasing converges.}
\begin{definition} We  call the following  statement \emph{Dedekind's Theorem}.

$\mathbf{Ded}$: {\it For all $\gamma$ in $\mathbb{Q}^\omega$, \begin{center}if
$\forall n[\gamma(n)\le_\mathbb{Q} \gamma(n+1)\le_\mathbb{Q} 1_\mathbb{Q}]$, then $ \exists \zeta \in [\omega]^\omega\forall n [\gamma\circ\zeta(n+1) \le_\mathbb{Q} \gamma\circ\zeta(n) +_\mathbb{Q} \frac{1}{2^n}]]$.\end{center}}\end{definition}

\smallskip The following Theorem shows that $\mathbf{Ded}$ is constructively false.\footnote{See Subsubsection \ref{SSS:lpo}.}

\begin{theorem}\label{T:dedfalse} $\mathsf{BIM}\vdash \mathbf{Ded} \rightarrow \mathbf{LPO}$. \end{theorem}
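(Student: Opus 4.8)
The plan is to show that $\mathbf{Ded}$, applied to a carefully chosen non-decreasing bounded rational sequence built from an arbitrary $\alpha$, decides the disjunction $\exists n[\alpha(n)\neq 0]\vee\forall n[\alpha(n)=0]$ that constitutes $\mathbf{LPO}$.

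Given $\alpha$, I would define $\gamma\in\mathbb{Q}^\mathbb{N}$ by primitive recursion: $\gamma(0):=0_\mathbb{Q}$, and $\gamma(S(n)):=1_\mathbb{Q}$ if $\alpha(n)\neq 0$ or $\gamma(n)=_\mathbb{Q}1_\mathbb{Q}$, while $\gamma(S(n)):=\gamma(n)$ otherwise. Then each $\gamma(n)$ is $0_\mathbb{Q}$ or $1_\mathbb{Q}$, $\gamma$ is non-decreasing and bounded by $1_\mathbb{Q}$, one has $\gamma(m)=_\mathbb{Q}1_\mathbb{Q}$ exactly when $\exists i<m[\alpha(i)\neq 0]$, and if $\alpha(j)\neq 0$ then $\gamma(m)=_\mathbb{Q}1_\mathbb{Q}$ for every $m>j$. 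So $\gamma$ satisfies the hypothesis of $\mathbf{Ded}$, and $\mathbf{Ded}$ furnishes $\zeta\in[\omega]^\omega$ with $\forall n[\gamma\circ\zeta(n+1)\le_\mathbb{Q}\gamma\circ\zeta(n)+_\mathbb{Q}\frac{1}{2^n}]$.

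Now decide, a decidable matter, whether $\gamma\circ\zeta(1)=_\mathbb{Q}1_\mathbb{Q}$ or $\gamma\circ\zeta(1)=_\mathbb{Q}0_\mathbb{Q}$. In the first case $\gamma(\zeta(1))=_\mathbb{Q}1_\mathbb{Q}$, so some $i<\zeta(1)$ has $\alpha(i)\neq 0$, and a bounded search over $i<\zeta(1)$ produces a witness, giving $\exists n[\alpha(n)\neq 0]$. In the second case I would prove $\forall n[\alpha(n)=0]$: since $\gamma$ is non-decreasing and non-negative, also $\gamma\circ\zeta(0)=_\mathbb{Q}0_\mathbb{Q}$. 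Suppose some $j$ had $\alpha(j)\neq 0$. Then $\gamma(m)=_\mathbb{Q}1_\mathbb{Q}$ for all $m>j$; as $\zeta$ is strictly increasing, $\zeta(j+1)>j$, whence $\gamma\circ\zeta(j+1)=_\mathbb{Q}1_\mathbb{Q}$. Let $k$ be the least number with $\gamma\circ\zeta(k)=_\mathbb{Q}1_\mathbb{Q}$; then $k\ge 2$, because $\gamma\circ\zeta(0)=_\mathbb{Q}\gamma\circ\zeta(1)=_\mathbb{Q}0_\mathbb{Q}$, and $\gamma\circ\zeta(k-1)=_\mathbb{Q}0_\mathbb{Q}$ by minimality. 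The subsequence condition at $n=k-1$ then yields $1_\mathbb{Q}=_\mathbb{Q}\gamma\circ\zeta(k)\le_\mathbb{Q}0_\mathbb{Q}+_\mathbb{Q}\frac{1}{2^{k-1}}\le_\mathbb{Q}\frac{1}{2}$, a contradiction; hence $\alpha(j)=0$. Thus in both cases $\exists n[\alpha(n)\neq 0]\vee\forall n[\alpha(n)=0]$, i.e.\ $\mathbf{LPO}$.

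The one delicate point is obtaining the genuine disjunction of $\mathbf{LPO}$ rather than a double-negated, Markov-style weakening. A more naive construction — letting $\gamma$ accumulate $\frac{1}{2^{i+1}}$ whenever $\alpha(i)\neq 0$ and comparing the limit real with $0_\mathcal{R}$ — delivers only $\neg\neg\exists n[\alpha(n)\neq 0]\vee\forall n[\alpha(n)=0]$, since one cannot extract a witness from $x>_\mathcal{R}0_\mathcal{R}$. The trick is to make the single jump as large as the bound permits (size $1$ inside $[0,1]$), so that the modulus $\frac{1}{2^n}$ in $\mathbf{Ded}$ prevents the jump from occurring "between" the first two terms of the subsequence; the concrete number $\zeta(1)$ then bounds the location of a nonzero entry of $\alpha$, and a finite search produces it. Everything else — definability of $\gamma$ by primitive recursion in $\mathsf{BIM}$, decidability of $\gamma\circ\zeta(1)=_\mathbb{Q}1_\mathbb{Q}$, and the least-number principle for the decidable predicate $\gamma\circ\zeta(k)=_\mathbb{Q}1_\mathbb{Q}$ — is routine.
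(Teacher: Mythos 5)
Your proof is correct and follows essentially the same route as the paper: the same $0$--$1$ indicator sequence $\gamma$ recording whether a nonzero value of $\alpha$ has yet appeared, application of $\mathbf{Ded}$, and a decidable case split on $\gamma\circ\zeta(1)$, with a bounded search below $\zeta(1)$ in one case and a contradiction with the modulus $\frac{1}{2^n}$ (a forced unit jump versus an allowed increment of at most $\frac{1}{2}$) in the other. Your ``least $k$ with $\gamma\circ\zeta(k)=_\mathbb{Q}1_\mathbb{Q}$'' argument just spells out the step the paper states more tersely.
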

\begin{proof} Assume $\mathbf{Ded}$.  Let $\alpha$ be given. Define $\gamma$ in $\mathbb{Q}^\omega$ such that  for all $n$, if $\forall i<n[\alpha(i) =0]$, then $\gamma(n) = 0_\mathbb{Q}$, and, if $\exists i<n[\alpha(i) \neq 0]$, then $\gamma(n) = 1_\mathbb{Q}$. Using $\mathbf{Ded}$, find $\zeta$ in $[\omega]^\omega$ such that  $\forall n [\gamma\circ\zeta(n+1) \le_\mathbb{Q} \gamma\circ\zeta(n) +_\mathbb{Q} \frac{1}{2^n}]$. Define $m:=\zeta(1)$ and note: $\forall n> m[\gamma(n) <_\mathbb{Q} \gamma(m) +_\mathbb{Q} \frac{1}{2}]$.
\textit{Either} $  \gamma(m)=1_\mathbb{Q}$ and: $\exists i<m[\alpha(i) \neq 0]$ \textit{or:} $\gamma(m) = 0_\mathbb{Q} $ and $\forall n[\gamma(n)<_\mathbb{Q} 1_\mathbb{Q}]$ and $\forall i[\alpha(i)= 0]$. We thus see that $\forall \alpha[\exists i[\alpha(i)\neq 0]\;\vee\;\forall i[\alpha(i)=0]]$, i.e.  $\mathbf{LPO}$.
\end{proof}
\subsection{A contraposition of Dedekind's Theorem}$\;$\\
We introduce a  \textit{contraposition} of Dedekind's Theorem. 

\begin{definition} The \emph{contrapositive of Dedekind's Theorem} is the following statement:

$\overleftarrow{\mathbf{Ded}}$: 
\textit{For all $\gamma$ in $\mathbb{Q}^\omega$, if $\forall n[\gamma(n)\le_\mathbb{Q} \gamma(n+1)]$ and $\forall \zeta \in [\omega]^\omega\exists n [ \gamma\circ\zeta(n) +_\mathbb{Q} \frac{1}{2^n}<_\mathbb{Q}\gamma\circ\zeta(n+1)]$, then $\exists n[ 1_\mathbb{Q}<_\mathbb{Q} \gamma(n)]$,}

i.e.  every non-decreasing infinite sequence of rationals that positively fails to converge grows beyond $1$.\end{definition}

\begin{theorem}\label{T:oided}
 $\mathsf{BIM}\vdash \mathbf{OI}({[}0,1{]})\leftrightarrow \overleftarrow{\mathbf{Ded}}$.

\end{theorem}
	 
\begin{proof}
(i) Assume $\mathbf{OI}([0,1])$.  
Let $\gamma$ in $\mathbb{Q}^\omega$ be given such that $\forall n[\gamma(n)\le_\mathbb{Q}\gamma(n+1)]$ and $\forall \zeta\in [\omega]^\omega\exists n[\gamma\circ\zeta(n)+_\mathbb{Q}\frac{1}{2^n}<_\mathbb{Q}\gamma\circ\zeta(n+1)]$. 
Define $\alpha$  such that  $\forall s\in \mathbb{S}[\alpha(s) \neq 0\leftrightarrow \exists n<s[s''<_\mathbb{Q} \gamma(n)]]$. 
Note that $\forall \eta \in \mathcal{R}[\eta \in \mathcal{H}_\alpha\leftrightarrow \exists n[\eta<_\mathcal{R}\bigl(\gamma(n)\bigr)_\mathcal{R}]]$. We now prove that $\mathcal{H}_\alpha$ is progressive in $[0,1]$. 
Let $\eta$ be an element of $[0,1]$ such that $[0, \eta) \subseteq \mathcal{H}_\alpha$.
 Note that $\forall n \exists m[\eta -_\mathcal{R}\frac{1}{2^n} <_\mathcal{R}\bigl(\gamma(m)\bigr)_\mathcal{R}]$.
 Define $\beta$ such that $\forall n[\beta(n)=\mu p[\eta''(p') -_\mathbb{Q} \frac{1}{2^n}<_\mathbb{Q} \bigl(\gamma(p'')\bigr)'(p')]$. 
  Note that  $\forall n[\eta-_\mathcal{R} \frac{1}{2^n} <_\mathcal{R} \bigl(\gamma\circ\beta''(n)\bigr)_\mathcal{R}].$
  Define $\zeta$  such that $\zeta(0) = \beta''(0)$ and $\forall n[\zeta(n+1) = \max\bigl(\zeta(n) + 1, \beta''(n+1)\bigr)]$. 
    Note that $\zeta\in[\omega]^\omega$ and $\forall n[\eta<_\mathcal{R}\bigl(\gamma\circ\zeta(n)\bigr)_\mathcal{R}+_\mathcal{R} \frac{1}{2^n}]$. 
   Find $n$ such that $ \gamma\circ\zeta(n)+ \frac{1}{2^n}<_\mathbb{Q}\gamma\circ\zeta(n+1) $.
   Conclude that $\eta<_\mathcal{R} \bigl(\gamma\circ\zeta(n)\bigr)_\mathcal{R}+ \frac{1}{2^n}<_\mathcal{R}\bigl(\gamma\circ\zeta(n+1)\bigr)_\mathcal{R}$ and  $\eta\in\mathcal{H}_\alpha$.
   We thus see that  $\forall \eta \in [0,1][\;[0,\eta)\subseteq \mathcal{H}_\alpha\rightarrow \eta \in \mathcal{H}_\alpha]$.
Using $\mathbf{OI}({[}0,1{]})$, we conclude that $[0,1]\subseteq \mathcal{H}_\alpha$, and  $\exists n[1_\mathbb{Q} <_\mathbb{Q}\gamma(n) ]$. We thus see that, for each $\gamma$ in $\mathbb{Q}^\omega$, if $\gamma$ is nondecreasing and positively fails to converge, then $\exists n[1_\mathbb{Q}<_\mathbb{Q}\gamma(n)]$, i.e. 
$\overleftarrow{\mathbf{Ded}}$.

\smallskip (ii) Assume $\overleftarrow{\mathbf{Ded}}$.
Let $\alpha$ be given such that $\mathcal{H}_\alpha$ is progressive in $[0,1]$. 
Define $\gamma$ in $\mathbb{Q}^\omega$ such that, for each $n$, $\gamma(n) = \max_\mathbb{Q}(\{q\in \mathbb{Q}\mid [0, q)\subseteq \mathcal{H}_{\overline \alpha n}\})$.  
Note that $\forall n[\gamma(n)\le_\mathbb{Q}\gamma(n+1)]$. 
We now prove that $\gamma$ positively fails to converge. Let $\zeta$ in $[\omega]^\omega$ be given. 
Define $\delta$ in $\mathbb{Q}^\omega$ such that $\delta(0)=\gamma(0)$, and, for each $n$,  $\delta(n+1)=\min_\mathbb{Q} \bigl(\delta(n)+\frac{1}{2^n}, \gamma\circ\zeta(n+1)\bigr)$. 
Note that the infinite sequence $\delta$ converges and find $\varepsilon:=\lim_{n\rightarrow \infty} \delta(n)$ in $\mathcal{R}$.
Note that, for each $n$, $[0,\delta(n))\subseteq \mathcal{H}_\alpha$ and $[0, \varepsilon)=\bigcup_n[0, \delta(n)\bigr)$.  Conclude that $[0, \varepsilon)\subseteq  \mathcal{H}_\alpha$, and that $\varepsilon \in \mathcal{H}_\alpha$. Find $s,m$ such that $s'<_\mathbb{Q} \varepsilon'(m)<_\mathbb{Q}\varepsilon''(m)<_\mathbb{Q} s''$ and $\alpha(s)\neq 0$. Find $p$ such that $[0, s')\subseteq \mathcal{H}_ {\overline \alpha p}$. Define $q:=\max(s+1, p)$ and note that $[0,s'')\subseteq \mathcal{H}_{\overline \alpha q}$. Conclude that $\varepsilon<_\mathcal{R} \bigl(\gamma(q)\bigr)_\mathcal{R}$ and note that $\gamma(q)\le_\mathbb{Q} \gamma \circ \zeta (q)$. 
Conclude that $\delta(q+1) <_\mathbb{Q} \gamma \circ \zeta(q+1)$ and $\exists i\le q[\gamma\circ\zeta(i) +_\mathbb{Q} \frac{1}{2^1} <_\mathbb{Q} \gamma\circ \zeta(i+1)]$. 
  We thus see that $\forall \zeta \in [\omega]^\omega \exists i[\gamma\circ\zeta(i)+_\mathbb{Q} \frac{1}{2^1} <_\mathbb{Q} \gamma\circ \zeta(i+1)]$. Using $\overleftarrow{\mathbf{Ded}}$, we find $n$ such that $\gamma(n)>_\mathbb{Q} 1_\mathbb{Q}$ and conclude that $[0,1]\subseteq \mathcal{H}_{\overline \alpha n}$. We thus see that, for each $\alpha$, if $\mathcal{H}_\alpha$ is progressive in $[0,1]$, then $1\in \mathcal{H}_\alpha$, i.e.  $\mathbf{OI}([0,1])$. 
\end{proof}

\subsection{The contraposition of Dedekind's Theorem in $\omega^\omega$.}\label{SS:contr(ded)N}$\;$ 

\begin{definition}For each $n$, for all $a, b$, 

$a \neq_n b\leftrightarrow \exists j<n[j<\min\bigl(length(a), length(b)\bigr)\;\wedge\; a(j) \neq b(j)]$, and $a=_n b\leftrightarrow \neg (a\neq_n b)$.\footnote{ See Subsection \ref{SS:codfs}.} \end{definition} 

The following Lemma may be compared to Lemma \ref{L:ftconvprep}(ii).
\begin{lemma}\label{L:ftconvprep2}  
 $\mathsf{BIM}$ proves the following.

For all $\gamma$, for all $\eta$ in $[\omega]^\omega$, if $\forall\zeta\in [\omega]^\omega\exists n[ \gamma\circ\zeta(n+1) \neq_{\eta(n)}\gamma\circ\zeta(n)]$, then $\forall\zeta\in [\omega]^\omega\exists n[ \gamma\circ\zeta(n+1) \neq_n \gamma\circ\zeta(n)]$.

\end{lemma}
\begin{proof} Let $\gamma, \eta$ be given such that  $\eta$ in $[\omega]^\omega$ and  $\forall\zeta\in [\omega]^\omega\exists n[ \gamma\circ\zeta(n+1) \neq_{\eta(n)}\gamma\circ\zeta(n)]$.  Let $\zeta$ be an element of $[\omega]^\omega$. Note that $\zeta\circ \eta \in [\omega]^\omega$ and find $n$ such that  $\gamma\circ\zeta\circ\eta(n+1) \neq_{\eta(n)}\gamma\circ\zeta\circ\eta(n)$. Find $i$ such that $\eta(n)\le i <\eta(n+1)$ and $\gamma\circ\zeta(i)\neq_{\eta(n)}\gamma\circ\zeta(i+1)$. Note that $\eta(n)\le i$ and $\gamma\circ\zeta(i)\neq_i\gamma\circ\zeta(i+1)$.  We thus see that $\forall\zeta\in [\omega]^\omega\exists i[ \gamma\circ\zeta(i+1) \neq_i \gamma\circ\zeta(i)]$.
\end{proof}

\begin{definition}For all $a,b$, 

$a<_{lex}b\leftrightarrow \exists i<\min\bigl(length(a), length(b)\bigr)[ \overline a i =\overline bi \;\wedge\;a(i)<b(i)]$.   

$\gamma\in \omega^\omega$  is  \emph{$1$-non-decreasing} if and only if $\forall n[\gamma(n)<_\mathit{lex} \gamma(n+1)\;\vee\;\gamma(n)\sqsubseteq \gamma(n+1)]$. If $\gamma\in \omega^\omega$ is $1$-nondecreasing, then $\gamma$ is \emph{ explicitly $1$-convergent} if and only if $\exists \zeta\in [\omega]^\omega \forall n [ \gamma\circ\zeta(n) = _n \gamma\circ\zeta(n+1)]$.
 If $\gamma\in \omega^\omega$ is $1$-nondecreasing, then    $\gamma$ \emph{positively fails to be $1$-convergent} if and only if $\forall \zeta\in[\omega]^\omega\exists n[\gamma\circ\zeta(n) \neq_n \gamma\circ\zeta(n+1)]$.
 
\smallskip  
  
The \emph{contrapositive of Dedekind's Theorem for Baire space} is the  statement:

 $\overleftarrow{\mathbf{Ded}_{\omega^\omega}}$: 
For all  $\gamma$,  
if $\forall n[\gamma(n) <_{lex} \gamma(n+1)\;\vee\;\gamma(n)\sqsubseteq \gamma(n+1)]$ and  $\forall \zeta \in [\omega]^\omega\exists n[\gamma\circ\zeta(n) \neq_n \gamma\circ\zeta(n+1)]$, 
then  $\exists m [\gamma(m) \notin 2^{<\omega}]$.   

i.e. \textit{every $1$-non-decreasing element of $\omega^\omega$ that positively fails be $1$-convergent will leave $2^{<\omega}$.} \end{definition}
\begin{theorem}\label{T:deddedN} $\mathsf{BIM}\vdash \overleftarrow{\mathbf{Ded} }\leftrightarrow \overleftarrow{\mathbf{Ded}_{\omega^\omega}}$.

\end{theorem}

\begin{proof} (i) Assume $\overleftarrow{\mathbf{Ded}}$. 

Define ${\bm \delta_0}$ such that, for all $a$ in $ 2^{<\omega}$,  
${\bm \delta_0} (a) = \sum_{i<\mathit{length}(a)} \frac{2a(i)+1}{5^i}$. 
Note that, for all $n$, for all $a,b$ in $2^{<\omega}$, if  $ a  <_{lex} b $ and $a\neq_n b$, then ${\bm \delta_0}(a) +\frac{1}{5^n} <_\mathbb{Q} {\bm \delta_0}(b)$.

Let $\gamma$ be given such that $\forall n[\gamma(n) <_{lex} \gamma(n+1)\;\vee \gamma(n)\sqsubseteq \gamma(n+1)]$ and  $\forall\zeta  \in [\omega]^\omega\exists n[\gamma\circ\zeta(n)\neq_n \gamma\circ\zeta(n+1)]$. 
Define $\eta$ in $\mathbb{Q}^\omega$ such that,  for each $p$, \textit{either} $\forall m \le p  [\gamma(m) \in 2^{<\omega}]$ and $\eta(p) = {\bm \delta_0}\bigl(\gamma(p)\bigr)$, \textit{or} $\exists m \le p[\gamma(m) \notin 2^{<\omega}]$ and $\eta(p) = (p+1)_\mathbb{Q}$. 
Note that $\forall n[\eta(n) \le_\mathbb{Q} \eta(n+1)]$. We now prove that $\eta$ positively fails to have a converging subsequence. 
Let $\zeta \in [\omega]^\omega$ be given.  Find $p$ such that $\gamma\circ\zeta(p) \neq_p 
 \gamma\circ\zeta(p+1)$ and distinguish two cases.
 \begin{enumerate}[\upshape (1)]
  \item  $\forall m \le \zeta(p+1)[\gamma(m) \in 2^{<\omega}]$. 
Then $\eta\circ\zeta(p) = {\bm \delta_0}\bigl(\gamma\circ\zeta(p)\bigr)$ and $\eta\circ\zeta(p+1) = {\bm \delta_0}\bigl(\gamma\circ\zeta(p+1)\bigr)$ and $\eta\circ\zeta(p) +_\mathbb{Q} \frac{1}{5^p} \le_\mathbb{Q}\eta\circ\zeta(p+1) $.

 \item $\exists m \le \zeta(p+1)[\gamma(m) \notin 2^{<\omega}]$. Then  $\eta\circ\zeta(p+1) =_\mathbb{Q} \bigl(\zeta(p+1)+1\bigr)_\mathbb{Q}$ and $\eta\circ\zeta(p+2) =_\mathbb{Q} \bigl(\zeta(p+2)+1\bigr)_\mathbb{Q}$ and $\eta\circ\zeta(p+2) -_\mathbb{Q} \eta\circ\zeta(p+1) \ge_\mathbb{Q} 1_\mathbb{Q}>_\mathbb{Q} \frac{1}{ 2^{p+1}}$. 
 \end{enumerate}
We thus see that $\forall \zeta \in [\omega]^\omega\exists p[ \eta\circ\zeta(p)+_\mathbb{Q} \frac{1}{5^{p+1}}<_\mathbb{Q} \eta\circ\zeta(p+1)]$.
Using Lemma \ref{L:ftconvprep} and $\overleftarrow{\mathbf{Ded}}$, we find $p$ such that $ 1_\mathbb{Q}<_\mathbb{Q} \eta(p)$. 
 Conclude that $\eta(p)  =_\mathbb{Q} (p+1)_\mathbb{Q}$ and $\exists m \le p [\gamma(m) \notin 2^{<\omega}]$.  
 We thus see that, if $\gamma \in \omega^\omega$  is $1$-nondecreasing and positively fails to be $1$-convergent, then $\exists m  [\gamma(m) \notin 2^{<\omega}]$, i.e.  $\overleftarrow{\mathbf{Ded}_{\omega^\omega}}$.
 
 \smallskip (ii) 
 Assume $\overleftarrow{\mathbf{Ded}_{\omega^\omega}}$. 
 
 Define ${\bm \delta_1}$ such that, for all $a$ in $ 2^{<\omega}$,  ${\bm \delta_1} (a) = \sum_{i<\mathit{length}(a)} \frac{a(i)}{2^{i+1}}$. 
Note that, 
for all $n$, for all $a,b$ in $2^{<\omega}$, if  $a=_n b$ then $|{\bm \delta_1}(a)-{\bm \delta_1}(b)|<\frac{1}{2^{n}} $. 

Let $\gamma$ in $\mathbb{Q}^\omega$ be given such that $\forall n[\gamma(n)\le_\mathbb{Q}\gamma(n+1)]$ and $\forall \zeta\in [\omega]^\omega\exists n[\gamma\circ \zeta(n)+_\mathbb{Q} \frac{1}{2^n} <_\mathbb{Q}\gamma\circ\zeta(n+1)]$. 
 We want to prove that $\exists n[1_\mathbb{Q}<_\mathbb{Q}\gamma(n)  ]$. First note that $\forall m\exists n>m[\gamma(n) <_\mathbb{Q} \gamma(n+1)]$.
Then find $\zeta$ in $[\omega]^\omega$ such that  $\forall n[\gamma\circ \zeta(n)<_\mathbb{Q} \gamma\circ\zeta(n+1)]$ and define $\gamma^\ast = \gamma\circ \zeta$.
Note that $\forall n[\gamma^\ast(n)<_\mathbb{Q} \gamma^\ast(n+1)]$ and $\forall \eta \in [\omega]^\omega\exists n[\gamma^\ast\circ \eta(n)+_\mathbb{Q}\frac{1}{2^n}<_\mathbb{Q}\gamma^\ast\circ\eta(n+1) ]$.  
Define $\rho$ such that,  
 for each $n$, if $\exists i\le n+1[1_\mathbb{Q}<_\mathbb{Q}\gamma^\ast(i) ]$, then $\rho(n)=\langle n +2\rangle$, and, if $\forall i\le n+1[\gamma^\ast(i) \le_\mathbb{Q} 1_\mathbb{Q}]$, then $\rho(n)$ is the least $a$ in $2^{<\omega}$ such that $\gamma^\ast(n)<_\mathbb{Q} {\bm \delta_1}(a) <_\mathbb{Q} \gamma^\ast(n+1)$ and ${\bm \delta_1}(a)-_\mathbb{Q} \gamma^\ast(n) <_\mathbb{Q} \frac{1}{2} \bigl(\gamma^\ast (n+1)-_\mathbb{Q} \gamma^\ast(n)\bigr)$.  Note that $\rho$ is $1$-non-decreasing. We now prove that $\rho$ positively fails to be 1-convergent.
Assume that $\eta \in [\omega]^\omega$. Find $n$ such that $\gamma^\ast\circ\eta(n) +_\mathbb{Q}\frac{1}{2^n}<_\mathbb{Q} \gamma^\ast\circ\eta(n+1)$ and distinguish two cases.
 \begin{enumerate}[\upshape (1)]
 \item  $\forall i \le \eta(n+1)+1[\gamma^\ast(i) \le_\mathbb{Q} 1_\mathbb{Q}]$. 
Then $|\rho\circ\eta(n+1)-_\mathbb{Q} \rho\circ\eta(n)|=|{\bm \delta_1}\circ\gamma^\ast\circ\eta(n+1) -_\mathbb{Q}{\bm \delta_1}\circ\gamma^\ast\circ\eta(n)|>_\mathbb{Q} \frac{1}{2}\bigl(\gamma^\ast\circ\eta(n+1)-_\mathbb{Q} \gamma^\ast\circ \eta(n)\bigr) > \frac{1}{2^{n+1}}$. Conclude that $\rho\circ\eta(n+1)\ne_{n+1}\rho\circ\eta(n)$. 
\item $\exists i \le \eta(n+1)+1[\gamma^\ast(i)>_\mathbb{Q} 1_\mathbb{Q} ]$. Then  $\rho\circ\eta(n+1) =_\mathbb{Q} \langle\eta(n+1)+2\rangle$ and $\rho\circ\eta(n+2) =\langle \bigl(\eta(n+2)+2\rangle$ and $\rho\circ\eta(n+2) \neq_{1} \rho\circ\eta(n+1) $. \end{enumerate}
 We thus see that $\forall \eta \in [\omega]^\omega\exists n[ \rho\circ\eta(n+1) \neq_{n+1} \rho\circ\eta(n)]$.
Using Lemma \ref{L:ftconvprep2} and $\overleftarrow{\mathbf{Ded}_{\omega^\omega}}$, we find $p$ such that $\eta(p)\notin 2^{<\omega}$. Conclude  that $\eta(p)  =\langle p+2\rangle$ and $\exists n\le p [1_\mathbb{Q} <_\mathbb{Q}\gamma^\ast(n)]$ and $\exists n[1_\mathbb{Q}<_\mathbb{Q}\gamma(n) ]$.   We thus see that, for all $\gamma$ in $\mathbb{Q}^\omega$, if $\gamma$ is non-decreasing and positively fails to  converge, then $\exists n[1_\mathbb{Q}<_\mathbb{Q}\gamma(n) ]$, i.e.  $\overleftarrow{\mathbf{Ded}}$.
\end{proof}

\subsection{The Principle of Open Induction on Cantor space $2^\omega$}\label{SS:oicantorspace} 

\begin{definition}\label{D:progrcantorspace}
 For all $\mathcal{A}\subseteq \omega^\omega$, for all $\gamma$ in $\mathcal{A}$, $\mathcal{A}_{<_{lex}\gamma}=\{\delta \in \mathcal{A}\mid \delta<_{lex}\gamma\}$.
 
 $\mathcal{A}\subseteq \omega^\omega$ is  \emph{progressive in $2^\omega$} if and only if $\forall \gamma \in 2^\omega[(2^\omega)_{<_{lex}\gamma}\subseteq \mathcal{A} \rightarrow \gamma \in\mathcal{A}]$.  
 
 \smallskip The \emph{Principle of Open Induction on Cantor space} is the following statement:

$\mathbf{OI}(2^\omega)$: \textit{For every open subset $\mathcal{G}$ of $\omega^\omega$, if $\mathcal{G}$ is progressive  in $2^\omega$, then} $2^\omega\subseteq \mathcal{G}$.
\end{definition}
\begin{theorem}\label{T:oic} $\mathsf{BIM} \vdash \overleftarrow{\mathbf{Ded}_{\omega^\omega}} \leftrightarrow \mathbf{OI}(2^\omega)$.
 
\end{theorem}

\begin{proof} (i) Assume $\overleftarrow{\mathbf{Ded}_{\omega^\omega}}$. 
Let $\alpha$ be given such that $\forall \gamma \in 2^\omega[(2^\omega)_{ <_{lex} \gamma}\subseteq \mathcal{G}_\alpha\rightarrow \gamma \in \mathcal{G}_\alpha]$. 
Define $\gamma$ such that $\gamma(0):=\langle\;\rangle$ and, for each $n$, (1) if    $2^\omega \nsubseteq \mathcal{G}_{\overline\alpha n}$ then  $\gamma(n)$  is the least element  $s$ of $2^{<\omega}$ satisfying  $(2^\omega)_{<_{lex}s}\subseteq \mathcal{G}_{\overline \alpha n}$ and $2^\omega\cap s\nsubseteq \mathcal{G}_{\overline \alpha n}$, and,  (2) if  $2^\omega \subseteq \mathcal{G}_{\overline\alpha n}$, then $\gamma(n)=\langle n +2\rangle$. 
Note that $\forall n[(2^\omega)_{<_{lex}\gamma(n)}\subseteq \mathcal{G}_{\overline \alpha n}]$  and $\forall n[\gamma(n)<_{lex}\gamma(n+1)\;\vee\;\gamma(n)\sqsubseteq \gamma(n+1)]$. 
 We now prove that $\gamma$ positively fails to be $1$-convergent. Let $\zeta$ in $[\omega]^\omega$ be given.
Define $\delta$  such that $\delta(0)=\gamma\circ\zeta(0)$, and, for each $n$, if  $\forall i\le n[\gamma\circ \zeta(i) =_i \gamma\circ\zeta(i+1)$, then $\delta(n+1)=\gamma\circ\zeta(n+1)$, and, if not, then $\delta(n+1)= \delta(n)\ast\langle 0\rangle$. Note that, for each $n$, $\delta(n)=_n\delta(n+1)$.
  Define $\varepsilon$ such that $\forall n[\varepsilon =_n \delta(n)]$ and note that, for all $\beta$ in $2^\omega$, if $\beta<_{lex} \varepsilon$, then  $\exists n[\beta<\delta(n)]$, and $\exists n[\beta<_{lex} \gamma(n)]$, and $\beta \in \mathcal{G}_\alpha$. We thus see that $2^\omega_{<_{lex}\varepsilon}\subseteq \mathcal{G}_\alpha$ and conclude that $\varepsilon \in \mathcal{G}_\alpha$. 
Find $n$ such that $\alpha(\overline \varepsilon n)\neq 0$. Define $p:=\max(\zeta(n), \overline \varepsilon n +1)$  and note that $\varepsilon <_{lex}  \gamma(p)$.
Conclude that  $\exists i\le n[\gamma\circ\zeta(i) \neq_i \gamma\circ \zeta(i+1)]$. 
   We thus see: $\forall \zeta \in [\omega]^\omega \exists i[\gamma\circ\zeta(i)\neq_i \gamma\circ \zeta(i+1)]$. 
Using $\overleftarrow{\mathbf{Ded}_{\omega^\omega}}$, we find $n$ such that $\gamma(n)\notin 2^{<\omega}$ and conclude: $2^\omega\subseteq \mathcal{G}_{\overline \alpha n}$. 

We thus see that, for all $\alpha$, if $\mathcal{G}_\alpha$ is progressive in $2^\omega$, then $2^\omega\subseteq \mathcal{G}_\alpha$, i.e.  $\mathbf{OI}(2^\omega)$.  

\smallskip (ii) Assume $\mathbf{OI}(2^\omega)$.
Let $\gamma$ be given such that $\forall n[\gamma(n) <_{lex} \gamma(n+1)\;\vee\; \gamma(n)\sqsubseteq \gamma(n+1)]$ and $\forall \zeta \in [\omega]^\omega\exists n[\gamma\circ\zeta(n)\neq_n \gamma\circ\zeta(n+1)]$. Note that $\exists n [\gamma(n) <_{lex} \gamma(n+1)]$ and conclude that $\exists n\exists m[\underline {\overline 0}m <_{lex} \gamma(n)]$. Also note that $\forall m \exists p>m[\gamma(p)<_{lex} \gamma(p+1) ]$.

 Define $\alpha$ such that, for each $s$, $\alpha(s)\neq 0$ if and only if $\exists n < s  [s<_{lex}  \gamma(n)]$.
Note that $\mathcal{G}_\alpha=\{\beta\mid \exists n[\beta<_{lex} \gamma(n)]\}$.
We now prove that $\mathcal{G}_\alpha$ is progressive in $2^\omega$. 
 Let $\delta$ in $2^\omega$ be given such that $(2^\omega)_{<_{lex}\delta}\subseteq  \mathcal{G}_\alpha$. 
We want to prove that $\delta \in \mathcal{G}_\alpha$. 
We  first prove that,  for all $n$, $\overline \delta n\ast \underline 0$ belongs to $G_\alpha$. We do so by induction. 
First observe that $\overline \delta 0\ast\underline 0=\underline 0$ belongs to $\mathcal{G}_\alpha$, as $\exists n\exists m[\underline {\overline 0}m <_{lex} \gamma(n)]$.
 Now let $n$ be given such that $\overline \delta n\ast \underline 0$ belongs to $\mathcal{G}_\alpha$.
If $\delta(n)=0$, then $\overline\delta(n+1)\ast\underline 0=\overline\delta n\ast \underline 0$ belongs to $\mathcal{G}_\alpha$. 
Now assume $\delta(n)=1$ and note that  $\overline \delta n\ast\langle 0\rangle\ast \underline 1$ belongs to $\mathcal{G}_\alpha$. Find $m$ such that $\overline \delta n\ast\langle 0\rangle\ast \underline 1 <_{lex} \gamma(m)$. Conclude that $\overline \delta n\ast\langle 0\rangle <_{lex} \gamma(m)$ and: {\it either} $\overline \delta n<_{lex}\gamma(m)$ and: $\overline \delta(n+1) \ast \underline 0<_{lex} \gamma(m)$, {\it or} $\overline \delta(n+1)\sqsubseteq \gamma(m)$, and, for some $p>m$, $\gamma(p)<_{lex} \gamma(p+1)$ and also: $\delta(n+1)\ast \underline 0 <_{lex} \gamma(p+1)$. We may conclude that, for all $n$, $\overline \delta n\ast \underline 0$ belongs to $G_\alpha$. 
Now find $\zeta$ in $[\omega]^\omega$ such that, for each $n$, 
$\overline \delta n\ast\underline  0 <_{lex}\gamma\circ\zeta(n)$. Find $n$ such that $\gamma\circ\zeta(n)\neq_n \gamma\circ\zeta(n+1)$ and note that $\overline \delta n <_{lex} \gamma\circ\zeta(n+1)$.
Conclude that $\delta<_{lex} \gamma\circ\zeta(n+1)$ and $\delta \in \mathcal{G}_\alpha$.
We thus see that $\mathcal{G}_\alpha$ is progressive in $2^\omega$.
Conclude that $2^\omega\subseteq \mathcal{G}_\alpha$ and  $\underline 1 \in \mathcal{G}_\alpha$ and  $\exists m[\gamma(m)\notin 2^{<\omega}]$.

We thus see that, for each $\gamma$, if $\gamma$ is $1$-non-decreasing and positively fails to converge, then $\exists n[\gamma(n)\notin 2^{<\omega}]$, i.e. $\overleftarrow{\mathbf{Ded}_{\omega^\omega}}$.

\end{proof}	
\begin{corollary}\label{C:oided} $\mathsf{BIM}\vdash \mathbf{OI}([0,1])\leftrightarrow \mathbf{OI}(2^\omega)\leftrightarrow \overleftarrow{\mathbf{Ded}}\leftrightarrow\overleftarrow{\mathbf{Ded}_{\omega^\omega}}$. \end{corollary}
\begin{proof} See Theorems \ref{T:oided}, \ref{T:deddedN} and \ref{T:oic}. \end{proof}
       
   \section{ Enumerable subsets of $\omega$ that positively fail to be decidable}     For each $\alpha$,  $D_\alpha:=\{n|\alpha(n)\neq 0\} $ and $E_\alpha:=\{ m|\exists n[\alpha(n) = m+1]\} $. $D_\alpha$ is  \textit{the subset of $\omega$ decided by $\alpha$} and $E_\alpha$ is  \textit{the subset of $\omega$ enumerated by $\alpha$}.\footnote{See Subsection \ref{SS:decen}.}

       \begin{lemma}
         $\mathsf{BIM}\vdash  \forall \alpha \exists \gamma[D_\alpha = E_\gamma]$.

       \end{lemma}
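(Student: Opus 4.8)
The plan is to observe that a \emph{decidable} subset of $\mathbb{N}$ is trivially enumerable: given $\alpha$, one simply walks through $0,1,2,\dots$ and, at stage $p$, outputs $p$ precisely when $\alpha(p)\neq 0$. So the whole content of the lemma is packaging this idea into a single total function $\gamma$.

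Concretely, I would first fix the primitive recursive signum function $\mathit{sg}$, with $\mathit{sg}(0)=0$ and $\mathit{sg}(n)=1$ for $n>0$; a constant for it and its defining equations are available in $\mathsf{BIM}$ by the conventions of Subsection~\ref{SS:fs}. Using the closure of $\mathcal{N}$ under composition (Axiom~\ref{ax:recop}), together with $\mathit{Id}$, I would produce $\gamma$ in $\mathcal{N}$ with $\gamma(p)=\mathit{sg}\bigl(\alpha(p)\bigr)\cdot(p+1)$ for every $p$; thus $\gamma(p)=p+1$ when $\alpha(p)\neq 0$ and $\gamma(p)=0$ when $\alpha(p)=0$. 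Note this construction makes sense even when $D_\alpha$ happens to be empty: in that case $\gamma$ is just the zero function and $E_\gamma=\emptyset$ as well, so no case split on ``$D_\alpha$ inhabited or not'' is ever needed.

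Then I would verify $D_\alpha=E_\gamma$ by two inclusions. For $D_\alpha\subseteq E_\gamma$: if $\alpha(m)\neq 0$, then $\mathit{sg}\bigl(\alpha(m)\bigr)=1$, so $\gamma(m)=m+1$, whence $m\in E_\gamma$. For $E_\gamma\subseteq D_\alpha$: if $m\in E_\gamma$, pick $p$ with $\gamma(p)=m+1$; since $m+1\neq 0$ we get $\gamma(p)\neq 0$, hence $\mathit{sg}\bigl(\alpha(p)\bigr)\neq 0$, so $\alpha(p)\neq 0$ and $\gamma(p)=p+1$; therefore $p+1=m+1$, so $p=m$ and $\alpha(m)\neq 0$, i.e.\ $m\in D_\alpha$.

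There is essentially no obstacle here; the only points requiring a modicum of care are the availability of the defined-by-cases function $\gamma$ inside $\mathsf{BIM}$ (handled by Axiom~\ref{ax:recop} and the primitive recursive constants) and the harmless observation that the construction degrades gracefully to the empty set. The lemma is really just the remark that $\mathbf{\Sigma^0_1}$ contains all decidable sets, proved uniformly in $\alpha$.
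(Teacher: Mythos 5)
Your proposal is correct and is essentially the paper's own proof: the paper defines $\gamma(n)=n+1$ when $\alpha(n)\neq 0$ and $\gamma(n)=0$ otherwise, exactly your construction (you merely phrase the case distinction via $\mathit{sg}$ and spell out the two inclusions). Nothing further is needed.
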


\begin{proof}  Let $\alpha$  be given. Define $\gamma$ such that, for each $n$, if $\alpha(n) = 0$, then $\gamma(n) = 0$, and, if $\alpha(n)\neq 0$, then $\gamma(n) = n+1$. Clearly, $D_\alpha = E_\gamma$.
 \end{proof}

 \begin{lemma}\label{L:consbcp}
        $\mathsf{BIM} +\mathbf{BCP} \vdash \neg \forall \gamma\exists\alpha[E_\gamma= D_\alpha]$.

       \end{lemma}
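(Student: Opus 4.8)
The plan is to reduce the lemma to the fact, recorded in Subsection \ref{SSS:lpo}, that $\mathsf{BIM}+\mathbf{BCP}\vdash\neg\mathbf{LPO}$. By pure intuitionistic logic it then suffices to prove, in $\mathsf{BIM}$ alone, the implication $\forall\gamma\exists\alpha[E_\gamma=D_\alpha]\rightarrow\mathbf{LPO}$; combining this with $\neg\mathbf{LPO}$ gives $\neg\forall\gamma\exists\alpha[E_\gamma=D_\alpha]$ over $\mathsf{BIM}+\mathbf{BCP}$.

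To establish the implication, I would argue as follows. Work in $\mathsf{BIM}$ and assume $\forall\gamma\exists\alpha[E_\gamma=D_\alpha]$. Let $\alpha$ be given; we must show $\exists n[\alpha(n)\neq 0]\vee\forall n[\alpha(n)=0]$. Using the closure properties of $\mathcal{N}$ from Axiom \ref{ax:recop}, define $\gamma$ such that, for each $n$, $\gamma(n)=1$ if $\exists i\le n[\alpha(i)\neq 0]$, and $\gamma(n)=0$ if not. Since $\gamma$ takes only the values $0$ and $1$, we have $E_\gamma\subseteq\{0\}$ and, moreover, $0\in E_\gamma\leftrightarrow\exists n[\gamma(n)=1]\leftrightarrow\exists n[\alpha(n)\neq 0]$. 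By the hypothesis, applied to this particular $\gamma$, find $\delta$ with $E_\gamma=D_\delta$. Since numerical equality is decidable, distinguish two cases. If $\delta(0)\neq 0$, then $0\in D_\delta=E_\gamma$, hence $\exists n[\alpha(n)\neq 0]$. If $\delta(0)=0$, then $0\notin D_\delta=E_\gamma$, hence $\neg\exists n[\gamma(n)=1]$, hence $\neg\exists n[\alpha(n)\neq 0]$, and, again by decidability of numerical equality, $\forall n[\alpha(n)=0]$. In both cases the required disjunction holds, and as $\alpha$ was arbitrary we obtain $\mathbf{LPO}$.

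I do not expect a genuine obstacle here; the one point to be careful about is that the hypothesis $\forall\gamma\exists\alpha[E_\gamma=D_\alpha]$ is used \emph{pointwise}, only for the single $\gamma$ manufactured from the given $\alpha$, so no choice principle beyond what $\mathsf{BIM}$ already supplies is invoked, and the case split on $\delta(0)$ is legitimate because $\mathsf{BIM}$ decides equality of natural numbers. One could instead try to apply $\mathbf{BCP}$ directly to the hypothesis, but that would first require extracting a continuous functional $\gamma\mapsto\delta$ realizing $E_\gamma=D_\delta$, which is an unnecessary detour; routing through $\neg\mathbf{LPO}$ keeps the argument short and avoids any continuity bookkeeping.
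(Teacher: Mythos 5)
Your proof is correct, but it takes a different route from the paper. The paper applies $\mathbf{BCP}$ directly: from the hypothesis $\forall \gamma\exists\alpha[E_\gamma= D_\alpha]$ it extracts $\forall \gamma[0 \in E_\gamma \;\vee\; 0 \notin E_\gamma]$, applies continuity at the zero sequence $\underline 0$ to get a neighbourhood $\overline \gamma p = \underline{\overline 0}p$ on which the answer is uniform, and refutes both alternatives by testing $\underline 0$ (where $0 \notin E_{\underline 0}$) against $\underline{\overline 0}p \ast \underline 1$ (where $0 \in E_{\underline{\overline 0}p \ast \underline 1}$). You instead factor the argument: you prove, in $\mathsf{BIM}$ alone, that $\forall \gamma\exists\alpha[E_\gamma= D_\alpha] \rightarrow \mathbf{LPO}$ (your construction of $\gamma$ from a given $\alpha$, together with the decidable case split on $\delta(0)$ and the step $\neg\exists n[\alpha(n)\neq 0]\rightarrow\forall n[\alpha(n)=0]$, is sound), and then you invoke the fact, recorded in Subsubsection \ref{SSS:lpo}, that $\mathsf{BIM}+\mathbf{BCP}\vdash\neg\mathbf{LPO}$. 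The two proofs are cousins — the continuity work you outsource to $\neg\mathbf{LPO}$ is exactly the kind of argument the paper performs by hand — but your decomposition is more modular and yields the slightly more informative intermediate result that $\forall \gamma\exists\alpha[E_\gamma= D_\alpha]$ already implies $\mathbf{LPO}$ over $\mathsf{BIM}$, which by the paper's own remark means the statement makes no constructive sense; the price is that your proof is not self-contained, since the paper only cites $\mathsf{BIM}+\mathbf{BCP}\vdash\neg\mathbf{LPO}$ from elsewhere rather than proving it, whereas the paper's direct argument needs nothing beyond $\mathbf{BCP}$ itself.
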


\begin{proof}   Assume that $\forall \gamma\exists\alpha[E_\gamma= D_\alpha]$. Conclude that $\forall \gamma[  0 \in E_\gamma \;\vee\; 0 \notin E_\gamma]$. Using Brouwer's Continuity Principle $\mathbf{BCP}$\footnote{For  $\mathbf{BCP}$, see Subsubsection \ref{SSS:cp}.}, find $p$ such that {\it either} $\forall \gamma[\overline \gamma p = \underline{\overline 0}p\rightarrow 0 \in E_\gamma]$, {\it or}  $\forall \gamma[\overline \gamma p = \underline{\overline 0}p\rightarrow 0 \notin E_\gamma]$. Note that $0 \notin E_{\underline 0}$ and $0 \in E_{\underline{\overline 0} p\ast \underline 1}$, and conclude that both alternatives are false.
\end{proof}
\begin{lemma}\label{L:ct} $\mathsf{BIM} + \mathbf{CT} \vdash \exists \gamma\forall \alpha[ D_\alpha \subseteq E_\gamma \rightarrow \exists n \in E_\gamma[n \notin D_\alpha]]$. 
\end{lemma}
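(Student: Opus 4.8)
The plan is to read this lemma as the constructive counterpart of the classical fact that some enumerable set is not decidable, and to build the required $\gamma$ explicitly from the partial computable apparatus supplied by $\mathbf{CT}$. Under the assumption $\mathbf{CT}$, fix $\tau$ and $\psi$ with $\forall\alpha\exists e\forall n\exists z[\tau(e,n,z)=1\;\wedge\;\forall i<z[\tau(e,n,i)\neq 1]\;\wedge\;\psi(z)=\alpha(n)]$, and write, for each $e,n$, ``$\{e\}(n)$ is defined with value $v$'' as an abbreviation of $\exists z[\tau(e,n,z)=1\;\wedge\;\forall i<z[\tau(e,n,i)\neq 1]\;\wedge\;\psi(z)=v]$; note this $v$, if it exists, is unique. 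Consider the \emph{diagonal halting set with output $0$},
\[A := \{a\mid \{a\}(a)\text{ is defined with value }0\}.\]
Being $\mathbf{\Sigma^0_1}$ (its defining matrix is obtained from $\tau,\psi$ by a bounded quantifier and the primitive recursive operations available in $\mathsf{BIM}$), $A$ has an enumerator: there is $\gamma$ with $E_\gamma = A$, for instance the $\gamma$ given by $\gamma(p) = p'+1$ when that matrix holds at $(p',p'')$ and $\gamma(p)=0$ otherwise. This $\gamma$ is the witness for the outermost $\exists\gamma$.

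It remains to show $\forall\alpha[D_\alpha\subseteq E_\gamma\rightarrow\exists n\in E_\gamma[n\notin D_\alpha]]$. Let $\alpha$ be given with $D_\alpha\subseteq E_\gamma$. Since $\alpha$ is total, $\mathbf{CT}$ provides some $e$ with $\forall n[\{e\}(n)\text{ is defined with value }\alpha(n)]$; in particular $\{e\}(e)$ is defined with value $\alpha(e)$. As $\alpha$ is total I may decide whether $\alpha(e)=0$ or $\alpha(e)\neq 0$. If $\alpha(e)=0$, then $e\in A=E_\gamma$, while $\alpha(e)=0$ means $e\notin D_\alpha$, so $e$ witnesses $\exists n\in E_\gamma[n\notin D_\alpha]$. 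If $\alpha(e)\neq 0$, then $e\in D_\alpha\subseteq E_\gamma=A$, so $\{e\}(e)$ is defined with value $0$; by uniqueness of that value, $\alpha(e)=0$, contradicting $\alpha(e)\neq 0$, so this case is vacuous and the conclusion follows ex falso. In either case $\exists n\in E_\gamma[n\notin D_\alpha]$, and since $\alpha$ was arbitrary this establishes the lemma.

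I expect the argument to be almost entirely bookkeeping, with two points meriting care. First, the ``least witness'' clause $\forall i<z[\tau(a,a,i)\neq 1]$ hidden in ``$\{a\}(a)$ is defined with value $0$'' is indispensable: without it, membership in $A$ would merely assert that \emph{some} computation trace for $(a,a)$ carries output $0$, not that the genuine value of $\{a\}(a)$ equals $0$, and the second case above would break, since uniqueness of the value would fail. Second, the case split ``$\alpha(e)=0$ or $\alpha(e)\neq 0$'' is legitimate intuitionistically precisely because $\alpha$ is a total function, so $\alpha(e)$ is an honest natural number. Beyond this I foresee no obstacle: no fixed-point or recursion theorem is needed, because we diagonalise against $\alpha$ directly at an index of $\alpha$ itself rather than against an arbitrary enumerable set, and the existence of a $\gamma$ with $E_\gamma=A$ in $\mathsf{BIM}$ is the routine passage from a $\mathbf{\Sigma^0_1}$ description to an enumerator via composition and primitive recursion from $\tau$ and $\psi$.
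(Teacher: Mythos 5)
Your proof is correct, and it is a leaner variant of, rather than a copy of, the paper's argument. The paper also diagonalises with $\mathbf{CT}$, but with a different witness set and a different target for $\mathbf{CT}$: it takes $E_\gamma = K = \{n \mid \exists z[\tau(n,n,z)=1]\}$, the self-halting set, and, given $\alpha$ with $D_\alpha \subseteq K$, it first constructs an auxiliary total function $\delta$ (with $\delta(n) = \psi(z)+1$ for the least $z$ with $\tau(n,n,z)=1$ when $\alpha(n)\neq 0$, and $\delta(n)=0$ otherwise), a definition that implicitly uses the unbounded-search part of Axiom 3, the search being guaranteed to terminate because $n \in D_\alpha \subseteq K$; it then applies $\mathbf{CT}$ to $\delta$ to obtain an index $e$, notes $e \in K$ because $\delta$ is total, and gets the contradiction $\psi(z)=\psi(z)+1$ when $\alpha(e)\neq 0$. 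You instead take $E_\gamma$ to be the set of all $a$ such that $\{a\}(a)$ is defined with value $0$ and apply $\mathbf{CT}$ directly to $\alpha$, diagonalising at an index of $\alpha$ itself; the output-value clause built into your set, together with the least-witness uniqueness you rightly flag as essential, replaces the paper's ``add one on the diagonal'' trick. Your route buys economy: no auxiliary $\delta$, hence no appeal to unbounded search inside the proof, just a case split on the decidable alternative $\alpha(e)=0 \;\vee\; \alpha(e)\neq 0$ (with the second case closed off ex falso). The paper's route buys the slightly stronger advertising point made right after the lemma, namely that the canonical halting set $K$ itself positively fails to be decidable. Both arguments are valid in $\mathsf{BIM}+\mathbf{CT}$, and your passage from the $\mathbf{\Sigma^0_1}$ description of your set to an enumerator $\gamma$ is indeed routine in $\mathsf{BIM}$.
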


\begin{proof} Using Church's Thesis $\mathbf{CT}$\footnote{ For  $\mathbf{CT}$. see Subsubsection \ref{SSS:ct}.},  find $\tau$, $\psi$ such that $\forall \alpha \exists e\forall n \exists z[\tau(e, n, z) \neq 0 \;\wedge\; \forall i<z[\tau(e, n, i) =0 ]\;\wedge\; \psi(z) = \alpha(n)]$. Define, for each $n$, $W_n:=\{m\mid\exists z[\tau(n,m,z)\neq 0]\}$. Define the so-called  \textit{(self-)halting problem} $K := \{ n\mid n \in W_n\}$.  Define $\gamma$ such that, for each $n$, \textit{if} $\tau(n',n',n'') \neq 0$ and $\forall i < n''[\tau(n',n', i) =0]$, \textit{then} $\gamma(n) = n'+1$, and, \textit{if not, then} $\gamma(n) = 0$, and note that $K= E_\gamma$. Let $\alpha$ be given such that $D_\alpha \subseteq E_\gamma$. Define $\delta$ such that, for each $n$, if $\alpha(n) =0$, then $\delta(n) = 0$ and, if $\alpha(n) \neq 0$, then $\delta(n) = \psi(z) +1$ where $z$ is the least number $i$ such that $\tau(n,n,i) \neq 0$. Find $e$ such that $\forall n \exists z[\tau(e, n, z) \neq 0 \;\wedge\; \forall i<z[\tau(e, n, i) =0]\;\wedge\; \psi(z) = \delta(n)]$. Note that $ e \in K=E_\gamma$.
Assume $\alpha(e) \neq 0$. Let $z$ be the least $i$ such that $\tau(e,e,i) = 1$ and conclude that $\psi(z) = \psi(z) +1$. Contradiction. Conclude that $\alpha(e) = 0$ and $ e\in E_\gamma\setminus D_\alpha$.
 \end{proof}
    \begin{definition} Let $D,E$ be subsets of $\omega$ such that $D\subseteq E$.   $D$  \emph{ is properly contained in}   $E$ or $D$ \emph{is surpassed by $E$} if and only if  $\exists n\in E[n \notin D]$. $X\subseteq \omega$  \emph{positively fails to be decidable} if and only if for every $\alpha$,   if $D_\alpha\subseteq X$, then $D_\alpha$ is  properly contained in $X$. \end{definition}  
    
    Lemma \ref{L:ct} shows that $\mathbf{CT}$ enables one to find $\gamma$ such that $E_\gamma$  \textit{positively fails to be decidable}.

\begin{definition}  We introduce the following statement.
 \[\mathbf{EnDec?!}:\forall \gamma[\forall \alpha[(D_\alpha \subseteq E_\gamma \; \wedge \; \exists n[n \notin D_\alpha]) \rightarrow \exists p[p \notin D_\alpha \; \wedge \; p \in E_\gamma]] \rightarrow E_\gamma = \omega].\] 
i.e.  \emph{for every $\gamma$,  \emph{if,} for each $\alpha$,  if $D_\alpha\subseteq E_\gamma$  is surpassed by $\omega$, then $D_\alpha$ is surpassed by  $E_\gamma$, \emph{then} 
        $E_\gamma = \omega$.}\end{definition}

  Note that $\mathbf{EnDec?!}$ implies:   \textit{``There is no $\gamma$ such that $E_\gamma$ positively fails to be decidable.''} and thus contradicts $\mathbf{CT}$.

      \begin{theorem}\label{T:oiendec}
 $\mathsf{BIM}\vdash\mathbf{OI}(2^\omega)\rightarrow\mathbf{EnDec?!}$.

      \end{theorem}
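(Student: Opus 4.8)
I want to prove $\mathbf{OI}(\mathcal{C}) \rightarrow \mathbf{EnDec?!}$. So assume $\mathbf{OI}(\mathcal{C})$, and let $\gamma$ be given with the property that every $\alpha$ for which $D_\alpha$ is a proper subset of $\mathbb{N}$ contained in $E_\gamma$ in fact has $D_\alpha$ a proper subset of $E_\gamma$. I must show $E_\gamma = \mathbb{N}$, i.e. $\forall n[n \in E_\gamma]$. The idea is to build, from $\gamma$, an open set $\mathcal{G}_\delta \subseteq \mathcal{N}$ which is progressive in $\mathcal{C}$, and arrange things so that $\mathcal{C} \subseteq \mathcal{G}_\delta$ forces every natural number into $E_\gamma$.

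**Encoding $\mathbb{N}$ into $\mathcal{C}$.** The natural move is to let each $\alpha \in \mathcal{C}$ "code" the finite set $D_\alpha = \{i \mid \alpha(i) \neq 0\} = \{i \mid \alpha(i) = 1\}$, or rather $\alpha$ codes a (possibly infinite) decidable subset of $\mathbb{N}$. Under the lexicographical order, smaller $\alpha$ in $\mathcal{C}$ means: at the first place of disagreement, $\alpha$ has a $0$ where the competitor has a $1$. I want to define $\mathcal{G}_\delta$ so that $\alpha \in \mathcal{G}_\delta$ holds roughly when $D_\alpha$ is \emph{not} a subset of $E_\gamma$, i.e. when $\alpha$ witnesses some $i$ with $\alpha(i)=1$ but $i \notin E_\gamma$ — but "not in $E_\gamma$" is not open, so instead I make it open by asking: $\exists n \exists i < n[\alpha(i) = 1 \wedge i \notin E_{\overline\gamma n}]$ and then arguing progressivity using the hypothesis on $\gamma$. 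Concretely: set $\delta$ so that $D_\delta$ consists of all $s \in Bin$ such that $\exists i < \mathit{length}(s)[s(i) = 1 \wedge i \notin E_{\overline\gamma(\mathit{length}(s))}]$ — or some monotone variant thereof; one checks $\mathcal{G}_\delta$ is genuinely open and that $\alpha \in \mathcal{G}_\delta$ is equivalent to $D_\alpha \not\subseteq E_\gamma$ in the weak sense that $\exists i[\alpha(i) \neq 0 \wedge \neg(i \in E_\gamma)]$ being \emph{positively} witnessed.

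**Progressivity.** Here is where the hypothesis on $\gamma$ enters. Let $\alpha \in \mathcal{C}$ with $\mathcal{C}_{<_{lex}\alpha} \subseteq \mathcal{G}_\delta$; I must show $\alpha \in \mathcal{G}_\delta$. Suppose not: then $D_\alpha \subseteq E_\gamma$. Now consider any $\beta <_{lex} \alpha$ in $\mathcal{C}$. At the first place $i$ where $\beta$ and $\alpha$ differ we have $\beta(i) = 0$, $\alpha(i) = 1$, and $\overline\beta i = \overline\alpha i$. Since $\beta \in \mathcal{G}_\delta$, some $j$ has $\beta(j) = 1$ and $j \notin E_\gamma$; since $D_\alpha \subseteq E_\gamma$, such a $j$ must be one where $\beta$ and $\alpha$ disagree, hence $j > i$ is impossible in the relevant direction — a careful bookkeeping shows the only way $\mathcal{C}_{<_{lex}\alpha} \subseteq \mathcal{G}_\delta$ can hold, given $D_\alpha \subseteq E_\gamma$, is if $D_\alpha$ is a \emph{proper} subset of $\mathbb{N}$ (one can always flip some bit of $\alpha$ to $0$ below to get a smaller element, unless $\alpha = \underline 1$). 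If $D_\alpha$ is proper, the hypothesis on $\gamma$ gives $p \notin D_\alpha$ with $p \in E_\gamma$; but then $\beta := \overline\alpha p \ast \langle 1 \rangle \ast \underline 0$ — wait, that is \emph{larger}; rather one uses a $\beta$ obtained by setting $\alpha$'s bit at some $q \in D_\alpha$ to $0$, so $\beta <_{lex}\alpha$, hence $\beta \in \mathcal{G}_\delta$, extracting a witness $i \notin E_\gamma$ with $\beta(i) = 1$, i.e. $\alpha(i) = 1$, contradicting $D_\alpha \subseteq E_\gamma$. So $\alpha \in \mathcal{G}_\delta$ after all — except when $D_\alpha = \mathbb{N}$, where $\alpha = \underline 1$ and one checks directly, using $\mathcal{C}_{<_{lex}\underline 1}\subseteq\mathcal{G}_\delta$ and the $\gamma$-hypothesis (with $D_\alpha$ an arbitrary finite set $\subsetneq \mathbb{N}$ whose characteristic function is $<_{lex}\underline 1$), that $E_\gamma = \mathbb{N}$, which is what we wanted.

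**The main obstacle.** The delicate part is getting the definition of $\mathcal{G}_\delta$ exactly right so that (a) it is visibly open, (b) progressivity genuinely follows from the intensional hypothesis on $\gamma$ (which only promises a witness $p \in E_\gamma \setminus D_\alpha$ when $D_\alpha \subsetneq \mathbb{N}$ and $D_\alpha \subseteq E_\gamma$), and (c) $\mathcal{C} \subseteq \mathcal{G}_\delta$ really does yield $E_\gamma = \mathbb{N}$ — the last point forces us to handle $\underline 1$ separately and to run the $\gamma$-hypothesis on cleverages of $\underline 1$'s lexicographic predecessors. Applying $\mathbf{OI}(\mathcal{C})$ then gives $\mathcal{C} \subseteq \mathcal{G}_\delta$, in particular $\underline 1 \in \mathcal{G}_\delta$, producing $i$ with $\underline 1(i) = 1$, i.e. $1 = 1$, and $i \notin E_\gamma$ — \emph{contradiction}, so the assumption that $\underline 1 \notin \mathcal{G}_\delta$, unwound, yields $E_\gamma = \mathbb{N}$. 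I expect the bookkeeping in the progressivity step, matching the open approximation $E_{\overline\gamma n}$ against the lexicographic comparison, to be the real work; everything else is routine coding in $\mathsf{BIM}$.
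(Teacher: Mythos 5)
There is a genuine gap, and it lies in the very first design decision: the orientation of your open set. You want $\alpha \in \mathcal{G}_\delta$ to mean ``$D_\alpha$ positively fails to be a subset of $E_\gamma$'', i.e.\ $\exists i[\alpha(i)=1 \;\wedge\; i \notin E_\gamma]$. But ``$i \notin E_\gamma$'' is a $\Pi^0_1$ condition, and no choice of $\delta$ computed from $\gamma$ presents this set in the form $\mathcal{G}_\delta$; your finite-stage surrogate ``$\exists i<\mathit{length}(s)[s(i)=1 \wedge i \notin E_{\overline\gamma(\mathit{length}(s))}]$'' defines a quite different (far larger) set, since $i \notin E_{\overline\gamma n}$ for one $n$ says nothing about $i \notin E_\gamma$, and no ``monotone variant'' repairs this. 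Worse, even granting the intended set, it cannot be progressive: $\mathcal{C}_{<_{lex}\underline 0}=\emptyset$, so progressivity would force $\underline 0 \in \mathcal{G}_\delta$, which is impossible since $\underline 0$ has no $1$'s; and $\underline 1 \in \mathcal{G}_\delta$ would mean $\exists i[i \notin E_\gamma]$, so your set must \emph{exclude} $\underline 1$ exactly when the desired conclusion $E_\gamma=\mathbb{N}$ holds. Accordingly your endgame is a non sequitur: from $\mathcal{C}\subseteq \mathcal{G}_\delta$ you would obtain some $i \notin E_\gamma$, which is not a contradiction and does not ``unwind'' to $E_\gamma = \mathbb{N}$. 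Several intermediate steps also argue by contradiction to establish membership in an open set or the inclusion $D_\alpha \subseteq E_\gamma$, which is not available in $\mathsf{BIM}$.

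The workable construction goes the other way around, and this is what the paper does: fix $n$ and let $\mathcal{G}$ consist of those $\alpha$ in $\mathcal{C}$ such that \emph{either} $\exists p[p \in E_\gamma \wedge \alpha(p)=0]$ \emph{or} $n \in E_\gamma$ — both disjuncts are witnessed by finite data ($\alpha(p)=0$ together with some $i$ with $\gamma(i)=p+1$), so this genuinely is a $\mathcal{G}_\zeta$. Progressivity is proved by flipping a bit $k \in D_\alpha$ down to $0$ (giving $\beta <_{lex}\alpha$, hence $\beta \in \mathcal{G}$), which yields, for each $k \in D_\alpha$, a decidable alternative ``$\alpha \in \mathcal{G}$ or $k \in E_\gamma$''; a least-witness search then carves out $\alpha^\ast \in \mathcal{C}$ with $D_{\alpha^\ast} \subseteq E_\gamma$ and $D_{\alpha^\ast} \subseteq D_\alpha$, to which the hypothesis on $\gamma$ applies, and either branch of the resulting case distinction puts $\alpha$ into $\mathcal{G}$. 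Finally $\underline 1 \in \mathcal{G}$ forces the second disjunct $n \in E_\gamma$, because $D_{\underline 1}=\mathbb{N}$ leaves no room for a missing $p$; quantifying over $n$ gives $E_\gamma = \mathbb{N}$. Your bit-flipping idea and the special role of $\underline 1$ are in the right spirit, but without reversing the roles of $D_\alpha$ and $E_\gamma$ in the definition of the open set, the proof cannot be carried out.
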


\begin{proof} Assume $\mathbf{OI}(2^\omega)$.

 Let  $\gamma,n$ be  given such that   $\forall \alpha  \in 2^\omega[ ( D_\alpha \subseteq E_\gamma\;\wedge\; n  \notin D_\alpha) \rightarrow \exists p[p \notin
      D_\alpha\; \wedge p\; \in E_\gamma]]$.
     Define $\zeta$  such that,  for all $a$ in $2^{<\omega}$, $\zeta(a) \neq 0$ if and only if $\exists p <length(a)\exists i<length(a)[\bigl(a(p) = 0 \;\wedge\; \gamma(i) = p+1\bigr)\vee \gamma(i) = n+1]$. 
        Note that $\mathcal{G}_\zeta=\{\alpha\in2^\omega\mid \exists p[p \notin D_\alpha \;\wedge\; p \in E_\gamma]\;\vee\; n \in E_\gamma\}$. 
  We now prove that $\mathcal{G}_\zeta$ is progressive in $2^\omega$.
        Let $\alpha$ in $2^\omega$ be given such that  $(2^\omega)_{<_{lex}\alpha}\subseteq \mathcal{G}_\zeta$.
        Let also $k$ in $D_\alpha$ be given.
Define $\beta$ in $2^\omega$ such that $\beta(k) = 0$ and $\forall m \neq k[\beta(m) = \alpha (m)]$. 
Note that $D_\beta = D_\alpha\setminus \{k\}$ and $\beta <_{lex} \alpha$ and, therefore,  $\beta \in \mathcal{G}_\zeta$.  
Find $p$ such that either $p \in E_\gamma\setminus D_\beta$ or $n \in E_\gamma$.  
Note that (1) if $p \in E_\gamma\setminus D_\beta$ and $p\neq k$, then $p \in E_\gamma \setminus D_\alpha$ and  $\alpha \in \mathcal{G}_\zeta$, and, (2) 
if $p \in E_\gamma\setminus D_\beta$ and $p = k$, then   $k \in E_\gamma$, and 
(3) if $n \in E_\gamma$, then $\alpha \in \mathcal{G}_\zeta$. 
We thus see that  $\forall k\in D_ \alpha[\alpha \in \mathcal{G}_\zeta\;\vee\; k \in E_\gamma]$, i.e.
 $\forall k\in D_\alpha\exists q[ \overline \alpha q \in D_\zeta\;\vee\;\gamma(q) = k+1]$.
Define $\delta$  such that $\forall k \in D_\alpha[\delta(k)=\mu q[\overline \alpha q \in D_\zeta\;\vee\;\gamma(q) = k+1]$.
 Define $\alpha^\ast$  such that  $\forall k[\alpha^\ast(k) \neq 0 \leftrightarrow \bigl(\alpha(k) \neq 0\;\wedge\;\gamma(\delta(k))=k+1\bigr)]$.
     Note that $D_{\alpha^\ast}\subseteq  E_\gamma$ and find $p$ in $E_\gamma\setminus D_{\alpha^\ast}$.
      \textit{Either} $\alpha(p)=0$ and $p \in E_\gamma\setminus D_\alpha$ and $\alpha \in \mathcal{G}_\zeta$, \textit{or} $\alpha(p) \neq 0$ and $\gamma(\delta(p)) \neq p+1$ and: $\overline \alpha\delta(p) \in D_\zeta$ and $\alpha \in \mathcal{G}_\zeta$.
       In both cases, $\alpha \in \mathcal{G}_\zeta$.
       We thus see that $\forall \alpha\in 2^\omega[(2^\omega)_{<_{lex}\alpha}\subseteq \mathcal{G}_\zeta\rightarrow\alpha\in\mathcal{G}_\zeta]$.
    Using $\mathbf{OI}(2^\omega)$  we conclude that $2^\omega\subseteq \mathcal{G}_\zeta$ and,  in particular,  $\underline 1\in\mathcal{G}_\zeta$, and $n \in E_\gamma$. Conclude that, for all $\gamma, n$, if $\forall \alpha  \in 2^\omega[ ( D_\alpha \subseteq E_\gamma\;\wedge\; n  \notin D_\alpha) \rightarrow \exists p[p \notin
      D_\alpha\; \wedge p\; \in E_\gamma]]$, then $n\in E_\gamma$.

Now let  $\gamma$ be given such that $ \forall \alpha \in 2^\omega[(D_\alpha \subseteq E_\gamma \; \wedge \; \exists n[n \notin D_\alpha]) \rightarrow \exists p[p \notin D_\alpha \; \wedge \; p \in E_\gamma]]$.
 Conclude that $\forall n[n \in E_\gamma]$ and   $  E_\gamma = \omega$.
 
  Conclude that, for each $\gamma$, if every decidable subset of $\omega$ that is contained in $E_\gamma$ and properly contained in $\omega$  is also properly contained in $E_\gamma$, then $E_\gamma=\omega$, i.e.   $\mathbf{EnDec?!}$.
 \end{proof}

     \begin{theorem}\label{T:endoi}
 $\mathsf{BIM}\vdash\mathbf{EnDec?!}\rightarrow \mathbf{OI}([0,1])$.  \end{theorem}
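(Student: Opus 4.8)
\textit{Proof proposal.} The plan is to recycle the construction in the proof of Theorem~\ref{T:ebioi01}. Assume $\mathbf{EnDec?!}$ and let $\alpha$ be given with $\mathcal{H}_\alpha$ progressive in $[0,1]$. Let $\delta$ and $\beta$ be defined exactly as in that proof, so $\delta(\langle\;\rangle)=(0_\mathbb{Q},1_\mathbb{Q})$, $\delta(s\ast\langle n\rangle)=R\bigl(\delta(s)\bigr)$ if $D_{\overline\alpha n}$ covers $\overline{L(\delta(s))}$ and $=L\bigl(\delta(s)\bigr)$ otherwise, and $E_\beta=\{s\mid D_\alpha\text{ covers }\overline{\delta(s)}\}$; recall that $E_\beta$ is monotone and that $E_\beta$ is a bar in $\mathcal{N}$, the latter being exactly where progressiveness of $\mathcal{H}_\alpha$ is used. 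Note that $s\in E_\beta$ holds iff $\overline{\delta(s)}$ is covered by some \emph{finite} $D_{\overline\alpha m}$, a property one can verify by a terminating search, and that $\langle\;\rangle\in E_\beta$ already yields the theorem, since $\overline{\delta(\langle\;\rangle)}=[0,1]$ and hence $[0,1]\subseteq\mathcal{H}_{\overline\alpha m}\subseteq\mathcal{H}_\alpha$. So, by $\mathbf{EnDec?!}$, it suffices to verify its hypothesis for $\beta$: for every $\mu$, if $D_\mu\subseteq E_\beta$ and $\exists s[s\notin D_\mu]$, then $\exists p[p\notin D_\mu\;\wedge\;p\in E_\beta]$.

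So let $\mu$ be given with $D_\mu\subseteq E_\beta$ and $s_1\notin D_\mu$. Decide whether $\langle\;\rangle\in D_\mu$. If so, then $\langle\;\rangle\in E_\beta$, hence, by monotonicity of $E_\beta$, $E_\beta=\mathbb{N}$, and $p:=s_1$ works. Assume now $\langle\;\rangle\notin D_\mu$. I would define $\zeta\in\mathcal{N}$, together with a possible \emph{halting step}, by recursion, maintaining the invariant that $\overline\zeta k\notin D_\mu$ as long as no halting step has yet occurred. At step $k$, put $s:=\overline\zeta k$ (so $s\notin D_\mu$; also $\delta(s\ast\langle 0\rangle)=L(\delta(s))$, the empty set covering no nondegenerate segment). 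Decide whether $s\ast\langle 0\rangle\in D_\mu$. If not, set $\zeta(k):=0$. If so, then $s\ast\langle 0\rangle\in D_\mu\subseteq E_\beta$, so one can find the least $m$ with $D_{\overline\alpha m}$ covering $\overline{L(\delta(s))}$, and then $\delta(s\ast\langle m\rangle)=R(\delta(s))$ by the definition of $\delta$. Decide whether $s\ast\langle m\rangle\in D_\mu$. If not, set $\zeta(k):=m$. If so, then $s\ast\langle m\rangle\in E_\beta$ provides a finite subset of $D_\alpha$ covering $\overline{R(\delta(s))}$, which together with $D_{\overline\alpha m}$ gives a finite $D_{\overline\alpha q}$ covering $\overline{\delta(s)}$; so $s\in E_\beta$; declare step $k$ the halting step, put $p:=s$, and let $\zeta(j):=0$ for all $j\ge k$. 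Each step is a bounded computation (a couple of decidable tests and a search that terminates because its precondition has just been checked), so $\zeta$ is total and the invariant holds by construction.

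Since $E_\beta$ is a bar in $\mathcal{N}$, one may find $k_0$ with $\overline\zeta(k_0)\in E_\beta$. As the recursion is deterministic, it is now decidable whether a halting step $k^\ast\le k_0$ occurred. If it did, $p:=\overline\zeta(k^\ast)$ lies in $E_\beta\setminus D_\mu$ by construction; if it did not, the invariant gives $\overline\zeta(k_0)\notin D_\mu$ while $\overline\zeta(k_0)\in E_\beta$ by the choice of $k_0$, so $p:=\overline\zeta(k_0)$ works. This establishes the hypothesis of $\mathbf{EnDec?!}$ for $\beta$; therefore $E_\beta=\mathbb{N}$, in particular $\langle\;\rangle\in E_\beta$, so some finite $D_{\overline\alpha m}$ covers $\overline{\delta(\langle\;\rangle)}=[0,1]$ and thus $[0,1]\subseteq\mathcal{H}_{\overline\alpha m}\subseteq\mathcal{H}_\alpha$.

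The delicate point is proving the hypothesis of $\mathbf{EnDec?!}$ for $\beta$ \emph{constructively}: one must actually exhibit a member of $E_\beta$ lying outside $D_\mu$, and the obvious strategies (repeatedly descend while avoiding $D_\mu$, falling back on inductiveness of $E_\beta$ when blocked) hinge on the non-constructive dichotomy ``we get blocked somewhere, or we never do''. The device above avoids this: every step makes only a bounded decision, and the one infinitary ingredient — that $E_\beta$ is a bar in $\mathcal{N}$, which is precisely where progressiveness of $\mathcal{H}_\alpha$ enters — is used only to fix in advance a depth $k_0$ beyond which a purely finite inspection of the recursion settles the matter.
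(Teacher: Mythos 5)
Your argument is correct, but it follows a genuinely different route from the paper's. You apply $\mathbf{EnDec?!}$ to the tree-indexed enumerable set $E_\beta$ of the Coquand construction from Theorem~\ref{T:ebioi01} (nodes $s$ whose segment $\overline{\delta(s)}$ admits a finite subcover $D_{\overline\alpha m}$), reusing from that proof the monotonicity of $E_\beta$ and the fact that it is a bar in $\mathcal{N}$ — so progressiveness of $\mathcal{H}_\alpha$ enters exactly once, packaged in the bar property — and you verify the hypothesis of $\mathbf{EnDec?!}$ by a deterministic descent steered by $D_\mu$, with a halting device that, at a blocked node, recovers membership in $E_\beta$ by the same cover-merging step the paper uses for inductiveness; the one delicate point, that the unbounded searches in the recursion are terminating (totalized by a decidable disjunction) and hence $\zeta$ exists by Axiom~\ref{ax:recop}, you gloss as a ``bounded computation'', but the construction is legitimate in $\mathsf{BIM}$. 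The paper's own proof of Theorem~\ref{T:endoi} instead works with a rational-indexed enumerable set $E_\gamma$ (indices of $q$ with $[0,q]$ finitely covered) and, for each decidable $D_\beta\subseteq E_\gamma$, builds a fresh real $\varepsilon$ by bisection steered by $D_\beta$, applies progressiveness to that $\varepsilon$ directly, and reads off the witness as a right endpoint, so no halting case ever arises. Your version buys a cleaner separation of concerns — it shows that $\mathbf{EnDec?!}$ can stand in for $\mathbf{\Sigma^0_1}$-$\mathbf{BI}$ on the specific monotone bar of Theorem~\ref{T:ebioi01}, which resonates with Theorem~\ref{T:solovay} — while the paper's argument is more self-contained and analytic, avoiding both the halting device and any appeal to the earlier construction.
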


      \begin{proof} Assume $\mathbf{EnDec?!}$.
      
      Let $\alpha$ be given such that  $\forall \delta\in[0,1][\;[0,\delta)\subseteq \mathcal{H}_\alpha \rightarrow \delta\in\mathcal{H}_\alpha]$.
      We shall prove that $[0,1]\subseteq \mathcal{H}_\alpha$.
      Define $\zeta$  such that $\forall n[\zeta(n)=\mu i[i \in \mathbb{Q}\;\wedge\;0_\mathbb{Q} \le_\mathbb{Q} i \le_\mathbb{Q} 1_\mathbb{Q}\;\wedge\; \forall j<i[\zeta(j) \neq \zeta(i)]]]$. 
       Note that $\{\zeta(n)\mid n \in \omega\} =\{q \in \mathbb{Q}|0_\mathbb{Q} \le_\mathbb{Q} q \le_\mathbb{Q} 1_\mathbb{Q}\}$.
       Define $\gamma$  such that, for each $n$,  \textit{if}  $[0, \zeta(n'')]\subseteq \mathcal{H}_{\overline\alpha n'}$, then $\gamma(n) = n'' + 1$, and, \textit{if not}, then $\gamma(n) = 0$. 
       Note that  $\forall n[n \in E_\gamma\leftrightarrow\exists m[\;[0, \zeta(n)]\subseteq \mathcal{H}_{\overline\alpha m}]]$. We shall prove that  every decidable subset of $\omega$ that is surpassed by $\omega$ and contained in $E_\gamma$  is also surpassed by $E_\gamma$.
       Let $\beta,n$ be given such that $n \notin D_\beta$ and $D_\beta \subseteq E_\gamma$.
      We shall prove that $ \exists p[p \in E_\gamma \setminus  D_\beta]$.
       Define $k_0:=\mu i[\zeta(i) = 0_\mathbb{Q}]$. Note that $0_\mathcal{R} \in \mathcal{H}_\alpha$ and $k_0 \in E_\gamma$. 
  Note that,     if $k_0 \notin D_\beta$, we are done.
        Now assume that $k_0 \in D_\beta$.
       Define  $\delta$   as follows, by induction.   Define $\delta(0) := (k_0,n)$. Note that  $\delta'(0)\in D_\beta$ and $\delta''(0)\notin D_\beta$ and  $\zeta\circ\delta'(0) <_\mathbb{Q}\zeta\circ\delta''(0)$.
        Let  $m$ be given such that  $\delta(m)$ has been defined and $\delta'(m)\in D_\beta$ and $\delta''(m)\notin D_\beta$ and  $\zeta\circ\delta'(m) <_\mathbb{Q}\zeta\circ\delta''(m)$.
       Find $l$ such that $\zeta(l) = \frac{1}{2}\bigl(\zeta\circ\delta'(m)+_\mathbb{Q} \zeta\circ\delta''(m)\bigr)$. 
        \textit{If} $l\in D_\beta$,  define $\delta(m+1)  = \bigl(l, \delta''(m)\bigr)$, and,  \textit{if} $l \notin D_\beta$,  define $\delta(m+1):= \bigl(\delta'(m), l\bigr)$.
        Note that, for each $m$,  $\delta'(m)\in D_\beta \subseteq E_\gamma$, and  $\exists r[\;[0, \zeta\circ\delta'(m)]\subseteq \mathcal{H}_{\overline\alpha r}]$.
       Also note that, for each $m$, $\delta''(m)\notin D_\beta$.
      Define $\varepsilon$  such that $\forall n[\varepsilon(n) = \mathit{double}_\mathbb{S}\bigl((\zeta\circ\delta'(n),\zeta\circ\delta''(n))\bigr)]$.
        Note that $\varepsilon \in [0,1]$ and  
        $\forall\eta \in [0,\varepsilon)\exists m[\eta <_\mathcal{R} (\zeta \circ \delta'(m))_\mathcal{R}]$ and $\forall\eta \in [0,\varepsilon)\exists r[\eta \in \mathcal{H}_{\overline\alpha r}\subseteq\mathcal{H}_\alpha]$, so $[0,\varepsilon)\subseteq \mathcal{H}_\alpha$.    Conclude that $\varepsilon \in \mathcal{H}_\alpha$. 
       Find $s, p$ such that $s \in \mathbb{S}$ and $s \in D_\alpha$ and $\varepsilon(p) \sqsubset_\mathbb{S} s$. 
        Note that $s'<_\mathbb{Q}\varepsilon'(p)<_\mathbb{Q} \zeta\circ\delta'(p)<_\mathbb{Q}\zeta\circ\delta''(p)<_\mathbb{Q} \varepsilon''(p)<_\mathbb{Q} s''$. Find $r$ such that $D_{\overline \alpha r}$ covers $[0, \zeta\circ\delta'(p)]$. Define $t :=\max(r, s+1)$ and  note that $D_{\overline \alpha t}$ covers $[0, \zeta\circ\delta''(p)]$.    Conclude that $\delta''(p) \in E_\gamma\setminus D_\beta$.
We thus see that  $\forall \beta[ (D_\beta\subseteq E_\gamma \;\wedge\;\;
\exists n[n\notin  D_\beta])\rightarrow \exists n [n \in E_\gamma \setminus D_\beta]]$.
Using $\mathbf{EnDec?!}$,  we conclude that $E_\gamma =\omega$. 
 Define $k_1:=\mu i[\zeta(k_1) =1_\mathbb{Q}]$ and find $n$ such that $\gamma(n) = k_1 +1$. 
 Conclude that  $n'' = k_1$ and  $D_{\overline \alpha n'}$ covers $[0_\mathbb{Q},1_\mathbb{Q}]$ and $[0_\mathbb{Q},1_\mathbb{Q}] \subseteq \mathcal{H}_{\overline \alpha n '} \subseteq \mathcal{H}_\alpha$. 
 
 We thus see that, for each $\alpha$, if $\mathcal{H}_\alpha$ is progressive in $[0,1]$, then $[0,1]\subseteq \mathcal{H}_\alpha$, i.e. $\mathbf{OI}([0,1])$.
     \end{proof}

    \begin{corollary}\label{Cor:sumup} $\mathsf{BIM} \vdash  \mathbf{OI}([0,1]) \leftrightarrow \overleftarrow{\mathbf{Ded}}\leftrightarrow 
  \overleftarrow{\mathbf{Ded}_{\omega^\omega}} \leftrightarrow \mathbf{OI}(2^\omega)\leftrightarrow \mathbf{EnDec?!}$. \end{corollary}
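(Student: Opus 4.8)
The corollary is not a fresh result but the harvest of the five implications proved one after another in this section, which have been arranged deliberately so as to form a closed cycle. So the plan is just to read off the cycle and observe that it collapses the five statements into a single equivalence class.

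\textbf{Steps, in order.} First I would recall that Theorem \ref{T:oided} gives $\mathsf{BIM}\vdash\mathbf{OI}([0,1])\rightarrow\mathbf{Contr(Ded)}$, that Theorem \ref{T:deddedN} gives $\mathsf{BIM}\vdash\mathbf{Contr(Ded)}\rightarrow\mathbf{Contr(Ded)}_\mathcal{N}$, that Theorem \ref{T:oic} gives $\mathsf{BIM}\vdash\mathbf{Contr(Ded)}_\mathcal{N}\rightarrow\mathbf{OI}(\mathcal{C})$, that Theorem \ref{T:oiendec} gives $\mathsf{BIM}\vdash\mathbf{OI}(\mathcal{C})\rightarrow\mathbf{EnDec?!}$, and finally that Theorem \ref{T:endoi} gives $\mathsf{BIM}\vdash\mathbf{EnDec?!}\rightarrow\mathbf{OI}([0,1])$. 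Concatenating these head-to-tail one obtains in $\mathsf{BIM}$ the cyclic chain
$$\mathbf{OI}([0,1])\rightarrow\mathbf{Contr(Ded)}\rightarrow\mathbf{Contr(Ded)}_\mathcal{N}\rightarrow\mathbf{OI}(\mathcal{C})\rightarrow\mathbf{EnDec?!}\rightarrow\mathbf{OI}([0,1]).$$
Since every one of these statements is reachable from every other by travelling around this cycle, all five are provably equivalent over $\mathsf{BIM}$; in particular each of the ten biconditionals claimed in the corollary is obtained by composing at most four of the listed implications. Nothing beyond this bookkeeping is required, and in fact there is no need to prove all $\binom{5}{2}$ pairwise equivalences separately — the cycle does the whole job.

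\textbf{Main obstacle.} Honestly there is none at this point: the work has already been done in Theorems \ref{T:oided}, \ref{T:deddedN}, \ref{T:oic}, \ref{T:oiendec} and \ref{T:endoi}, and the only thing to verify is that the antecedent of each implication is literally the consequent of the previous one, with no mismatch between, say, $\mathbf{OI}([0,1])$ and $\mathbf{OI^+}([0,1])$ (they coincide by the remark following Theorem \ref{T:oihb}). The substantive difficulty lay entirely in Theorems \ref{T:oic} and \ref{T:oiendec}, where a progressive open set must be threaded through a $<_{lex}$-increasing sequence or a halting-type construction; once those are granted, the corollary is immediate.
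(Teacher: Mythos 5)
Your proposal is correct and is exactly the paper's own argument: the corollary is obtained by citing Theorems \ref{T:oided}, \ref{T:deddedN}, \ref{T:oic}, \ref{T:oiendec} and \ref{T:endoi} and closing the implication cycle, just as you describe. Nothing further is needed.
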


       \begin{proof}  Use Theorems \ref{T:oiendec} and \ref{T:endoi} and Corollary \ref{C:oided}.\end{proof}
       
 \section{Below a bar, $<_{KB}$ is a well-ordering}
  
 \begin{definition} $<_{KB}$ is a binary relation on $\omega$ satisfying  $\forall s\forall t[s<_{KB}t\leftrightarrow (t \sqsubset s\;\vee\; s <_{lex} t)]$.

  The ordering $<_{KB}$ is called the \emph{Kleene-Brouwer ordering} of $\omega$.
  
   For every decidable\footnote{$X\subseteq \omega$ is {\it decidable} if and only if $\exists \alpha[X=D_\alpha]$.} $X\subseteq \omega$,  $Below(X):=\{s \mid \forall t\sqsubseteq s[t \notin X]\}$.
  
   For every decidable $Y\subseteq \omega$,  \emph{ $<_{KB}$ well-orders $Y$} if and only if, for all $\zeta$, if $\forall n[\zeta(n+1)<_{KB} \zeta(n)]$, then $\exists n[\zeta(n)\notin Y]$.
  \end{definition}
  \begin{lemma}\label{L:wellorders2times} Let $Y$ be a decidable subset of $\omega$. The following statements are equivalent in $\mathsf{BIM}$. 
  \begin{enumerate}[\upshape(i)] \item $<_{KB}$ well-orders $Y$. \item for all $\zeta$, there exists $n$ such that either  $\neg\bigl(\zeta(n+1)<_{KB} \zeta(n)\bigr)$or $\zeta(n)\notin Y$. \end{enumerate} \end{lemma}
  
  \begin{proof} (i) $\Rightarrow$ (ii). Let $\zeta$ be given. Define $\zeta^\ast$ such that $\zeta^\ast(0)=\zeta(0)$ and, for each $n$, if $\forall i\le n[\zeta(i+1) <_{KB}\zeta(i)]$, then $\zeta^\ast(n+1)=\zeta(n+1)$, and, if  not, then $\zeta^\ast(n+1) = \zeta^\ast(n)\ast\langle 0\rangle$. 
  Note that $\forall n[\zeta^\ast(n+1)<_{KB} \zeta^\ast(n)]$ and find $n$ such that $\zeta^\ast (n) \notin Y$. 
  Either $\zeta(n) =\zeta^\ast(n)\notin Y$ or $\exists i<n[\neg(\zeta(i+1)<_{KB} \zeta(i)\bigr)]$.

\smallskip (ii) $\Rightarrow$ (i). Obvious.  \end{proof}
 \begin{definition}\label{D:finiteseqbinary} For all $s,n$, if $n \ge \mathit{length}(s)$, then $s(n) = 0$. \footnote{See Subsection \ref{SS:codfs}.}

 We define ${\bm
\delta}$ such that ${\bm \delta}(\langle\;\rangle)=\langle\;\rangle$ and,   $\forall s\forall n[{\bm \delta}(s\ast\langle n \rangle) = {\bm \delta}( s)\ast \underline{\overline 0}n\ast\langle 1 \rangle]$. \end{definition}
Note that $\forall s[{\bm \delta}(s) \in 2^{<\omega}]$ and $\forall s \forall t[s<_{KB} t\leftrightarrow \bigl({\bm \delta}(t) <_{lex} {\bm \delta}(s)\;\vee \;{\bm \delta}(t)\sqsubset{\bm \delta}(s)\bigr)]$. 
Note that,  for all $\gamma$ in $2^\omega$, $\exists \beta[\gamma={\bm \delta}|\beta]$ if and only if  $\forall m \exists n>m[\beta(n)=1]$.
\begin{theorem}\label{T:dedKB1} The following statements are equivalent in $\mathsf{BIM}$:

\begin{enumerate}[\upshape (i)] \item 
 $\overleftarrow{\mathbf{Ded}_{\omega^\omega}}$. \item  
 
 For all $\alpha$, if $\mathit{Bar}_{\omega^\omega}(D_\alpha)$, then $<_{KB}$ well-orders $Below(D_\alpha)$. \item $\mathbf{EnDec?!}$.
 \end{enumerate}
 
\end{theorem}	
\begin{proof} (i) $\Rightarrow$ (ii).  Assume $\overleftarrow{\mathbf{Ded}_{\omega^\omega}}$. 
		
		Let $\alpha,\zeta$ be given such that $Bar_{\omega^\omega}(D_\alpha)$  and $\forall n[\zeta(n+1)<_{KB}\zeta(n)]$.
		 We shall prove that  $ \exists n\exists m\le\mathit{length}\bigl(\zeta(n)\bigr)[\overline{\zeta(n)}m\in D_\alpha]$.
		 
		 Define $\zeta^\ast$ such that, for each $n$, if $\neg\exists i\le n\exists m \le\mathit{length}\bigl(\zeta(i)\bigr)[\overline{\zeta(i)}m \in D_\alpha]$, then $\zeta^\ast(n)={\bm \delta}\circ\zeta(n)$, and,  if $\exists i\le n\exists m \le\mathit{length}\bigl(\zeta(i)\bigr)[\overline{\zeta(i)}m \in D_\alpha]$, then $\zeta^\ast(n)=\langle n+2\rangle$.
		Note that, for all $n$,  $\zeta(n+1)<_{lex} \zeta(n)\;\vee\;\zeta(n)\sqsubset\zeta(n+1)$ and $\zeta^\ast(n)<_{lex}\zeta^\ast(n+1)\;\vee\;\zeta^\ast(n)\sqsubset\zeta^\ast(n+1)$. We now first prove that $\forall n\exists j[\zeta^\ast(n+j)<_{lex}\zeta^\ast(n+j+1)]$.
		 
Let $n$ be given. 
Define $\beta$ such that $\beta(0)=\zeta^\ast(n)$ and, for each $j$, if $\forall i\le j[\zeta^\ast (n+i)\sqsubset \zeta^\ast(n+i+1)]$ then $\beta(j+1)=\zeta^\ast(n+j+1)$, and, if not, then $\beta(j+1)=\beta(j)\ast\langle 1 \rangle$. 
Note that, for all $j$, $\beta(j)\sqsubset \beta(j+1)$ and $length\bigl(\beta(j)\bigr)\ge j$. 
Find $\gamma$ in $2^\omega$ such that $\forall j[ \beta(j)\sqsubset \gamma]$. Note that, for each $j$, $length\bigl(\beta(j)\bigr)>0$ and $\bigl(\beta(j)\bigr)(length(\beta(j))-1)=1$ and conclude  that $\forall i\exists j>i[\gamma(j)=1]$. Find $\varepsilon$ such that ${\bm \delta}|\varepsilon=\gamma$. Find $k$ such that $\overline \varepsilon k \in D_\alpha$. Find $q:=\mathit{length}\bigl({\bm \delta}(\varepsilon(k)\bigr)$. 
Assume that $\beta(q)=\zeta^\ast(n+q)$. Then $\overline\varepsilon k=\overline{\zeta(n+q)}k\in D_\alpha$ and $\zeta^\ast(n+q)=\langle n+q+2\rangle$. Contradiction.
Conclude that $\beta(q)\neq \zeta^\ast(n+q)$ and that there exists  $j\le q$ such that 
$\neg(\zeta^\ast(n+j) \sqsubset \zeta^\ast(n+j+1))$ and  $\zeta^\ast(n+j) <_{lex} \zeta^\ast(n+j+1)$.

 Now define $\theta$ such that $\theta(0)=\mu j[\zeta^\ast(j) <_{lex}\zeta^\ast(j+1)]$ and $\forall j[\theta(i+1) =\mu j[j>\theta(i)\;\wedge\;\zeta^\ast(j)<_{lex}\zeta^\ast(j+1)]]$. 
Define $\zeta^\diamond:=\zeta^\ast \circ \theta$ and note that $\forall j[\zeta^\diamond(j)<_{lex}\zeta^\diamond(j+1)]$. 
We now  prove that $\forall \eta \in [\omega]^\omega \exists n[\zeta^\diamond\circ \eta(n) \neq_n \zeta^\diamond\circ\eta(n+1)]$.	

Let $\eta$ in $[\omega]^\omega$ be given.
Define $\gamma$ in $2^\omega$ such that, for each $j$, 
 if
		$\forall i\le j[\zeta^\diamond\circ \eta(i) =_{i+1} \zeta^\diamond\circ\eta(i+1)]$, then $ \gamma (j) = \bigl(\zeta^\diamond\circ\eta(j)\bigr)(j)$, and, 
		if not, then $\gamma(j) = 1$. 
		We prove that $\forall n \exists m >n[\gamma(m) = 1]$. 
		
		Let $n$ be given. There are several cases to consider. 
		
		\smallskip
		\textit{Case (i)}.   $\forall i \le n[\zeta^\diamond\circ\eta(i) =_{i+1} \zeta^\diamond \circ \eta(i+1)]$.  Note  $\overline \gamma(n+1) =_{n+1}\zeta^\diamond\circ\eta(n) =_{n+1}\zeta^\diamond\circ\eta(n+1)$ and $\zeta^\diamond\circ \eta(n)<_{lex} \zeta^\diamond\circ \eta (n+1)$. 
		Find $p:=\mu i[\bigl(\zeta^\diamond\circ\eta(n)\bigr)(n+i+1)=0\;\wedge\; \bigl(\zeta^\diamond\circ\eta(n+1)\bigr)(n+i+1)=1]$. 
		Note that $\bigl(\zeta^\diamond\circ\eta(p)\bigr)(n+p+1)=1$ and consider $\overline \gamma(n+p+2)$.

		    \textit{Subcase (i)a.}  $\forall i \le n+p+1[\zeta^\diamond\circ\eta(i) =_{i+1}\zeta^\diamond\circ\eta(i+1)]$. 
		    Conclude that $\overline \gamma (n+p+2) =_{n+p+2} \zeta^\diamond\circ\eta(n+p+2)$ and $\gamma(n+p+1)=1$.

		    \textit{Subcase (i)b.} $\exists  i \le n+p+1[\zeta^\diamond\circ\eta(i) \neq_{i+1}\zeta^\diamond\circ\eta(i+1)]$. 
		   Conclude that $\gamma(n+p+1) =1$.
		 
	\smallskip	  \textit{Case (ii)}.  $\exists i \le n+1[\zeta^\ast\circ\eta(i) \neq_i \zeta^\ast \circ \eta(i+1)]$. Conclude that $\gamma(n+1) = 1$.

\smallskip	  
We thus see that	$\forall n\exists m>n[\gamma(m) =1]$. 	

		\smallskip
		Find $\varepsilon$ such that $\forall n[{\bm \delta}(\overline \varepsilon n) \sqsubset \gamma]$ and  find $n$ such that $\overline \varepsilon n \in D_\alpha$. Define $q:= \mathit{length}\bigl({\bm \delta}(\overline \varepsilon (n))\bigr)$. 
		Assume that  $\forall i \le q[\zeta^\diamond\circ\eta(i) =_{i+1} \zeta^\diamond\circ\eta(i+1)]$. 
		 Then $\overline \gamma q = {\bm \delta}\bigl(\overline\varepsilon(n)\bigr)  \sqsubseteq \zeta^\diamond\circ\eta(q)$ and $\overline\varepsilon(n) \sqsubseteq \zeta\circ\theta\circ\eta(q)$.
 As $\overline \varepsilon n \in D_\alpha$ and $q\le\theta\circ\eta(q)$, $\zeta^\diamond\circ\eta(q)=\langle \theta\circ\eta(q)+2) \rangle \neq\langle \theta\circ\eta(q)+3\rangle=\zeta^\diamond\circ\eta(q+1)$ and $\zeta^\diamond\circ\eta(q) \neq_1 \zeta^\diamond\circ\eta(q+1)$. Contradiction.
		Conclude that $\exists i \le q[\zeta^\diamond\circ\eta(i) \neq_{i+1} \zeta^\diamond\circ\eta(i+1)]$.
	We thus see that $\forall \eta \in [\omega]^\omega\exists i[\zeta^\diamond\circ \eta(i) \neq_i \zeta^\diamond\circ\eta(i+1)]$.
		Using $\overleftarrow{\mathbf{Ded}_{\omega^\omega}} $ we conclude that 
$\exists n[\zeta^\diamond(n)\notin 2^{<\omega}]$  and
	$\exists n\exists m\le\mathit{length}\bigl(\zeta(n)\bigr)[\overline{\zeta(n)}m\in D_\alpha]$. 
	
	We thus see that, for all $\alpha$,  $\zeta$, if $\mathit{Bar}_{\omega^\omega}(D_\alpha)$ and $\forall n[\zeta(n+1)<_{KB}\zeta(n)]$, then $\exists n\exists m[\overline{\zeta(n)}m\in D_\alpha]]$ and $\exists n[\zeta(n)\notin Below(D_\alpha)]$, i.e. $<_{KB}$ well-orders $Below(D_\alpha)$.

		\medskip (ii) $\Rightarrow$ (iii). 
Assume:	for all $\alpha$,  if $\mathit{Bar}_{\omega^\omega}(D_\alpha)$, then $<_{KB}$ well-orders $Below(D_\alpha)$.  

Let $\gamma, n$ be given such that $\forall \alpha \in 2^\omega[(n \notin D_\alpha \;\wedge\; D_\alpha \subseteq E_\gamma) \rightarrow \exists p[p \in E_\gamma \setminus D_\alpha]].$ 
		 We shall prove that $n \in E_\gamma$,
		 i.e. $\exists p[\gamma(p) = n+1]$.
		 
		 Note that $\forall m \exists \alpha[E_{\overline \gamma m} = D_\alpha]$. It follows that $\forall m \exists p >m[E_{\overline \gamma m} \subsetneq E_{\overline \gamma p} \;\vee\; n \in E_{\overline \gamma m} ]$.
		 
		 Define $\eta$  such that $\eta(0) =0$ and $\forall m[\eta(m+1)=\mu i> \eta(m)[E_{\overline \gamma(\eta(m))}\subsetneq E_{\overline \gamma  i}\;\vee\;n \in E_{\overline \gamma i)}].$ 
		 Note that $\forall m[n \notin E_{\overline \gamma (\eta(m+1))}\rightarrow \exists s \in [\omega]^{m}\forall p[p \in E_{\overline \gamma(\eta(m+1))}\leftrightarrow\exists i<m[s(i)=p]]$.    
		 Define $\zeta$ such that $\zeta(0) = \langle \;\rangle$ and, for each $m$, either: \begin{enumerate}[\upshape (1)] \item $n \in E_{\overline \gamma (\eta( m+1))}$ and $\zeta(m+1)=\zeta(m)\ast\langle n \rangle$, or: \item $n \notin E_{\overline \gamma (\eta( m+1))}$ and $\zeta(m+1)=\mu s \in [\omega]^{m+1}\forall p[p \in E_{\overline \gamma(\eta(m+1))}\leftrightarrow \exists j<m[s(j) = p]]$. \end{enumerate}
	Note that, for all $m$, $\zeta(m) \in \omega^m$ and $\forall i<m[\bigl(\zeta(m)\bigr)(i) \in E_\gamma]$ and $\zeta(m+1) <_{KB} \zeta(m)$.
		 Define $\lambda$ such that, for all $m$,  $\lambda(m)\in\omega^{2m}$  and $\forall i<m[\bigl(\lambda(m)\bigr)(2i) = \bigl(\zeta(m)\bigr)(i)\;\wedge\;\bigl(\lambda(m)\bigr)(2i+1)=\mu q[\gamma(q) = \bigl(\zeta(m)\bigr)(i) +1]$.
		  Note that, for all $m$, $\lambda(m+1) <_{KB} \lambda(m)$.

		 For all $k,s$  such that $s\in \omega^{2k}$, 
		we define: $s$ \textit{is fine}  if and only if \begin{enumerate}[\upshape (1)]\item
		  $\forall i<k[\gamma\bigl(s(2i+1)\bigr) = s(2i) +1]$ and \item
		  $\forall i[i+1<k\rightarrow \bigl(s(2i) < s(2i+2)\;\vee\;s(2i+2)=n\bigr)]$ and \item$\forall i<k[s(2i)=n \rightarrow s(2i+2)=n]$.\end{enumerate}
		  Note that,  if $s\in \omega^{2k}$ is fine, then $\forall i<k[s(2i)\in E_\gamma]$  and, if $\forall i<k[ s(2i) \neq n]$ then $\forall i[i+1 < k \rightarrow s(2i) <s(2i+2)]$. Note that one may decide, given any $s$ in $\omega^{2k}$, if $s$ is fine  or not. Note that, if $s$ is fine, then for every $j<k$, $\overline s(2j)$ is fine. Note that, for every $m$, $\lambda(m)$ is fine.

		 For all $k,s$  such that $s\in \omega^{2k}$, 
		we define: $s$ \textit{is forgetful}  if and only if $s$ is fine  and, for some $j<k$, $\gamma(j)>0$ and {\it either}  $\gamma(j) - 1 < s(0)$ {\it or}  $\exists i[i+1 <k\;\wedge\;s(2i) < \gamma(j) -1<s(2i+2)]$, {\it or} $\gamma(j) = n+1$.
		  If $s\in \omega^{2k}$ is forgetful,  then \textit{either}  the finite sequence $\bigl(s(0), s(2), \ldots, s(2k-2)\bigr)$ is \textit{not} the list of the first $k$ elements of $E_\gamma$ \textit{or} $n \in  E_\gamma$. 
		  Note that one may decide, given any $s$ in $\omega^{2k}$, if $s\in \omega^{2k}$ is forgetful  or not. Note that, for all $m$, if $\neg\exists i<m[\gamma(i)=n+1]$, then $\lambda(m)$ is not forgetful.

		 Define $B:=\{s \in \bigcup_k \omega^{2k}\mid s\; \mathit{is\; not\; fine}\; \vee \;\mathit{s\; is\; forgetful}\}$. $B$ is a decidable subset of $\omega$ as one may define $\tau$ such that $B=D_\tau$.
		 We now prove that $Bar_{\omega^\omega}(B)$.
		 
		  Let $\beta$ be given. 
		 Define $\nu$ such that, for each $m$, if $\overline \beta(2m+2)$ is fine, then
		 $\overline\nu(2m+2) = \overline\beta(2m+2)$, and, 
		 if $\overline \beta(2m+2)$ is not fine,  then  $\nu(2m+1)= \mu q[\gamma(q) >0\;\wedge\;\bigl(\gamma(q)-1= n\;\vee\;\gamma(q) -1 > \nu(2m-2)\bigr)]$ and $\nu(2m)=\gamma\bigl(\nu(2m+1)\bigr)-1$.
		 Note that, for all $m$,  $\overline \nu( 2m)\;\mathit{is \;fine}$
		and either $\nu(2m) < \nu(2m+2)$ or $\nu(2m+2) = n$. 
		Define $\alpha$ such that, for each $m$,  $\alpha(m) \neq 0\leftrightarrow\bigl( m\neq n\;\wedge\;  \exists j\le m[m = \nu(2j)\bigr)$.
		 Note that $D_\alpha\subseteq E_\gamma$ and that $n \notin D_\alpha$. Find $p=\mu q[\gamma(q') = q'' +1\;\wedge\; q'' \notin D_\alpha]$ and distinguish two cases.
		  \textit{Case (a)}. $p'' = n$. Note that $\overline \nu(2p')$ is forgetful.
		 \textit{Case (b)}. $p'' \neq n$. Find $i$ such that either $p''<\nu(0)$ or $\nu(2i)<p''<\nu(2i+2)$. Define $m:=\max(i+1, p'+1)$. Note that $\overline \nu(2m)$ is forgetful.
		In both cases we thus find $m$ such that $\overline \nu(2m)$ is forgetful.
		{\it Either} $\overline \beta(2m) \neq \overline \nu(2m)$ and $\overline\beta (2m)$ is not fine, {\it or} $\overline \beta(2m)=\overline \nu(2m)$ is forgetful. Conclude that $\overline \beta(2m)\in B$.
		 We thus see that, for each $\beta$, there exists $m$ such that $\overline\beta(2m)\in B$, i.e. $Bar_{\omega^\omega}(B)$.
		 
		 Using the fact that $<_{KB}$ well-orders $Below(B)$, find $m,p$ such that $\overline{\lambda(m)}p\in B$. \\As $\lambda(m)$ is fine,   $\overline{\lambda(m)}p$ is forgetful  and also  $\lambda(m)$ itself is  forgetful. Conclude that $\exists j<m[\gamma(j)=n+1]$ and $n\in E_\gamma$.

We thus see that,	for each $n$, 	for each $\gamma$,   if $ \forall \alpha \in 2^\omega[(n \notin D_\alpha \;\wedge\; D_\alpha \subseteq E_\gamma) \rightarrow \exists p[p \in E_\gamma \;\wedge\; p \notin D_\alpha]]$, then  $n \in E_\gamma$. We conclude that,    
		 for each $\gamma$, if $\forall \alpha \in 2^\omega[(D_\alpha \subseteq E_\gamma \; \wedge \; \exists n[n \notin D_\alpha]) \rightarrow \exists p[p \notin D_\alpha \; \wedge \; p \in E_\gamma]]$, then $ E_\gamma = \omega$, i.e. 
		$\mathbf{EnDec?!}$.
		 
		 \smallskip (iii) $\Rightarrow$ (i). See Corollary \ref{Cor:sumup}. 
		\end{proof} Theorem  \ref{T:dedKB1} is a counterpart to  \cite[Lemma V.1.3 and Exercise V.1.11]{Simpson}.

 In Definition \ref{D:thinbar}, we defined:
  		
		 $B\subseteq \omega$  is  \emph{thin} if and only if $\forall p\in B\forall q\in B[p\neq q \rightarrow p\perp q]$. 
	
	  $B\subseteq\omega$ is a {\it thin bar in $\mathcal{F}\subseteq \omega^\omega$}, notation $Thinbar_\mathcal{F}(B)$  if and only if $B$ is thin and  $Bar_\mathcal{F}$.

	\begin{corollary}The following are equivalent in $\mathsf{BIM}$ \begin{enumerate}[\upshape (i)] \item $\mathbf{OI}(2^\omega)$. \item For all $\alpha$, if $Bar_{\omega^\omega}(D_\alpha)$, then $<_{KB} $ well-orders $Below(D_\alpha)$. \item For all $\alpha$, if $Thinbar_{\omega^\omega}(D_\alpha)$, then $<_{KB} $ well-orders $D_\alpha$. \end{enumerate} 
	\end{corollary}
	\begin{proof} (i)$\Leftrightarrow$(ii). See Theorem 	\ref{T:dedKB1} and  Corollary \ref{Cor:sumup}.
	
	\smallskip (ii)$\Rightarrow$(iii). Assume (ii). 
Let $\alpha$ be given such that $Thinbar(D_\alpha)$. Define $\beta$ such that $\beta(\langle\;\rangle)=0$ and $\forall s\forall n[\beta(s\ast \langle n \rangle) \neq 0 \leftrightarrow \alpha(s)\neq 0]$. Note that $Bar_{\omega^\omega}(D_\beta)$ and $D_\alpha\subseteq Below(D_\beta))$. As $<_{KB}$ well-orders  $Below(D_\beta)$, $<_{KB}$ also well-orders $D_\alpha$.  Note that, as $D_\alpha$ is thin, the relations $<_{KB}$ and $<_{lex}$ coincide on $D_\alpha$. Conclude that (iii) holds.

\smallskip (iii)$\Rightarrow$ (ii). Assume (iii). Let $\alpha$ be given such that $Bar_{\omega^\omega}(D_\alpha)$. Define $\beta$ such that $\forall s[\beta(s) \neq 0 \leftrightarrow (s \in D_\alpha\;\wedge\;\forall t\sqsubset s[t \notin D_\alpha])]$. Note that $Thinbar_{\omega^\omega}(D_\beta)$. Let $\zeta$ be given.  We have to prove  $QED:= \exists n [\neg \bigl(\zeta(n+1) <_{KB} \zeta (n)\bigr)\;\vee\; \zeta(n) \notin Below(D_\alpha]$. We claim that  $\forall i \exists j\ge i[ \zeta(j+1)<_{lex} \zeta(j)\;\vee\;QED]$.
We prove this claim as follows. Let $i$ be given.  Define $\zeta^\ast$ such that $\zeta^\ast(0) = \zeta(i)$ and, for each $j$, {\it if} $\forall i\le j[\zeta(i)\sqsubset \zeta(i+1)]$, then $\zeta^\ast(j+1)=\zeta(j+1)$, and, {\it if not}, then $\zeta^\ast(j+1)= \zeta^\ast(j)\ast\langle 0 \rangle$. Note that, for all $j$, $\zeta^\ast(j)\sqsubset \zeta^\ast(j+1)$, and find $\gamma$ such that $\forall n[\zeta^\ast(n)\sqsubset \gamma]$. Then  find $p$ such that $\overline \gamma p \in D_\alpha$ and $n$ such that $\overline \gamma  p \sqsubseteq \zeta^\ast(n)$. Note: $\zeta^\ast(n)\notin Below( D_\alpha)$.
If $\zeta^\ast(n) = \zeta(n)$, then $QED$, and, if not, $\exists k<n[\zeta(i+k+1)<_{lex} \zeta(i+k)]$. Conclude that $\forall i \exists j\ge i[ \zeta(j+1)<_{lex} \zeta(j)\;\vee\;QED]$.
Find $\eta$ in $[\omega]^\omega$ such that  $\forall n[ \zeta\circ \eta(n+1) <_{lex} \zeta\circ\eta(n)\;\vee\;QED]$.
Find $s_0$ such that $s_0 \notin D_\beta$. Now define $\rho$ such that, for each $n$, if $\forall i\le n[\zeta\circ\eta(i) \in Below(D_\alpha)]$, then $\zeta\circ \eta(n)\sqsubset\rho(n)$ and $\rho(n)\in D_\beta$, and, if not, then $\rho(n) = s_0$. 
Using (iii), find $n$ such that either $\neg \bigl(\rho(n+1)<_{lex}\rho(n)\bigr)$ or $\rho(n)\notin D_\beta$. Conclude that {\it either} $\neg \bigl(\zeta\circ\eta(n+1) <_{lex} \zeta\circ\eta(n)\bigr)$ and $\exists n [\neg \bigl(\zeta(n+1) <_{lex} \zeta(n)\bigr)]$ {\it or} $\exists i\le n+1[\zeta\circ \eta(i)\notin Below(D_\alpha)]$, so, in any case, $QED$.  \end{proof}
 One may introduce some of the usual countable ordinals as elements $\beta$ of $\omega^\omega$ with the property: $D_\beta$ is a thin bar in $\omega^\omega$. We sketch how this could be done.

\begin{definition}\label{D:ordinals}
Define ${\bm \varphi}:2^\omega\rightarrow 2^\omega$  such that, for every $\alpha$, $({\bm \varphi}|\alpha)(\langle\;\rangle) = 0$, and, for every $n$, for every $t$, $({\bm \varphi}|\alpha)(\langle n \rangle \ast t) \neq 0$ if and only if  $\exists s[\mathit{length}(s) = n \;\wedge\;\forall i < n[\alpha\bigl(s(i)\bigr) \neq 0] \;\wedge\; t = s(0)\ast s(1) \ast \ldots \ast s(n-1)]$. 

 $\tilde{\bm \omega}$ is the element of $2^\omega$ satisfying 
 $\forall s[\tilde{\bm \omega}(s) = 1\leftrightarrow \exists n[s=\langle n \rangle]]$.
  
  $ \tilde{\bm \varepsilon}_0$ is the element of $2^\omega$   such that $\tilde{\bm \varepsilon}_0(\langle \;\rangle) = 0$ and $(\tilde{\bm \varepsilon}_0)^0 = \mathbf{\omega}$ and $\forall n[(\tilde{\bm \varepsilon}_0)^{n+1} = {\bm \varphi}|(\tilde{\bm \varepsilon}_0)^n]$. \end{definition}

\begin{theorem}\label{T:epsilon}$\mathsf{BIM}$ proves the following. \begin{enumerate}[\upshape(i)]\item $\forall \alpha[Thinbar_{\omega^\omega}(D_\alpha) \rightarrow Thinbar_{\omega^\omega}(D_{{\bm \varphi}|\alpha})]$. \item  $Thinbar(D_{\tilde{\bm  \omega}}) \;\wedge\;Thinbar(D_{\tilde{\bm \varepsilon}_0})$. \item $ \mathbf{OI}(2^\omega) \rightarrow \;\;<_{lex} $ well-orders $D_{\tilde{\bm \varepsilon}_0}$. \end{enumerate}\end{theorem}

\begin{proof} The proof of (i) and (ii) is left to the reader. For (iii), use Theorem \ref{T:dedKB1}. \end{proof}

Theorem \ref{T:epsilon} is a sharpening of a result obtained by W.~Howard and G.~Kreisel, see \cite{howard-kreisel}, Appendix 2,  to the effect:  $\mathsf{BIM}\vdash \mathbf{BI}\rightarrow  \;\;<_{lex}$ well-orders $D_{\tilde{\bm \varepsilon}_0}$. Using the fact: $<_{lex}$ well-orders $D_{\tilde{\bm \varepsilon}_0}$, G.~Gentzen was able to prove the consistency of arithmetic, that is, the consistency of $\mathsf{BIM}$.  A.S. Troelstra, see \cite{troelstra},  proved, making use of techniques developed for eliminating choice sequences, that
     the Fan Theorem {\bf FT} is conservative over Heyting Arithmetic, that is, over $\mathsf{BIM}$. Using G\"odel's Second Incompleteness Theorem we obtain:

\begin{corollary}\label{Cor:incomplete}$\mathsf{BIM}\nvdash \;\;<_{lex}$ well-orders $D_{\tilde{\bm \varepsilon}_0}$ and $\mathsf{BIM}+\mathbf{FT}\nvdash \mathbf{OI}(2^\omega)$. \end{corollary}
  \section{The Almost-Fan Theorem and the Approximate-Fan Theorem}\label{S:sft?}

  We need a statement stronger than $\mathbf{FT}$, that plays, in the intuitionistic context of $\mathsf{BIM}$, a r\^ole comparable to the r\^ole fulfilled by $\mathbf{KL}$ in the classical context of $\mathsf{RCA_0}$.  $\mathbf{FT^+_{ext}}$ probably is not a good candidate, see Subsection \ref{SS:ftwkl}.

   \subsection{Notions of finiteness}\label{SS:notionfinite}    \begin{definition}\label{D:almostfinite} For each $n$, $\omega^n:=\{s\mid length(s)=n\}$
    and $[\omega]^n:=\{s\in \omega^n \mid \forall i[i+1 <n  \rightarrow s(i)< s(i+1)]\}$ \footnote{ See Subsection \ref{SS:inc}.}.
    
    For each $\alpha$,  $D_\alpha := \{n \in \omega\mid \alpha(n) \neq 0\}$. $D_\alpha$ 
     is  \emph{finite} if and only if  $\exists n\forall m\ge n[\alpha(m) = 0]$. For each $n$, $D_\alpha$ \emph{has at most $n$ members} if and only if $\forall s \in [\omega]^{n+1} \exists i \le n[ s(i) \notin D_{\alpha}]$.
        $D_\alpha$ is \emph{bounded-in-number}  if and only if there exists $n$ such that $D_\alpha$ has at most $n$ members. 
        $D_{\alpha}$ is \emph{almost-finite} if and only if $\forall \zeta \in [\omega]^\omega\exists n[\zeta(n) \notin D_\alpha]]$. \end{definition}

       Obviously, for each $\alpha$,  if $D_\alpha$ is finite, then $D_\alpha$ is bounded-in-number, and, if $D_\alpha$ is bounded-in-number, then $D_\alpha$ is almost-finite. The converse statements are not true, see \cite{veldman1995}.
       Almost-finite subsets of $\omega$ are  also studied in \cite{veldman1999}, \cite{veldman2005} and \cite{veldman2022}.

     $B\subseteq \omega$ is called a \emph{decidable subset of $\omega$} if and only if $\exists \alpha[B=D_\alpha]$. 
      
       \begin{lemma}\label{L:almostfinite} One may prove in $\mathsf{BIM}$: \begin{enumerate}[\upshape (i)]\item For all decidable subsets $A,B$ of $\omega$, if $A\subseteq B$ and $B$ is almost-finite, then also $A$ is almost-finite.  \item For all decidable subsets $A,B$ of $\omega$,  if $A,B$ are almost-finite, then  $A \cup B$ is almost-finite.
   \item For every $k$, for all decidable subsets $B_0, B_1, \dots, B_k$ of $\omega$, if, for each $n\le k$, $B_n$ is almost-finite, then $\bigcup\limits_{ n\le k} B_n$ is almost-finite.   \item For every infinite sequence $A, B_0, B_1, \ldots$ of decidable subsets of $\omega$, if  $A=\bigcup\limits_{n\in \omega}B_n$  and,  for each $n$, $B_n$ is almost-finite, and  $\forall \zeta \in [\omega]^\omega \exists n[B_n = \emptyset]$, then  $A$ is almost-finite. \end{enumerate} \end{lemma}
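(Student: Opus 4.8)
The plan is to prove the three statements in order, with (i) as the core computation and (ii), (iii) as elaborations. For (i), assume $A = D_\alpha$ and $B = D_\beta$ are both almost-finite, and let $\zeta \in [\omega]^\omega$ be given; I must find $n$ with $\zeta(n) \notin A \cup B$, i.e.\ $\zeta(n) \notin A$ \emph{and} $\zeta(n) \notin B$. The idea is to thin out $\zeta$ twice. First apply the almost-finiteness of $A$ to $\zeta$ itself to find some $n_0$ with $\zeta(n_0) \notin A$; but this is not enough, since $\zeta(n_0)$ might lie in $B$. Instead I would build a subsequence of $\zeta$ consisting entirely of elements outside $A$: since $A$ is decidable, define $\eta \in [\omega]^\omega$ by recursion so that each $\eta(k)$ is the least index $> \eta(k-1)$ with $\zeta(\eta(k)) \notin A$. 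The point is that such an index always exists: if from some stage on every $\zeta(m)$ belonged to $A$, then $A$ would fail to be almost-finite on a tail of $\zeta$, contradicting almost-finiteness of $A$ applied to an appropriate strictly increasing sequence. So $\zeta \circ \eta \in [\omega]^\omega$ and all its values avoid $A$. Now apply almost-finiteness of $B$ to $\zeta \circ \eta$: find $k$ with $(\zeta\circ\eta)(k) \notin B$. Then $\zeta(\eta(k)) \notin A$ and $\zeta(\eta(k)) \notin B$, so setting $n := \eta(k)$ we get $\zeta(n) \notin A \cup B$, as required.

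For (ii), I would argue by external induction on $k$, using (i) as the inductive step: $\bigcup_{n \le 0} B_n = B_0$ is almost-finite by hypothesis, and $\bigcup_{n \le k+1} B_n = \bigl(\bigcup_{n \le k} B_n\bigr) \cup B_{k+1}$ is a union of two almost-finite decidable sets (the finite union of decidable sets is decidable), hence almost-finite by (i). This induction is legitimate since $k$ is a numerical parameter and the statement ``$\bigcup_{n\le k} B_n$ is almost-finite'' is a formula of the language, so Axiom~\ref{ax:ind} applies. For (iii), assume $A = \bigcup_{n} B_n$ with each $B_n$ decidable and almost-finite, $A$ decidable, and $\forall \zeta \in [\omega]^\omega \exists n[B_n = \emptyset]$ — note this last hypothesis is really ``$\forall \zeta \in [\omega]^\omega \exists n \exists m[B_{\zeta(m)} = \emptyset]$'' or, more naturally, that the set of $n$ with $B_n \ne \emptyset$ is itself almost-finite; I would read it the way that makes the conclusion go through, namely that one cannot find an infinite strictly increasing sequence of indices $n$ all having $B_n$ nonempty. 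Given $\zeta \in [\omega]^\omega$, I want $n$ with $\zeta(n) \notin A$. The strategy: consider, for each index $n$, whether $\zeta(n)$ lies in $B_0, B_1, \ldots$; I would extract a subsequence along which membership is controlled. Concretely, use the emptiness hypothesis to find some $B_{n_0} = \emptyset$, then use almost-finiteness of $B_0, \dots, B_{n_0}$ and part (ii) to conclude $\bigcup_{m \le n_0} B_m$ is almost-finite, and find $n$ with $\zeta(n) \notin \bigcup_{m \le n_0} B_m$; but since $B_m = \emptyset$ for $m = n_0$ and we need $\zeta(n) \notin B_m$ for \emph{all} $m$, I must also handle $m > n_0$. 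The resolution is that if $\zeta(n) \in A$ then $\zeta(n) \in B_m$ for some $m$, and I can choose the data so that this forces $m \le n_0$, giving a contradiction — so I would set things up to pick $n_0$ \emph{after} seeing enough of $\zeta$, using the decidability of each $B_m$ and of $A$ to make the choice effective.

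The main obstacle is part (iii), specifically pinning down the correct reading of the hypothesis ``$\forall \zeta \in [\omega]^\omega \exists n[B_n = \emptyset]$'' and making the double-thinning argument constructive. Unlike (i), where both sets are fixed, here one is juggling an infinite family, and the naive approach of ``find an empty $B_{n_0}$, then reduce to a finite union'' does not immediately bound the indices $m$ for which $\zeta(n)$ could still lie in $B_m$. The fix I expect to need is to interleave the choice of the cutoff $n_0$ with the inspection of $\zeta$: build a strictly increasing sequence of candidate indices and use the hypothesis to halt it, exactly as in the proof of (i) but one level up. The decidability assumptions on $A$ and the $B_n$ are essential throughout, since they are what make the recursive definitions of the thinning sequences $\eta$ legitimate applications of Axiom~\ref{ax:recop}.
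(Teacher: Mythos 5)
Your parts (i) and (ii) are fine and coincide with the paper's own proof: thin $\zeta$ through the decidable complement of $A$ and then apply almost-finiteness of $B$ to the thinned sequence, and get (ii) from (i) by induction on $k$. One small repair in (i): justify the termination of the search for $\eta(k)$ directly, not by reductio. From ``it is impossible that all later values lie in $A$'' you cannot constructively extract an index; but applying almost-finiteness of $A$ to the shifted sequence $m \mapsto \zeta\bigl(m+\eta(k-1)+1\bigr)$ gives the witness positively, and decidability of $A$ then lets the least-number search find it.

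Part (iii) has a genuine gap. First, the hypothesis must be read positively, as $\forall \zeta \in [\omega]^\omega \exists n[B_{\zeta(n)} = \emptyset]$; your fallback reading ``one cannot find an infinite strictly increasing sequence of indices with all $B_n$ inhabited'' is a negation and is too weak, since the argument must actually produce the index at which the $B$-set is empty. Second, the mechanism you sketch --- choose a cutoff $n_0$ ``after seeing enough of $\zeta$'' so that $\zeta(n) \in B_m$ forces $m \le n_0$ --- cannot be made to work: the least index of a $B$-set containing a given point of $A$ is not bounded by any cutoff fixed in advance, however you interleave. What does work (and is the paper's argument) is to make the indices themselves the increasing sequence fed to the hypothesis. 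Thin $\zeta$ to $\zeta\circ\eta$ so that $\zeta\circ\eta(n) \notin B_i$ for all $i \le n$ (possible by your part (ii) applied to shifted tails); then define $\varepsilon(n)$ to be the least $p$ with $\zeta\circ\eta(n) \in B_p$ if $\zeta\circ\eta(n) \in A$, and $n+1$ otherwise (decidability of $A$ and of the $B_p$ makes this a legitimate search). The thinning gives $\varepsilon(n) > n$, so one can extract $\lambda$ with $\varepsilon\circ\lambda \in [\omega]^\omega$; applying the hypothesis to $\varepsilon\circ\lambda$ yields $n$ with $B_{\varepsilon\circ\lambda(n)} = \emptyset$, and then $\zeta\bigl(\eta(\lambda(n))\bigr) \notin A$, since membership in $A$ would place it in precisely this empty set. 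This is the ``one level up'' diagonal you anticipated, but the missing idea is that the halting index is not a bound on where $\zeta(n)$ could sit in the family; it is the value of the index function $\varepsilon$ at a definite position, and that linkage is what converts the hypothesis into a point of $\zeta$ outside $A$.
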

         \begin{proof} (i) The proof is left to the reader.
         
         \smallskip (ii) Let $A,B$ be given decidable and almost-finite subsets of $\omega$. Assume $\zeta \in [\omega]^\omega$. Define $\eta$ as follows, by induction. $\eta(0) := \mu p[\zeta(p) \notin A]$ and, for each $n$, $\eta(n+1) := \mu p[p>\eta(n) \;\wedge\; \zeta(p) \notin A]$. Note that $\zeta  \in [\omega]^\omega$ and $\forall n[\zeta \circ \eta (n) \notin A]$. Find $p$ such that $\zeta \circ \eta (p) \notin B$. Define $q:= \eta(p)$ and note that $\zeta(q) \notin A \cup B$. We thus see that $\forall \zeta \in [\omega]^\omega\exists q[\zeta(q) \notin A \cup B]$, i.e. $A\cup B$ is almost-finite.   
         
         \smallskip (iii) Use (ii) and induction.
         
         \smallskip (iv) Let $A, B_0, B_1, \ldots$ be an infinite sequence of decidable  subsets of $\omega$ such that, for each $n$, $B_n$ is almost-finite and $A=\bigcup\limits_{n\in \omega}B_n$  and  $\forall \zeta \in [\omega]^\omega \exists n[B_n = \emptyset]$. Assume $\zeta \in [\omega]^\omega$. Using (ii), define $\eta$  as follows, by induction. $\eta(0) := \mu p[\zeta(p) \notin B_0]$, and, for each $n$,   $\eta(n+1):= \mu p[p > \eta(n) \;\wedge\; \forall i\le n+1[\zeta(p) \notin B_i]]$.  Note that $\zeta\circ \eta \in [\omega]^\omega$ and $\forall n \forall i\le n[\zeta\circ \eta(n) \notin B_i]$. Define $\varepsilon$ as follows. For each $n$, if $\zeta\circ \eta(n) \notin A$, $\varepsilon(n) = n+1$, and, if $\zeta\circ\eta(n) \in A$, then $\varepsilon(n) :=\mu p[\zeta\circ \eta(n) \in B_p]$.  Note that $\forall n[\varepsilon(n) >n \;\wedge\;\bigl( \zeta \circ \eta(n) \in A \rightarrow \zeta\circ \eta(n) \in B_{\varepsilon(n)}\bigr)]$. Define $\lambda$ as follows. $\lambda(0) = \varepsilon(0)$ and, for each $n$, $\lambda(n+1) := \mu p[\varepsilon(p)>\varepsilon\circ\lambda(n)]$. Note that $\varepsilon \circ \lambda \in [\omega]^\omega$. Find $n$ such that $B_{\varepsilon\circ\lambda(n)} = \emptyset$. Define $q:= \eta\circ \lambda(n)$ and note that $\zeta(q) \notin A$. We thus see that $\forall \zeta \in [\omega]^\omega\exists q[\zeta(q) \notin A]$, i.e. $A$ is almost-finite. \end{proof} 
    \subsection{Fans, approximate fans and almost-fans}\label{SS:appalmostfans}
    \begin{definition}\label{D:fanapproximatealmostfan} 
  $\beta\in \omega^\omega$ is an \emph{approximate-fan-law}, notation: $Appfan(\beta)$,  if and only if $Spr(\beta)$ and, for each $n$, the set $\{t \in \omega^n \mid \beta(t) = 0 \}$ is bounded-in-number. 
  
  $\beta$ is an \emph{explicit} approximate-fan-law, notation: $Appfan^+(\beta)$,  if and only if $Spr(\beta)$ and there exists $\gamma$ such that   for each $n$, the set $\{t\in \omega^n\mid\beta(t) = 0 \}$ has at most $\gamma(n) $ members. 
  
  $\beta$ is an \emph{almost-fan-law}, notation: $Almfan(\beta)$,  if and only if $Spr(\beta)$ and, for each $s$, the set $\{n\mid\beta(s\ast\langle n \rangle) = 0\}$ is almost-finite.
  
   $\mathcal{F}\subseteq \omega^\omega$ is an \emph{(explicit) approximate fan/almost-fan}  if and only if there exists an (explicit) approximate-fan-law/almost-fan-law $\beta$ such that $\mathcal{F}=\mathcal{F}_\beta$.\end{definition}

\textbf{Weak}-$\mathbf{\Pi}^0_1$-$\mathbf{AC}_{0,0}$, see Subsection \ref{SSS:AC00}, does not seem to be sufficient for proving that every approximate-fan-law is an explicit approximate-fan-law, but $\mathbf{\Pi}^0_1$-$\mathbf{AC}_{0,0}$ clearly is sufficient. 

Note that every  fan is an approximate fan, but that, conversely, it is not true that every approximate fan is a fan.

    \begin{theorem}\label{T:almostfans}  A spread-law $\beta$ is an almost-fan-law if and only if, for each $n$, the set $\{t\in \omega^n\mid \beta(t)=0\}$ is almost-finite. \end{theorem}
    
   \begin{proof} First, let $\beta$ be an almost-fan-law. We prove by induction that, for each $n$, the set $\{t\in \omega^n\mid \beta(t)=0\}$ is almost-finite. Obviously, the set $\{t\in \omega^0\mid \beta(t)=0\}$ either is empty or coincides with $\{\langle\;\rangle\}$ and so is almost-finite. Now let $n$ be given such that the set $\{t\in \omega^n\mid \beta(t)=0\}$ is almost-finite. Note that, for each $t$ in $\omega^n$ such that $\beta(t)=0$, the set $\{s\in \omega^{n+1}\mid \beta(s)=0\;\wedge\;t\sqsubset s\}$ is almost-finite, like the set $\{n\mid \beta(t\ast\langle n\rangle)=0\}$.  Note that the set $\{s\in \omega^{n+1}\mid \beta(s)=0\} = \bigcup \{\{s\in \omega^{n+1}\mid t\sqsubset s \;\wedge\; \beta(s)=0\}\mid t\in\omega^n\;\wedge\;\beta(t)=0\}$. Using Lemma \ref{L:almostfinite}(iv), conclude that the set $\{s\in \omega^{n+1}\mid \beta(s)=0\}$ is almost-finite. 
   
   \smallskip As to the converse, let $\beta$ be a spread-law such that, for each $n$, the set $\{t\in \omega^n\mid \beta(t)=0\}$ is almost-finite. Let $t,n$ be given such that $t\in \omega^n$ and $\beta(t)=0$. Note that the set $\{s \in \omega^{n+1} \mid  \beta(s)=0\;\wedge\; t\sqsubset s\}$ is a subset of the set $\{s \in \omega^{n+1}\mid \beta(s)=0\}$ and thus, by Lemma \ref{L:almostfinite})(i), almost-finite. Conclude that the set $\{k\mid \beta(t\ast\langle k \rangle=0\}$ is also almost-finite. \end{proof}

     One may ask if every spread-law $\beta$ satisfying the condition that, for each $s$, the set $\{n\mid \beta(s\ast\langle n \rangle)=0\}$ is bounded-in-number, is an approximate-fan-law.   The answer is no: assuming this statement we prove $\mathbf{LPO}$. 
     
      Let $\alpha$ be given. Define $\beta$ be such that, for each $s$, $\beta(s)=0$ if and only if  \begin{enumerate}[\upshape (i)]  \item if $length(s)>0$, then   $s(0)=0$ or $s(0)=\mu n[\alpha(n)\neq 0]$, and \item if $length(s)>1$ then  $s(0)\le s(1)\le 2\cdot s(0)$, and \item if $length(s)>2$, then $\forall j>1[j<length(s)\rightarrow s(j)=0]$. \end{enumerate} Note that $\beta$ is a spread-law, and that, for each $s$, if $\beta(s)=0$, then either $s=\langle\;\rangle$ and the set $\{n\mid \beta(s\ast \langle n \rangle)=0\}$ has at most 2 members or $length(s)=1$ and   the set $\{n\mid \beta(s\ast \langle n \rangle)=0\}$ has $s(0)+1$ members or $length(s)>1$ and  the set $\{n\mid \beta(s\ast \langle n \rangle)=0\}$ has $1$ member. By our assumption,  $\beta$ is an approximate-fan-law. Find $k$ such that the set $\{s\in \omega^2\mid \beta(s)=0\}$ has at most $k$ members. Conclude that, for each $m$, if $\beta(\langle m \rangle)=0$, then $m<k$. Conclude that, for all $m$, if $m=\mu n[\alpha(n)\neq 0]$, then $m<k$.  Conclude that $\exists n[\alpha(n) \neq 0]$ if and only if $\exists n<k[\alpha(n)\neq 0]$. Conclude that $\exists n[\alpha(n)\neq 0]\;\vee\;\forall n[\alpha(n)=0]$.
      
      We thus see that, for each $\alpha$, $\exists n[\alpha(n)\neq 0]\;\vee\;\forall n[\alpha(n)=0]$, i.e. $\mathbf{LPO}$. 
     
     \smallskip Note that every approximate fan is an almost-fan, but that,  conversely, it is not true that every almost-fan is an approximate fan.
       
\begin{definition}        The following statement is called the \emph{Almost-fan Theorem}:
   \[\mathbf{AlmFT}:\forall \beta \forall \alpha[\bigl(Almfan(\beta) \;\wedge\; Thinbar_{\mathcal{F}_\beta}(D_\alpha) \;\wedge\; \forall s[s \in D_\alpha \rightarrow \beta(s) = 0]\bigr)\rightarrow \]\[ \forall \zeta \in [\omega]^\omega\exists n[\zeta(n) \notin D_\alpha]],\]
    
    i.e. \textit{in an almost-fan, every thin bar is almost-finite}.  
    
    \smallskip
      We also introduce the \emph{Approximate-fan Theorem}:
   \[\mathbf{AppFT}:\forall \beta \forall \alpha[\bigl(Appfan^+(\beta) \;\wedge\; Thinbar_{\mathcal{F}_\beta}(D_\alpha) \;\wedge\; \forall s[s \in D_\alpha \rightarrow \beta(s) = 0]\bigr)\rightarrow \]\[ \forall \zeta \in [\omega]^\omega\exists n[\zeta(n) \notin D_\alpha]],\]
    
    i.e. \textit{in an explicit approximate fan, every thin bar is almost-finite}.\end{definition}

    Note that $\mathsf{BIM}\vdash \mathbf{AlmFT} \rightarrow \mathbf{AppFT}.$ One may also see that $\mathbf{AppFT}$ extends
 $\mathbf{FT}$, see Theorem \ref{T:ftext}(ii). 
   
    $\mathbf{AlmFT}$ has been introduced in \cite{veldman2001a} and \cite{veldman2001b}. It was shown in these papers that $\mathbf{AlmFT}$ is a consequence of Brouwer's Thesis on Bars in $\omega^\omega$ and that $\mathbf{AlmFT}$ implies  $\mathbf{OI}([0,1])$.
      As we shall see, also $\mathbf{AppFT}$ implies $\mathbf{OI}([0,1])$. $\mathbf{AppFT}$ seems to be a little bit weaker than $\mathbf{AlmFT}$.   
      \subsection{Proving $\mathbf{AlmFT}$} \begin{definition}\label{D:co-analytic} For each $\beta$,  $CA_\beta:=\{s|\forall \gamma \exists n[\beta ( s, \overline  \gamma n)\neq 0]\}.$ $X\subseteq \omega$ is  \emph{co-analytic} or $\mathbf{\Pi}^1_1$ if and only if $\exists \beta[X=CA_\beta].$ 
      
The  \emph{Principle of Induction on co-analytic bars in Baire space} is the following statement:

$\mathbf{\Pi}_1^1$-$\mathbf{BI}:\;\;\forall \beta[(Bar_{\omega^\omega}(CA_\beta) \;\wedge\; \forall s[s \in CA_\beta \leftrightarrow \forall i[s\ast\langle i \rangle \in CA_\beta]]) \rightarrow 0\in CA_\beta]$.\end{definition}
\begin{theorem}\label{T:bipi11sigma01}$\mathsf{BIM}\vdash \mathbf{\Pi}_1^1$-$\mathbf{BI} \rightarrow \mathbf{\Sigma}^0_1$-$\mathbf{BI}$.\end{theorem}
\begin{proof} Note that every enumerable subset of $\omega$ is co-analytic as $\forall \beta \exists \delta[E_\beta = CA_\delta]$. One proves the latter fact as follows. Given $\beta$, define $\delta$ such that, for each $a$, for each $s$, $\delta(s, a) \neq 0$ if and only if $\gamma\bigl(\mathit{length}(a)\bigr) = s+1$.\end{proof}

\begin{theorem} $\mathsf{BIM}\vdash \mathbf{\Pi}^1_1$-$\mathbf{BI} \rightarrow \mathbf{AlmFT}$.\end{theorem}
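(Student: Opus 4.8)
The plan is to derive $\mathbf{AlmFT}$ from $\mathbf{\Pi^1_1}$-$\mathbf{BI}$ by showing that, for an almost-fan $\mathcal{F}_\beta$ carrying a thin bar $D_\alpha$ consisting of nodes $s$ with $\beta(s)=0$, the set $B$ of all $s$ such that ``the part of the thin bar lying strictly above $s$ inside $\mathcal{F}_\beta$ is almost-finite, and $\mathcal{F}_\beta\cap s$ is barred by $D_\alpha$'' is a co-analytic bar that is inductive and monotone. Concretely, fix $\beta,\alpha$ with $Almfan(\beta)$, $Thinbar_{\mathcal{F}_\beta}(D_\alpha)$, and $\forall s[s\in D_\alpha\to\beta(s)=0]$. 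For each $s$ let $D_\alpha\!\restriction\! s:=\{n\mid \exists m[\,\overline{s\ast\langle n\rangle\ast\cdots}\text{ extends into }D_\alpha\,]\}$ — more precisely, the set of first coordinates $n$ such that some node of $D_\alpha$ is comparable with (an extension of) $s\ast\langle n\rangle$ inside $\mathcal{F}_\beta$. I would define $B$ to be the set of all $s$ such that $\beta(s)\ne 0$, \emph{or} ($Bar_{\mathcal{F}_\beta\cap s}(D_\alpha)$ and $D_\alpha\!\restriction\! s$ is almost-finite). The desired conclusion is precisely $\langle\;\rangle\in B$ together with the elementary closure facts about almost-finiteness.

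First I would check $B$ is co-analytic. The clause ``$Bar_{\mathcal{F}_\beta\cap s}(D_\alpha)$'' is $\forall\gamma\exists n[\cdots]$ — directly of the form defining $CA_\beta$. The clause ``$D_\alpha\!\restriction\! s$ is almost-finite'' is ``$\forall\zeta\in[\omega]^\omega\exists n[\zeta(n)\notin D_\alpha\!\restriction\! s]$''; quantifying over $\zeta\in[\omega]^\omega$ is quantifying over a closed subset of $\mathcal{N}$, and the matrix is arithmetical, so this too is $\mathbf{\Pi^1_1}$; a finite conjunction/disjunction of $\mathbf{\Pi^1_1}$ predicates and a decidable predicate is $\mathbf{\Pi^1_1}$, so $B=CA_\delta$ for a suitable $\delta$ one writes down explicitly. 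Next, $Bar_\mathcal{N}(B)$: given $\gamma\in\mathcal{N}$, if $\beta(\overline\gamma n)\ne 0$ for some $n$ we are done; otherwise $\gamma\in\mathcal{F}_\beta$, so by $Thinbar_{\mathcal{F}_\beta}(D_\alpha)\Rightarrow Bar_{\mathcal{F}_\beta}(D_\alpha)$ there is $n$ with $\overline\gamma n\in D_\alpha$, and then, since $D_\alpha$ is thin, no node of $D_\alpha$ lies strictly above $\overline\gamma n$ in $\mathcal{F}_\beta$, whence $D_\alpha\!\restriction\!\overline\gamma n=\emptyset$ and trivially $Bar_{\mathcal{F}_\beta\cap\overline\gamma n}(D_\alpha)$; so $\overline\gamma n\in B$.

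Then the two structural clauses. Monotonicity: if $s\in B$ then either $\beta(s)\ne 0$, so $\beta(s\ast\langle i\rangle)\ne 0$, or $D_\alpha\!\restriction\! s$ is almost-finite and $\mathcal{F}_\beta\cap s$ is barred; restricting to the $i$-th immediate child only shrinks both $\mathcal{F}_\beta\cap s$ and (using that $D_\alpha$ is thin, so $s\notin D_\alpha$ forces the witnesses above $s\ast\langle i\rangle$ to be a subset of those above $s$) the relevant slice, so $s\ast\langle i\rangle\in B$. Inductivity is the crux: suppose $s\ast\langle i\rangle\in B$ for every $i$. If $\beta(s)\ne 0$, $s\in B$ and we are done; otherwise $Spr(\beta)$ gives some $i$ with $\beta(s\ast\langle i\rangle)=0$, and for those $i$, $\mathcal{F}_\beta\cap(s\ast\langle i\rangle)$ is nonempty and barred by $D_\alpha$ with $D_\alpha\!\restriction\!(s\ast\langle i\rangle)$ almost-finite. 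Since $Almfan(\beta)$, the set $\{i\mid\beta(s\ast\langle i\rangle)=0\}$ is almost-finite; and $D_\alpha\!\restriction\! s$ is the union, over this almost-finite index set together with the ``empty-for-large-branches'' side condition, of the sets $D_\alpha\!\restriction\!(s\ast\langle i\rangle)$ (plus possibly $\{i\}$ when $s\ast\langle i\rangle\in D_\alpha$). By Lemma~\ref{L:almostfinite}(iii) — almost-finiteness is preserved under a countable union $A=\bigcup_n B_n$ of almost-finite sets for which $\forall\zeta\in[\omega]^\omega\exists n[B_n=\emptyset]$, the latter supplied here precisely because $\{i\mid\beta(s\ast\langle i\rangle)=0\}$ is almost-finite — it follows that $D_\alpha\!\restriction\! s$ is almost-finite; and $Bar_{\mathcal{F}_\beta\cap s}(D_\alpha)$ follows since every $\gamma$ in $\mathcal{F}_\beta\cap s$ passes through some $s\ast\langle i\rangle$. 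Hence $s\in B$. Applying $\mathbf{\Pi^1_1}$-$\mathbf{BI}$ yields $\langle\;\rangle\in B$, i.e. $D_\alpha\!\restriction\!\langle\;\rangle=D_\alpha$ is almost-finite (after noting $D_\alpha\!\restriction\!\langle\;\rangle$ and $D_\alpha$ have the same membership as subsets of $\mathbb{N}$ modulo the thinness of $D_\alpha$, or simply reformulating $B$ at the root directly in terms of $D_\alpha$), which is exactly $\forall\zeta\in[\omega]^\omega\exists n[\zeta(n)\notin D_\alpha]$. The main obstacle I anticipate is bookkeeping the ``slice'' operator $D_\alpha\!\restriction\! s$ so that the inductive step genuinely expresses $D_\alpha\!\restriction\! s$ as a union of the form required by Lemma~\ref{L:almostfinite}(iii) — in particular arranging the side condition $\forall\zeta\in[\omega]^\omega\exists n[B_n=\emptyset]$ honestly from $Almfan(\beta)$ rather than merely asserting it — and in verifying the co-analyticity computation cleanly given the $[\omega]^\omega$-quantifier.
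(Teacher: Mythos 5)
Your route is the paper's own: it applies $\mathbf{\Pi^1_1}$-$\mathbf{BI}$ to the co-analytic set $Q$ of all $s$ such that $\{t\mid s\sqsubseteq t\;\wedge\;t\in D_\alpha\}$ is almost-finite, shows $Q$ is a bar using thinness, and proves inductivity exactly as you sketch, via Lemma \ref{L:almostfinite} with the side condition $\forall\zeta\in[\omega]^\omega\exists n[B_n=\emptyset]$ supplied by $Almfan(\beta)$ together with the fact that $\beta(s\ast\langle n\rangle)\neq 0$ forces $B_n=\emptyset$ (bar elements satisfy $\beta=0$, and initial segments of $\beta$-zero nodes are $\beta$-zero). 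But two steps, as you wrote them, do not stand. First, your slice must be the set of \emph{codes of bar elements extending} $s$ (your prose says this), not the set of \emph{first coordinates} $n$ that your displayed definition gives. With the first-coordinate reading, the slice at $s$ is essentially a subset of $\{n\mid\beta(s\ast\langle n\rangle)=0\}$ and so is almost-finite outright by $Almfan(\beta)$, carrying no information; the decomposition of $D_\alpha\restriction s$ as a union of the sets $D_\alpha\restriction(s\ast\langle i\rangle)$ is then false (child indices and grandchild indices are unrelated natural numbers); and at the root you would obtain almost-finiteness of the set of first entries of bar elements, which is not the conclusion of $\mathbf{AlmFT}$. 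With the node-set reading everything goes through, and your extra conjunct $Bar_{\mathcal{F}_\beta\cap s}(D_\alpha)$ and disjunct $\beta(s)\neq 0$ become superfluous (the former holds for every $s$ since $D_\alpha$ bars $\mathcal{F}_\beta$, and the latter already makes the node-slice empty).

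Second, your verification that the set is a bar in $\mathcal{N}$ opens with the case distinction ``if $\beta(\overline\gamma n)\neq 0$ for some $n$ we are done; otherwise $\gamma\in\mathcal{F}_\beta$'', which is not constructively available: for an arbitrary $\gamma$ one cannot decide whether it ever leaves the spread. The paper repairs this by defining a companion $\gamma^\ast\in\mathcal{F}_\beta$ that copies $\gamma$ as long as doing so keeps the $\beta$-value $0$ and otherwise takes the least admissible continuation; since $D_\alpha$ bars $\mathcal{F}_\beta$ one finds $m$ with $\overline{\gamma^\ast}m\in D_\alpha$, and the \emph{decidable} comparison of $\overline\gamma m$ with $\overline{\gamma^\ast}m$ then shows that the part of $D_\alpha$ above $\overline\gamma m$ is either $\{\overline\gamma m\}$ (by thinness) or empty (because $\gamma$ has left the spread), hence almost-finite in both cases, so $\overline\gamma m\in Q$. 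With these two repairs your argument coincides with the paper's proof.
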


\begin{proof} Let $\beta, \alpha$ be given such that $Almfan(\beta)$ and $Thinbar_{\mathcal{F}_\beta}(D_\alpha)$  and $\forall s[s \in D_\alpha\rightarrow\beta(s) =0]$. Define $B
:=\{s\mid\forall \zeta \in [\omega]^\omega\exists n[s\sqsubseteq \zeta(n) \rightarrow \zeta(n) \notin D_\alpha]\}$, i.e. $B$ is the set of all $s$ such that the set $\{t\mid s\sqsubseteq t \;\wedge\; t \in D_\alpha\}$ is almost-finite. Note that $B$ is a co-analytic subset of $\omega$. 

We claim that $B$ is a bar in  $\omega^\omega$. For let $\gamma$  be given. Define $\gamma^\ast$ such that, for each $n$, if $\beta(\overline{\gamma^\ast}n\ast\langle \gamma(n)\rangle) = 0$, then $\gamma^\ast(n) = \gamma(n)$, and, if not, then $\gamma^\ast(n)=\mu p[\beta(\overline{\gamma^\ast}n\ast\langle p \rangle) = 0]$. Note that $\gamma^\ast \in \mathcal{F}_\beta$. Find $m$ such that $\overline{\gamma^\ast}m \in D_\alpha$. {\it Either} $\overline \gamma m = \overline{\gamma^\ast}m$ and, as $D_\alpha$ is thin,  $\{t\mid\overline \gamma m \sqsubseteq t \;\wedge\; t\in D_\alpha\}=\{\overline \gamma m\}$ is  almost-finite, {\it or} $\overline \gamma m \neq \overline{\gamma^\ast}m$ and $t\mid\overline \gamma m \sqsubseteq t \;\wedge\; t \in D_\alpha\}=\emptyset$  is almost-finite.  In any case, $\overline \gamma m \in B$. 

We now prove that $B$ is inductive. Let $s$ be given such that, for each $n$, $s\ast\langle n \rangle \in B$. Define $A := \{t\mid s\sqsubseteq t \;\wedge\; t \in D_\alpha\}$, and, for each $n$, $B_n :=\{t\mid s\ast\langle n \rangle \sqsubseteq t \;\wedge\; t \in D_\alpha\}$.   First assume that $\exists t\sqsubseteq s[t \in D_\alpha]$. Then $A$ has at most one element. Next, assume that $\neg\exists t\sqsubset s[t \in D_\alpha]$. Then $A=\bigcup\limits_{n \in \omega} B_n$, and $\forall \zeta \in [\omega]^\omega\exists n[\beta(s\ast\langle \zeta(n) \rangle)\neq 0]$. Note that $\forall n[\beta(s\ast\langle n \rangle) \neq 0 \rightarrow B_n = \emptyset]$, and, for each $n$, $B_n$ is almost-finite.  One may conclude, using Lemma \ref{L:almostfinite}(iv), that $A$ is almost-finite. In any case, $A$ is almost-finite, i.e. $s \in B$.  

Note that $B$ is also monotone. Applying $\mathbf{\Pi}^1_1$-$\mathbf{BI}$,  conclude that $\langle \;\rangle \in B$ i.e. $D_\alpha$ is almost-finite. \end{proof}

 \section{The Semi-approximate-fan Theorem and the contrapostion of the Bolzano-Weierstrass Theorem}

\subsection{The Semi-approximate-fan Theorem} 

\begin{definition}\label{D:boundedinnumberenumerable}Let $\gamma, n$ be given.  The set $E_\gamma=\{m\mid\exists p[\gamma(p)=m+1]\}$  \emph{has at most $n$ members} if and only if $\forall s \in [\omega]^{n+1} \exists i \le n\exists j \le n[\gamma\circ s(i) = 0 \;\vee\; \bigl(i<j \;\wedge\; \gamma\circ s(i) = \gamma\circ s(j)\bigr)]$. The set $E_\gamma$ is \emph{bounded-in-number} if and only if, for some $n$, the set $E_\gamma$ has at most $n$ members.\end{definition}
\begin{definition}\label{D:semiappfan}For every $\delta$, $\mathcal{S} \mathcal{F}_\delta:=\{\gamma\mid\forall n[\overline \gamma n \in E_\delta]\} =\{\gamma\mid\forall n \exists p[\delta(p) = \overline \gamma n + 1]\}.$
$\delta\in\omega^\omega$ is a \emph{semi-spreadlaw}, notation: $\mathit{Semispr}(\delta)$,  if and only if $\forall s[s \in E_\delta \leftrightarrow \exists n[s\ast\langle n \rangle \in E_\delta]]$. 
 $\mathcal{F}\subseteq\omega^\omega$ is  a \emph{semi-spread}  if and only if there exists a semi-spreadlaw $\delta$ such that $\mathcal{F} = \mathcal{SF}_\delta$.

 $\delta\in\omega^\omega$ is a \emph{semi-approximate-fan-law}, notation: $Semiappfan(\delta)$, if and only if $Semispr(\delta)$ and, for each $n$, the enumerable\footnote{$X\subseteq \omega$ is enumerable if and only if $\exists \gamma[X=E_\gamma]$.} set $\{s \in \omega^n\mid  s\in E_\delta \;\wedge\; \mathit{length}(s) = n\}$ is bounded-in-number. 
 $\delta$ is an \emph{explicit} semi-approximate-fan-law, notation: $\mathit{Semiappfan}^+(\delta)$, if and only if $\mathit{Semiappfan}(\delta)$ and there exists $\gamma$ such that, for each $n$, the  set $\{s \in \omega^n\mid s\in E_\delta \}$ has at most $\gamma(n)$ members. $\mathcal{F}\subseteq \omega^\omega$ is an \emph{(explicit) semi-approximate fan} if and only if there exists an (explicit) semi-approximate-fan-law $\delta$ such that $\mathcal{F}=\mathcal{SF}_\delta$. \end{definition} 
 
Semi-spreads are called  {\it closed-and-semilocated} subsets of $\omega^\omega$ in \cite{veldman2011b}.  It is shown in \cite{veldman2011b} that an inhabited subset $\mathcal{F}$ of $\omega^\omega$ is closed-and-semilocated if and only if $\mathcal{F}$ is \textit{closed-and-separable}, i.e. $\mathcal{F}$ is closed\footnote{A subset $\mathcal{F}$ of $\omega^\omega$ is \textit{closed} if and only if, for some $\beta$, $\mathcal{F}=\mathcal{F}_\beta=\{\alpha\mid\forall n[\beta(\overline\alpha n)=0]\}$, see Subsection \ref{SS:fans}.}    and $\exists \alpha\forall \gamma[\gamma \in \mathcal{F} \leftrightarrow \forall n \exists m[\overline \gamma n = \overline{\alpha^m}n]]$.

  Using $\mathbf{\Pi}^0_1$-$\mathbf{AC}_{0,0}$\footnote{See Subsubsection \ref{SSS:AC00}.} one may prove that every semi-approximate-fan-law is an explicit semi-approximate-fan-law.
  
\begin{definition} The following statement is the \emph{semi-approximate-fan theorem}:
 \[\mathbf{SemiappFT}:\forall \delta \forall \alpha[\bigl(Semiappfan^+(\delta) \;\wedge\; Thinbar_{\mathcal{SF}_\delta}(D_\alpha) \;\wedge\; D_\alpha\subseteq E_\delta\bigr)\rightarrow \]\[ \forall \zeta \in [\omega]^\omega\exists n[\zeta(n) \notin D_\alpha]],\]
    i.e. \textit{in an explicit  semi-approximate-fan, every decidable thin bar is almost-finite}. \end{definition}

    \begin{theorem}$\mathsf{BIM} \vdash \mathbf{AppFT} \rightarrow \mathbf{SemiappFT}$.\label{T:appsemiapp}

      \end{theorem}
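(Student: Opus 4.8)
The goal is to deduce $\mathbf{SemiappFT}$ from $\mathbf{AppFT}$. The essential point is that a closed-and-semilocated set $\mathcal{SF}_\delta$ cut out by an explicit semi-approximate-fan-law is, up to a straightforward ``pruning'' transformation, the same thing as the set of branches through an explicit approximate-fan-law $\beta$. So the plan is: given $\delta$ with $\mathit{Semiappfan}^+(\delta)$, manufacture a spread-law $\beta$ with $Appfan(\beta)$ such that $\mathcal{F}_\beta = \mathcal{SF}_\delta$, transport the hypothesised thin bar $D_\alpha$ across this identification to a thin bar $D_{\alpha^\ast}$ in $\mathcal{F}_\beta$, apply $\mathbf{AppFT}$ to $(\beta,\alpha^\ast)$, and finally transport the resulting almost-finiteness back to $D_\alpha$.

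First I would define $\beta$ from $\delta$. The obvious candidate is $\beta(s) = 0 \leftrightarrow s\in E_\delta$; but $E_\delta$ is only enumerable, whereas a spread-law must be decidable, so this does not work directly. Instead I would use the explicit bound: let $\varepsilon$ be such that, for each $n$, the set $\{s\in E_\delta \mid \mathit{length}(s)=n\}$ has at most $\varepsilon(n)$ members, and define $\beta(s) = 0$ iff, running the enumeration $\delta$ for a number of steps depending only on $\mathit{length}(s)$ and $\varepsilon$, one already finds $\varepsilon(\mathit{length}(s))$ members of $E_\delta$ of each relevant length up to $\mathit{length}(s)$, and $s$ is among the members of length $\mathit{length}(s)$ witnessed so far — together with the semi-spread closure conditions enforced by hand. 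Because the count is capped at $\varepsilon(n)$, this search terminates within a bound computable from $\varepsilon$, so $\beta$ is decidable; because $\delta$ is a semi-spreadlaw, the so-defined $\beta$ is a genuine spread-law with $\mathcal{F}_\beta=\mathcal{SF}_\delta$; and the cardinality cap $\varepsilon(n)$ immediately gives $\{t\mid\beta(t)=0 \wedge \mathit{length}(t)=n\}$ bounded-in-number, i.e. $Appfan(\beta)$. The transports of $D_\alpha$ are then trivial: since $\mathcal{F}_\beta=\mathcal{SF}_\delta$ literally as sets and every $s\in D_\alpha$ has $s\in E_\delta$ hence $\beta(s)=0$, the set $D_\alpha$ is already a thin bar in $\mathcal{F}_\beta$ with $\forall s\in D_\alpha[\beta(s)=0]$, so no modification of $\alpha$ is needed at all. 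Applying $\mathbf{AppFT}$ to $(\beta,\alpha)$ yields $\forall \zeta\in[\omega]^\omega\exists n[\zeta(n)\notin D_\alpha]$, which is exactly the conclusion of $\mathbf{SemiappFT}$.

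The main obstacle, and the only place requiring care, is the construction of the decidable $\beta$ with $\mathcal{F}_\beta = \mathcal{SF}_\delta$ \emph{and} $Spr(\beta)$ simultaneously. One has to make sure that the finite search defining $\beta(s)$ is long enough that $\beta(s)=0$ captures \emph{exactly} ``$s$ extends to an element of $\mathcal{SF}_\delta$'', not merely ``$s\in E_\delta$''; the explicit bound $\varepsilon$ is what makes this possible, since once $\varepsilon(n)$ members of length $n$ have appeared there can be no more, so membership questions about length-$n$ nodes are decided after a known finite stage. A slightly more economical route, avoiding the reconstruction of a spread-law, would be to prove directly a version of $\mathbf{AppFT}$ phrased for explicit approximate fans given by their sets of nodes and then note that $\mathit{Semiappfan}^+(\delta)$ presents such data; but reusing $\mathbf{AppFT}$ verbatim as stated is cleaner, so I would take the detour through $\beta$. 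Everything else — thinness, the bar property, monotonicity-free transport — is immediate from $\mathcal{F}_\beta=\mathcal{SF}_\delta$.
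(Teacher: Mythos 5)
Your overall strategy --- replace the semi-spread by a decidable (explicit) approximate fan, transport the thin bar, apply $\mathbf{AppFT}$, transport back --- is indeed the paper's strategy, but the step you describe as ``the only place requiring care'' is exactly where your argument breaks down. From $\mathit{Semiappfan}^+(\delta)$ you only know that, for each $n$, the set $\{s \in E_\delta \mid \mathit{length}(s)=n\}$ has \emph{at most} $\varepsilon(n)$ members; this is a bound on cardinality, not a modulus for the enumeration. Nothing guarantees that $\varepsilon(n)$ members of length $n$ will ever appear, and even when the last member of a given length does appear there is no computable stage by which you can know this has happened. So your search ``run $\delta$ until $\varepsilon(\mathit{length}(s))$ members of each relevant length have shown up'' need not terminate, and your claim that it ``terminates within a bound computable from $\varepsilon$'' is false: $\varepsilon$ bounds how many elements appear, not when they appear. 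Membership of $s$ in $E_\delta$ therefore remains merely semi-decidable, and in general there is no decidable spread-law $\beta$ with $\mathcal{F}_\beta=\mathcal{SF}_\delta$ at all --- this is precisely why semi-spreads are introduced as a notion distinct from spreads (compare Lemma \ref{L:consbcp}: in $\mathsf{BIM}+\mathbf{BCP}$ one refutes $\forall\gamma\exists\alpha[E_\gamma=D_\alpha]$, and the same obstruction applies here).

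What the paper does instead is give up on an identity identification and relabel the tree by enumeration time-stamps: it defines $\beta$ and a map $\eta$ simultaneously, declaring $\beta(s\ast\langle n\rangle)=0$ precisely when $\beta(s)=0$ and $n$ is the least index $i$ with $\delta(i)=\eta(s)\ast\langle p\rangle+1$ for some $p$, and then setting $\eta(s\ast\langle n\rangle)=\eta(s)\ast\langle p\rangle$. Decidability of $\beta$ is then automatic, because the defining condition only inspects the single value $\delta(n)$; the cardinality bound $\varepsilon$ transfers to $\beta$, giving $Appfan^+(\beta)$; and $\eta$ maps $\{s\mid\beta(s)=0\}$ one-to-one onto $E_\delta$ and $\mathcal{F}_\beta$ onto $\mathcal{SF}_\delta$. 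The price of this relabelling is that your ``trivial transports'' are no longer trivial: one applies $\mathbf{AppFT}$ to the pulled-back thin bar $D_{\alpha\circ\eta}$, and then a further argument is needed (given $\zeta$ in $[\omega]^\omega$, one builds a strictly increasing sequence of $\beta$-codes, using that $\eta$ is one-to-one and that there are infinitely many $s$ with $\beta(s)\neq 0$) to convert almost-finiteness of $D_{\alpha\circ\eta}$ into almost-finiteness of $D_\alpha$. Your proposal is missing both the relabelling device --- for which there is no substitute, since $E_\delta$ cannot be decided --- and this final transfer step.
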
 
      
      \begin{proof} 
   Let $\delta, \alpha$ be given such that $Semiappfan(\delta)$  and $Thinbar_{\mathcal{SF}_\delta}(D_\alpha)$ and $ D_\alpha  \subseteq E_\delta$.  We have to prove that  $D_\alpha$ is almost-finite.
 We   introduce $\beta$ and $\eta$ such that $Appfan(\beta)$  and $\eta$ is a one-to-one mapping of the set $\{s\mid\beta(s) =0\}$ onto $E_\delta$, as follows.
 We define  
  $\beta(\langle \;\rangle) := 0$ and $\eta(\langle \;\rangle) := \langle \;\rangle$ and
  for each $s$,  for each $n$, we distinguish two cases. \begin{enumerate}[\upshape (i)] \item  $\beta(s) = 0$, and  there exists $p$ such that $n=\mu i[\delta(i) = \eta(s)\ast\langle p \rangle + 1]$. We define  $\beta(s\ast\langle n \rangle) = 0$ and $\eta(s\ast\langle n\rangle) = \eta(s)\ast\langle \delta(n)-1 \rangle$.
  \item  {\it Either} $\beta(s) \neq 0$ {\it or} $\beta(s) = 0$ and there is no $p$ such that $n=\mu i[\delta(i) = \eta(s)\ast\langle p \rangle + 1]$.  We define $\beta(s\ast\langle n \rangle) = 1$ and $\eta(s\ast\langle n \rangle) = \eta(s) \ast\langle 0 \rangle$.\end{enumerate}

 Note that, for every $s$, if $\beta(s) = 0$, then $\eta(s) \in E_\delta$. Note that, for every $t$ in $E_\delta$, there exists $s$  such that $\beta(s) = 0$ and  $\eta(s) = t$.
 Also note that $\forall t[t \in E_\delta \leftrightarrow \exists p[t\ast \langle p \rangle  \in E_\delta]]$ and conclude that $\forall s[\beta(s) = 0 \leftrightarrow \exists n[ \beta(s\ast n \rangle) = 0]]$.  
   Find $\varepsilon$ such that, for each $n$,  the set $E_\delta \cap\omega^n$ has at most $\varepsilon(n)$ elements and conclude that, for each $n$,   the set $\{t \in \omega^n\mid \beta(s) = 0\}$ has at most $\varepsilon(n)$ elements. Conclude that $\beta$ is an explicit approximate-fan-law.  
  Let $\theta$ be an element of $\mathcal{F}_\beta$. Define $\nu$ such that, for each $n$, $\overline \nu n = \eta(\overline \theta n)$. Note that $\nu$ belongs to $\mathcal{S} \mathcal{F}_\delta$ and find $n$ such that $\overline \nu n \in D_\alpha$. Conclude that $\overline \theta n \in D_{\alpha\circ \eta}$.
We thus see that $Bar_{\mathcal{F}_\beta}(D_{\alpha \circ \eta})$. Note that $D_{\alpha \circ \eta}$ is thin, and, 
 using $\mathbf{AppFT}$,  conclude that $D_{\alpha \circ \eta}$ is almost-finite. 
 We now prove that also $D_\alpha$ itself is almost-finite.
Let $\zeta$ be an element of $[\omega]^\omega$. Define  $\kappa$ such that, for each $n$, \textit{either} $\zeta(n) \in D_\alpha$ and $\kappa(n)$ is the least $s$  such that $\beta(s) = 0$ and $\eta(s) = \zeta(n)$, (and therefore: $\eta \circ \kappa (n) = \zeta(n)$), \textit{or}  $\zeta(n) \notin D_\alpha$  and $\kappa(n)$ is the least $s$  such that  $\beta(s) \neq 0$ and $\forall i < n[\kappa(i) < s]$. Note that $\kappa$ is well-defined, as there are infinitely many numbers $s$ such that $\beta(s) \neq 0$,  and note that $\kappa$ is one-to-one, as $\eta$ is a one-to-one function from the set $\{t\mid\beta(t) = 0\}
 $ to $E_\delta$. Define $\rho$ such that $\rho(0) = \kappa(0)$ and, for each $n$, $\rho(n+1)$ is  the least $j> \rho(n) $ such that $\kappa\bigl(\rho(n)\bigr) <  \kappa(j)$.   Note that $\kappa \circ \rho \in [\omega]^\omega$ and find   $n$ such that $\kappa \circ \rho(n) \notin D_{\alpha\circ \eta}$. Conclude that $\eta \circ \kappa \circ \rho(n) \notin D_\alpha$. If $\zeta\bigl(\rho(n)\bigr)\in D_\alpha$, then $\eta\circ\kappa\circ\rho(n) = \zeta \bigl(\rho(n)\bigr)$ and $\zeta\bigl(\rho(n)\bigr)\notin D_\alpha$.  Conclude that $\zeta\bigl(\rho(n)\bigr)\notin D_\alpha$. Conclude  that $\forall \zeta \in [
 \omega]^\omega \exists m[\zeta(m) \notin D_\alpha]$, i.e. $D_\alpha$ is almost-finite. 
  \end{proof} 
 The next Definition extends Definition \ref{D:strongbar}. \begin{definition}\label{D:strongbar2}     
       For every semi-spread-law $\delta$, for every $B\subseteq\omega$,  \emph{$B$ is s strong bar in $\mathcal{SF}_\delta$}, notation: $Strongbar_{\mathcal{SF}_\delta}(B)$,  if and only if $\forall \zeta \in [\omega]^\omega[\forall n[\zeta(n) \in E_\delta ] \rightarrow \exists m [\overline{\zeta(n)}m \in B]]$. 
  \end{definition}
   Lemma  \ref{L:appfanbelow} and Corollary \ref{C:almostfinitestrongbar} extend Lemma \ref{L:fankl1}. 
  \begin{lemma}\label{L:appfanbelow} $\mathsf{BIM}$ proves that,  for every semi-spread-law $\delta$, for every $\alpha$,
  
    if
  $Bar_{\mathcal{SF}_\delta}(D_\alpha)$ and  $D_\alpha$ is almost-finite, then $Strongbar_{\mathcal{SF}_\delta}(D_\alpha)$. \end{lemma}
  
  \begin{proof}  Let $\delta$, $\alpha$  be given such that $Semispr(\delta)$ and $Bar_{\mathcal{SF}_\delta}(D_\alpha)$ and $D_\alpha$ is almost-finite.   
      Assume that $\zeta\in[\omega]^\omega$ and $\forall n[\zeta(n)\in E_\delta]$. For each $n$, for each $s$ in $\omega^n\cap E_\delta$, define $s^+$ in $\omega^{n+1}\cap E_\delta$, as follows. Given $n,s$, find $k_0:=\mu k[\delta(k)>0 \;\wedge\; s\sqsubset \delta(k)-1]$ and define $s^+:=\overline{\delta(k_0)-1}(n+1)$. Define $\gamma$ such that, for each $n$,  $\zeta(n)\sqsubset \gamma^n$ and and, for each $p\ge length\bigl(\zeta(n))$, $\overline{\gamma^n}(p+1)=\bigl(\overline{\gamma^n}(p)\bigr)^+$. Note that, for each $n$, $\zeta(n)\sqsubset \gamma^n$ and $\gamma^n \in \mathcal{SF}_\delta$. Define $\eta$ such that, for each $n$, $\eta(n)=\overline{\gamma^n}t$ where $t=\mu q[\overline{\gamma^n}q\in D_\alpha]$.  Note that, for all $m,n$ either $\eta(m)=\eta(n)$ or $\eta(m) \perp \eta(n)$. Note that, for each $n$, either $\zeta(n)\sqsubset \eta(n)$ or $\eta(n)\sqsubseteq \zeta(n)$. Define $QED\footnote{QED: `quod \emph{est} demonstrandum', `what \emph{still has} to be proven'.}:=\exists m\exists n[\overline{\zeta(n)}m\in D_\alpha]$ and note that $QED\leftrightarrow \exists n[\eta(n)\sqsubseteq \zeta(n)]$. Note that, for each $n$,   the set $\{k\mid  \zeta(k) 
      \sqsubseteq \eta(n)\}$ has at most $p:=length\bigl(\eta(n)\bigr)$ elements.    We define $\rho$ in $[\omega]^\omega$. We define  $\rho(0)=\eta(0)$ and, for each $n>0$, we distinguish two possibilities. \begin{enumerate}[\upshape (i)] \item  There exists $s$ in $[\omega]^n$ such that $\overline \rho n = \eta \circ s$ and $\forall i<n[\zeta\circ s(i)\sqsubset \eta\circ s(i)]$. We  calculate $N:=\sum_{i<n}length\bigl(\rho(i)\bigr)$ and $p:=s(n-1)$ and determine  $k<N$ such that {\it either}, for all $i<n$,  $\eta(i)\perp \eta(p+k+1)$, {\it or} $\eta(p+k+1)\sqsubseteq \zeta(p+k+1)$ and define $\rho(n) = \eta(p+k+1)$.
      
      \item If (i) does not apply, $n>0$ and we define $\rho(n)=\rho(n-1)\ast\overline{\underline 0}k$, where $k=\mu p\forall i<n[\rho(i)< \rho(n-1)\ast\overline{\underline 0}p]$.\end{enumerate} 
 
 Note that, for each $n$ either there exists $s$ in $[\omega]^n$ such that $\overline \rho n =  \eta\circ s$ or  $\exists i<n \exists  j[\zeta(j)\sqsubseteq \eta(j)=\rho(i)]$ and $QED$. Note that $\forall m\forall n[m<n\rightarrow \rho(m)\neq \rho(n)]$ and find $\theta$ in $[\omega]^\omega$ such that $\rho\circ\theta \in [\omega]^\omega$. Using the fact that $D_\alpha$ is almost-finite, find $n$ such that $\rho\circ \theta(n)\notin D_\alpha$. Find $p$ such that $\rho\circ\theta(n)\sqsubseteq\zeta(p)$ and conclude that $\zeta(p)\sqsubseteq \eta(p)\in D_\alpha$. Conclude $QED$.   
 
 We thus see that $\forall\zeta\in[\omega]^\omega[\forall n[\zeta(n) \in E_\delta]\rightarrow \exists m\exists n[\overline{\zeta(n)}m \in D_\alpha]$, i.e.
 
$Strongbar_{\mathcal{SF}_\delta}(D_\alpha)$. \end{proof}

\begin{corollary}\label{C:almostfinitestrongbar}$\mathsf{BIM}$ proves that the following two statements are equivalent. \begin{enumerate}[\upshape (i)] \item   $\mathbf{AppFT}$. \item For all $\beta , \alpha$, if $Appfan^+(\beta)$ and $\forall s[s\in D_\alpha\rightarrow \beta(s)=0]$ and $Bar_{\mathcal{F}_\beta}(D_\alpha) $, then  $ Strongbar_{\mathcal{F}_\beta}(D_\alpha)$.\end{enumerate} \end{corollary}   

\begin{proof} (i)$\Rightarrow$(ii)  Let $\beta,\alpha$ be given such that $Appfan^+(\beta)$ and $\forall s[s\in D_\alpha\rightarrow \beta(s)=0]$ and $Bar_{\mathcal{F}_\beta}(D_\alpha)$. Using $\mathbf{AppFT}$, conclude that $D_\alpha$ is almost-finite. Note that there exists $\delta$ such that $\mathcal{F}_\beta=\mathcal{SF}_\delta$.  Using Lemma \ref{L:appfanbelow}, conclude 
 that $Strongbar_{\mathcal{F}_\beta}(D_\alpha)$.  
 
 \smallskip (ii)$\Rightarrow$(i).  Let $\beta,\alpha$ be given such that $Appfan^+(\beta)$ and  $\forall s[s\in D_\alpha\rightarrow \beta(s)=0]$ and $Thinbar_{\mathcal{F}_\beta}(D_\alpha)$.  Define $\gamma$ such that for all $t$, $\gamma(t)\neq 0$ if and only if $\exists s\exists n[t= s\ast\langle n\rangle\;\wedge\;s\in D_\alpha\;\wedge\; \beta(s\ast\langle n \rangle)=0 ]$.   Note that $Bar_{\mathcal{F}_\beta}(D_\gamma)$ and $\forall s[s\in D_\gamma\rightarrow \beta(s)=0]$. Conclude that $Strongbar_{\mathcal{F}_\beta}(D_\gamma)$. Let $\zeta$ in $[\omega]^\omega$ be given. Find $n,m$ such that $\overline{\zeta(n)}m\in D_\gamma$ and conclude that $\zeta(n)\notin D_\alpha$. Conclude that $\forall \zeta \in [\omega]^\omega \exists n[\zeta(n)\notin D_\alpha]$, i.e. $D_\alpha$ is almost-finite. \end{proof}
\subsection{The contraposition of the Bolzano-Weierstrass Theorem in $\omega^\omega$}   
  \begin{definition}   The \emph{contrapositive of the Bolzano-Weierstrass Theorem in $\omega^\omega$} is the following statement:  
  
  $\overleftarrow{\mathbf{BW}_{\omega^\omega}}$:
      For all $\gamma$, if $\forall \zeta \in [\omega]^\omega\exists n[\gamma\circ\zeta(n) \neq_n \gamma\circ\zeta(n+1)]$, then $\exists n[\gamma(n)\notin 2^{<\omega}]$. \end{definition}

\begin{definition}\label{D:finiteseqbinary2} Define\footnote{Note that ${\bm \tau}$ subtly differs from the function ${\bm \delta}$ given in Definition   \ref{D:finiteseqbinary}.} ${\bm \tau}$ such that ${\bm \tau}(\langle \;\rangle):= \langle \;\rangle$ and, for  all $s$, for all $n$,  ${\bm \tau}(s\ast\langle n\rangle) := {\bm \tau}(s) \ast\overline{\underline 0}(n+1)\ast\langle 1 \rangle$. \end{definition}
\begin{theorem}\label{T:semiappcontrbw} $\mathsf{BIM}\vdash \mathbf{SemiappFT} \rightarrow \overleftarrow{\mathbf{BW}_{\omega^\omega}}$. \end{theorem}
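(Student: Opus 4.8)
The plan is to assume $\mathbf{SemiappFT}$, start from an oscillating $\gamma$, and manufacture from $\gamma$ an explicit semi-approximate fan $\mathcal{SF}_{\delta}$ together with a thin bar $D_{\alpha}$ in it, arranged so that the conclusion of $\mathbf{SemiappFT}$ for $\delta,\alpha$, read along a suitable $\zeta^{\ast}\in[\omega]^{\omega}$, forces $\gamma$ to leave $\mathit{Bin}$.

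First I would pass to the padded sequence $\gamma^{\ast}$: for each $n$, put $\gamma^{\ast}(n):=\gamma(n)\ast\underline 0$ if $\forall m\le n[\gamma(m)\in\mathit{Bin}]$, and $\gamma^{\ast}(n):=\underline 0$ otherwise, so that $\forall n[\gamma^{\ast}(n)\in\mathcal{C}]$. From the oscillation hypothesis on $\gamma$ one obtains an ``oscillation with escape'': for every $\zeta\in[\omega]^{\omega}$ there is $n$ with $\gamma^{\ast}\circ\zeta(n)\neq_{n}\gamma^{\ast}\circ\zeta(n+1)$ or $\exists m\le\zeta(n+1)[\gamma(m)\notin\mathit{Bin}]$, by splitting on the decidable question whether all of $\gamma(0),\dots,\gamma(\zeta(n+1))$ lie in $\mathit{Bin}$ and noting that a witness $j<n$ for $\gamma\circ\zeta(n)\neq_{n}\gamma\circ\zeta(n+1)$ survives the padding. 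Then I define $\delta$ so that $E_{\delta}$ is the set of all $s$ with $s\sqsubset\gamma^{\ast}(n)$ for some $n$; this is enumerable, it is a tree without leaves because every $\gamma^{\ast}(n)$ lies in $\mathcal{C}$, so $\mathit{Semispr}(\delta)$ holds and $\mathcal{SF}_{\delta}$ is the closure of $\{\gamma^{\ast}(n)\mid n\in\mathbb{N}\}$; and since $E_{\delta}\cap\{s\mid\mathit{length}(s)=k\}$ consists of binary sequences of length $k$ it has at most $2^{k}$ members, so $\mathit{Semiappfan}^{+}(\delta)$ holds with modulus $k\mapsto 2^{k}$.

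The heart of the proof is the construction of the thin bar. I would define, by a recursion that inspects $\gamma^{\ast}(0),\gamma^{\ast}(1),\dots$ in turn, a monotone bar $B\subseteq E_{\delta}$ in $\mathcal{SF}_{\delta}$ that is forced ``deeper and deeper'': the $n$-th point is barred at a level above every level already used and large enough that the thinning $D_{\alpha}$ of $B$ stays an antichain, while the oscillation-with-escape for $\gamma^{\ast}$ is precisely what keeps the branches moving, so that as long as no $\gamma(m)$ leaves $\mathit{Bin}$ the set $D_{\alpha}$ contains nodes of arbitrarily large length; here $\mathit{Thinbar}_{\mathcal{SF}_{\delta}}(D_{\alpha})$ and $D_{\alpha}\subseteq E_{\delta}$ hold by construction. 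Using this I define a strictly increasing total $\zeta^{\ast}$: to compute $\zeta^{\ast}(n)$, search for a stage at which either a code of an element of $D_{\alpha}$ of length $\ge n$ and larger than $\zeta^{\ast}(n-1)$ has appeared, or some $\gamma(m)$ is seen not to lie in $\mathit{Bin}$; in the first case output that code, in the second a harmless larger default. The search terminates by the ``unbounded depth or escape'' property, and by construction $\zeta^{\ast}(n)\notin D_{\alpha}$ can happen only when some $\gamma(m)\notin\mathit{Bin}$.

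Finally, $\mathbf{SemiappFT}$ applied to $\delta$ and $\alpha$ says that $D_{\alpha}$ is almost-finite, i.e. $\forall\zeta\in[\omega]^{\omega}\exists n[\zeta(n)\notin D_{\alpha}]$; instantiating with $\zeta^{\ast}$ yields an $n$ with $\zeta^{\ast}(n)\notin D_{\alpha}$, whence $\exists m[\gamma(m)\notin\mathit{Bin}]$, which is what $\mathbf{Contr(BW)}_{\mathcal{N}}$ demands. I expect the main obstacle to be precisely the middle step: arranging the descending bar so that it simultaneously bars \emph{every} element of $\mathcal{SF}_{\delta}$, including the cluster points of the sequence $\gamma^{\ast}$, thins to an antichain, and is driven to unbounded depth by the oscillation — turning the classical fact that an oscillating bounded sequence has spread-out cluster points into an explicit constructive recursion, with careful bookkeeping of the escape disjunct throughout.
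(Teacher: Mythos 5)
Your outer scaffolding matches the paper's strategy (build a semi-approximate fan from $\gamma$, produce a thin decidable bar in it that keeps acquiring deep members as long as $\gamma$ stays in $\mathit{Bin}$, and let the almost-finiteness conclusion of $\mathbf{SemiappFT}$ force an escape), but the proposal has a genuine gap exactly where you yourself flag ``the main obstacle'': the middle step is not a construction, it is a wish-list. In your semi-spread $\mathcal{SF}_\delta$ (all $\beta$ whose initial segments are initial segments of the padded $\gamma^\ast(n)$), the points that must be barred include the cluster points of the sequence $\gamma^\ast$, and no stage-by-stage recursion that only inspects $\gamma^\ast(0),\gamma^\ast(1),\dots$ and plants bar-nodes along those finitely many branches can make $Bar_{\mathcal{SF}_\delta}(D_\alpha)$ hold ``by construction'': at no finite stage can you decide whether a given node lies on a cluster-point branch, so barhood at those points is not a by-product of bookkeeping but must be \emph{proved}, and the only available resource for proving it is the oscillation hypothesis itself. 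The same missing argument underlies your ``unbounded depth or escape'' property: the termination of the search defining $\zeta^\ast$, and the claim that $\zeta^\ast(n)\notin D_\alpha$ only happens when some $\gamma(m)\notin\mathit{Bin}$, both rest on properties of a bar you never exhibit. So as it stands the proposal defers the entire mathematical content of the theorem to an unspecified recursion whose required properties (bar, thin, contained in $E_\delta$, unbounded depth while $\gamma$ stays binary) are in tension and are nowhere verified.

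For comparison, the paper resolves precisely this point by changing the semi-spread rather than the bar: it re-encodes finite sequences via $\tau$ ($\tau(s\ast\langle n\rangle)=\tau(s)\ast\overline{\underline 0}(n+1)\ast\langle 1\rangle$), so that $\tau$-images contain no two consecutive $1$'s, and lets $E_\delta$ consist of initial parts of $\tau(t)\ast\underline 1$ for $t\sqsubseteq\gamma(m)$. Then the bar is the fixed, explicitly decidable set ``contains two consecutive $1$'s'' (refined to $C$ with enumeration witnesses and thinned to $D_\alpha$): the appended tails of $1$'s bar every ``isolated'' branch at a definite finite depth, and barhood at the remaining (cluster-point) branches is exactly what the hypothesis $\forall\zeta\in[\omega]^\omega\exists n[\gamma\circ\zeta(n)\neq_n\gamma\circ\zeta(n+1)]$ delivers, via an explicit argument. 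Finally the paper extracts a subsequence of $\gamma$ with pairwise distinct values to get an increasing sequence of codes in $D_\alpha$ (as long as $\gamma$ stays in $\mathit{Bin}$), which is the role your $\zeta^\ast$ was meant to play. If you want to salvage your plan, you need either this kind of encoding or some other concrete device that reduces barhood at cluster points to the oscillation hypothesis; without it the proof is incomplete.
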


\begin{proof}                                                                                                                                                                                                                                                                                         Let ${\bm \tau}$ be as in Definition \ref{D:finiteseqbinary2}. Observe the following. \begin{enumerate}[\upshape(i)] \item $\forall s[{\bm \tau}(s) \in 2^{<\omega}]$ \item $\forall s\forall i[\bigl(i+1 <length(s) \;\wedge\; \bigl({\bm \tau}(s)\bigr)(i)=1\bigr)\rightarrow  \bigl({\bm \tau}(s)\bigr)(i+1)=0]$ and \item $\forall s \in  2^{<\omega}[\bigl(i+2<\mathit{length}\bigl({\bm \tau}(s)\bigr)\;\wedge\; \bigl({\bm \tau}(s)\bigr)(i)=\bigl({\bm \tau}(s)\bigr)(i+1)=0\bigr)\rightarrow \\\bigl({\bm \tau}(s)\bigr)(i+2)=1]$ and \item $\forall s \in 2^{<\omega}[\mathit{length}\bigl({\bm \tau}(s)\bigr) \le 3\cdot\mathit{length}(s)]$.\end{enumerate}
Let $\gamma$ be given such that $\forall \zeta \in [\omega]^\omega\exists n[\gamma\circ\zeta(n) \neq_n \gamma\circ\zeta(n+1)]$. 
Define $\delta$ such that, for each $n$, \textit{if} $n'\sqsubseteq {\bm \tau}\circ\gamma(n'')\ast\underline 1$,  then $\delta(n) =n' +1$, and, \textit{if not}, then $\delta(n) =0$.  
Note that $\forall s[s\in E_\delta\leftrightarrow \exists m  [s \sqsubseteq {\bm \tau}\circ\gamma(m)\ast\underline{ 1}]]$ and conclude that $\delta$ is a semi-spread-law. Note that  $\forall s[s \in E_\delta \leftrightarrow \exists n<2[s\ast\langle n \rangle \in E_\delta]]$ and that, for each $n$, the set $\{s\in \omega^n\mid s \in E_\delta\}$ has at most $2^n$ members, and conclude that  $\delta$ is an explicit semi-approximate-fan-law.   
Define $B:=\{s \in 2^{<\omega} \mid \exists i[i+2 < length(s) \;\wedge\;s(i)=s(i+1)=s(i+2)=1]\;\vee\;\exists i<length(s)[\gamma(i)\notin 2^{<\omega}]\}$. 
We  now prove that $B$ is a bar in $\mathcal{SF}_\delta$.

Let $\beta$ in $\mathcal{SF}_\delta$ be given. Define $\zeta$ as follows.
 For each $p$, having to define $\zeta(p)$, we distinguish two cases. 
 
 \textit{Case (1)}.  $\neg \exists i[i+2<p \;\wedge\; \beta(i)=\beta(i+1)=\beta(i+2)=1]$.  Find $k:=\mu n [\delta(n) = \overline \beta p +1]$ and define $\zeta(p) = k''$.  
   Note that $k'=\overline \beta(p) \sqsubseteq {\bm \tau}\bigl(\gamma\circ\zeta(p)\bigr)\ast\underline 1$.
  
   \textit{Case (2)}. $\exists i[i+2<p \;\wedge\; \beta(i)=\beta(i+1)=\beta(i+2)=1 ]$. Note that $p>0$ and define $\zeta(p) :=\zeta(p-1) +1$. 
  
This completes the definition of $\zeta$.     Note that $\forall p \exists q[\zeta (q)>p]$. 
   Define $\lambda$ such that $\lambda(0) =0$ and $\forall n[\lambda(n+1)= \mu q>\lambda(n)[\zeta(q)> \zeta\bigl(\lambda(n)\bigr)]$.  Consider $\eta:=\zeta \circ \lambda$   and note that $\eta \in [\omega]^\omega$.  Find $n$ such that $\gamma\circ\eta(3n) \neq_n\gamma\circ\eta(3n+3)$. Assume that $\gamma\circ\eta(3n) \in 2^{<\omega}$ and $\gamma\circ\eta(3n+3) \in 2^{<\omega}$ and that $\overline \beta\bigl(\lambda(3n+3)\bigr)\notin B$.   Note that $\overline \beta\bigl(\lambda(3n)\bigr)\sqsubseteq {\bm \tau}\bigl(\gamma \circ\eta(3n)\bigr)\ast\underline 1$
    and $\overline \beta\bigl(\lambda(3n)\bigr)\sqsubseteq \overline \beta\bigl(\lambda(3n+3)\bigr)\sqsubseteq {\bm \tau}\bigl(\gamma \circ\lambda(3n+3)\bigr)\ast\underline 1$ and conclude that ${\bm \tau}\bigl(\gamma \circ\eta(3n)\bigr)=_{\lambda(3n)} {\bm \tau}\bigl(\gamma \circ\eta(3n+3)\bigr)$.  As $\lambda(3n) \ge 3n$, also ${\bm \tau}\bigl(\gamma \circ\eta(3n)\bigr)=_{3n} {\bm \tau}\bigl(\gamma \circ\eta(3n+3)\bigr)$. As $\gamma\circ\eta(3n) \in 2^{<\omega}$ and $\gamma\circ\eta(3n+3) \in 2^{<\omega}$, also $\gamma \circ\eta(3n)=_n \gamma \circ\eta(3n+3)$. Contradiction. Conclude that {\it either} $\overline\beta\bigl(\lambda(3n+3)\bigr)\in B$ {\it or}  $\gamma\circ\eta(3n) \notin 2^{<\omega}$ {\it or} $\gamma\circ\eta(3n+3) \notin 2^{<\omega}$. In the latter two cases, $\overline \beta\bigl((\eta(3n+3)+1\bigr) \in B$.  
 We thus see that $\forall \beta \in \mathcal{SF}_\delta\exists n[\overline \beta n  \in B]$, i.e. $Bar_{\mathcal{SF}_\delta}(B)$.
Note that $\forall m \exists n >m [\gamma(n) > \gamma (m)]$. 
 Define $\eta^\ast$ in $[\omega]^\omega$ such that  $\forall n[\gamma\circ\eta^\ast(n+1)>\gamma \circ\eta^\ast(n)]$ and define: $\gamma^\ast:=\gamma\circ \eta^\ast$.
 Using $\mathbf{SemiappFT}$ and Lemma \ref{L:appfanbelow}, find $m,n$ such that: $\overline{{\bm \tau}\circ\gamma^\ast(n)}m \in B$.
 Conclude that $\exists n[\gamma(n) \notin 2^{<\omega}]$.
 
 We thus see that, for all $\gamma$, if $\forall \zeta \in [\omega]^\omega\exists n[\gamma\circ\zeta(n) \neq_n \gamma\circ\zeta(n+1)]$, then $\exists n[\gamma(n) \notin 2^{<\omega}]$, i.e. $\overleftarrow{\mathbf{BW_{\omega^\omega}}}$. 
\end{proof}    

\subsection{The contraposition of the Bolzano-Weierstrass Theorem in $\mathcal{R}$.}

\begin{definition} The following statement is a version of the classical \emph{Bolzano-Weierstrass Theorem}:

      $ \mathbf{BW}$: for all $\gamma$ in $\mathbb{Q}^\omega$,  \\if $\forall n[| \gamma(n)|\le_\mathbb{Q} 1_\mathbb{Q}]$, then $\exists \zeta \in [\omega]^\omega\forall n[|\gamma\circ\zeta(n+1) -_\mathbb{Q} \gamma\circ\zeta(n)| \le_\mathbb{Q} \frac{1}{2^n}]].$ 
      
       i.e. every infinite sequence $\gamma$ of rationals in $[-1,1]$  has a convergent subsequence:

The following  statement is the \emph{contrapositive  of the Bolzano-Weierstrass Theorem}:

$\overleftarrow{\mathbf{BW}}$:
{\it For all $\gamma$ in $\mathbb{Q}^\omega$,   \\if
$\forall \zeta \in [\omega]^\omega\exists n[|\gamma\circ\zeta(n+1) -_\mathbb{Q} \gamma\circ\zeta(n)| 
>_\mathbb{Q} \frac{1}{2^n}]$, then $\exists n[|1_\mathbb{Q}<_\mathbb{Q} |\gamma(n)| ].$} 

i.e. \textit{every infinite  sequence of rationals that positively fails to have a convergent subsequence leaves $[-1,1]$}.  \end{definition}\smallskip $\mathbf{BW}$   extends Dedekind's Theorem $\mathbf{Ded}$, see Subsection \ref{SS:dt}, and thus is constructively false.
\begin{theorem}\label{T:bwbwr} $\mathsf{BIM}\vdash \overleftarrow{\mathbf{BW}_{\omega^\omega}} \leftrightarrow \overleftarrow{\mathbf{BW}}$. \end{theorem}

\begin{proof} (i) Assume $\overleftarrow{\mathbf{BW}_{\omega^\omega}}$. 

Define ${\bm \delta_1}$ such that, for all $a$ in $ 2^{<\omega}$,  ${\bm \delta_1} (a) = \sum_{i<\mathit{length}(a)} \frac{a(i)}{2^{i+1}}$. 
Note that, for all $n$, for all $a,b$ in $2^{<\omega}$, if  $a=_n b$ then $|{\bm \delta_1}(a)-{\bm \delta_1}(b)|<\frac{1}{2^{n}} $. 

\smallskip Let $\gamma$ in $\mathbb{Q}^\omega$ be given such that  $\forall \zeta\in [\omega]^\omega\exists n[|\gamma\circ \zeta(n+1)-\gamma\circ\zeta(n)|>_\mathbb{Q} \frac{1}{2^n}]$. 
 We want to prove: $\exists n[1_\mathbb{Q}<_\mathbb{Q}|\gamma(n)|  ]$. 
 Define $\eta$ such that,  
for each $n$, if $\exists i\le n+1[1_\mathbb{Q}<_\mathbb{Q}|\gamma(i)| ]$, then $\eta(n)=\langle n +2\rangle$, and, if $\forall i\le n+1[|\gamma(i)| \le_\mathbb{Q} 1_\mathbb{Q}]$, then $\eta(n)$ is the least $a$ in $2^{<\omega}$ such that $|{\bm \delta_1}(a)-_\mathbb{Q} \gamma(n)| <_\mathbb{Q} \frac{1}{2^{n+2}} $.  
Assume that $\zeta \in [\omega]^\omega$. Find $n$ such that $|\gamma\circ\zeta(n+1) -_\mathbb{Q}\gamma\circ\zeta(n)|>_\mathbb{Q}\frac{1}{2^n}$ and distinguish two cases.
 \begin{enumerate}[\upshape (1)]
 \item  $\forall i \le \zeta(n+1)+1[|\gamma(i)| \le_\mathbb{Q} 1_\mathbb{Q}]$. 
Observe that $|{\bm \delta_1}\circ\gamma\circ\zeta(n+1) -_\mathbb{Q}{\bm \delta_1}\circ\gamma\circ\zeta(n)|>_\mathbb{Q} \\|\gamma\circ\zeta(n+1)-_\mathbb{Q} \gamma\circ \zeta(n)| -_\mathbb{Q} \frac{1}{2^{\zeta(n+1)+2}} -_\mathbb{Q} \frac{1}{2^{\zeta(n)+2}} >_\mathbb{Q} \frac{1}{2^n}-_\mathbb{Q}2\cdot \frac{1}{2^{n+2}}=\frac{1}{2^{n+1}}$. Conclude that $\eta\circ\zeta(n+1)\ne_{n+1}\eta\circ\zeta(n)$. 
\item $\exists i \le \zeta(n+1)+1[\gamma(i)<_\mathbb{Q} |\gamma(i)| ]$. Note that  $\eta\circ\zeta(n+1) =_\mathbb{Q} \langle\zeta(n+1)+2\rangle$ and $\eta\circ\zeta(n+2) =\langle \bigl(\zeta(n+2)+2\rangle$ and $\eta\circ\zeta(n+2) \neq_{1} \eta\circ\zeta(n+1) $. \end{enumerate}
 We thus see that $\forall \zeta \in [\omega]^\omega\exists n[ \eta\circ\zeta(n+1) \neq_{n+1} \eta\circ\zeta(n)]$.
Using Lemma \ref{L:ftconvprep2} and $\overleftarrow{\mathbf{BW}_{\omega^\omega}}$, we find $p$ such that $\eta(p)\notin 2^{<\omega}$. Conclude that $\eta(p)  =\langle p+2\rangle$ and $\exists n[1_\mathbb{Q}<_\mathbb{Q}|\gamma(n) |]$.

We thus see that, for each $\gamma$ in $\mathbb{Q}^\omega$, if $\gamma$ positively fails tio have a convergent subsequence, then  $\exists n[1_\mathbb{Q}<_\mathbb{Q}|\gamma(n) |]$, i.e. $\overleftarrow{\mathbf{BW}}$.

\smallskip  (ii)  Assume $\overleftarrow{\mathbf{BW}}$.

 Define ${\bm \delta_0}$ such that, for all $a$ in $ 2^{<\omega}$,  ${\bm \delta_0} (a) = \sum_{i<\mathit{length}(a)} \frac{2a(i)+1}{5^i}$. 
Note that, for all $n$, for all $a,b$ in $2^{<\omega}$, if   $a\neq_n b$, then $\frac{1}{5^n} <_\mathbb{Q} |{\bm \delta_0}(a)-_\mathbb{Q} {\bm \delta_0}(b)|$.

Let $\gamma$ be given such that  $\forall\zeta  \in [\omega]^\omega\exists n[\gamma\circ\zeta(n)\neq_n \gamma\circ\zeta(n+1)]$. 
Define $\eta$ in $\mathbb{Q}^\omega$ such that,  for each $p$, \textit{either} $\forall m \le p  [\gamma(m) \in 2^{<\omega}]$ and $\eta(p) = {\bm \delta_0}\bigl(\gamma(p)\bigr)$, \textit{or} $\exists m \le p[\gamma(m) \notin 2^{<\omega}]$ and $\eta(p) = (p+1)_\mathbb{Q}$. 
Assume $\zeta \in [\omega]^\omega$. Find $p$ such that $\gamma\circ\zeta(p) \neq_p 
 \gamma\circ\zeta(p+1)$ and distinguish two cases.
 \begin{enumerate}[\upshape (1)]
  \item  $\forall m \le \zeta(p+1)[\gamma(m) \in 2^{<\omega}]$. 
Conclude that $\eta\circ\zeta(p) = {\bm \delta_0}\bigl(\gamma\circ\zeta(p)\bigr)$ and $\eta\circ\zeta(p+1) = {\bm \delta_0}\bigl(\gamma\circ\zeta(p+1)\bigr)$ and $\frac{1}{5^p}<_\mathbb{Q} |\eta\circ\zeta(p) -_\mathbb{Q} \eta\circ\zeta(p+1)| $.

 \item $\exists m \le \zeta(p+1)[\gamma(m) \notin 2^{<\omega}]$. Then  $\eta\circ\zeta(p+1) =_\mathbb{Q} \bigl(\zeta(p+1)+1\bigr)_\mathbb{Q}$ and $\eta\circ\zeta(p+2) =_\mathbb{Q} \bigl(\zeta(p+2)+1\bigr)_\mathbb{Q}$ and $|\eta\circ\zeta(p+2) -_\mathbb{Q} \eta\circ\zeta(p+1)| \ge_\mathbb{Q} 1_\mathbb{Q}>_\mathbb{Q} \frac{1}{ 2^{p+1}}$. 
 \end{enumerate}
We thus see that $\forall \zeta \in [\omega]^\omega\exists p[ \frac{1}{5^{p+1}}<_\mathbb{Q} |\eta\circ\zeta(p)-_\mathbb{Q}  \eta\circ\zeta(p+1)|]$.
Using Lemma \ref{L:ftconvprep} and $\overleftarrow{\mathbf{BW}}$, we find $p$ such that $ 1_\mathbb{Q}<_\mathbb{Q} |\eta(p)|$. 
 Conclude: $\eta(p)  =_\mathbb{Q} (p+1)_\mathbb{Q}$ and $\exists m \le p [\gamma(m) \notin 2^{<\omega}]$.  
 
 We thus see that, for each $\gamma$, if $\gamma$ positively fails to have a $1$-convergent subsequence, then $\exists m [\gamma(m) \notin 2^{<\omega}]$, i.e.  $\overleftarrow{\mathbf{BW}_{\omega^\omega}}$.
\end{proof}

\subsection{Extending $\protect\overleftarrow{\mathbf{BW}_{\omega^\omega}}$}
\begin{definition}  The \emph{Extended contrapositive of the Bolzano-Weierstrass Theorem} is the following  statement:

   $(\overleftarrow{\mathbf{BW}_{\omega^\omega}})^+$: 
  For all $\beta,\gamma$, if $Appfan^+(\beta)$ and $\forall \zeta \in [\omega]^\omega\exists n[\gamma\circ\zeta(n) \neq_n \gamma\circ\zeta(n+1)]$, then  $\exists n[\beta\bigl(\gamma(n)\bigr) \neq 0].$ \end{definition}

$(\overleftarrow{\mathbf{BW}_{\omega^\omega}})^+$ generalizes $\overleftarrow{\mathbf{BW}_{\omega^\omega}}$ from Cantor space $2^\omega$ to any explicit approximate fan $\mathcal{F}_\beta$.
\begin{theorem}\label{T:bwbw+} $\mathsf{BIM}\vdash \overleftarrow{\mathbf{BW}_{\omega^\omega}} \rightarrow (\overleftarrow{\mathbf{BW}_{\omega^\omega}})^+$. \end{theorem}

\begin{proof} Let $\beta$ be given such that $Appfan^+(\beta)$. Find $\delta$  such that   for all $n$, the set  $\{s\in \omega^n\mid \beta(s) = 0\}$   has at most $\delta(n)$ elements.  
Define $\eta$ such that,  for each $n$, $\eta(n) =\sum\limits_{i\le n} \delta(i) +1$.  Define  ${\bm\psi}$ such that \begin{enumerate}[\upshape (i)]
 \item $\mathbf{{\bm \psi}}(\langle \; \rangle) =  \langle \;\rangle $, and, \item
 for every $t$,  $j$,  $k$, if $\beta(t\ast\langle j \rangle) =0$, and there are exactly $k$ numbers $l$ such that $l<j$ and $\beta(t\ast\langle l\rangle)=0$, then  $\mathbf{{\bm \psi}}(t\ast\langle j\rangle)=\mathbf{{\bm \psi}}(t)\ast \underline{\overline 0}k\ast\langle 1 \rangle$. \end{enumerate}
Note that,   for all $n$, for all $s,t$, if $\beta(s) = \beta(t) = 0$ and $s\neq_n t$ then ${\bm \psi}(s)\neq_{\eta(n)}{\bm \psi}(t)$. 
Let $\gamma$ be given such that $\forall\zeta  \in [\omega]^\omega\exists n[\gamma\circ\zeta(n) \neq _n \gamma\circ\zeta(n+1)]$. Define $\nu$ such that,  for each $p$, \textit{either} $\forall n \le p[\beta\bigl(\gamma (p)\bigr) =0]$ and $\nu(p) ={\bm \psi}\bigl(\gamma(p)\bigr)$,  \textit{or} $\exists  n \le p[\beta\bigl(\gamma(n)\bigr) \neq 0 ]$ and $\nu(p) = \langle p+2\rangle$. 
Let $\zeta$ be an element of $[\omega]^\omega$. Find $p$ such that $\gamma\circ\zeta(p) \neq _p
 \gamma\circ\zeta(p+1)$. If   $\forall m \le \zeta(p+1)[\beta\bigl(\gamma(m)\bigr)= 0]$, then  
 $\nu\circ\zeta(p)={\bm \psi}\bigl(\gamma\circ\zeta(p)\bigr) \neq_{\eta(p)} {\bm \psi}\bigl(\gamma\circ\zeta(p+1)\bigr)=\nu\circ\zeta(p+1)$. If
$\exists m \le \zeta(p+1)[\beta\bigl(\gamma(m)\bigr)\neq 0]$, then   $\nu\circ\zeta(p+1) =\langle \zeta(p+1)+2\rangle $ and $\nu\circ\zeta(p+2) = \langle \zeta(p+2)+2\rangle$ and $\nu\circ\zeta(p+2) \neq_1 \nu\circ\zeta(p+1)$. 
 We thus see that $\forall \zeta \in [\omega]^\omega\exists p[ \nu\circ\zeta(p+1) \neq_{\eta(p)} \nu\circ\zeta(p)]$. 
Applying $\overleftarrow{\mathbf{BW}_{\omega^\omega}}$, we find $p$ such that $\nu(p) \notin 2^{<\omega}$. Conclude that $\nu(p) = \langle p+2\rangle$ and $\exists m \le p[\beta\bigl(\gamma(m)\bigr) \neq 0]$.

We thus see that, for all $\beta,\gamma$, if $Appfan^+(\beta)$ and $\forall \zeta \in [\omega]^\omega\exists n[\gamma\circ\zeta(n) \neq_n \gamma\circ\zeta(n+1)]$, then  $\exists n[\beta\bigl(\gamma(n)\bigr) \neq 0]]$, i.e. $(\overleftarrow{\mathbf{BW}_{\omega^\omega}})^+$. 
\end{proof}

\begin{theorem}\label{T:bw+app*} $\mathsf{BIM}\vdash (\overleftarrow{\mathbf{BW}_{\omega^\omega}})^+ \rightarrow \mathbf{AppFT}$.\end{theorem}
  \begin{proof}  Assume $(\overleftarrow{\mathbf{BW}_{\omega^\omega}})^+$. 
 Let $\beta$, $\alpha$  be given such that $Appfan^+(\beta)$ and $\forall s[ s\in D_\alpha\rightarrow\beta(s)=0]$ and $Bar_{\mathcal{F}_\beta}(D_\alpha)$.  Find $\delta$ such that, for each $n$, the set $\{s\in \omega^n\mid \beta(s) = 0\}$ has at most $\delta(n)$ members.  Let $\zeta$ in $[\omega]^\omega$ be given such that $\zeta(0)\in D_\alpha$.   Define $\zeta^\ast$, $\zeta^\dag$ such that $\zeta^\ast(0)=\zeta^\dag(0)=\zeta(0)$ and, for each $n$, if $\forall i\le n+1[\zeta(i) \in D_\alpha]$, then $\zeta^\ast(n+1)=\zeta^\dag(n+1)=\zeta(n+1)$, and, if not, then $\zeta^\ast(n+1)=\zeta^\ast(n)\ast\langle p\rangle$ where $p=\mu q[\beta(\zeta^\ast (n)\ast\langle q \rangle)= 0]$, and $\zeta^\dag(n+1)=\langle p\rangle$ where $p=\mu q[\langle q\rangle >\zeta^\dag(n)\;\wedge\; \beta(\langle q \rangle)\neq 0]$. 
 Note that both $\zeta^\ast, \zeta^\dag$ are in $[\omega]^\omega$ and, for each $n$,  $\beta\bigl(\zeta^\ast(n)\bigr)=0$. Also note that, for each $n$, there exists $m\le \sum\limits_{i\le n }\delta(i)$ such that $\mathit{length}\bigl(\zeta^\ast(m)\bigr) > n$. Define $\varepsilon$ in $[\omega]^\omega$ such that $\varepsilon(0)=0$ and, for each $n$, $\varepsilon(n+1) = \mu p>\varepsilon(n)[ \mathit{length}\bigl(\zeta^\ast(p)\bigr)>\mathit{length}\bigl(\zeta^{\ast}(\varepsilon(n))\bigr)]$ and define $\zeta^{\ast\ast}=\zeta^\ast\circ \varepsilon$. Note that $\forall n[\mathit{length}\bigl(\zeta^{\ast\ast}(n)\bigr)\ge n]$.
       Let $\eta$ in $[\omega]^\omega$ be given. Define $\theta$ such that, for each $n$, \textit{if}  $\forall i\le n[\zeta^{\ast\ast}\circ\eta(i)=_i\zeta^{\ast\ast}\circ\eta(i+1)]$, then $\theta(n) = \bigl(\zeta^{\ast\ast}\circ\eta(n+1)\bigr)(n)$, and, if not, then $\theta(n)$ is the least $i$ such that $\beta(\overline \theta n \ast \langle i \rangle)=0$.  Note that  $\theta \in \mathcal{F}_\beta$, and find $n$ such that $\overline \theta n \in D_\alpha$. Note that $\zeta^{\ast\ast}\circ \eta(n+1)=\zeta^\ast\circ\varepsilon\circ\eta(n+1)$ and  distinguish two cases. \begin{enumerate}[\upshape (a)]
    \item $\exists i\le \varepsilon\circ \eta(n+1)[\zeta(i)\notin D_\alpha]$. 
    
    Find $p:= \mu i[\zeta(i)\notin D_\alpha]$ and note that $\zeta^\dag\circ\varepsilon\circ\eta(p+1)\neq_{p+1} \zeta^\dag \circ\varepsilon\circ\eta(p+2)$.   
     \item  $\forall i\le \varepsilon\circ \eta(n+1)[\zeta(i)\in D_\alpha]$.
    
    Then $\forall i\le\varepsilon\circ \eta(n+1)[\zeta(i)=\zeta^\ast(i)=\zeta^\dag(i)]$.
     Assume  that $\forall i\le n[\zeta^{\ast\ast}\circ\eta(i) =_i \zeta^{\ast\ast}\circ\eta(i+1)]$. Then $\overline \theta n = \overline{\zeta^{\ast\ast}\circ\eta(n)}n  \sqsubset \zeta^{\ast\ast}\circ\eta(n+1)$. Note  that $D_\alpha$ is thin and  conclude  that $\zeta^{\ast\ast}\circ \eta(n+1)\notin D_\alpha$.  Contradiction. Conclude that $\exists i\le n[\zeta^{\ast\ast}\circ\eta(i) \neq_i \zeta^{\ast\ast}\circ\eta(i+1)]$ and that $\exists i\le n[\zeta^\dag\circ\varepsilon\circ\eta(i) \neq_i \zeta^\dag\circ\varepsilon\circ\eta(i+1)]$.
      \end{enumerate}

 We thus see that $\forall \eta \in [\omega]^\omega \exists n[\zeta^\dag\circ\varepsilon\circ\eta(n) \neq_n \zeta^\dag\circ\varepsilon\circ\eta(n+1)]$.
 Using $(\overleftarrow{\mathbf{BW}_{\omega^\omega}})^+$, we find $n$ such that $\beta\bigl(\zeta^\dag\circ\varepsilon
 (n) \bigr)\neq 0$.
 Conclude that there exists $i \le \varepsilon(n)$ such that $\zeta(i)\notin D_\alpha$. 
We thus see that $\forall \zeta \in [\omega]^\omega\exists n[ \zeta(n) \notin D_\alpha]$, i.e. $D_\alpha$ is almost-finite.   We thus see that, in every explicit approximate fan, every thin bar is almost-finite, i.e.  $\mathbf{AppFT}$.\end{proof}

\begin{corollary}\label{C:sumup3}
$\;$ \\$\mathsf{BIM}\vdash \mathbf{AppFT} \leftrightarrow \mathbf{SemiappFT} \leftrightarrow \overleftarrow{\mathbf{BW}_{\omega^\omega}} \leftrightarrow  \overleftarrow{\mathbf{BW}}\leftrightarrow (\overleftarrow{ \mathbf{BW}_{\omega^\omega}})^+ $.\end{corollary}

\begin{proof} Use Theorems \ref{T:appsemiapp}, \ref{T:semiappcontrbw}, \ref{T:bwbwr}, \ref{T:bwbw+} and \ref{T:bw+app*}. \end{proof}

\begin{corollary}\label{Cor:sumup2} $\mathsf{BIM}\vdash \mathbf{AppFT} \rightarrow \mathbf{OI}([0,1])$. 
\end{corollary}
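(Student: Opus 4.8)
The plan is to route the implication through Baire space, exploiting the two chains of equivalences already in hand. First I would invoke Corollary \ref{C:sumup3} to pass from $\mathbf{AppFT}$ to $\mathbf{Contr(BW)}_\mathcal{N}$. Next I would observe that $\mathbf{Contr(Ded)}_\mathcal{N}$, as stated in Subsection \ref{SS:contr(ded)N}, is literally $\mathbf{Contr(BW)}_\mathcal{N}$ with one extra hypothesis imposed on $\gamma$, namely $\forall n[\gamma(n) <_{lex} \gamma(n+1)]$: the remaining hypothesis (positive failure of $1$-convergence, $\forall \zeta \in [\omega]^\omega \exists n[\gamma\circ\zeta(n) \neq_n \gamma\circ\zeta(n+1)]$) and the conclusion ($\exists m[\gamma(m) \notin Bin]$) are verbatim the same in the two statements. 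Hence $\mathsf{BIM} \vdash \mathbf{Contr(BW)}_\mathcal{N} \rightarrow \mathbf{Contr(Ded)}_\mathcal{N}$ needs no argument whatsoever: one simply discards an assumption. Finally, Corollary \ref{Cor:sumup} supplies $\mathbf{Contr(Ded)}_\mathcal{N} \rightarrow \mathbf{OI}([0,1])$, and composing the three steps yields $\mathsf{BIM} \vdash \mathbf{AppFT} \rightarrow \mathbf{OI}([0,1])$.

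I would point out, for orientation, that the parallel route through $\mathcal{R}$ — deriving $\mathbf{Contr(Ded)}$ from $\mathbf{Contr(BW)}$ — is also available but marginally less immediate. Given a strictly increasing $\gamma \in \mathbb{Q}^\mathbb{N}$ that positively fails to converge, it positively fails to have a convergent subsequence because for a nondecreasing sequence $|\gamma\circ\zeta(n+1) -_\mathbb{Q} \gamma\circ\zeta(n)| =_\mathbb{Q} \gamma\circ\zeta(n+1) -_\mathbb{Q} \gamma\circ\zeta(n)$; applying $\mathbf{Contr(BW)}$ to $n \mapsto \gamma(n) -_\mathbb{Q} \gamma(0)$ (which is nondecreasing and starts at $0_\mathbb{Q}$, so its values are $\ge_\mathbb{Q} 0_\mathbb{Q}$) collapses the disjunctive conclusion to ``goes beyond $1_\mathbb{Q}$'', i.e. $\exists n[\gamma(n) >_\mathbb{Q} \gamma(0) +_\mathbb{Q} 1_\mathbb{Q}]$; and a finite iteration (restarting the argument from that $n$) absorbs the fixed rational $\gamma(0)$ and gives $\exists n[\gamma(n) >_\mathbb{Q} 1_\mathbb{Q}]$. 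Since the Baire-space route requires none of this bookkeeping, I would present the proof along the first line.

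Accordingly there is no genuine obstacle: the entire substance of the corollary is already absorbed into Corollaries \ref{Cor:sumup} and \ref{C:sumup3}, and the one new link in the chain, $\mathbf{Contr(BW)}_\mathcal{N} \rightarrow \mathbf{Contr(Ded)}_\mathcal{N}$, is a trivial weakening. The write-up can therefore be a single short paragraph citing the two corollaries and remarking on the inclusion of hypotheses.

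\begin{proof} By Corollary \ref{C:sumup3}, $\mathsf{BIM}\vdash \mathbf{AppFT} \rightarrow \mathbf{Contr(BW)}_\mathcal{N}$. The statement $\mathbf{Contr(Ded)}_\mathcal{N}$ from Subsection \ref{SS:contr(ded)N} is obtained from $\mathbf{Contr(BW)}_\mathcal{N}$ by adding the hypothesis $\forall n[\gamma(n) <_{lex} \gamma(n+1)]$, so trivially $\mathsf{BIM}\vdash \mathbf{Contr(BW)}_\mathcal{N} \rightarrow \mathbf{Contr(Ded)}_\mathcal{N}$. By Corollary \ref{Cor:sumup}, $\mathsf{BIM}\vdash \mathbf{Contr(Ded)}_\mathcal{N} \rightarrow \mathbf{OI}([0,1])$. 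Combining these, $\mathsf{BIM}\vdash \mathbf{AppFT} \rightarrow \mathbf{OI}([0,1])$. \end{proof}
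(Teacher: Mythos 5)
Your proof is correct, and structurally it is the paper's own argument: the paper likewise obtains the corollary by composing the two equivalence chains, noting $\mathbf{AppFT}\leftrightarrow\mathbf{Contr(BW)}$ (Corollary \ref{C:sumup3}) and $\mathbf{OI}([0,1])\leftrightarrow\mathbf{Contr(Ded)}$ (Corollary \ref{Cor:sumup}) and inserting the link $\mathbf{Contr(BW)}\rightarrow\mathbf{Contr(Ded)}$. The only real difference is where you place that link. The paper places it at the level of $\mathcal{R}$, where the step is asserted without proof and is not a pure weakening: $\mathbf{Contr(BW)}$ concludes $\exists n[\gamma(n)<_\mathbb{Q} 0_\mathbb{Q}\;\vee\;\gamma(n)>_\mathbb{Q} 1_\mathbb{Q}]$ while $\mathbf{Contr(Ded)}$ demands $\exists n[\gamma(n)>_\mathbb{Q} 1_\mathbb{Q}]$, so the first disjunct has to be disposed of, e.g. by the shift-and-iterate argument you sketch in your second paragraph. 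You place the link at the level of $\mathcal{N}$ instead, where $\mathbf{Contr(Ded)}_\mathcal{N}$ is verbatim $\mathbf{Contr(BW)}_\mathcal{N}$ with the added hypothesis $\forall n[\gamma(n)<_{lex}\gamma(n+1)]$ and the identical conclusion $\exists m[\gamma(m)\notin Bin]$, so the link is a literal logical weakening and nothing remains to be checked. Both routes rest on exactly the same two corollaries; yours buys the elimination of the small piece of real-number bookkeeping that the paper's unproved ``Note'' silently requires, at no cost.
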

\begin{proof} Note: $\mathsf{BIM} \vdash \mathbf{AppFT} \leftrightarrow \overleftarrow{\mathbf{BW}}$ and:
$\mathsf{BIM}\vdash \mathbf{OI}([0,1]) \leftrightarrow \overleftarrow{\mathbf{Ded}}$ and: \\$\mathsf{BIM}\vdash \overleftarrow{\mathbf{BW}} \rightarrow \overleftarrow{\mathbf{Ded}}$. 
\end{proof}

\section{Ascoli's Lemma}

\subsection{Again the Bolzano-Weierstrass Theorem}
One might say that $\overleftarrow{\mathbf{BW}}$ states the {\it sequential compactness} of $[-1,1]$. We want to get rid of the reference to points not lying in $[-1,1]$.

 \begin{theorem}\label{T:01seqcpct}
 The following statements are equivalent in $\mathsf{BIM}$:
 \begin{enumerate}[\upshape (i)]
 \item $\overleftarrow{\mathbf{BW}}$: For all $\gamma$ in $\mathbb{Q}^\omega$,   \\if
$\forall \zeta \in [\omega]^\omega\exists n[|\gamma\circ\zeta(n+1) -_\mathbb{Q} \gamma\circ\zeta(n)| 
>_\mathbb{Q} \frac{1}{2^n}]$, then $\exists n[|1_\mathbb{Q}<_\mathbb{Q} |\gamma(n)| ]$.
\item 
  For all  $\gamma$ in $(\mathbb{Q} \cap [-1,1])^\omega$, for all $\alpha$, \\if  $\forall \zeta \in [\omega]^\omega\exists n[|\gamma\circ\zeta(n+1) -_\mathbb{Q} \gamma\circ\zeta(n)|>_\mathbb{Q} \frac{1}{2^n} \;\vee\; \alpha(n) \neq 0]$, then $\exists n[\alpha(n) \neq 0].$
 \end{enumerate}
 \end{theorem}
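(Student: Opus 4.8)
The plan is to prove each direction by the same device: take the sequence witnessing the hypothesis of one principle, modify it into a sequence $\gamma^\ast$ that is forced either to stay inside $[0,1]$ or to run out of $[0,1]$ in a transparent way, and then apply the other principle to $\gamma^\ast$.

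For $(ii)\Rightarrow(i)$, I would assume $\mathbf{SC([0,1])}$ and let $\gamma\in\mathbb{Q}^\mathbb{N}$ be such that $\forall\zeta\in[\omega]^\omega\exists n[|\gamma\circ\zeta(n+1)-_\mathbb{Q}\gamma\circ\zeta(n)|>_\mathbb{Q}\frac{1}{2^n}]$. Define $\alpha$ by $\alpha(n)\neq 0\leftrightarrow\exists i\le n[\gamma(i)<_\mathbb{Q}0_\mathbb{Q}\vee\gamma(i)>_\mathbb{Q}1_\mathbb{Q}]$, a decidable and ``monotone'' $\alpha$ (once it is nonzero it stays nonzero), and define $\gamma^\ast$ by $\gamma^\ast(n)=\gamma(n)$ if $\forall i\le n[0_\mathbb{Q}\le_\mathbb{Q}\gamma(i)\le_\mathbb{Q}1_\mathbb{Q}]$ and $\gamma^\ast(n)=0_\mathbb{Q}$ otherwise; then $\gamma^\ast\in(\mathbb{Q}\cap[0,1])^\mathbb{N}$. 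The key step is to check that $\gamma^\ast$ and $\alpha$ satisfy the hypothesis of $\mathbf{SC([0,1])}$: given $\zeta\in[\omega]^\omega$, pick $n_0$ with $|\gamma\circ\zeta(n_0+1)-_\mathbb{Q}\gamma\circ\zeta(n_0)|>_\mathbb{Q}\frac{1}{2^{n_0}}$ and decide whether $\forall i\le\zeta(n_0+1)[0_\mathbb{Q}\le_\mathbb{Q}\gamma(i)\le_\mathbb{Q}1_\mathbb{Q}]$; in the affirmative case $\gamma^\ast$ agrees with $\gamma$ at $\zeta(n_0)$ and $\zeta(n_0+1)$, so $n_0$ witnesses the first disjunct, and in the negative case $\alpha$ is nonzero at some $j\le\zeta(n_0+1)$, which witnesses the second. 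Then $\mathbf{SC([0,1])}$ yields $\exists n[\alpha(n)\neq 0]$, hence $\exists i[\gamma(i)<_\mathbb{Q}0_\mathbb{Q}\vee\gamma(i)>_\mathbb{Q}1_\mathbb{Q}]$, which is the conclusion of $\mathbf{Contr(BW)}$.

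For $(i)\Rightarrow(ii)$, I would assume $\mathbf{Contr(BW)}$ and let $\gamma\in(\mathbb{Q}\cap[0,1])^\mathbb{N}$ and $\alpha$ be given with $\forall\zeta\in[\omega]^\omega\exists n[|\gamma\circ\zeta(n+1)-_\mathbb{Q}\gamma\circ\zeta(n)|>_\mathbb{Q}\frac{1}{2^n}\vee\alpha(n)\neq 0]$. Define $\gamma^\ast$ by $\gamma^\ast(n)=\gamma(n)$ if $\forall i\le n[\alpha(i)=0]$ and $\gamma^\ast(n)=(n+2)_\mathbb{Q}$ otherwise; then $\gamma^\ast\in\mathbb{Q}^\mathbb{N}$ and, as soon as $\alpha$ has fired at some $j$, $\gamma^\ast$ is strictly increasing and $>_\mathbb{Q}1_\mathbb{Q}$ from index $j$ on. Again the main point is that $\gamma^\ast$ positively fails to have a converging subsequence: given $\zeta\in[\omega]^\omega$, pick $n_0$ from the hypothesis and decide whether $\forall i\le\zeta(n_0+1)[\alpha(i)=0]$; if so, then (since $\zeta(n_0)<\zeta(n_0+1)$) $\alpha(n_0)=0$, so the divergence disjunct must hold for $n_0$ and $\gamma^\ast$ agrees with $\gamma$ at $\zeta(n_0),\zeta(n_0+1)$, giving a divergence witness at $n_0$; if not, $\alpha$ has fired at some $j\le\zeta(n_0+1)$, and then $\gamma^\ast\circ\zeta$ is eventually strictly increasing, so a divergence witness exists at a large enough index. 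Applying $\mathbf{Contr(BW)}$ to $\gamma^\ast$ yields $\exists m[\gamma^\ast(m)<_\mathbb{Q}0_\mathbb{Q}\vee\gamma^\ast(m)>_\mathbb{Q}1_\mathbb{Q}]$; since $\gamma(m)\in[0,1]$ this forces $\gamma^\ast(m)\neq\gamma(m)$, and as the predicate $\forall i\le m[\alpha(i)=0]$ is decidable we get $\exists i\le m[\alpha(i)\neq 0]$, the conclusion of $\mathbf{SC([0,1])}$.

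The only genuinely delicate point in either direction is the verification that the modified sequence $\gamma^\ast$ inherits the positive-failure-of-convergence property uniformly over all $\zeta\in[\omega]^\omega$; this rests entirely on the decidability of the bounded predicates ``$\forall i\le k[0_\mathbb{Q}\le_\mathbb{Q}\gamma(i)\le_\mathbb{Q}1_\mathbb{Q}]$'' and ``$\forall i\le k[\alpha(i)=0]$'', together with the observation that once $\gamma$ leaves $[0,1]$ (respectively once $\alpha$ fires) the modified sequence becomes so tame (respectively so manifestly divergent) that the required witness can be read off directly. No fan-theoretic or choice principle is used, so the argument is carried out in $\mathsf{BIM}$ itself.
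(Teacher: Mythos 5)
Your proposal is correct and follows essentially the same route as the paper: in each direction you modify the given sequence (sending it to $(n+2)_\mathbb{Q}$ once $\alpha$ fires, respectively clamping it into $[0,1]$ while recording the escape in $\alpha$), verify the positive failure of convergence by the same decidable case distinction on the bounded predicates, and apply the other principle. The only cosmetic difference is that in $(ii)\Rightarrow(i)$ you use the cumulative condition $\forall i\le n[0_\mathbb{Q}\le_\mathbb{Q}\gamma(i)\le_\mathbb{Q}1_\mathbb{Q}]$ where the paper clamps pointwise, which changes nothing essential.
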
 
 
 Taking $\alpha = \underline 0$, we see that (ii) implies the constructively weaker statement:
 $ \forall \gamma \in (\mathbb{Q} \cap [-1,1])^\omega\neg\forall \zeta \in [\omega]^\omega\exists n[|\gamma\circ\zeta(n+1) -_\mathbb{Q} \gamma\circ\zeta(n)|>_\mathbb{Q} \frac{1}{2^n}]$, that is:
 \textit{the assumption that, for some $\gamma$ in $(\mathbb{Q}\cap[-1,1])^\omega$, every subsequence positively fails to converge, leads to a contradiction}.

\begin{proof} (i) $\Rightarrow$ (ii). Let $\gamma$ in $(\mathbb{Q} \cap[-1,1])^\omega$ and  $\alpha$ be given such that $\forall \zeta \in [\omega]^\omega\exists n[|\gamma\circ\zeta(n+1) -_\mathbb{Q} \gamma\circ\zeta(n)|>_\mathbb{Q} \frac{1}{2^n} \;\vee\; \alpha(n) \neq 0]$. Define $\delta$ in $\mathbb{Q} ^\omega$ such that,  for each $n$, if $\forall i \le n[\alpha(i) = 0]$, then $\delta(n) = \gamma(n)$, and, if $\exists i \le n[\alpha(i) \neq 0]$, then $\delta(n) =_\mathbb{Q} n+_\mathbb{Q} 2$.
We prove that every subsequence of $\delta$ positively fails to converge. Assume $\zeta\in[\omega]^\omega$. Find $n$ such that $|\gamma\circ\zeta(n+1) -_\mathbb{Q} \gamma\circ\zeta(n)|>_\mathbb{Q} \frac{1}{2^n} \;\vee\; \alpha(n) \neq 0$. \textit{Either} $\forall i \le \zeta(n+1)[\alpha(i) =0]$ and $\delta\circ\zeta(n) = \gamma\circ\zeta(n)$ and $\delta\circ\zeta(n+1)=\gamma\circ\zeta(n+1)$ and $|\delta\circ\zeta(n+1) -_\mathbb{Q} \delta\circ\zeta(n)|>_\mathbb{Q} \frac{1}{2^n}$, \textit{or} $\exists i \le \zeta(n+1)[\alpha(i) \neq 0]$ and $|\delta\circ\zeta(n+2) -_\mathbb{Q} \delta\circ\zeta(n+1)| =_\mathbb{Q} \zeta(n+2) -_\mathbb{Q} \zeta(n+1) >_\mathcal{R} \frac{1}{2^{n+1}}$. We thus see that $\forall\zeta\in [\omega]^\omega\exists n[|\delta\circ\zeta(n+1) -_\mathbb{Q}\delta\circ\zeta(n)|>_\mathbb{Q} \frac{1}{2^n}]$. 
Now  use (i) and find $n$ such that $|\delta(n)|>_\mathbb{Q} 1_\mathbb{Q} $.  Conclude that $\delta(n) \neq  \gamma(n)$ and $\exists i \le n[\alpha(i) \neq 0]$.

\smallskip (ii) $\Rightarrow$ (i). 
Let $\gamma$ in $\mathbb{Q}^\omega$ be given such that  $\forall \zeta \in [\omega]^\omega\exists n[|\gamma\circ\zeta(n+1) -_\mathbb{Q} \gamma\circ\zeta(n)|>_\mathbb{Q}\frac{1}{2^n}]$. 
  Define $\alpha$  such that, for each $n$, $\alpha(n) \neq 0$ if and only if $\exists i\le n[|\gamma(i)| >_\mathbb{Q} 1_\mathbb{Q}]$.
  Define $\delta$ in $(\mathbb{Q}\cap[0,1])^\omega$ such that, for each $n$, \textit{if} $| \gamma(n)| \le_\mathbb{Q} 1_\mathbb{Q}$, then $\delta(n) = \gamma(n)$, and \textit{if not}, then $\delta(n) =0_\mathbb{Q}$.
  Note that,  for each $n$, if $\gamma(n) \neq \delta(n)$, then  $|\gamma(n)| >_\mathbb{Q} 1_\mathbb{Q}$ and $\exists i \le n[\alpha(i) \neq 0]$. 
   Let $\zeta$ be an element of $[\omega]^\omega$. Find $n$ such that $|\gamma\circ\zeta(n+1)-_\mathbb{Q} \gamma\circ\zeta(n)|>_\mathbb{Q} \frac{1}{2^n}$. \textit{Either} $|\delta\circ\zeta(n+1) -_\mathbb{Q} \delta\circ\zeta(n)|>_\mathcal{R} \frac{1}{2^n}$ \textit{or} $\exists i \le \zeta(n+1)[\alpha(i) \neq 0]$.
   We thus see that $\forall \zeta \in [\omega]^\omega\exists n[|\delta\circ\zeta(n+1) -_\mathbb{Q} \delta\circ\zeta(n)|>_\mathbb{Q} \frac{1}{2^n} \;\vee\; \alpha(n) \neq 0]$. Using (ii), find $n$ such that  $\alpha(n) \neq 0$, and conclude that
  $\exists i \le n[|\gamma(i)| >_\mathbb{Q} 1_\mathbb{Q}]$.
 \end{proof}

We now give a similar reformulation of $(\overleftarrow{\mathbf{BW}_{\omega^\omega}})^+$.

\begin{theorem}\label{T:scfbeta} The following statements are equivalent in $\mathsf{BIM}$.
\begin{enumerate}[\upshape (i)]\item $(\overleftarrow{\mathbf{BW}_{\omega^\omega}})^+$:  For all $\beta,\gamma$, if $Appfan^+(\beta)$ and $\forall \zeta \in [\omega]^\omega\exists n[\gamma\circ\zeta(n) \neq_n \gamma\circ\zeta(n+1)]$, then  $\exists n[\beta\circ\gamma(n) \neq 0].$\item For all $\beta, \gamma, \alpha$, if  $Appfan^+(\beta)$ and  $\forall n[\beta\circ \gamma(n)=0]$ and $ \forall \zeta \in [\omega]^\omega \exists n[\gamma\circ\zeta(n+1) \neq_n \gamma\circ\zeta(n) \;\vee \alpha(n) \neq 0]$, then  $\exists n[\alpha(n) \neq 0]$. \item For all $\beta, \gamma, \alpha$, if  $Appfan^+(\beta)$ and  $\forall n[\gamma^n \in \mathcal{F}_\beta]$ and $ \forall \zeta \in [\omega]^\omega \exists n[\gamma^{\zeta(n+1)} \neq_n \gamma^{\zeta(n)} \;\vee \alpha(n) \neq 0]$, then  $\exists n[\alpha(n) \neq 0]$.
\end{enumerate}\end{theorem}\begin{proof} (i) $\Rightarrow$ (ii).  Let $\beta, \alpha, \gamma$ be  given such that $Appfan^+(\beta)$ and $\forall n[\beta \circ \gamma(n)=0]$   and   $\forall \zeta \in [\omega]^\omega\exists n[\gamma\circ\zeta(n+1) \neq_n \gamma\circ \zeta(n) \;\vee\;\alpha(n) \neq 0]$. We define $\delta$ such that,  for each $n$, \textit{if} $\exists i \le n[\alpha(i) \neq 0]$, then $\delta(n) = \langle n+2\rangle$, and, \textit{if not}, then $\delta(n) = \gamma(n)$.
Let  $\zeta$ in $[\omega]^\omega$ be given. Find $n$ such that $\gamma\circ\zeta(n+1) \neq_n\gamma\circ\zeta(n)\;\vee\;\alpha(n) \neq 0$. \textit{Either} $\delta\circ\zeta(n+1) \neq_n \delta\circ\zeta(n)$ \textit{or} $\exists i \le \zeta(n+1)[\alpha(i) \neq 0]$ and $\delta\circ\zeta(n+1) \neq_1 \delta\circ\zeta(n+2)$. 
  We thus see that $\forall \zeta \in [\omega]^\omega\exists n[ \delta\circ\zeta(n+1) \neq_n \delta\circ\zeta(n) ]$.
 Now  use (i) and find $n$ such that $\beta \circ \delta(n) \neq 0$. Conclude that $\delta(n) = \langle n+2\rangle$ and $\exists i \le n[\alpha(i) \neq 0]$.

\smallskip (ii) $\Rightarrow$ (i). Let $\beta$ be given such that $Appfan^+(\beta)$ and
let $\gamma$ be given such that $\forall \zeta \in [\omega]^\omega\exists n[\gamma\circ\zeta(n+1)\neq_n \gamma\circ\zeta(n)]$. 
  Define $\alpha$ such that, for each $n$, $\alpha(n) \neq 0$ if and only $\exists i \le n[\beta\circ \gamma(i) \neq 0]$. 
  Define $\delta$   such that, for each $n$, \textit{if} $\forall i \le n[\beta\circ \gamma(i)=0]$, then $\delta(n) = \gamma(n)$, and, \textit{if not}, then $\delta(n) = \langle \;\rangle$.  
  Let $\zeta$ be an element of $[\omega]^\omega$. Find $n$ such that $\gamma\circ\zeta(n+1) \neq_n\gamma\circ\zeta(n)$. \textit{Either} $\delta\circ\zeta(n+1) \neq_n \delta\circ\zeta(n)$ \textit{or} $\exists i \le \zeta(n+1)[\alpha(i) \neq 0]$.
  We thus see that $\forall \zeta \in [\omega]^\omega\exists n[ \delta\circ\zeta(n+1) \neq_n \delta\circ\zeta(n) \;\vee\; \alpha(n) \neq 0]$.
  Now use (ii) and find $n$ such that  $\alpha(n) \neq 0$, and conclude that
  $\exists i \le n[\beta\circ \gamma(i) \neq 0]$.
  
 \smallskip (ii) $\Rightarrow$ (iii). Let $\beta$ be given such that $Appfan^+(\beta)$ and
let $\gamma,\alpha$ be given such that $\forall n[\gamma^n\in \mathcal{F}_\beta]$ and $\forall \zeta \in [\omega]^\omega\exists n[\gamma^{\zeta(n+1)}\neq_n \gamma^{\zeta(n)}]\;\vee\;\alpha(n)\neq 0]$. Define $\delta$ such that, for each $n$, $\delta(n)=\overline{\gamma^n}n$. Note that $\forall n[\beta\circ\delta(n)=0]$ and  
 $\forall \zeta \in [\omega]^\omega\exists n[\delta\circ\zeta(n+1)\neq_n \delta\circ\zeta(n)\;\vee\;\alpha(n)\neq 0]$.  Using (ii), conclude that $\exists n[\alpha(n)\neq 0]$. 
 
 \smallskip (ii) $\Rightarrow$ (ii). Let $\beta$ be given such that $Appfan^+(\beta)$ and
let $\gamma, \alpha$ be given such that $\forall n[\beta\circ\gamma(n)=0]$ and $\forall \zeta \in [\omega]^\omega\exists n[\gamma\circ\zeta(n+1)\neq_n \gamma\circ\zeta(n)\;\vee\;\alpha(n)\neq 0]$. Define $\delta$ such that, for each $n$, $\gamma(n)\sqsubset \delta^n$ and $\delta^n \in \mathcal{F}_\beta$. One may do so by requiring that, for each $n$, for each $i\ge length\bigl(\gamma(n)\bigr)$, $\delta^n(i) = \mu p [\beta(\overline{\delta^n}(i)\ast\langle p \rangle)=0]$. Note that 
 $\forall \zeta \in [\omega]^\omega\exists n[\delta^{\zeta(n+1)}\neq_n \delta^{\zeta(n)}\;\vee\;\alpha(n)\neq 0]$.  Using (iii), conclude that $\exists n[\alpha(n)\neq 0]$. 

 \end{proof}

 \subsection{Introducing $C([-1,1])$.} 

\begin{definition}  
$s\in \omega^n$ is a \emph{block} if  and only if \begin{enumerate}[\upshape (i)]
\item for each $i<n$, $\bigl(s(i)\bigr)'\in \mathbb{S}$ and $\bigl(s(i)\bigr)'' 
\in \mathbb{S}$, and
\item $((s(0))')'=-1$  and, for all $i$, if $i+1<n$, then $((s(i))')''=((s(i+1))')'$, and $((s(n))')''=1$, and, \item  for all $i$, if $i+1<n$ ,
 then $\bigl(s(i)\bigr)''\approx_\mathbb{S}\bigl(s(i+1)\bigr)''$. \end{enumerate}
$Block:=\{s\mid s$ is a block$\}$.

 For each $n$, for each $s$  in $Block\cap\omega^n$,  $\emph{height}(s) := \max_\mathbb{Q}\{((s(i))'')''-((s(i))'')'\mid i<n\}$, and  
 $\emph{mesh}(s):=\min\{((s(i))')''-((s(i))')'\mid i<n\})$.
 
For all $s,t$ in $\mathit{Block}$, 
$t\sqsubseteq_{Block} s$, \emph{the block $t$ is a refinement of the block $s$}, if and only if $\forall i < \mathit{length}(t)\exists j < \mathit{length}(s)[\bigl(t(i)\bigr)'\sqsubseteq_\mathbb{S} \bigl(s(j)\bigr)'\;\wedge\;\bigl(t(i)\bigr)''\sqsubseteq_\mathbb{S} \bigl(s(j)\bigr)'']$, and $s \;\#_{Block}\;t$, \emph{$s$ deviates from $t$}, if and only if $\exists i <\mathit{length}(s)\exists j<\mathit{length}(t)[(s(i))'\approx_\mathbb{S} (t(j))'\;\wedge\; \bigl(s(i)\bigr)''\;\#_\mathbb{S} \;\bigl(t(j)\bigr)'']$, and $s\approx_{Block} t$, $s$ \emph{does not deviate from} $t$, if and only if $\neg (s\#_{Block}t)$. 
$\varphi$ in $Block^\omega$ \emph{shrinks} if and only if $\forall n[\varphi(n+1) \sqsubseteq_\mathit{Block} \varphi(n)]$ and $\varphi$ \emph{dwindles} if and only if $\forall m\exists n[\mathit{height}\bigl(\varphi(n)\bigr) < \frac{1}{2^m}]$. $\mathrm{C}([-1,1])$ is the set of all shrinking and dwindling elements of  $Block^\omega$.
For all $\varphi, \psi$ in $C[-1,1]$, $\varphi\; \#_{Block^\omega}\;\psi$, \emph{$\varphi$ deviates from $\psi$}, if and only if $\exists n[\varphi(n)\;\#_{Block}\;\psi(n)]$ and $\varphi=_{Block^\omega}\psi$, \emph{$\varphi$ does not deviate from $\psi$}, if and only if $\forall n[\varphi(n)\approx_{Block} \psi(n)]$. For all $\varphi$ in $\mathrm{C}([-1,1])$,  for all $\alpha$ in $[-1,1]$, for all   $\beta$ in $\mathcal{R}$, we define  $\varphi: \alpha \mapsto \beta$, \emph{$\varphi$ maps $\alpha$ onto $\beta$}, if and only if, for each $m$, there exist $n$, $i$ such that $i < \mathit{length}\bigl(\varphi(n)\bigr)$ and $\alpha(n) \approx_\mathbb{S} \bigl((\varphi(n))(i)\bigr)'$ and $\bigl((\varphi(n))(i)\bigr)''\approx_\mathbb{S} \beta(m)$. \end{definition} 

\begin{lemma}\label{L:C[-1,1]} One may prove in $\mathsf{BIM}$, that, for all $\varphi$ in $\mathrm{C}([-1,1])$,\begin{enumerate}[\upshape (i)] \item  for all $\alpha, \gamma$ in $[-1,1]$, for all $ \beta,\delta$ in $\mathcal{R}$, if $\varphi:\alpha \mapsto \beta$ and $\varphi:\gamma\mapsto \delta$, then, \begin{enumerate}[\upshape (a)] \item for each $n$, if $|\alpha -\gamma|< \frac{1}{2}mesh\bigl(\varphi(n)\bigr)$, then $|\beta-\delta| < 2\cdot heigth\bigl(\varphi(n)\bigr)$, and,  \item if $\alpha=_\mathcal{R} \gamma$, then $\beta=_\mathcal{R}  \delta$, and, \end{enumerate}\item for all $\alpha$ in $[0,1]$, there exists $\beta$ in $\mathcal{R}$ such that $\varphi: \alpha \mapsto \beta$.\end{enumerate} \end{lemma}

\begin{proof} The proof is left to the reader. \end{proof}
\subsection{Introducing  moduli of uniform continuity}

\begin{definition}[Canonical elements of $\mathrm{C}[-1,1\rbrack$]

$\mathbb{S}_0:=\{s\in\mathbb{S}\mid s''-_\mathbb{Q} s'=_\mathbb{Q} 1_\mathbb{Q}\;\wedge\; 2\cdot s'\in \mathbb{Z}\}$, and,
for each $n$, $\mathbb{S}_{n+1}:=\{s\in\mathbb{S}\mid (2\cdot_\mathbb{Q} s', 2\cdot_\mathbb{Q} s'') \in \mathbb{S}_{n+1}\}$.
\footnote{Brouwer already made use of these \emph{canonical rational segments}, see \cite[\S 5]{brouwer25}.} 
For all $m,n$, $CBlock_{m,n}:=\{s\in Block\mid \forall i<\mathit{length}(s)[\bigl(s(i)\bigr)' \in \mathbb{S}_m\;\wedge\; \bigl(s(i)\bigr)'' \in \mathbb{S}_n\}$. 
For each $\delta$ in $[\omega]^\omega$,  $\mathrm{C}_\delta([-1,1]):=\{\varphi\in \mathrm{C}([-1,1])\mid \forall n[ \varphi(n)\in CBlock_{\delta(n), n}]\}$. \end{definition} 

\begin{lemma}\label{L:uniformc} One may prove in $\mathsf{BIM}$ that, for all $\delta$ in $[\omega]^\omega$, for all $\varphi $ in $\mathrm{C}_\delta([-1,1])$, for all $n>0$,   for all $\alpha,\beta$ in $[-1,1]$, for all $\gamma, \varepsilon$ in $\mathcal{R}$, if $\varphi:\alpha \mapsto \beta$ and $\varphi:\gamma \mapsto \varepsilon$ and $|\alpha -_\mathcal{R} \beta|<_\mathcal{R} \frac{1}{2^{\delta(n)+1}}$, then $|\gamma -_\mathcal{R} \varepsilon|<_\mathcal{R} \frac{1}{2^{n-1}}$.
\end{lemma}
\begin{proof} The proof is left to the reader. \end{proof}

So $\delta$ may be seen as a \textit{modulus of uniform continuity} that is valid for every member of $C_\delta([-1,1])$. Note that, for each $\varphi$ in $\mathrm{C}_\delta([-1,1])$, for each $n$, $length\bigl(\varphi(n)\bigr)=2^{\delta(n)+1}$. 
 
 \begin{definition}[Bounded elements of $\mathrm{C}([-1,1\rbrack)$] 
 
 For each $\delta$ in $[\omega]^\omega$,  \\ $\mathrm{C}_\delta([-1,1],[-1,1] ):= \{\varphi\in \mathrm{C}_\delta([-1,1])\mid \forall n \forall i<2^{\delta(n)+1}[\bigl((\varphi(n))(i)\bigr)''\sqsubseteq_\mathbb{S} (-1,1)\}$.  
  \end{definition}
 
 \begin{definition} We call the following statement \emph{Ascoli's Lemma},
 
 $\mathbf{Asc}$:  For all $\delta$  in $[\omega]^\omega$, for all $\varphi$ in $\bigl(\mathrm{C}_\delta([-1,1], [-1, 1])\bigr)^\omega$, 
 
 $\exists \zeta \in [\omega]^\omega \forall n[\varphi^{\zeta(n+1)}(n) \approx_{Block} \varphi^{\zeta(n)}(n)]$. 
 
 \smallskip The following statement is a \emph{contrapositive of Ascoli's Lemma}, 
 
 $\overleftarrow{\mathbf{Asc}}$: for all $\delta$ in $[\omega]^\omega$, for all $\varphi$ in $\bigl(\mathrm{C}_\delta([-1,1], [-1, 1])\bigr)^\omega$, for all $\alpha$, 
 
 if $\forall \zeta \in [\omega]^\omega \exists n[\varphi^{\zeta(n+1)}(n)\;\#_{Block}\; \varphi^{\zeta(n)} (n)\;\vee\;  \alpha(n)\neq 0]$, then  $\exists n[\alpha(n) \neq 0]$.\end{definition}  
 
The following argument show that $\mathbf{Asc}$ proves $\mathbf{BW}$.
 
 Assume $\mathbf{Asc}$. Let $\gamma$ in $\mathbb{Q}^\omega$ be given such that $\forall n[|\gamma(n)|\le_\mathbb{Q} 1_\mathbb{Q}]$. Define $\delta$ such that $\forall n[\delta(n)=n]$.  Define $\varphi$ in $\bigl(C_\delta([-1,1], [-1,1])\bigr)^\omega$ such that, for each $n$, for all $\alpha$ in $[-1,1]$, $\varphi^n: \alpha\mapsto \bigl(\gamma(n)\bigr)_\mathcal{R}$.  Applying $\mathbf{Asc}$, find $\zeta$ in $[\omega]^\omega$ such that $\forall n[\varphi^{\zeta(n+1)} =_n \varphi^{\zeta(n)}]$.
 Conclude that $\forall n[|\gamma\circ\zeta(n)-_\mathbb{Q}\gamma\circ\zeta(n+1)|\le_\mathbb{Q} \frac{1}{2^n}]$.  Conclude $\mathbf{BW}$. 
 
 As $\mathbf{BW}$ is constructively false, so is $\mathbf{Asc}$. 
 
 \smallskip  We now observe  that, for every $\delta$ in $[\omega]^\omega$, $C_\delta([-1,1],[-1,1])$ is an explicit fan. Given  $\delta$, define   $\delta^\ast$ such that, for each $t$, $\delta^\ast(t) =0$ if and only if $\forall i<\mathit{length}(t)[t(i) \in CBlock_{\delta(i),i}]$ and $\forall i <\mathit{length}(t) - 1[ t(i+1) \sqsubseteq_{Block} t(i)]$. Note that $\delta^\ast$ is  a spread-law and that $\mathcal{F}_{\delta^\ast}$ coincides with $C_\delta([0,1],[0,1])$. Note that, for each $n$, for each $t$ in $\omega^n$ such that $\delta^\ast(t)=0$, the set $\{m\mid \delta^\ast(t\ast\langle m\rangle )=0\}$ is a subset of $ CBlock_{\delta(n), n}$ and has at most $2^{\delta(n) + n +2}$ elements. Conclude that  $\delta^\ast$ is  an explicit fan-law.

 \begin{theorem}\label{T:asc} The following statements are equivalent in $\mathsf{BIM}$.
 \begin{enumerate}[\upshape (i)] \item $(\overleftarrow{\mathbf{BW}_{\omega^\omega}})^+$. 
\item $\overleftarrow {\mathbf{Asc}}$.  \item $\mathbf{AppFT}$.
 \end{enumerate}
 \end{theorem}
 \begin{proof} (i) $\Rightarrow$ (ii). This follows from the fact that, for every $\delta$,  $\delta^\ast$ is an  explicit fan-law, see Theorem \ref{T:scfbeta}(iii). 
 
  Use the observation that, for each $\varphi$ in $C_\delta([-1,1], [-1,1])^\omega$, 
 
 if 
  $\forall \zeta \in [\omega]^\omega \exists n[\varphi^{\zeta(n+1)}(n)\;\#_{Block}\; \varphi^{\zeta(n)} (n)\;\vee\;  \alpha(n)\neq 0]$, 
  
  then $\forall \zeta \in [\omega]^\omega \exists n[\varphi^{\zeta(n+1)}\neq_n \varphi^{\zeta(n)} \;\vee\;  \alpha(n)\neq 0]$.

\smallskip
(ii) $\Rightarrow$ (i). Assume (ii). According to Corollary \ref{C:sumup3}, it suffices to prove $\overleftarrow{\mathbf{BW}}$. We also use Theorem \ref{T:01seqcpct}. Let $\gamma$, $\alpha$ be given such that   $\gamma \in \mathbb{Q} \cap [-1,1])^\omega$ and   $\forall \zeta \in [\omega]^\omega\exists n[|\gamma\circ\zeta(n+1) -_\mathbb{Q} \gamma\circ\zeta(n)|>_\mathbb{Q} \frac{1}{2^n} \;\vee\; \alpha(n) \neq 0]$. Slightly adapting the proof of Lemma \ref{L:ftconvprep}(ii), one may conclude that $\forall \zeta \in [\omega]^\omega\exists n[|\gamma\circ\zeta(n+1) -_\mathbb{Q} \gamma\circ\zeta(n)|>_\mathbb{Q} \frac{1}{2^{n-1}} \;\vee\; \alpha(n) \neq 0]$. Let $\delta$ in $[\omega]^\omega$ be given. Define $\varphi$ such that, for each $n$, $\varphi^n \in \mathrm{C}_\delta([-1,1])$, and, for all $\alpha$ in $[-1,1]$ $\varphi:\alpha\mapsto \bigl(\gamma(n)\bigr)_\mathcal{R}$.  Note that, for all $p,q,n $, if $\varphi^p =_{n+1} \varphi^q$, then $|\gamma(p) -_\mathbb{Q} \gamma(q)|<_\mathbb{Q} \frac{1}{2^n}$. Conclude that $\forall \zeta \in [\omega]^\omega \exists n[\varphi^{\zeta(n+1)}\;\#_{Block}\; \varphi^{\zeta(n)} \;\vee\;  \alpha(n)\neq 0]$. Using (ii), conclude that $\exists n[\alpha(n)\neq 0]$. We thus see that, for all $\alpha$, for all $\gamma$ in $(\mathbb{Q}\cap [-1.1])^\omega$ , if $\forall \zeta \in [\omega]^\omega\exists n[|\gamma\circ\zeta(n+1) -_\mathbb{Q} \gamma\circ\zeta(n)|>_\mathbb{Q} \frac{1}{2^{n-1}} \;\vee\; \alpha(n) \neq 0]$, then $\exists n[\alpha(n)\neq 0]$, i.e. $\overleftarrow{\mathbf{BW}}$, see Theorem \ref{T:01seqcpct}.

\smallskip
(i) $\Leftrightarrow$ (iii). According to Corollary \ref{C:sumup3}   (i) is equivalent, in $\mathsf{BIM}$, to $\mathbf{AppFT}$.
 \end{proof}

	\section{Ramsey's Theorem}
	\subsection{The intuitionistic (infinite) Ramsey Theorem}

	\begin{definition}For all $k>0$, for every  $X\subseteq\omega$,  we define: $X$ is \emph{$k$-almost-full},  $Almost$-$full_k(X)$,  if and only if $\forall \zeta \in [\omega]^\omega \exists s \in [\omega]^k[\zeta \circ
	 s \in X]$.
	 
	  For each $k>0$,    the following statement is called \emph{the $k$-dimensional  Intuitionistic Ramsey Theorem}, $\mathbf{IRT}(k)$:
	
	\smallskip $\forall \alpha\forall \beta[\bigl(Almostfull_k(D_\alpha) \;\wedge\;
Almostfull_k( D_\beta)\bigr) \rightarrow Almostfull_k(D_\alpha \cap D_\beta)].$

\smallskip
The statement $\mathbf{IRT}(1)$ is called the \emph{Intuitionistic Pigeonhole Principle}. 

\smallskip The statement $\mathbf{IRT}:=\forall k>0[\mathbf{IRT}(k)]$ is called the \emph{Intuitionistic Ramsey Theorem}.

\smallskip For each $k\ge 1$, the following statement is called \emph{the $k$-dimensional  Classical Ramsey Theorem}, $\mathbf{CRT}(k)$:
	
	\smallskip $\forall \alpha[\exists \zeta \in [\omega]^\omega\forall s \in[\omega]^k[\alpha\circ s \in D_\alpha]\;\vee\; \exists \zeta \in [\omega]^\omega\forall s \in[\omega]^k[\alpha\circ s \notin D_\alpha]].$
	
	\smallskip The statement $\mathbf{CRT}:=\forall k>0[\mathbf{CRT}(k)]$ is called the \emph{Classical Ramsey Theorem}.\end{definition}
	\begin{theorem} \begin{enumerate}[\upshape (i)]\item  $\mathsf{BIM}\vdash  \mathbf{CRT}(1)\rightarrow \mathbf{LPO}$. \item $\mathsf{BIM} + X\vee\neg X\vdash\forall k>0[\mathbf{IRT}(k)\leftrightarrow \mathbf{CRT}(k)]$.  \end{enumerate} \end{theorem}
	
	\begin{proof} (i) Assume $\mathbf{CRT}(1)$.
	Let $\alpha$  be given. Define $\beta$  such that $\forall n[\beta(\langle n\rangle ) \neq 0 \leftrightarrow \exists i \le n[\alpha(i) \neq 0]]$.  
	Note that $ \exists \zeta \in [\omega]^\omega\forall n[\langle\zeta(n)\rangle\in D_\beta] \leftrightarrow \exists n[\alpha(n)\neq 0]$ and 
	 that  $ \exists \zeta \in [\omega]^\omega\forall n[\langle\zeta(n)\rangle\notin D_\beta] \leftrightarrow \forall n[\alpha(n)= 0]$. 
Using $\mathbf{CRT}(1)$, conclude that 
	$\exists n[\alpha(n)\neq 0]\;\vee\;\forall n[\alpha(n)=0]$. 
	 Conclude that  $\forall \alpha[\exists n[\alpha(n)\neq 0]\;\vee\;\forall n[\alpha(n)=0]]$, i.e. $\mathbf{LPO}$.
	  
	  \smallskip (ii) Let $k>0$ be given and assume $\mathbf{IRT}(k)$. Let $\alpha$ be given and note that $D_\alpha$ and $\omega\setminus D_\alpha$ can not both be $k$-almost-full. If $D_\alpha$ is not $k$-almost-full, then $\exists\zeta \in [\omega]^\omega\forall s \in [\omega]^k[\alpha\circ s \notin D_\alpha]$ and, if $\omega\setminus D_\alpha$ is not $k$-almost-full, then $\exists\zeta \in [\omega]^\omega\forall s \in [\omega]^k[\alpha\circ s \in D_\alpha]$. We may conclude that $\mathbf{CRT}(k)$.

	Conversely, let $k>0$ be given and assume $\mathbf{CRT}(k)$. Let $\alpha, \beta$ be given such that   $D_\alpha, D_\beta$ are both $k$-almost-full.  
	Let $\zeta$ be an element of $[\omega]^\omega$. Define $\gamma=\alpha\circ \zeta$ and note that $D_\gamma$ is also $k$-almost-full. Conclude $\neg\exists \eta \in [\omega]^\omega\forall s \in [\omega]^k[\eta\circ s \notin D_\gamma]$. Using $\mathbf{CRT}(k)$, find $\eta$ in $[\omega]^\omega$ such that $\forall s \in [\omega]^k[\eta\circ   s \in D_\gamma]$, i.e. $ \forall s \in [\omega]^k[\zeta\circ\eta\circ   s \in D_\alpha]$. Find $s$ in $[\omega]^k$ such that $\zeta\circ\eta\circ s \in D_\beta$  and define $t:= \eta \circ s$. Note that $\zeta \circ t \in D_\alpha \cap D_\beta$. We thus see that $\forall \zeta\in [\omega]^\omega \exists t\in [\omega]^k[\zeta\circ t \in D_\alpha\cap D_\beta]$, i.e.  $D_\alpha\cap D_\beta$ is $k$-almost-full. We may conclude that $\mathbf{IRT}(k)$. 
	\end{proof}
	
	 Ramsey's Theorem is proven in \cite{ramsey} and  the intuitionistic version is treated in \cite{veldmanbezem93}, \cite{veldman2004b} and \cite{veldman2008}. In the latter three papers, the \textit{full Intuitionistic Ramsey Theorem} is considered. This theorem states that 
	\textit{for all $k\ge 1$, for all   $R, T\subseteq\omega$, if both $R,T$ are $k$-almost-full, then $R\cap T$ is $k$-almost-full}. 
	
	 In the statement $\forall k>0[\mathbf{IRT}(k)]$ one restricts oneself to  {\it decidable} subsets of $\omega$.  
	
	\begin{theorem}\label{T:irt} 
	 The following statements are equivalent in $\mathsf{BIM}$:
	 \begin{enumerate}[\upshape (i)]
	 \item  $\mathbf{AppFT}$.
	 \item $\forall k>0[\mathbf{IRT}(k)]$.
	 \item  $\mathbf{IRT}(3)$.
\item  $\mathbf{EnDec?!}\;\wedge\;\mathbf{IRT}(2)$.

\end{enumerate}
\end{theorem}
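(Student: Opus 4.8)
The plan is to prove the cycle of implications (i) $\Rightarrow$ (ii) $\Rightarrow$ (iii) $\Rightarrow$ (iv) $\Rightarrow$ (i), so that all four statements become equivalent in $\mathsf{BIM}$.

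\textbf{From $\mathbf{AppFT}$ to $\forall k[\mathbf{IRT}(k)]$.} Fix $k$ and let $\alpha,\beta$ be such that $D_\alpha,D_\beta$ are both $k$-almost-full. I would encode, in the usual way, the ``colouring tree'' whose nodes are increasing sequences $s\in[\omega]^{<\omega}$ that are $D_\alpha$-homogeneous in the sense that no $k$-element subsequence of $s$ lies in $D_\alpha$; the branching of this tree at a node of length $n$ is bounded-in-number, because $D_\alpha$ being $k$-almost-full forces every sufficiently long increasing extension to hit $D_\alpha$ — this is exactly the content that makes the tree (the complement of a $k$-almost-full set) an approximate fan once one also records, at each level, the extra requirement coming from $D_\beta$. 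More precisely, I would build a spread-law $\gamma$ whose infinite branches correspond to increasing $\zeta$ with $\zeta\circ s\notin D_\alpha$ for all $s\in[\omega]^k$, observe $Appfan(\gamma)$, and then let the thin bar $D_\delta\subseteq\{s\mid\gamma(s)=0\}$ consist of the $\sqsubseteq$-minimal nodes witnessing that $D_\beta$ fails to be $k$-almost-full \emph{inside} the homogeneous part; $\mathbf{AppFT}$ then says this thin bar is almost-finite, and by the closure properties of almost-finite sets (Lemma \ref{L:almostfinite}) one extracts a single increasing $\zeta$ showing $D_\alpha\cap D_\beta$ is $k$-almost-full. The bookkeeping of ``homogeneous node'' for dimension $k>2$ is the routine but laborious part; the key conceptual point is that the complement of a $k$-almost-full decidable set is governed by an approximate-fan-law.

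\textbf{From $\forall k[\mathbf{IRT}(k)]$ to $\mathbf{IRT}(3)$} is immediate (take $k=3$). \textbf{From $\mathbf{IRT}(3)$ to $\mathbf{EnDec?!}\wedge\mathbf{IRT}(2)$}: the second conjunct follows by a padding argument, coding a pair $\{m<n\}$ as a triple $\{m<n<n+1\}$ so that $2$-almost-fullness of $D_\alpha$ becomes $3$-almost-fullness of a derived decidable set, and then applying $\mathbf{IRT}(3)$; I would check the details but they are standard dimension-reduction. For $\mathbf{EnDec?!}$ I would argue, via Corollary \ref{Cor:sumup} and Corollary \ref{Cor:sumup2}, that it suffices to derive $\mathbf{Contr(BW)}_\mathcal{N}$ (which is equivalent to $\mathbf{AppFT}$) — but since we are only given $\mathbf{IRT}(3)$ here, it is cleaner to derive $\mathbf{EnDec?!}$ directly: given $\gamma$ as in $\mathbf{EnDec?!}$, one uses a Ramsey-type argument on $[\omega]^3$ (colouring triples by the enumeration behaviour of $E_\gamma$ along them) to force the desired element $n\in E_\gamma$; I expect to reuse the structure of the proof of Theorem \ref{T:kbendec}, replacing its appeal to well-foundedness of $<_{KB}$ by $\mathbf{IRT}(3)$.

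\textbf{From $\mathbf{EnDec?!}\wedge\mathbf{IRT}(2)$ to $\mathbf{AppFT}$.} By Theorem \ref{T:endoi}, $\mathbf{EnDec?!}$ gives $\mathbf{OI}([0,1])$; and the excerpt's Section 10 announces $\mathsf{BIM}\vdash(\mathbf{OI}([0,1])+\mathbf{IRT}(2))\rightarrow\mathbf{AppFT}$, whose proof I would carry out here: starting from an explicit approximate-fan-law $\beta$ and a thin bar, one uses $\mathbf{IRT}(2)$ to thin out any increasing $\zeta$ with $\forall n[\beta(\zeta(n))=0]$ into one along which the bar-membership behaviour is monotone, reducing the almost-finiteness claim to a statement about a non-decreasing sequence that $\mathbf{OI}([0,1])$ — in its guise $\mathbf{Contr(Ded)}$ — can handle. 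The main obstacle, and the place where I would spend the most care, is the first implication (i)$\Rightarrow$(ii): getting the tree associated to a $k$-almost-full set to be a genuine \emph{approximate}-fan-law (bounded-in-number at each level, not merely almost-fan), and simultaneously encoding the second set $D_\beta$ as a \emph{thin} bar rather than just a bar, so that $\mathbf{AppFT}$ applies on the nose. The combinatorial closure lemmas for almost-finite sets (Lemma \ref{L:almostfinite}) will be used repeatedly to stitch the pieces together.
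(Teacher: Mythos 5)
Your cycle (i)$\Rightarrow$(ii)$\Rightarrow$(iii)$\Rightarrow$(iv)$\Rightarrow$(i) is the same decomposition the paper uses, and your steps (iii)$\Rightarrow$(iv) and (iv)$\Rightarrow$(i) point in the right direction (the paper derives $\mathbf{EnDec?!}$ from $\mathbf{IRT}(3)$ by colouring triples according to the enumeration behaviour of $E_\gamma$, and proves (iv)$\Rightarrow$(i) by showing $\bigl(\mathbf{Contr(Ded)}+\mathbf{IRT}(2)\bigr)\rightarrow\mathbf{Contr(BW)}$, using $\mathbf{IRT}(2)$ on the two $2$-almost-full sets of ``descending'' and ``ascending'' pairs). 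But your step (i)$\Rightarrow$(ii) rests on a claim that is false: the tree of increasing finite sequences none of whose $k$-element subsequences lies in $D_\alpha$ is \emph{not} governed by an approximate-fan-law. For $k\ge 2$ its first level already consists of \emph{all} singletons $\langle m\rangle$, so the level sets are not bounded-in-number; even for $k=1$ the complement of a $1$-almost-full decidable set is only almost-finite, and the paper stresses (Subsection \ref{SS:notionfinite}) that almost-finite does not imply bounded-in-number. So the ``bad tree'' is at best an almost-fan, and $\mathbf{AppFT}$ (as opposed to $\mathbf{AlmFT}$, which the paper expects to be strictly stronger over $\mathsf{BIM}$) cannot be applied to it. Almost-fullness of $D_\alpha$ gives no finitary bound on the branching; this is exactly the obstacle your plan does not overcome.

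The paper's way around it is quite different and you would need its two ingredients. First, the approximate fan is not the complement-tree of $D_\alpha$ but the Erd\H{o}s--Rad\'o tree of sequences that are $(k+1)$-homeogeneous for $D_\alpha$ and $D_\beta$: at a node of length $n$ the number of admissible immediate extensions is bounded by the purely combinatorial count $4^{\binom{n}{k}}$ of possible colour-behaviours, independently of any almost-fullness hypothesis; padding with $0$'s turns this into an explicit approximate-fan-law. Second, the argument is an induction on $k$: to show that the relevant decidable set $B$ (``either a $(k+1)$-set in $D_\alpha\cap D_\beta$ has appeared along $s$, or $s$ has left the Erd\H{o}s--Rad\'o tree'') is a bar in that fan, one applies the induction hypothesis $\mathbf{IRT}(k)$ to the induced colourings $\alpha^\ast,\beta^\ast$ along an arbitrary branch, and then invokes $\mathbf{AppFT}$ in its strong-bar form $\mathbf{AppFT}^\ast$ to land on a node of the tree itself. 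Your sketch has neither the induction on $k$ nor any argument that your proposed bar is in fact a bar (that is where all the Ramsey-theoretic work sits), so as it stands the first implication does not go through.
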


\begin{proof} (i) $\Rightarrow$ (ii).  We use induction and first prove $\mathbf{IRT}(1)$. 

  Let $\alpha, \beta$ be given such that both $D_\alpha$ and $D_\beta$ are 1-almost-full.
Let $\zeta$  in $[\omega]^\omega$ be given.  Note that $\forall n  \exists m > n[\langle\zeta(m)\rangle \in D_\alpha]$. 
 Define $\eta$ such that $\eta(0)=\mu m[\langle\zeta(m)\rangle \in D_\alpha]$  and $\forall n   [\langle\zeta\circ\eta(n+1)=\mu m>\eta(n)[\langle\zeta(m)\rangle \in D_\alpha]]$. Note that $\zeta\circ\eta\in[\omega]^\omega$ and $\forall n[\langle\zeta\circ\eta(n)\rangle \in D_\alpha]$. Find $m$ such that  $\langle \zeta\circ\eta(m)\rangle \in D_\beta$, define $q:= \eta(m)$ and note that 
$\langle \zeta(q)\rangle\in D_\alpha \cap D_\beta$.
We thus see that $\forall \zeta \in [\omega]^\omega \exists q[\zeta(q) \in D_\alpha \cap D_\beta]$, i.e.  $D_\alpha \cap D_\beta$ is 1-almost-full.
 
 \smallskip
 Now assume $k \ge 1$ and $\mathbf{IRT}(k)$. We are going to prove $\mathbf{IRT}(k+1)$.

 Let $\alpha, \beta$ be given such that  $D_\alpha,D_\beta$ both are $(k+1)$-almost-full. We have to prove that also $D_\alpha\cap D_\beta$ is $(k+1)$-almost-full. We first set ourselves the modest goal of proving that the set $D_\alpha\cap D_\beta$ is {\it inhabited}, i.e. $\exists u \in [\omega]^{k+1}[u \in D_\alpha \cap D_\beta]$.
 
 For each $s\in [\omega]^{<\omega}$ and 
	 each $X\subseteq \omega$, 
	  define 
	  \begin{center} 
	 $s$ is {\em $(k+1)$-prehomogeneous for $X$} \end{center} if and only if $\forall t\in[\omega]^k\forall i\forall j[t(k-1)<i <j <\mathit{length}(s)\rightarrow \bigl(s \circ (t\ast\langle i \rangle) \in X\leftrightarrow s \circ (t\ast\langle j\rangle) \in X\bigr)].$

	 Define  $\varphi$  such that
	 $\varphi(0) =  \langle \; \rangle$, and,
	 for each $n$, $\varphi(n+1) = \varphi(j) \ast \langle n\rangle$ where $j$ is the largest  $i\le n$ such that $\varphi(i) \ast\langle n \rangle$ is $(k+1)$-prehomogeneous for $D_\alpha$ and for $D_\beta$. Note that $\varphi(n+1)$ is well-defined, as $\varphi(0) \ast\langle n \rangle$ is $(k+1)$-prehomogeneous for $D_\alpha$ and for $D_\beta$.
	 
	Define $\delta$ such that $\delta(\langle\;\rangle)\neq 0$ and $\forall t\forall n[\delta(t\ast\langle n\rangle) \neq 0\leftrightarrow \varphi(n)=t\ast\langle n \rangle]$.  
	Note that $\forall s[\delta(s)\neq 0\leftrightarrow \exists n[\varphi(n)=s]]$. 
Also note that $D_\delta \subseteq [\omega]^{<\omega}$ and that $D_\delta$  is a {\it tree}, i.e. $\forall s\forall n[s\ast \langle n \rangle \in D_\delta\rightarrow s \in D_\delta]$.
	$D_\delta$ is called the \textit{$(k+1)$-Erd\"os-Rad\'o tree belonging to $\alpha$ and $\beta$}.
	 $D_\delta$ is infinite and, for each $n$, for each $s$ in $D_\delta\cap [\omega]^n$,  there are at most $4^{\binom{n}{k}}$ numbers $i$ such that $\delta(s\ast\langle i \rangle) \neq 0$. 
	 
	 This is because, for each $n$, for each $s$ in $D_\delta\cap [\omega]^n$, there are $\binom{n}{k}$ elements $t$ of $[\omega]^{k}$ such that $t(k-1)<n$, and, for each such $t$,  for each $i$, $(s\circ t)\ast\langle i \rangle$ belongs to one of the four sets $D_\alpha \cap D_\beta$, $D_\alpha \setminus D_\beta$, $D_\beta \setminus D_\alpha$ and $\omega\setminus(D_\alpha \cup D_\beta)$.  Note that, for all $i,j$, if $s\ast\langle i \rangle$ belongs to $D_\delta$ and $i<j$, and, for all  elements $t$ of $[\omega]^{k}$ such that $t(k-1)<n$, $(s\circ t)\ast\langle i\rangle$ and $(s\circ t)\ast\langle j\rangle$ belong to the same of these four sets, then  $s\ast\langle i, j\rangle$ is $(k+1)$-prehomogeneous for both $D_\alpha$ and $D_\beta$, and $s\ast\langle j\rangle$ does not belong to $D_\delta$. 
	 
	 Define $\gamma$ such that $\gamma(0) =1$, and, for each $n$, $\gamma(n+1) = \gamma(n) \cdot 4^{\binom{n}{k}}$. Note that, for each $n$, there are at most $\gamma(n)$ elements $s$ of $D_\delta$ such that $\mathit{length}(s) = n$. 
	   Define $\varepsilon$ such that $\forall s[\varepsilon(s) = 0\leftrightarrow \exists t\exists n[t \in D_\delta\;\wedge \;s = t\ast\overline{\underline 0}n]]$. 
	  Note that $\forall s[\varepsilon(s) = 0\leftrightarrow \exists n[\varepsilon(s\ast\langle n \rangle)=0 ]]$.  
	  Define $\eta$ such that $\eta(0) = 1$, and $\forall n[\eta(n+1) = \eta(n) \cdot( 4^{\binom{n}{k}}+1)]$. Note that, for each $n$, there are at most $\eta(n)$ numbers $s$ in $\omega^n$ such that   $\varepsilon(s) = 0$.
	   We thus see that the set $\mathcal{F}_\varepsilon$ is an explicit approximate fan. 
	   
	   Now define $B:=\{s \mid \varepsilon(s) = 0\;\wedge\;\exists t\in [\omega]^{k+1}[t(k)<\mathit{length}(s)\;\wedge\;s \circ t \in D_\alpha \cap D_\beta]\;\vee\;\delta(s) = 0\}.$ We  prove that  $B$ is a bar in $\mathcal{F}_\varepsilon$. 
	    Let $\gamma$ in $\mathcal{F}_\varepsilon$ be given.
	 Define $\gamma^\ast$ such that $\gamma^\ast(0) = \gamma(0)$ and, for each $n$, if $\gamma(n+1)\neq 0$ 
	  then $\gamma^\ast(n+1) = \gamma(n+1)$, and, if $\gamma(n+1)=0$, then $\gamma^\ast(n+1)= \gamma^\ast(n) +1$.
	 Note that $\gamma^\ast \in [\omega]^\omega$ and, if $\gamma \in [\omega]^{\omega}$, then $\gamma^\ast = \gamma$, and   $\forall n[\overline \gamma n \neq \overline{\gamma^*}n\rightarrow \delta(\overline \gamma n)=0]$. 
	   Define $\alpha^\ast$ and $\beta^\ast$  such that, for each $t$ in $[\omega]^k$,
	    $\alpha^\ast(t) = \alpha\bigl(\gamma^\ast\circ (t\ast\langle j\rangle)\bigr)$ and $\beta^\ast(t) = \beta\bigl(\gamma^\ast\circ (t\ast\langle j\rangle)\bigr)$ where $j := t(k-1)+1$.

	Let $\zeta$ in $[\omega]^\omega$ be given. Find $s$ in $[\omega]^{k+1}$ such that $\gamma^\ast \circ \zeta \circ s \in D_\alpha$.
	Define $n:=(\zeta \circ s)(k) +1$ and  $t:= \overline s k$ and $i:= s(k)$. Note that $s=t\ast\langle i\rangle$ and $\overline{\gamma^\ast}n\circ \zeta \circ (t\ast\langle i \rangle) \in D_\alpha$, i.e. $\overline{\gamma^\ast}n\circ \bigl((\zeta \circ t)\ast\langle \zeta (i)\rangle\bigr) \in D_\alpha$. Conclude that {\it either} $\overline{\gamma^\ast}n\circ \bigl((\zeta \circ t)\ast\langle j \rangle\bigl) \in D_\alpha$,
 where $j:=(\zeta \circ t)(k-1) +1$, and therefore, 	$\zeta\circ t \in D_{\alpha^\ast}$, {\it or} $\overline {\gamma^\ast}n$ is not $(k+1)$-prehomogeneous for $D_\alpha$ and, therefore, $\delta(\overline {\gamma^\ast} n )=0$. Conclude that  $\forall \zeta \in [\omega]^\omega\exists t\in [\omega]^k [\zeta \circ t \in D_{\alpha^\ast} \;\vee \;\exists n[\delta\bigl(\overline{\gamma^\ast}n\bigr)=0]].$  
One may  prove by a similar argument that
$\forall \zeta \in [\omega]^\omega\exists t\in [\omega]^k [\zeta \circ t \in D_{\beta^\ast}\;\vee \;\exists n[\delta\bigl(\overline{\gamma^\ast}n\bigr)=0]]$.

	Define $\alpha^{\ast\ast},\beta^{\ast\ast}$  such that $\forall s[\alpha^{\ast\ast}(s)\neq 0 \leftrightarrow\bigl(\alpha^\ast(s)\neq 0\;\vee\;\exists n\le s[\delta(\overline{\gamma^\ast}n)=0]\bigr)]$ and $\forall s[\beta^{\ast\ast}(s)\neq 0 \leftrightarrow\bigl(\beta^\ast(s)\neq 0\;\vee\;\exists n\le s[\delta(\overline{\gamma^\ast}n)=0]\bigr)]$. 
 Conclude that
	$\forall \zeta \in [\omega]^\omega \exists t \in [\omega]^k[\zeta \circ t \in D_{\alpha^{\ast\ast}}] \;\wedge\; \forall \zeta \in [\omega]^\omega \exists t \in [\omega]^k[\zeta \circ t \in D_{\beta^{\ast\ast}}] $.
	Use $\mathbf{IRT}(k)$ and  conclude that
	$\forall \zeta \in [\omega]^\omega \exists t \in [\omega]^k[\zeta \circ t \in D_{\alpha^{\ast\ast}}\cap D_{\beta^{\ast\ast}}]$.

	Find $t$ in $[\omega]^k$ such that $t \in D_{\alpha^{\ast\ast}} \cap D_{\beta^{\ast\ast}}$.  
	\textit{Either} $t \in D_{\alpha^\ast} \cap D_ {\beta^\ast}$ \textit{or} $\exists n[\delta(\overline{\gamma^\ast}n) = 0]$, i.e.
	\textit{either} $\gamma^\ast\circ(t\ast\langle j \rangle) = (\gamma^\ast \circ t)\ast \langle \gamma^\ast( j)\rangle \in D_\alpha \cap D_\beta$, where $j = t(k-1) + 1$, \textit{or} $\exists n[\delta(\overline{\gamma^\ast} n) = 0]$. In both cases, we find $n$ such that $\overline {\gamma^\ast} n \in B$.
	 Note that   \textit{either} $\overline \gamma n = \overline{\gamma^\ast} n$ \textit{or} $\delta(\overline \gamma n) = 0$. In both cases, $\overline \gamma n \in B$.

	We thus see that $\forall \gamma\in\mathcal{F}_\varepsilon\exists n[\overline\gamma n \in B]$, i.e. $Bar_{\mathcal{F}_\varepsilon}(B)$.
	
	Find $\zeta$ in $[\omega]^\omega$ such that $\zeta(0)=\mu m[m\in D_\delta]$, and, for each $n$, $\zeta(n+1)=\mu m[m \in D_\delta \;\wedge\; m>\zeta(n)]$. Note that $\forall m[m\in D_\delta \rightarrow \varepsilon(m)=0]$. Using  $\mathbf{AppFT}$ and Corollary \ref{C:almostfinitestrongbar},   conclude that $Strongbar_{\mathcal{F}_\varepsilon}(B)$ and  find $n,m$ such that $\overline{\zeta(n)}m \in B$ and $\zeta(n) \in B$.  
	Conclude that $\exists t \in [\omega]^{k+1}[s \circ t \in D_\alpha \cap D_\beta]$, and $\exists u \in [\omega]^{k+1}[u \in D_\alpha \cap D_\beta]$. We thus see that, for all $\alpha, \beta$, if $D_\alpha, D_\beta$ are $(k+1)$-almost-full, then $\exists u \in [\omega]^{k+1}[u \in D_\alpha \cap D_\beta]$.
	
	\smallskip
	
Again, let $\alpha, \beta$ be given such that $D_\alpha, D_\beta$ both are $(k+1)$-almost-full
and let  $\gamma$ in $[\omega]^\omega$ be given.
	Define $\alpha',\beta'$ such that $\forall t\in[\omega]^{<\omega}[\alpha'(t)= \alpha(\gamma \circ t)\;\wedge\;\beta'(t) = \beta(\gamma \circ t)]$. 
	 Note that  $D_{\alpha'}$ is $(k+1)$-almost-full, as $\forall\delta\in[\omega]^\omega\exists s\in[\omega]^{k+1}[ \gamma \circ \delta \circ s \in D_\alpha]$ and  $\forall\delta\in[\omega]^\omega\exists s\in[\omega]^{k+1}[\delta \circ s \in D_{\alpha'}]$.
	Also $D_{\beta'}$ is  $(k+1)$-almost-full. Conclude that $\exists u\in[\omega]^{k+1}[u \in D_{\alpha'}\cap D_{\beta'}]$, and that $\exists u\in[\omega]^{k+1}[\gamma \circ u \in D_\alpha \cap D_\beta]$.
	We thus see that $\forall \gamma \in [\omega]^\omega\exists u\in[\omega]^{k+1}[\gamma \circ u \in D_\alpha \cap D_\beta]$, i.e. $D_\alpha \cap D_\beta$ is $(k+1)$-almost-full.

	  Conclude that $\forall k>0[\mathbf{IRT}(k) \rightarrow \mathbf{IRT}(k+1)]$ and  $\forall k>0[\mathbf{IRT}(k)]$.

	  \smallskip

	  (ii) $\Rightarrow$ (iii). Obvious. 
	  
	  \smallskip (iii) $\Rightarrow$ (iv). Note that $\mathsf{BIM}\vdash \mathbf{IRT}(3) \rightarrow \mathbf{IRT}(2)$. It thus suffices to show that $\mathsf{BIM} \vdash \mathbf{IRT}(3) \rightarrow \mathbf{EnDec?!}$. 
	  
	  Assume $\mathbf{IRT}(3)$. Let $\gamma$, $n$ be given such that $\forall \alpha[(n \notin D_\alpha \;\wedge\; D_\alpha \subseteq E_\gamma)\rightarrow \exists p[p \notin D_\alpha \;\wedge\; p \in E_{\gamma}]].$
       We will prove that $n \in E_\gamma$, i.e. $\exists j[\gamma(j) = n+1]$.

         Define  $\alpha$ such that for all $s$ in $[\omega]^3$, $\alpha(s)\neq 0$ if and only if $\exists i<s(0)[i\in E_{\overline\gamma s(2)}\setminus E_{\overline\gamma s(1)}]\;\vee\;n\in E_{\overline\gamma s(2)}].$
         We now prove that $D_\alpha$ is $3$-almost-full. 
     Assume $\delta\in[\omega]^\omega$. Define $\varepsilon$  such that, for all $p$, $\varepsilon(p)\neq 0$ if and only if  $p\neq n\;\wedge\;\forall k[ k=\mu i[p < \delta(i]\rightarrow \;p\in E_{\overline\gamma\delta(k+1)}].$ 
       Note that $D_\varepsilon \subseteq E_\gamma \;\wedge\; n \notin D_\varepsilon$. Find  $p$ in $E_\gamma\setminus D_\varepsilon$. Distinguish two cases.
      \begin{enumerate} \item $p = n$. Find $j$ such that $\gamma(j) = n+1$. Find $k$ such that $j < \delta(k+2)$ and note that $\langle \delta(0), \delta(1), \delta(k+2)\rangle \in D_\alpha$.

       \item  $p \neq n$.
         Find $k:=\mu i[p < \delta(i)]$. Find $j$ such that $\gamma(j) = p+1$.
        Note that $p \notin D_\varepsilon$, and, therefore, $p \notin E_{\overline\gamma\delta(k+1)}$. Find $l$ such that $j< \delta(l)$ and note that $\langle \delta(k), \delta(k+1), \delta(l)\rangle \in D_\alpha$.
       
      \end{enumerate}
   We thus see that $\forall \delta\in [\omega]^\omega\exists s \in [\omega]^3[\delta\circ s \in D_\alpha]$, i.e. $D_\alpha$ is $3$-almost-full.    
       
       Define $\beta$   such that $\forall s \in [\omega]^3[\beta(s) \neq 0\leftrightarrow\forall i < s(0)[i\in E_{\overline\gamma s(2)}\rightarrow i\in E_{\overline\gamma s(1)}]]$.
       We now prove that $D_\beta$ is $3$-almost-full.
       Assume $\delta\in[\omega]^\omega$.  
       Also assume that $\forall k \le \delta(0)[\delta(0), \delta(k), \delta(k+1)\rangle \notin D_\beta]$. 
       Then $\forall k \le \delta(0)\exists i < \delta(0)[i \in E_{\overline \gamma \bigl(\delta(k+1)\bigr)}\setminus E_{\overline \gamma\bigl(\delta(k)\bigr)}]$.  
       Clearly, this is impossible. Conclude that $\exists k\le \delta(0)[\langle \delta(0), \delta(k), \delta(k+1)\rangle \in D_\beta]$. We thus see that $\forall \delta\in [\omega]^\omega\exists s \in [\omega]^3[\delta\circ s \in D_\beta]$, i.e. $D_\beta$ is $3$-almost-full.

      Using $\mathbf{IRT}(3)$,       conclude that  $D_\alpha \cap D_\beta$ is $3$-almost-full and, in particular,  $\exists s[ s \in [\omega]^3[s \in D_\alpha \cap D_\beta]$. Find such $s$ and conclude that $n\in E_{\overline \gamma s(2)}\subseteq E_\gamma$.
       
    We thus see that, for all $\gamma$, $n$, if  $\forall \alpha[(n \notin D_\alpha \;\wedge\; D_\alpha \subseteq E_\gamma)\rightarrow \exists p[p \notin D_\alpha \;\wedge\; p \in E_{\gamma}]]$, then 
        $n \in E_\gamma$. One easily concludes $\mathbf{EnDec?!}$.
           
       \smallskip (iv) $\Rightarrow$ (i).  As $\mathsf{BIM}$ proves $ \mathbf{EnDec?!} \leftrightarrow \overleftarrow{\mathbf{Ded}}$, see Corollary \ref{Cor:sumup}, and also $\mathbf{AppFT} \leftrightarrow \overleftarrow{\mathbf{BW}}$, see Corollary \ref{C:sumup3}, it  suffices to show that  $\mathsf{BIM}$ proves  $\bigl(\overleftarrow{\mathbf{Ded}} + \mathbf{IRT}(2)\bigr)\rightarrow \overleftarrow{\mathbf{BW}}$.\footnote{The argument that follows is also used for \cite[Theorem 35]{veldman2023}.} 
       
       Assume $\overleftarrow{\mathbf{Ded}}$ and  $\mathbf{IRT}(2)$. 
       Let
$ \gamma$ in $\mathbb{Q}^\omega$ be given such that $\forall \zeta \in [\omega]^\omega\exists n[|\gamma\circ\zeta(n+1) -_\mathbb{Q} \gamma\circ\zeta(n)| 
>_\mathbb{Q} \frac{1}{2^n}]$. 
We shall prove that $\exists n[|\gamma(n)| >_\mathbb{Q} 1_\mathbb{Q} ]$.

Let $\zeta$ in $[\omega]^\omega$ be given. Define $\delta$ in $\mathbb{Q}^\omega$ such that $\delta(0) := \gamma\circ\zeta (0)$, and, for each $n$, {\it if} $\gamma\circ\zeta(n+1) \ge_\mathbb{Q}\delta(n)]$, then $\delta(n+1) = \gamma\circ\zeta(n+1)$, and, {\it if not}, then $\delta(n+1) = \delta(n) +_\mathbb{Q} 1_\mathbb{Q}$. 
Note that $\forall n[\delta(n+1) \ge_\mathbb{Q} \delta(n)]$ and $\forall \eta \in [\omega]^\omega\exists n[\delta\circ\eta(n+1) -_\mathbb{Q} \delta\circ\eta(n) 
>_\mathbb{Q} \frac{1}{2^n}]$. Using $\overleftarrow{\mathbf{Ded}}$, find $n$ such that $\delta(n) >_\mathbb{Q} 1_\mathbb{Q}$. Note that \textit{either} $\delta(n) = \gamma\circ \zeta(n)$ and $\gamma\circ \zeta(n)>_\mathbb{Q}1_\mathbb{Q}$, \textit{or} $\delta(n) \neq \gamma\circ\zeta(n)$ and $\exists i\le n[\gamma\circ\zeta(i) >_\mathbb{Q} \gamma\circ\zeta(i+1)]$. We thus see that 
$\forall \zeta \in [\omega]^\omega\exists n[ \gamma\circ \zeta(n)>_\mathbb{Q}1_\mathbb{Q} \;\vee\; \gamma\circ\zeta(n) >_\mathbb{Q} \gamma\circ\zeta(n+1)]$. 
One may also prove that  $\forall \zeta \in [\omega]^\omega\exists n[ \gamma\circ\zeta(n)<_\mathbb{Q} (-1)_\mathbb{Q} \;\vee\; \gamma\circ\zeta(n) <_\mathbb{Q} \gamma\circ\zeta(n+1)]$. 
Use $\mathbf{IRT}(2)$ and conclude that $\exists n[|\gamma(n)| >_\mathbb{Q} 1_\mathbb{Q} ]$.

We thus see that, for each $ \gamma$ in $\mathbb{Q}^\omega$, if  $\forall \zeta \in [\omega]^\omega\exists n[|\gamma\circ\zeta(n+1) -_\mathbb{Q} \gamma\circ\zeta(n)| 
>_\mathbb{Q} \frac{1}{2^n}]$, then $\exists n[|\gamma(n)| >_\mathbb{Q} 1_\mathbb{Q}]$, i.e. $\overleftarrow{\mathbf{BW}}$. 
\end{proof}

     \subsection{The Paris-Harrington Theorem}

We first show that     the Intuitionistic Ramsey Theorem extends from \textit{decidable} to \textit{enumerable} subsets of $\omega$.
   \begin{corollary}\label{C:ramseyenumer}  One may prove, in $\mathsf{BIM} + \forall k>0[\mathbf{IRT}(k)]$:
   
   $\forall k>0\forall \alpha\forall \beta[\bigl(Almostfull_k(E_\alpha) \;\wedge\;
Almostfull_k(E_\beta)\bigr) \rightarrow Almostfull_k(E_\alpha\cap E_\beta)]$.
\end{corollary}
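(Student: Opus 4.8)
The plan is to reduce the enumerable case to the decidable case $\mathbf{IRT}(k+1)$ by the standard device of adjoining one extra coordinate that bounds the witness for membership in an enumerable set, thereby replacing the unbounded condition ``$n\in E_\alpha$'', i.e.\ $\exists p[\alpha(p)=n+1]$, by a bounded, hence decidable, one. I would prove the statement for each fixed $k$, using the instance $\mathbf{IRT}(k+1)$ of the hypothesis $\forall k[\mathbf{IRT}(k)]$.

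First I would fix $\alpha,\beta$ with $E_\alpha,E_\beta$ both $k$-almost-full and fix an arbitrary $\zeta\in[\omega]^\omega$; the goal is to exhibit $s\in[\omega]^k$ with $\zeta\circ s\in E_\alpha\cap E_\beta$. I then define $\alpha^\ast,\beta^\ast$ so that, for each $t$ in $[\omega]^{k+1}$, $\alpha^\ast(t)\neq 0$ if and only if $\exists p<t(k)[\alpha(p)=(\zeta\circ\overline t k)+1]$, and similarly $\beta^\ast(t)\neq 0$ if and only if $\exists p<t(k)[\beta(p)=(\zeta\circ\overline t k)+1]$, with $\alpha^\ast(t)=\beta^\ast(t)=0$ for $t\notin[\omega]^{k+1}$; these are legitimate elements of $\mathcal N$ in $\mathsf{BIM}$ since their defining conditions involve only bounded quantifiers over $\alpha,\beta,\zeta$, so $D_{\alpha^\ast}$ and $D_{\beta^\ast}$ are decidable.

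The key step is to show that $D_{\alpha^\ast}$ and $D_{\beta^\ast}$ are $(k+1)$-almost-full. For $D_{\alpha^\ast}$: given $\eta\in[\omega]^\omega$, note $\zeta\circ\eta\in[\omega]^\omega$, so by $k$-almost-fullness of $E_\alpha$ there is $s\in[\omega]^k$ with $(\zeta\circ\eta)\circ s\in E_\alpha$, that is, $\zeta\circ(\eta\circ s)\in E_\alpha$, using the identity $(\zeta\circ\eta)\circ s=\zeta\circ(\eta\circ s)$. Picking a witness $p$ with $\alpha(p)=(\zeta\circ(\eta\circ s))+1$ and then $j>s(k-1)$ with $\eta(j)>p$, the sequence $s^\ast:=s\ast\langle j\rangle$ lies in $[\omega]^{k+1}$, and $\eta\circ s^\ast=(\eta\circ s)\ast\langle\eta(j)\rangle$ with $\overline{\eta\circ s^\ast}\,k=\eta\circ s$; hence $p<\eta(j)=(\eta\circ s^\ast)(k)$ and $\alpha(p)=(\zeta\circ\overline{\eta\circ s^\ast}\,k)+1$, so $\eta\circ s^\ast\in D_{\alpha^\ast}$. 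The same argument applies to $D_{\beta^\ast}$. Then $\mathbf{IRT}(k+1)$ gives that $D_{\alpha^\ast}\cap D_{\beta^\ast}$ is $(k+1)$-almost-full; applying this with the increasing sequence $\mathit{Id}$ produces $t\in[\omega]^{k+1}$ in $D_{\alpha^\ast}\cap D_{\beta^\ast}$. Setting $s:=\overline t k\in[\omega]^k$ and unwinding $\alpha^\ast(t)\neq 0$, $\beta^\ast(t)\neq 0$ yields witnesses below $t(k)$ showing $\zeta\circ s\in E_\alpha$ and $\zeta\circ s\in E_\beta$; as $\zeta$ was arbitrary, $E_\alpha\cap E_\beta$ is $k$-almost-full.

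I do not expect a serious obstacle: this is a routine ``$\mathbf{\Sigma^0_1}$-to-$\mathbf{\Delta^0_1}$'' reduction of exactly the kind already used in the step (iii)$\Rightarrow$(iv) of Theorem \ref{T:irt} and in the Erd\"os--Rad\'o construction there. The only points needing care are the bookkeeping of the composition and restriction operations (keeping $[\omega]^k$ and $[\omega]^{k+1}$ straight, and verifying $\overline{\eta\circ(s\ast\langle j\rangle)}\,k=\eta\circ s$ when $j>s(k-1)$) and checking that $\alpha^\ast,\beta^\ast$ are definable in $\mathsf{BIM}$, which holds because all the quantifiers introduced are bounded.
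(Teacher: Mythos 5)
Your proof is correct and follows essentially the same route as the paper: adjoin an extra last coordinate that bounds the $\mathbf{\Sigma^0_1}$-witness, obtaining decidable $(k+1)$-almost-full sets, and apply $\mathbf{IRT}(k+1)$ before projecting back. The only cosmetic difference is that you fix $\zeta$ first and fold it into the decidable sets (then apply the conclusion to $\mathit{Id}$), whereas the paper defines the $(k+1)$-dimensional sets $\delta,\varepsilon$ without reference to $\zeta$ and applies $\mathbf{IRT}(k+1)$ once for all $\zeta$.
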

\begin{proof} Let $k, \alpha, \beta$ be given such that both $E_\alpha$ and $E_\beta$ are $k$-almost-full. 
Define  $\delta, \varepsilon$, such that $\forall s[\delta(s) \neq 0\leftrightarrow \bigl(s \in [\omega]^{k+1}\;\wedge\;\overline s k \in E_{\overline\alpha s}\bigr)]$ and $\forall s[\varepsilon(s) \neq 0\leftrightarrow \bigl(s \in [\omega]^{k+1}\;\wedge\;\overline s k \in E_{\overline\beta s}\bigr)]$. Note that  $D_\delta$ and $D_\varepsilon$ are $k+1$-almost-full. Using $\mathbf{IRT}(k+1)$, conclude that $D_\delta \cap D_\varepsilon$ is $k+1$-almost-full.  Note that $\forall s[s \in D_\delta \cap D_\varepsilon\rightarrow\overline s k \in E_\alpha \cap E_\beta]$. Conclude that $E_\alpha \cap E_\beta$ is $k$-almost-full. \end{proof}

\begin{definition}
For all positive integers $m$, $k$, $[m]^k:=\{s\in[\omega]^k\mid s(k-1) < m\}$.
For all  positive integers $m,k$, for all $c, r$, we define $c: [m]^k \rightarrow r$ if and only if $\forall s\in[m]^k[s < \mathit{length}(c)\;\wedge\; c(s) < r]$.
For all $r, k, n, M$ such that $n\ge k$, we define $M\rightarrow_\ast (n)^k _r$
if and only if,  for all $ c:[M]^k \rightarrow r$, there exist $l,s$ such that $s \in [M]^l$ and
$l\ge n$ and $l\ge s(0)$ and   $\forall u \in [l]^k \forall v \in [l]^k[c(s \circ u) = c(s \circ v)]]$.

    The \emph{Paris-Harrington Theorem} $\mathbf{PH}$ is the following statement:
     \[\forall r \forall k\forall n\ge k \exists M [M \rightarrow_\ast (n)^k_r]\] \end{definition}
     In \cite{veldmanbezem93}, it is explained how $\mathbf{PH}$ may be derived from $\mathbf{IRT}$. It seems useful to consider this argument again in the formal context of $\mathsf{BIM}$.
     
     \begin{definition}  $[\omega]^{<\omega} := \bigcup\limits_{k \in \omega}[\omega]^k$.  
     
     $X\subseteq[\omega]^{<\omega}=\omega$ is called \emph{$\omega$-almost-full}, $Almostfull_\omega(X)$, if and only if $\forall \zeta \in [\omega]^\omega  \exists s \in [\omega]^{<\omega}[\zeta \circ s \in X]$.

      Let $r$ be a positive integer and let $\chi$ be given such that $\forall n[\chi(n) < r]$. One calls $\chi$   an  \emph{$r$-colouring of $\omega$}.
      
 For each       $X\subseteq [\omega]^{<\omega}$, for each $k>0$,  we let $X^{\chi, k}$ be the set of all $s$ in $X$ that are $\chi,k$-\emph{monochromatic}, i.e.  for all $u,v$ in $[\omega]^k$, if both   $u(k-1) < \mathit{length}(s)$ and $v(k-1) < \mathit{length}(s)$, then $\chi(s\circ u) = \chi(s \circ v).$ \end{definition}
 
       Note that, if $X$ is  an enumerable subset of $\omega$, then also $X^{\chi, k}$ is an enumerable subset of $\omega$.
      
     \begin{corollary}\label{C:PH}  One may prove, in $\mathsf{BIM} + \forall k>0[\mathbf{IRT}(k)]$:
     
       $\forall k>0\forall r>0\forall \chi:\omega\rightarrow r\forall \alpha[\mathit{Almost}\mathit{full}_\omega(E_\alpha)\rightarrow \mathit{Almost}\mathit{full}_\omega\bigl((E_\alpha)^{\chi, k}\bigr)].$
       
     \end{corollary}
     
     \begin{proof} Let $k>0$ be given. 
     We use induction on $r$. Note that the case $r=1$ is trivial. Now assume $r\ge 1$ is given such that the case $r$ of the statement has been established.
     
      Let $\chi$ be given such that $\forall n[\chi(n) < r+1]$. 
      Define $\chi_0$ such that, for all $n$, if $\chi(n)<r$, then $\chi_0(n)=\chi(n)$, and, if $\chi(n)=r$, then $\chi_0(n)=r-1$. Define $\chi_1$ such that, for all $n$, if $\chi(n)>0$, then $\chi_1(n)=\chi(n)-1$, and, if $\chi(n)=0$, then $\chi_1(n)=0$.  Note that $ \forall i<2\forall n[\chi_i(n) < r]$.
     Let $\alpha$ be given such that $E_\alpha\subseteq[\omega]^{<\omega}$ is $\omega$-almost-full. 
    Note that $(E_\alpha)^{\chi_0, k}$ and $(E_\alpha)^{\chi_1, k}$ are enumerable subsets of $\omega$, and according to
     the induction hypothesis,  $\omega$-almost-full subsets of $[\omega]^{<\omega}$.
     
     Let $\zeta$  in $[\omega]^\omega$ be given. Note that, for each $s$, if 
      $\zeta \circ s \in (E_\alpha)^{\chi_0, k}$, then \textit{either} $\zeta \circ s \in (E_\alpha)^{\chi, k}$, \textit{or}, for some $t \in [\omega]^k$, $t(k-1) < \mathit{length}(s)$ and $\chi(\zeta\circ s \circ t) = r$, and, if $\zeta \circ s \in (E_\alpha)^{\chi_1, k}$, then \textit{either} $\zeta \circ s \in (E_\alpha)^{\chi, k}$, \textit{or}, for some $t \in [\omega]^k$, $t(k-1) < \mathit{length}(s)$ and $\chi(\zeta\circ s \circ t) = 0$.
      
      Again, let $\zeta$  in $[\omega]^\omega$ be given.
       Let $QED$ be the statement: $\exists s \in [\omega]^{<\omega}[\zeta \circ s \in X^{\chi, k}]$. 
      Note that,  for each $\eta$ in $[\omega]^\omega$, there exists $s$ such that $\zeta \circ \eta \circ s \in X^{\chi_0, k}$ and, therefore, \textit{either} $QED$ \textit{or}  $\exists u\in[\omega]^k[\chi(\zeta \circ \eta \circ u) = r]$. 
      Define $Y_0:=\{t\in[\omega]^k\mid \chi(\zeta \circ t) = r\;\vee\;QED\}.$ Note that $Y_0$ is $k$-almost-full and an enumerable subset of $\omega$.
      Also define $Y_1:=\{t\in[\omega]^k\mid \chi(\zeta \circ t) = 0\;\vee\;QED\}.$ Note that also $Y_1$ is $k$-almost-full and an enumerable subset of $\omega$.
    Using Corollary \ref{C:ramseyenumer}, conclude that $Y_0 \cap Y_1$ is $k$-almost-full. Find $t$ such that $\zeta \circ t \in Y_0 \cap Y_1$. Then $\chi(\zeta\circ t) = r$ or $QED$, and $\chi(\zeta\circ t) = 0$ or $QED$. Note that $r >0$ and $QED$.
     
     We thus see that  $\forall \zeta \in[\omega]^\omega\exists s[\zeta\circ s \in (E_\alpha)^{\chi, k}]$, i.e. $(E_\alpha)^{\chi,k}$ is $\omega$-almost-full. \end{proof}
\begin{definition} Let $r,k$ be positive integers.  

Let $c$ be given such that $\forall n< \mathit{length}(c)[c(n) < r]$.

For all  $X\subseteq[\omega]^{<\omega}$,   $X^{c, k}$ is the set of all $s$ in $X$ that are $c, k$-\emph{monochromatic}, i.e. for all $u, v$ in $[\omega]^k$, if $u(k-1) < \mathit{length}(s)$ and $v(k-1) < \mathit{length}(s)$, then $s\circ u < \mathit{length}(c)$ and $s \circ  v < \mathit{length}(c)$ and $c(s\circ u) = c(s \circ v)$.  \end{definition}  
     
     Recall that $\mathbf{PH}$  is the statement: $\forall r>0\forall k>0\forall n \exists M[ M \rightarrow_\ast (n)^k_r]$.
   \begin{theorem}\label{T:PH}  
   $\mathsf{BIM} + \forall k>0[\mathbf{IRT}(k)]\vdash \mathbf{PH}.$
   \end{theorem}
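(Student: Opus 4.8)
The plan is to derive the finitary Paris--Harrington statement from the infinitary statement in Corollary \ref{C:PH} by a Fan-Theorem compactness argument over the space of $r$-colourings. That the Fan Theorem is at our disposal is immediate: by Theorem \ref{T:irt}, $\mathsf{BIM}+\forall k[\mathbf{IRT}(k)]$ proves $\mathbf{AppFT}$, hence $\mathbf{OI}([0,1])$ by Corollary \ref{Cor:sumup2}, hence $\mathbf{HB}$ by Theorem \ref{T:oihb}, hence $\mathbf{FT}$ by Theorem \ref{T:hbft}, and therefore $\mathbf{FT_{ext}}$ by Theorem \ref{T:generalfant1}.

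Fix positive integers $r,k,n$; we may assume $n\ge k$, since if $n<k$ the relation $M\rightarrow_\ast(n)^k_r$ holds already for $M=n$ (take $l=n$ and $s=\langle 0,1,\ldots,n-1\rangle$; the relevant homogeneity clause is vacuous because $[n]^k=\emptyset$). First I would let $X$ be the set of all $s$ in $[\omega]^{<\omega}$ with $\mathit{length}(s)\ge n$ and $\mathit{length}(s)\ge s(0)$, i.e.\ the relatively large increasing sequences of length at least $n$; $X$ is decidable, hence enumerable, and it is almost-full, since for each $\zeta$ in $[\omega]^\omega$ the sequence $\langle 0,1,\ldots,l-1\rangle$ with $l\ge\max(n,\zeta(0))$ satisfies $\zeta\circ\langle 0,\ldots,l-1\rangle\in X$. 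Next I would view an $r$-colouring as a point of the explicit fan $\mathcal{F}_\beta$, where $\beta(c)=0$ if and only if $c(i)<r$ for all $i<\mathit{length}(c)$; note $Fan^+(\beta)$. Finally I would put $B:=\{c\mid\exists s[\,s\in X^{c,k}\,]\}$. Because $n\ge k$, every $s$ with $s\in X^{c,k}$ has length $\ge k$, so the defining clause of $X^{c,k}$ is non-vacuous and, through the coding of finite increasing sequences, forces both $\mathit{length}(s)$ and the entries of $s$ to be bounded in terms of $\mathit{length}(c)$; hence $B$ is decidable, say $B=D_\alpha$.

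The heart of the matter is to show $Bar_{\mathcal{F}_\beta}(B)$. Given $\chi$ in $\mathcal{F}_\beta$, Corollary \ref{C:PH} applied with the identity $Id$ in place of $\zeta$ gives that $X^{\chi,k}$ is inhabited, so there is a relatively large increasing sequence $s$ of length $\ge n$ that is $k$-homogeneous for $\chi$ within its own length; as only finitely many values of $\chi$ occur in that condition, $s\in X^{\overline\chi m,k}$ for some $m$, i.e.\ $\overline\chi m\in B$. Now $\mathbf{FT_{ext}}$, applied to the bar $B=D_\alpha$ in the explicit fan $\mathcal{F}_\beta$, yields $m_0$ with $Bar_{\mathcal{F}_\beta}(D_{\overline\alpha m_0})$: every $r$-colouring has an initial part in $B$ of code-number below $m_0$, and such an initial part has length bounded by a number $p_0$ obtained from $m_0$. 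I would then take $M$ large enough that every element of $[\omega]^k$ of code-number below $p_0$ lies in $[M]^k$ (in particular has all entries below $M$). To see that $M\rightarrow_\ast(n)^k_r$, let $c$ be any $r$-colouring of $[M]^k$, extend it to a $\chi$ in $\mathcal{F}_\beta$ by setting $\chi(i)=0$ for $i\ge\mathit{length}(c)$, choose $\overline\chi p\in B$ with $\overline\chi p<m_0$, and read off $l$ and $s$ from the membership $s\in X^{\overline\chi p,k}$. Here $l\ge n\ge k$, so $u=\langle l-k,\ldots,l-1\rangle$ lies in $[l]^k$ and $s\circ u<p\le p_0$, whence $s(l-1)<M$ and $s\in[M]^l$; and for every $u$ in $[l]^k$ the tuple $s\circ u$ has code-number below $p$, hence lies in $[M]^k$, hence belongs to the domain of $c$, where $c$ agrees with $\chi$ and with $\overline\chi p$. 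So the membership $\overline\chi p\in B$ shows $c$ is constant on $\{s\circ u\mid u\in[l]^k\}$, that is, $M\rightarrow_\ast(n)^k_r$. Since $r,k,n$ were arbitrary, $\mathbf{PH}$ follows.

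The conceptual content --- Corollary \ref{C:PH} for the ``almost-full'' input and $\mathbf{FT_{ext}}$ for uniformizing over the fan $\mathcal{F}_\beta$ of $r$-colourings --- is short; the main obstacle I anticipate is the purely combinatorial bookkeeping around code numbers: arranging that the parameter $M$ computed from the Fan-Theorem index $m_0$ is large enough, and checking that the homogeneous witness $(l,s)$ extracted from $B$ really sits inside $[M]^l$ with all the tuples $s\circ u$ in the domain of the given colouring. The reduction to the case $n\ge k$ at the outset is what lets us avoid a second, degenerate bookkeeping case.
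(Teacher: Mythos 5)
Your proposal is correct and follows essentially the same route as the paper's proof: the same almost-full set $X$ of relatively large sequences, the same fan of $r$-colourings, the same bar $B$ of finite colourings admitting a witness in $X^{c,k}$, with Corollary \ref{C:PH} giving the bar property and the Fan Theorem (available by Theorem \ref{T:irt} and the chain through $\mathbf{OI}([0,1])$, $\mathbf{HB}$) giving the uniform bound $M$. Your extra care about the degenerate case $n<k$ and the code-number bookkeeping for $M$ only makes explicit what the paper leaves implicit.
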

   \begin{proof}
  Let $r, k, n$ be positive integers. Note that $\mathcal{F}:=\{\chi\mid \forall m[\chi(m)<r]\}$  is a fan. Define $$X:=\{s\in[\omega]^{<\omega}\mid\mathit{length}(s) \ge n\;\wedge\;\mathit{length}(s) \ge s(0)\}.$$ Note that $X$ is $\omega$-almost-full and a decidable subset of $\omega$. According to Corollary \ref{C:PH}, for each $\chi$ in $\mathcal{F}$,  the set $X^{\chi, k}$ is $\omega$-almost-full, and, in particular,  $\exists s[s \in X^{\chi, k}]$.  Define $$B:=\{c\mid \forall m <\mathit{length}(c)[c(m) < r]\;\wedge\;\exists s[s\in X^{c,k}]\}.$$ Note that $B$ is a decidable subset of $\omega$ and a bar in   $\mathcal{F}$. 
  
  Note that,  in $\mathsf{BIM} + \forall k>0[\mathbf{IRT}(k)]$ one  may prove the Fan Theorem $\mathbf{FT}$, see Theorem \ref{T:irt}, Corollaries \ref{C:sumup3} and \ref{Cor:sumup2} and  Theorems \ref{T:oihb} and \ref{T:hbft}.
  
   Using $\mathbf{FT}$, find $M$ such that $\forall\chi\in\mathcal{F} \exists m\le M[\overline \chi m \in B]$, and, therefore, \\$\forall \chi \in \mathcal{F}[\overline \chi M\in B]$. Assume $c: [M]^k \rightarrow r$. Note that $M \le \mathit{length}(c)$. Find $\chi$ in $\mathcal{F}$ such that $\overline \chi M = \overline c M$. Find $s$ in $X^{\overline \chi M, k}$ and note that $s \in X^{c,k}$. We thus see that $\forall c:[M]^k \rightarrow r\exists s[length(s) \ge n \;\wedge \; length(s)\ge s(0) \;\wedge\; s \;is\; c,k$-$monochromatic]$, i.e.  $M \rightarrow_\ast (n)^k_r$.
 \end{proof}      
       
       It is a famous fact that the Paris-Harrington Theorem can not be proven in classical or intuitionistic arithmetic, see \cite{paris-harrington}, that is: $\mathsf{BIM} \nvdash \mathbf{PH}$. Using Troelstra's result that $\mathbf{FT}$  is conservative over intuitionistic arithmetic, see \cite{troelstra}, we  conclude: 
       \begin{corollary}\label{Cor:incomplete1} $\mathsf{BIM} \nvdash \mathbf{PH}$ and $\mathsf{BIM} \nvdash \mathbf{FT} \rightarrow \mathbf{AppFT}$. \end{corollary}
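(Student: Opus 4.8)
The plan is to obtain both non-derivability claims cheaply from the combinatorial equivalences established above, together with two external facts: the Paris--Harrington independence theorem and Troelstra's conservativity theorem. No new intuitionistic argument is required; everything reduces to a short chain of implications among results already proved.

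First I would record why $\mathsf{BIM}\nvdash\mathbf{PH}$. Once its finitary data (the colourings $c$ and the combinatorial objects in the relation $M\rightarrow_\ast(n)^k_r$) are coded by natural numbers in the usual way, $\mathbf{PH}$ is a purely arithmetical sentence, in fact $\mathbf{\Pi^0_2}$. Paris and Harrington, see \cite{paris-harrington}, showed that $\mathbf{PH}$ is true but unprovable in Peano Arithmetic; since Peano Arithmetic and Heyting Arithmetic prove exactly the same $\mathbf{\Pi^0_2}$-sentences, and $\mathsf{BIM}$ is Heyting Arithmetic, it follows that $\mathsf{BIM}\nvdash\mathbf{PH}$. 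This is the first assertion of the Corollary.

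For the second assertion I would argue by contradiction. Suppose $\mathsf{BIM}\vdash\mathbf{FT}\rightarrow\mathbf{AppFT}$, equivalently, by the deduction theorem and the fact that $\mathbf{FT}$ is a single sentence, that $\mathsf{BIM}+\mathbf{FT}\vdash\mathbf{AppFT}$. By Theorem \ref{T:irt} we have $\mathsf{BIM}\vdash\mathbf{AppFT}\rightarrow\forall k[\mathbf{IRT}(k)]$, and by Theorem \ref{T:PH} we have $\mathsf{BIM}+\forall k[\mathbf{IRT}(k)]\vdash\mathbf{PH}$; chaining these yields $\mathsf{BIM}+\mathbf{FT}\vdash\mathbf{PH}$. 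Now Troelstra's theorem, see \cite{troelstra}, that $\mathbf{FT}$ is conservative over $\mathsf{BIM}$, applies to the arithmetical sentence $\mathbf{PH}$ and gives $\mathsf{BIM}\vdash\mathbf{PH}$, contradicting the first assertion. Hence $\mathsf{BIM}\nvdash\mathbf{FT}\rightarrow\mathbf{AppFT}$.

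I do not expect a genuine obstacle in the derivation itself, which is essentially a three-line computation through Theorems \ref{T:irt} and \ref{T:PH}; the only points requiring care are bookkeeping ones, namely confirming that $\mathbf{PH}$, in the coded form used in the proof of Theorem \ref{T:PH}, really is arithmetical (indeed $\mathbf{\Pi^0_2}$) so that both the Paris--Harrington independence result and the quoted conservativity statement legitimately apply to it. I would also remark that this supplies a proof of $\mathsf{BIM}+\mathbf{FT}\nvdash\mathbf{AppFT}$ that is independent of the one obtainable from Corollary \ref{Cor:incomplete} through $\mathbf{OI}([0,1])$ and the well-foundedness of $\varepsilon_0$, so the two arguments reinforce rather than replace each other.
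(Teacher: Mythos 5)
Your proof is correct and follows essentially the same route as the paper: the first claim is the Paris--Harrington independence result transferred to $\mathsf{BIM}$, and the second is obtained by chaining Theorems \ref{T:irt} and \ref{T:PH} to get $\mathsf{BIM}+\mathbf{FT}\vdash\mathbf{PH}$ from the hypothetical $\mathsf{BIM}\vdash\mathbf{FT}\rightarrow\mathbf{AppFT}$, and then invoking Troelstra's conservativity of $\mathbf{FT}$ over arithmetic to contradict the first claim. Your added bookkeeping (that $\mathbf{PH}$ is a $\mathbf{\Pi^0_2}$ arithmetical sentence, so both the independence result and the conservativity theorem apply) is sound, though for the first claim the trivial inclusion of intuitionistic in classical arithmetic already suffices.
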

  Note that the second part of Corollary \ref{Cor:incomplete1} is an easy consequence of the second part of Corollary \ref{Cor:incomplete}.

       Note that, in $\mathsf{BIM}$,  the axiom scheme of induction is not restricted to arithmetical formulas. The classical system $\mathsf{ACA_0}$ (implicitly) has such a restriction and $\mathsf{ACA}_0$  \textit{is} conservative over classical arithmetic, see \cite[page 367, Remark IX]{Simpson}. In $\mathsf{ACA_0}$, one may prove  (proper versions of) $\mathbf{CRT}(1)$ and  $\forall k[\mathbf{CRT}(k+1) \rightarrow \mathbf{CRT}(k+2)]$ and, therefore,  $\mathbf{RT}(3)$, but not the Paris-Harrington Theorem and not $\forall k>0[\mathbf{CRT}(k)]$, see \cite[Section III.7]{Simpson}.

\section{Markov's Principle}
       
      We consider two \textit{semi-classical} axioms: Markov's Principle and Kuroda's Principle. There is no good argument why either one of these principles should be taken as an axiom for  constructive arithmetic or analysis.

\begin{definition}      \emph{Markov's Principle} $(\mathsf{MP_1})$ is the statement 

$\forall \alpha[\neg \neg \exists n[\alpha(n) \neq 0] \rightarrow \exists n[\alpha(n) \neq 0]]$.

\smallskip
      \emph{Kuroda's Principle of Double Negation Shift} $(\mathbf{DNS}_0)$ is the statement

      for every subset $R$ of $\omega$, $\forall n [\neg \neg R(n)] \rightarrow \neg \neg \forall n[R(n)]$,
      
     or, equivalently,
     
      \textit{for every subset $R$ of $\omega$, $\neg \neg \forall n[R(n) \vee \neg R(n)]$}.  
       
     \smallskip  
       $X\subseteq\omega$ is $X$ \emph{nearly-decidable}, or \emph{classically decidable}\footnote{This expression is used in \cite{moschovakis}.}, if and only if
       
        $\neg \neg \exists \beta[D_\beta = X]$, that is, $\neg \neg \exists \beta \forall n[ n \in X \leftrightarrow \beta(n) \neq 0]$.  
        
  \smallskip $\mathbf{\Sigma}^0_1$-$\mathbf{ND}$ is the statement
         
         \textit{Every enumerable subset of $\omega$ is nearly-decidable}, $\forall \gamma \neg \neg \exists \beta [E_\gamma = D_\beta]$.\end{definition}
       
       \begin{theorem}\label{T:solovay} In $\mathsf{BIM} + \mathsf{MP_1}$, the following statements are equivalent:
       \begin{enumerate}[\upshape (i)]
       \item $\mathbf{EnDec?!}$.

       \item $\mathbf{\Sigma}^0_1$-$\mathbf{ND}$.
       \item $\mathbf{\Sigma}^0_1$-$\mathbf{BI}$.
       \end{enumerate}
       \end{theorem}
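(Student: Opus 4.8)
The plan is to prove the three implications (i) $\Rightarrow$ (ii) $\Rightarrow$ (iii) $\Rightarrow$ (i), working throughout in $\mathsf{BIM}+\mathsf{MP}_1$. The role of $\mathsf{MP}_1$ is to convert $\neg\neg\exists$-statements into $\exists$-statements whenever the matrix is decidable, which is exactly what is needed to move between the \emph{strongly} constructive formulations (like $\mathbf{EnDec?!}$, which demands that one actually \emph{find} a witness) and the \emph{nearly}-decidability formulations (which only demand a double negation). Since $\mathbf{EnDec?!}$ is already known, by Corollary \ref{Cor:sumup} and the chain of results in Sections 5 and 6, to be equivalent in plain $\mathsf{BIM}$ to $\mathbf{OI}([0,1])$, $\mathbf{Contr(Ded)}$, $\mathbf{OI}(\mathcal{C})$ and $\mathbf{Bar}\subseteq\mathbf{WF}^0_{<_{KB}}$, establishing this equivalence also pins down the strength of $\mathbf{\Sigma}^0_1$-$\mathbf{ND}$ and $\mathbf{\Sigma}^0_1$-$\mathbf{BI}$ over $\mathsf{BIM}+\mathsf{MP}_1$.

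First I would prove (i) $\Rightarrow$ (ii). Assume $\mathbf{EnDec?!}$ and let $\gamma$ be given; we must show $\neg\neg\exists\beta[E_\gamma = D_\beta]$. The idea is that, from a classical (i.e. $\neg\neg$) standpoint, \emph{every} decidable proper subset of $E_\gamma$ extends to a larger decidable subset of $E_\gamma$ unless it already equals $E_\gamma$, because, assuming $\neg\exists\beta[E_\gamma=D_\beta]$, a decidable $D_\alpha\subsetneq\mathbb N$ contained in $E_\gamma$ cannot coincide with $E_\gamma$, so there is an element of $E_\gamma$ outside $D_\alpha$, and using $\mathsf{MP}_1$ one may actually produce such a witness (the search ``$\exists p[p\in E_\gamma\wedge p\notin D_\alpha]$'' has a decidable-after-$\neg\neg$ matrix once we know $D_\alpha\neq E_\gamma$). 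Hence, under the assumption $\neg\exists\beta[E_\gamma=D_\beta]$, the hypothesis of $\mathbf{EnDec?!}$ is met, so $E_\gamma=\mathbb N$, so $E_\gamma=D_{\underline 1}$, contradicting the assumption. Therefore $\neg\neg\exists\beta[E_\gamma=D_\beta]$, i.e. $\mathbf{\Sigma}^0_1$-$\mathbf{ND}$.

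Next, (ii) $\Rightarrow$ (iii): assume $\mathbf{\Sigma}^0_1$-$\mathbf{ND}$ and let $\alpha$ be given with $Bar_\mathcal{N}(E_\alpha)$ and $E_\alpha$ inductive and monotone; we must derive $0\in E_\alpha$. By $\mathbf{\Sigma}^0_1$-$\mathbf{ND}$ we have $\neg\neg\exists\beta[E_\alpha=D_\beta]$; since the goal ``$0\in E_\alpha$'' is a $\neg\neg$-stable statement only if its matrix is suitably simple, we instead argue: it suffices to prove $\neg\neg(0\in E_\alpha)$, because ``$0\in E_\alpha$'' means $\exists p[\alpha(p)=1]$, a decidable-matrix existential, so $\mathsf{MP}_1$ gives $\neg\neg(0\in E_\alpha)\to 0\in E_\alpha$. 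To prove $\neg\neg(0\in E_\alpha)$, assume $\exists\beta[E_\alpha=D_\beta]$; then $D_\beta$ is a \emph{decidable} bar in $\mathcal N$ that is inductive and monotone, so by $\mathbf{Dec}$-$\mathbf{BI}$ for Baire space — the decidable case of bar induction, which is provable in $\mathsf{BIM}$ by the same straightforward argument as Theorem \ref{T:fantheorem}(i), building a sequence escaping $D_\beta$ — we get $0\in D_\beta=E_\alpha$. Pushing this under the double negation (using $\neg\neg\exists\beta[E_\alpha=D_\beta]$) yields $\neg\neg(0\in E_\alpha)$, hence $0\in E_\alpha$.

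Finally, (iii) $\Rightarrow$ (i): assume $\mathbf{\Sigma}^0_1$-$\mathbf{BI}$. Here I would recover $\mathbf{EnDec?!}$ by running, over $\mathsf{BIM}+\mathsf{MP}_1$, an enumerable-bar version of the argument that already appears in the proof of Theorem \ref{T:kbendec} ($\mathbf{Bar}\subseteq\mathbf{WF}^0_{<_{KB}}\Rightarrow\mathbf{EnDec?!}$), or more directly by noting that the proof of Theorem \ref{T:oiendec} ($\mathbf{OI}(\mathcal C)\Rightarrow\mathbf{EnDec?!}$) together with the route $\mathbf{\Sigma}^0_1$-$\mathbf{BI}\Rightarrow\mathbf{OI}([0,1])$ of Theorem \ref{T:ebioi01} and the equivalences of Corollary \ref{Cor:sumup} closes the loop; but the cleanest self-contained route is: given $\gamma$ as in $\mathbf{EnDec?!}$, one constructs an enumerable set $B\subseteq\mathbb N$ of finite sequences which is a bar in $\mathcal N$ exactly because the hypothesis of $\mathbf{EnDec?!}$ lets every attempted enumeration of $E_\gamma$ be caught, and which is inductive and monotone; then $\mathbf{\Sigma}^0_1$-$\mathbf{BI}$ gives $\langle\;\rangle\in B$, from which, using $\mathsf{MP}_1$ to extract actual witnesses for the membership facts, one reads off $E_\gamma=\mathbb N$. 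I expect the main obstacle to be the bookkeeping in this last implication: one must package the ``witnesses that elements of $E_\gamma$ were missed'' into a genuinely \emph{enumerable} (not just co-analytic) inductive-monotone bar, and verify barhood constructively — this is exactly the delicate part of the proof of Theorem \ref{T:kbendec}, and the present argument is a streamlined, one-dimensional shadow of it, where the use of $\mathsf{MP}_1$ at the end replaces the well-foundedness machinery used there.
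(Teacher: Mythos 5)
Your overall route is the same as the paper's: for (i) $\Rightarrow$ (ii) you assume $\neg\exists\beta[E_\gamma=D_\beta]$, use $\mathsf{MP}_1$ to turn ``$D_\beta\subseteq E_\gamma$ cannot exhaust $E_\gamma$'' into an actual witness $p\in E_\gamma\setminus D_\beta$, apply $\mathbf{EnDec?!}$ to get $E_\gamma=\mathbb{N}=D_{\underline 1}$ and a contradiction; for (ii) $\Rightarrow$ (iii) you prove the implication ``if $E_\alpha$ is decidable then $\langle\;\rangle\in E_\alpha$'', push it under the double negation supplied by $\mathbf{\Sigma}^0_1$-$\mathbf{ND}$, and finish with $\mathsf{MP}_1$ since $\langle\;\rangle\in E_\alpha$ is an existential with decidable matrix; for (iii) $\Rightarrow$ (i) your citation route (Theorem \ref{T:ebioi01} plus Corollary \ref{Cor:sumup}) is exactly what the paper does, and your alternative ``self-contained'' sketch is not needed.

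There is, however, one wrong justification in your (ii) $\Rightarrow$ (iii). You assert that decidable bar induction on Baire space is ``provable in $\mathsf{BIM}$ by the same straightforward argument as Theorem \ref{T:fantheorem}(i)''. It is not: in Cantor space the branching is bounded, so from $s\notin D_\alpha$ and $\neg\forall i<2[s\ast\langle i\rangle\in D_\alpha]$ one can decide which of the two successors escapes; in Baire space the inductiveness hypothesis only gives $\neg\forall n[\beta(s\ast\langle n\rangle)\neq 0]$, i.e.\ $\neg\neg\exists n[\beta(s\ast\langle n\rangle)=0]$, and extracting the escaping successor from this unbounded existential is precisely an application of $\mathsf{MP}_1$ (it cannot be done in plain $\mathsf{BIM}$: decidable bar induction on $\mathcal{N}$ yields the decidable Fan Theorem, which is no theorem of $\mathsf{BIM}$). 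The paper makes exactly this use of $\mathsf{MP}_1$ inside the hypothetical decidability assumption, and since you are working in $\mathsf{BIM}+\mathsf{MP}_1$ anyway, your argument is repaired by inserting that step; but as stated the justification of the escape construction is a genuine error, not merely a shortcut.
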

       
    \begin{proof} (i)  $\Rightarrow$ (ii). Assume $\mathbf{EnDec?!}$.  Let $\gamma$ be given and assume that   $\neg \exists \beta[D_\beta =E_\gamma]$. 
    Let $\beta$ be given such that  $D_\beta \subseteq E_\gamma$. Conclude that $\neg (E_\gamma \subseteq D_\beta]$, and  $\neg \forall n[n \in E_\gamma \rightarrow \beta(n) \neq 0]$, and $\neg \forall n \forall j[\gamma(j) = n+1 \rightarrow \beta(n)\neq 0]$, and $\neg \neg \exists p[ \gamma(p') = p'' +1\; \wedge\; \beta(p'') =0]$.
    Use $\mathsf{MP}_1$ and conclude that $\exists p[\gamma(p') = p'' +1  \;\wedge \; \beta(p'') = 0]$ and $\exists q[q \in E_\gamma\setminus D_\beta]$.
     We thus see that $\forall \beta \in 2^\omega[D_\beta \subseteq E_\gamma \rightarrow \exists q[q \in E_\gamma \setminus D_\beta]]$. 
     Using $\mathbf{EnDec?!}$, we conclude that $E_\gamma = \omega$, i.e. $E_\gamma = D_{\underline 1}$. Contradiction.
   We thus see that   $\forall \gamma\neg\neg \exists \beta[ E_\gamma = D_\beta]$, i.e.  $\mathbf{\Sigma}^0_1$-$\mathbf{ND}$.
    
    \smallskip
    (ii) $\Rightarrow$ (iii). Assume $\mathbf{\Sigma}^0_1$-$\mathbf{ND}$. Let $\gamma$ be given such that $Bar_{\omega^\omega}(E_\gamma)$ and  $\forall s [s\in E_\gamma \leftrightarrow \forall n[s\ast\langle n \rangle \in E_\gamma]]$.  Assume we find  $\beta$ such that $E_\gamma = D_\beta$. 
    Note that $\forall s[\beta(s) = 0 \rightarrow \neg \forall n[\beta(s\ast\langle n \rangle)\neq 0]]$, and: $\forall s[\beta(s) = 0 \rightarrow \neg \neg \exists n[\beta(s \ast \langle n \rangle) = 0]]$. Using $\mathsf{MP}_1$,   conclude that $\forall s[\beta(s) = 0 \rightarrow\exists n[\beta(s\ast\langle n \rangle) = 0 ]]$. 
    
     Assume $\beta(\langle \;\rangle) = 0$. Find $\delta$  such that, for each $n$, $\delta(n)$ is the least $p$ such that $\beta(\overline \delta n \ast \langle p \rangle)= 0$. Note that $\forall n[\beta(\overline \delta n) = 0]$, and: $\delta$ does not meet $D_\beta$ and $E_\gamma = D_\beta$ is not a bar in $\omega^\omega$. Contradiction. We have to conclude that  $\beta(\langle \; \rangle)\neq 0$ and: $\langle \; \rangle \in E_\gamma$.
     
      We thus see that, if $\exists \beta \in 2^\omega[ E_\gamma = D_\beta]$, then $0=\langle \; \rangle  \in E_\gamma$. 
     Using $\mathbf{\Sigma}^0_1$-$\mathbf{ND}$, note that $\neg \neg \exists \beta \in 2^\omega[E_\gamma = D_\beta]$, and conclude that  $\neg \neg(\langle \; \rangle \in E_\gamma)$, i.e. $\neg\neg \exists p[\gamma(p) =1]$. Using $\mathsf{MP}_1$ once more,  conclude that  $\exists p[\gamma(p) = 1]$ and $0=\langle\;\rangle \in E_\gamma$. 
    
     Conclude that, for all $\gamma$, if $Bar_{\omega^\omega}(E_\gamma)$ and $\forall s[s\in E_\gamma \leftrightarrow \forall n[s\ast\langle n \rangle \in E_\gamma]]$, then $\langle\;\rangle \in E_\gamma$, i.e. $\mathbf{\Sigma}^0_1$-$\mathbf{BI}$.
     
     \smallskip
     (iii) $\Rightarrow$ (i). See Theorem \ref{T:ebioi01} and Corollary  \ref{Cor:sumup}.
	\end{proof}

    The surprising observation that, in a context like $\mathsf{BIM} + \mathsf{MP_1}$, $\mathbf{\Sigma^0_1}$-$\mathbf{BI}$ implies $\mathbf{\Sigma}^0_1$-$\mathbf{ND}$   is due to R. Solovay, see Lemma 5.3 in \cite{moschovakis}.  J.R.~Moschovakis made me see that $\mathsf{BIM} +\mathsf{MP}_1 \vdash \mathbf{\Sigma}^0_1$-$\mathbf{ND}\rightarrow \mathbf{EnDec?!}$.

  \medskip
  \begin{definition}We extend the language of $\mathsf{BIM}$ by introducing an infinite sequence of binary predicate symbols $S^0_1$, $P^0_1, S^0_2, P^0_2, \ldots$ with the following defining axioms: 
   \begin{enumerate}[\upshape (i)]
   \item $\forall m[ S^0_1(m, \gamma)\leftrightarrow m \in E_\gamma]$, 
   \item $\forall m[ P^0_1 (m,\gamma)\leftrightarrow \neg S^0_1(m,\gamma)]$, and
   \item for each $n>0$, $\forall m[ S^0_{n+1}(m,\gamma)\leftrightarrow \exists p[ P^0_{n}((m,p), \gamma)]]$, and, 
   \item for each $n>0$, $\forall m[ P^0_{n+1}(m,\gamma)\leftrightarrow \forall p[  S^0_{n}((m,p),\gamma)]]$.
   \end{enumerate}
   
   A subset $X$ of $\omega$ is \emph{(positively) arithmetical} if and only if there exist $n>0$, $\gamma$ such that $\forall m[m \in X \leftrightarrow S^0_n(m, \gamma)]$ or  $\forall m[m \in X \leftrightarrow P^0_n(m, \gamma)]$.

 \smallskip For each $n$, we let $\mathbf{\Sigma}^0_n$-$\mathbf{ND}$ be the   the statement
 
  \textit{Every $\mathbf{\Sigma}^0_n$-subset of $\omega$ is nearly-decidable},  $\forall \gamma \neg \neg \exists \beta\forall m[S^0_n(m, \gamma) \leftrightarrow  \beta(m) \neq 0]$,
  
   and  we let $\mathbf{\Pi}^0_n$-$\mathbf{ND}$  be the statement
  
   \textit{Every $\mathbf{\Pi}^0_n$-subset of $\omega$ is nearly-decidable},  $\forall \gamma \neg \neg \exists \beta\forall m[P^0_n(m, \gamma) \leftrightarrow  \beta(m) \neq 0]$. 
   \end{definition}
   \begin{theorem}\label{T:last} For each $n$, $\mathsf{BIM}+ \mathbf{\Sigma}_1^0$-$\mathbf{ND}\vdash \mathbf{\Sigma}_n^0$-$\mathbf{ND}\;\wedge\;\mathbf{\Pi}_n^0$-$\mathbf{ND}$.
   
      \end{theorem}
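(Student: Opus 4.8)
The plan is to argue by induction on $n$, relying on two elementary observations. First, $\neg\neg$ is a monad: from $A\to\neg\neg B$ one gets $\neg\neg A\to\neg\neg B$, and, more to the point, if $\neg\neg\exists\beta[\Phi(\beta)]$ holds and $\forall\beta[\Phi(\beta)\to\neg\neg C]$, then $\neg\neg C$ (instantiate to get $\exists\beta[\Phi(\beta)]\to\neg\neg C$, then $\neg\neg\exists\beta[\Phi(\beta)]\to\neg\neg\neg\neg C\to\neg\neg C$). Second, numerical equality is decidable, so $\neg(p=q)\leftrightarrow p\neq q$; in particular, whenever $\beta$ decides a subset $X$ of $\mathbb{N}$, the function $\beta'$ with $\beta'(m)\neq 0\leftrightarrow\beta(m)=0$ decides $\mathbb{N}\setminus X$, and $\neg\exists p[\beta((m,p))=0]\leftrightarrow\forall p[\beta((m,p))\neq 0]$.

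For $n=1$, $\mathbf{\Sigma}^0_1$-$\mathbf{ND}$ is the hypothesis, and $\mathbf{\Pi}^0_1$-$\mathbf{ND}$ follows by applying $\mathbf{\Sigma}^0_1$-$\mathbf{ND}$ to a given $\gamma$ and, within the resulting double negation, passing from a decider $\beta$ of $E_\gamma$ to the decider $\beta'$ of its complement, using $P^0_1(m,\gamma)\leftrightarrow\neg S^0_1(m,\gamma)$. Note that this step, like the whole theorem, uses $\mathbf{\Sigma}^0_1$-$\mathbf{ND}$ but not $\mathsf{MP}_1$.

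For the induction step, assume $\mathbf{\Sigma}^0_n$-$\mathbf{ND}$ and $\mathbf{\Pi}^0_n$-$\mathbf{ND}$, and let $\gamma$ be given. To prove $\mathbf{\Sigma}^0_{n+1}$-$\mathbf{ND}$ for $\gamma$: by $\mathbf{\Pi}^0_n$-$\mathbf{ND}$ and the monad remark it suffices to derive $\neg\neg\exists\beta''\forall m[S^0_{n+1}(m,\gamma)\leftrightarrow\beta''(m)\neq 0]$ from a $\beta$ with $\forall m'[P^0_n(m',\gamma)\leftrightarrow\beta(m')\neq 0]$. Given such $\beta$, define $\gamma'$ by: $\gamma'(q)=q'+1$ if $\beta(q)\neq 0$, and $\gamma'(q)=0$ if $\beta(q)=0$. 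A direct computation gives $E_{\gamma'}=\{m\mid\exists p[\beta((m,p))\neq 0]\}=\{m\mid\exists p[P^0_n((m,p),\gamma)]\}=\{m\mid S^0_{n+1}(m,\gamma)\}$, and $\mathbf{\Sigma}^0_1$-$\mathbf{ND}$ applied to $\gamma'$ delivers the required statement. To prove $\mathbf{\Pi}^0_{n+1}$-$\mathbf{ND}$ for $\gamma$, argue dually: by $\mathbf{\Sigma}^0_n$-$\mathbf{ND}$ fix, under a double negation, a $\beta$ with $\forall m'[S^0_n(m',\gamma)\leftrightarrow\beta(m')\neq 0]$; define $\gamma'$ by $\gamma'(q)=q'+1$ if $\beta(q)=0$ and $\gamma'(q)=0$ if $\beta(q)\neq 0$, so $E_{\gamma'}=\{m\mid\exists p[\beta((m,p))=0]\}$, whence $\mathbb{N}\setminus E_{\gamma'}=\{m\mid\forall p[\beta((m,p))\neq 0]\}=\{m\mid\forall p[S^0_n((m,p),\gamma)]\}=\{m\mid P^0_{n+1}(m,\gamma)\}$; now apply $\mathbf{\Sigma}^0_1$-$\mathbf{ND}$ to $\gamma'$ and, within the double negation, take the complement of the resulting decider.

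The only delicate point is the bookkeeping with the iterated double negations: every decider is available only inside a $\neg\neg$, the auxiliary $\gamma'$ is built from it, and each appeal to $\mathbf{\Sigma}^0_1$-$\mathbf{ND}$ introduces a further $\neg\neg$ that must be collapsed against the one already present via the monad observation. Beyond the $\mathbf{\Sigma}^0_1$ case, the passage from $\exists p$-projections to $\Sigma^0_1$ sets, and the decidability of equality on $\mathbb{N}$, no new idea is needed.
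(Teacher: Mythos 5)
Your proposal is correct and follows essentially the same route as the paper: meta-induction on $n$, reducing $S^0_{n+1}(\cdot,\gamma)$ and $P^0_{n+1}(\cdot,\gamma)$ to $\Sigma^0_1$- respectively $\Pi^0_1$-form via an auxiliary enumerating function built from the (doubly negated) decider given by the induction hypothesis, and collapsing the stacked double negations. The only cosmetic difference is that in the $\mathbf{\Pi}^0_{n+1}$ step you apply $\mathbf{\Sigma}^0_1$-$\mathbf{ND}$ and complement the decider, where the paper invokes the already-derived $\mathbf{\Pi}^0_1$-$\mathbf{ND}$ — the same argument.
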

       
  \begin{proof}  We use induction\footnote{in the metalanguage}. Assume $\mathbf{\Sigma}_1^0$-$\mathbf{ND}$.

   Let $\gamma, \beta$ be given such that $\forall m[S^0_1(m, \gamma)\leftrightarrow \beta(m) \neq 0]$. Define $\delta$ such that $\forall m[\delta(m) =0 \leftrightarrow \beta(m) \neq 0]$. Note that $\forall m[P^0_1(m,\gamma) \leftrightarrow \delta(m) \neq 0]]$. Conclude that,  if $\exists \beta\forall m[S^0_1(m,\gamma) \leftrightarrow \beta(m) \neq 0]$, then $\exists \beta\forall m[P^0_1(m,\gamma) \leftrightarrow \beta(m) \neq 0]$. Note that $\neg\neg\exists \beta\forall m[S^0_1(m,\gamma) \leftrightarrow \beta(m) \neq 0]$ and conclude that $\neg\neg\exists \beta\forall m[P^0_1(m,\gamma) \leftrightarrow \beta(m) \neq 0]$. Conclude $\mathbf{\Pi}^0_1$-$\mathbf{ND} $.
   
  We thus see that $\mathsf{BIM}+ \mathbf{\Sigma}_1^0$-$\mathbf{ND}\vdash \mathbf{\Pi}_{1}^0$-$\mathbf{ND}$.
  
 \smallskip Assume $n>0$ and $\mathbf{\Pi}_n^0$-$\mathbf{ND}$. Let $\gamma,  \beta$ be given such that $\forall m[P^0_n(m, \gamma) \leftrightarrow \beta(m) \neq 0]$.  Note that $\forall m[ S^0_{n+1}(m, \gamma)\leftrightarrow \exists p[\beta\bigl((m,p)\bigr)\neq 0]]$. Define $\delta$  such that, for all $m,p$, if $\beta\bigl((m,p)\bigr)\neq 0$, then $\delta\bigl((m,p)\bigr) = m+1$, and, if not, then $\delta\bigl((m,p)\bigr) = 0$. Note that $\forall m[ m \in E_\delta\leftrightarrow S^0_{n+1}(m,\gamma)]$, i.e.  $\forall m[S^0_1(m,\delta)\leftrightarrow S^0_{n+1}(m,\gamma)]$. Use $\mathbf{\Sigma}^0_1$-$\mathbf{ND}$ and conclude that $\neg\neg\exists \beta\forall m[S^0_{n+1}(m, \gamma)\leftrightarrow \beta(m) \neq 0]$. Conclude that, if $\exists \beta\forall m[P^0_n(m,\gamma) \leftrightarrow \beta(m) \neq 0]$, then $\neg\neg\exists \beta\forall m[S^0_{n+1}(m,\gamma) \leftrightarrow \beta(m) \neq 0]$. Note that $\neg\neg\exists \beta\forall m[P^0_n(m,\gamma) \leftrightarrow \beta(m) \neq 0]$ and conclude that $\neg\neg\exists \beta\forall m[S^0_{n+1}(m,\gamma) \leftrightarrow \beta(m) \neq 0]$. Conclude  $\mathbf{\Sigma}^0_{n+1}$-$\mathbf{ND} $.
 
  We thus see that $\mathsf{BIM}+ \mathbf{\Sigma}_1^0$-$\mathbf{ND}\vdash \mathbf{\Pi}_n^0$-$\mathbf{ND}\rightarrow\mathbf{\Sigma}_{n+1}^0$-$\mathbf{ND}$.
  
  Assume $n>0$ and $\mathbf{\Sigma}_n^0$-$\mathbf{ND}$. Let $\gamma,  \beta$ be given such that $\forall m[S^0_n(m, \gamma) \leftrightarrow \beta(m) \neq 0]$.  Note that $\forall m[ P^0_{n+1}(m, \gamma)\leftrightarrow \forall p[\beta\bigl((m,p)\bigr)\neq 0]]$. Define $\delta$  such that, for all $m,p$, if $\beta\bigl((m,p)\bigr)=0$, then $\delta\bigl((m,p)\bigr) = m+1$, and, if not, then $\delta\bigl((m,p)\bigr) = 0$. Note that $\forall m[ m \notin E_\delta\leftrightarrow P^0_{n+1}(m,\gamma)]$, i.e.  $\forall m[P^0_1(m,\delta)\leftrightarrow P^0_{n+1}(m,\gamma)]$. Use $\mathbf{\Pi}^0_1$-$\mathbf{ND}$ and conclude that $\neg\neg\exists \beta\forall m[P^0_{n+1}(m, \gamma)\leftrightarrow \beta(m) \neq 0]$. Conclude that, if $\exists \beta\forall m[S^0_n(m,\gamma) \leftrightarrow \beta(m) \neq 0]$, then $\neg\neg\exists \beta\forall m[P^0_{n+1}(m,\gamma) \leftrightarrow \beta(m) \neq 0]$. Note that $\neg\neg\exists \beta\forall m[S^0_n(m,\gamma) \leftrightarrow \beta(m) \neq 0]$ and conclude that $\neg\neg\exists \beta\forall m[P^0_{n+1}(m,\gamma) \leftrightarrow \beta(m) \neq 0]$. Conclude $\mathbf{\Pi}^0_{n+1}$-$\mathbf{ND} $.
  
   We thus see that $\mathsf{BIM}+ \mathbf{\Sigma}_1^0$-$\mathbf{ND}\vdash \mathbf{\Sigma}_n^0$-$\mathbf{ND}\rightarrow\mathbf{\Pi}_{n+1}^0$-$\mathbf{ND}$.
   \end{proof}
      
 Theorem \ref{T:last} is also due to R. Solovay, see Lemma 5.5 and Theorem 5.6 in \cite{moschovakis}, see also \cite{moschovakis2}.  J.R.~Moschovakis showed that $\mathsf{BIM} + \mathbf{\Sigma}^0_1$-$\mathbf{ND}$ proves the theorem that the constructive arithmetical hierarchy is proper and also  an intuitionistic version of $\Delta^1_1$-comprehension, see Corollaries 5.8  and 5.9 in \cite{moschovakis}.
	
 \begin{theorem}\label{T:hurrah} $\mathsf{BIM}+\mathsf{MP}_1\vdash \mathbf{\Sigma}^0_1$-$\mathbf{ND} \rightarrow \overleftarrow{\mathbf{BW}}$. \end{theorem}
   
   \begin{proof}
Assume that $\gamma \in \mathbb{Q}^\omega$ and $\forall \zeta \in [\omega]^\omega\exists n[|\gamma\circ\zeta(n+1) -_\mathbb{Q} \gamma\circ\zeta(n)| 
>_\mathbb{Q} \frac{1}{2^n}]$. \\Define $\delta$ in $\mathbb{Q}^\omega$ such that, for each $n$, \textit{if} $\forall i \le n[\gamma(i)\in [-1,1]]$, then $\delta(n) = \gamma(n)$, and, \textit{if not}, then $\delta(n) = 0_\mathbb{Q}$. Note that $\forall n[\delta(n)\in [-1,1]]$.
\\Define $C:=\{s \in \mathbb{S}\mid s\sqsubseteq_\mathbb{S} (-1_\mathbb{Q}, 1_\mathbb{Q})$ {\it and } $\exists m \forall n>m[\delta(n) <_\mathbb{Q} s'\;\vee\; s''<_\mathbb{Q} \delta(n) ]\}$.  Recall that, for each $s$ in $\mathbb{S}$, $L(s) = (s'', \frac{s'+_\mathbb{Q} s''}{2})$ and $R(s) = (\frac{s'+_\mathbb{Q} s''}{2}, s'')$. Note that,  for each $s$ in $\mathbb{S}$ such that $s\sqsubseteq (-1_\mathbb{Q}, 1_\mathbb{Q})$, if both $L(s) \in C$ and $R(s) \in C$, then $s \in C$.  Also note that $C$ is a $\mathbf{\Sigma}^0_2$-subset of $\omega$ and find $\varepsilon$ such that $\forall s[s \in C \leftrightarrow S^0_2(s,\varepsilon)]$.  Assume we find $\eta$ such that $C=D_\eta$, i.e. $\forall s[s \in C \leftrightarrow \eta(s) \neq 0]$. Note that,   for each $s$ in $\mathbb{S}$ such that $s\sqsubseteq (-1_\mathbb{Q}, 1_\mathbb{Q})$, if  $\eta\bigl(L(s)\bigr) \neq 0$ and $\eta\bigl(R(s)\bigr) \neq 0$, then $\eta(s) \neq 0$, and, therefore, if $\eta(s) =0$ then either $\eta\bigl(L(s)\bigr)=0$ or $\eta\bigl(R(s)\bigr)=0$.  Note that $\eta\bigl((0_\mathbb{Q}, 1_\mathbb{Q})\bigr) =0$. Define $\lambda$ such that $\lambda(0) = (-1_\mathbb{Q}, 1_\mathbb{Q})$, and, for each $n$, if $\eta\bigl(L(\lambda(n))\bigr) =0$, then $\lambda(n+1) = L\bigl(\lambda(n)\bigr)$, and, if not, then $\lambda(n+1) = R\bigl(\lambda(n)\bigr)$.
Note that,  for each $n$, $\eta\bigl(\lambda(n)\bigr) = 0$. 

 Let $s$ in $\mathbb{S}$ be given such that $ s\sqsubseteq_\mathbb{S} (-1_\mathbb{Q}, 1_\mathbb{Q})$ and $\eta(s)=0$. Then $\neg \exists m \forall n>m[\delta(n)
<_\mathbb{Q} s'\;\vee\; s'' <_\mathbb{Q} \delta(n)]$ and, therefore, $\forall m \neg\neg\exists n>m[s'\le_\mathbb{Q} \delta(n)
\le_\mathbb{Q} s'']$. Use $\mathsf{MP}_1$ and conclude that $\forall m \exists n>m[s'\le_\mathbb{Q} \delta(n)
\le_\mathbb{Q} s'']$. 

Conclude that $\forall p\forall m \exists n>m[ \bigl(\lambda(p)\bigr)'\le_\mathbb{Q} \delta(n) \le_\mathbb{Q} \bigl(\lambda(p)\bigr)'']$. Define $\zeta$ such that $\zeta(0) =0$ and, for each $n$, $\zeta(n+1)$ is the least $p$ such that $p >\zeta(n)$ and  $\bigl(\lambda(n)\bigr)'\le_\mathbb{Q}\delta(p)\le_\mathbb{Q} \bigl(\lambda(n)\bigr)''$. Note that,  for each $n$, $|\delta\circ\zeta(n+2)-_\mathbb{Q}\delta\circ\zeta(n+1)|\le_\mathbb{Q}\frac{1}{2^n}$.  Find $n$ such that $|\gamma\circ\zeta(n+2)-_\mathbb{Q}\gamma\circ\zeta(n+1)|>_\mathbb{Q}\frac{1}{2^n}$ and conclude that $\exists i\le \zeta(n+2)[\gamma(i)\neq \delta(i)]$ and $\exists i\le \zeta(n+2)[|\gamma(i)|>_\mathbb{Q} 1_\mathbb{Q}]$. 

We thus see that,  if $\exists \eta\forall s[S^0_2(s, \varepsilon) \leftrightarrow \eta(s) \neq 0]$, then $\exists n[|\gamma(n)| >_\mathbb{Q} 1_\mathbb{Q} ]$. Using $\mathbf{\Sigma}^0_1$-$\mathbf{ND}$ and Theorem \ref{T:last}, we conclude that $\neg\neg\exists \eta\forall s[S^0_2(s, \varepsilon) \leftrightarrow \eta(s) \neq 0]$. We thus find that $\neg \neg \exists n[|\gamma(n)| >_\mathbb{Q} 1_\mathbb{Q} ]$, and, using $\mathsf{MP}_1$ once more, that $\exists n[|\gamma(n)| >_\mathbb{Q} 1_\mathbb{Q}]$. 

We may conclude $\overleftarrow{\mathbf{BW}}$. \end{proof}
	
	The first item of the next theorem occurs already in \cite{veldman2005}, Section 3.20.

  \begin{theorem}\label{T:verylast}$\;$ \begin{enumerate}[\upshape (i)]  \item $\mathsf{BIM}\vdash \forall \alpha[\forall \zeta \in [\omega]^\omega \exists n[\zeta(n) \notin D_\alpha]\rightarrow \neg\neg\exists n\forall m>n[m \notin D_\alpha]]$.    
   \item $\mathsf{BIM} \vdash\mathsf{MP}_1\leftrightarrow \forall \alpha[\neg\neg\exists n\forall m>n[m \notin D_\alpha]\rightarrow\forall \zeta \in [\omega]^\omega \exists n[\zeta(n) \notin D_\alpha] ]$.   
    \item $\mathsf{BIM}+\mathsf{MP}_1+\mathbf{\Sigma}^0_1$-$\mathbf{ND}\vdash\forall \beta[Almfan(\beta)\rightarrow \neg\neg Fan(\beta)]$.   
    \item  $\mathsf{BIM} +\mathsf{MP}_1 + \mathbf{AppFT}\vdash \mathbf{AlmFT}$. \end{enumerate} \end{theorem}
   
   \begin{proof} (i)  
   
   Let $\alpha$ be given such that $D_\alpha$ is \textit{almost-finite}, i.e. $\forall \zeta \in [\omega]^\omega \exists n[\zeta(n) \notin D_\alpha]$. Assume that $\neg\exists n\forall m>n[m \notin D_\alpha]$. Then $\forall n\neg \neg \exists m>n[m \in D_\alpha]$, and, by $\mathsf{MP}_1$, $\forall n \exists m>n[m \in D_\alpha]$. Define $\zeta$ such that $\zeta(0)=\mu p[p \in D_\alpha]$ and, for each $n$, $\zeta(n+1) = \mu p[p>\zeta(n) \;\wedge\; p \in D_\alpha]$. Note that $\forall n[\zeta(n) \in D_\alpha ]$. Contradiction. Conclude that $\neg\neg\exists n\forall m>n[m \notin D_\alpha]$, i.e. $D_\alpha$ is \textit{not-not-finite}.
   
  \smallskip (ii) Assume $\mathsf{MP}_1$.  Let $\alpha$ be given such that $D_\alpha$ is {\it not-not-finite}, i.e. $\neg\neg\exists n\forall m>n[m \notin D_\alpha]$. Assume  $\zeta \in [\omega]^\omega$. Note that,  \textit{if} $\exists n\forall m>n[m \notin D_\alpha]$, \textit{then} $\exists n[\zeta(n) \notin D_\alpha]$. Conclude that $\neg\neg \exists n[\zeta(n) \notin D_\alpha]$ and, using $\mathsf{MP}_1$, that 
   $\exists n[\zeta(n) \notin D_\alpha]$. We thus see that $\forall \zeta \in [\omega]^\omega \exists n[\zeta(n) \notin D_\alpha]$, i.e. $D_\alpha$ is almost-finite.
   
 \smallskip  Now assume that $\forall \alpha[\neg\neg\exists n\forall m>n[m \notin D_\alpha]\rightarrow \forall \zeta \in [\omega]^\omega \exists n[\zeta(n) \notin D_\alpha]]$, i.e. every decidable subset of $\omega$ that is not-not-finite is also almost-finite.  Let $\alpha$ be given such that $\neg\neg \exists n[\alpha(n) \neq 0]$. Define $\beta$ such that $\forall n[\beta(n)\neq 0 \leftrightarrow \bigl(\alpha(n)\neq 0 \;\wedge\;\forall i < n[\alpha(i) = 0]\bigr)]$. Note that $\neg\neg\exists n\forall m>n[m \notin D_\beta]$, i.e. $D_\beta$ is not-not-finite. Conclude that  $D_\beta$ is almost-finite and find $n$ such that $n \notin D_\beta$, and, therefore, $\alpha(n) \neq 0$. We thus see that $\exists n[\alpha(n) \neq 0]$. Conclude that $\forall\alpha[\neg\neg\exists n[\alpha(n)\neq 0]\rightarrow \exists n[\alpha(n)\neq 0]]$, i.e.  $\mathsf{MP}_1$.
 
   \smallskip
  (iii) Assume $\mathsf{MP}_1$. Let $\beta$ be given such that $Almostfan(\beta)$, i.e. $Spr(\beta)\;\wedge\;\forall s[\beta(s)=0 \rightarrow \forall \zeta \in [\omega]^\omega\exists n[\beta(s\ast\langle \zeta(n)\rangle)\neq 0]]$. Using (i), conclude that $\forall s[\beta(s)=0 \rightarrow \neg\neg \exists n \forall m>n  [\beta(s\ast\langle m\rangle)= 0]]$. Assume we find $\delta$ such that $\forall s\forall n[\delta(s\ast\langle n \rangle ) \neq 0 \leftrightarrow \forall m>n[\beta(s\ast \langle m \rangle) \neq 0]]$. Note that $\forall s\neg\neg \exists n[\delta(s\ast\langle n \rangle) \neq 0]$. Using $\mathsf{MP}_1$, conclude that $\forall s \exists n[\delta(s\ast\langle n \rangle) \neq 0]$. Conclude that $Fan^+(\beta)$, i.e. $\beta$ is an {\it explicit} fan-law. Conclude that, if $\exists \delta \forall s\forall n [\delta(s\ast \langle n \rangle)\neq 0 \leftrightarrow  \forall m>n  [\beta(s\ast\langle m\rangle)= 0]]$, then $Fan^+(\beta)$. Using  $\mathbf{\Sigma}^0_1$-$\mathbf{ND}$, and  its consequence  $\mathbf{\Pi}^0_1$-$\mathbf{ND}$, see Theorem \ref{T:last},  conclude that  $\neg\neg \exists \delta \forall s\forall n [\delta(s\ast \langle n \rangle)\neq 0 \leftrightarrow  \forall m>n  [\beta(s\ast\langle m\rangle)= 0]]$. Conclude $\neg\neg Fan^+(\beta)$. 
   
   \smallskip (iv) Assume $\mathsf{MP}_1$ and  $\mathbf{AppFT}$. Note that, in $\mathsf{BIM}$,  $\mathbf{AppFT}$ implies $\mathbf{EnDec?!}$, see Corollary \ref{Cor:sumup2} and, together with $\mathsf{MP}_1$, also $\mathbf{\Sigma}^0_1$-$\mathbf{ND}$, see Corollary \ref{Cor:sumup} and Theorem \ref{T:solovay}.  We will prove $\mathbf{AlmFT}$. Let $\beta$ be given such that $Almfan(\beta)$. Let $\alpha$ be given such that $Thinbar_{\mathcal{F_\beta}}(D_\alpha)$. Note that, if $Fan^+(\beta)$, then $\exists n\forall m>n[m \notin D_\alpha]$, by Theorem \ref{T:ftext}(ii). As, by (iii), $\neg\neg Fan^+ (\beta)$, conclude that $\neg\neg \exists n\forall m>n[m \notin D_\alpha]$. Now use (ii) and conclude that $\forall \zeta \in [\omega]^\omega \exists n[\zeta(n)\notin D_\alpha]$, i.e. $D_\alpha$ is almost-finite.  We thus see that $\forall \beta\forall \alpha[\bigl(Almfan(\beta) \;\wedge\;Thinbar_{\mathcal{F}_\beta}(D_\alpha)\bigr) \rightarrow \forall \zeta \in [\omega]^\omega \exists n[\zeta(n)\notin D_\alpha]]$, i.e. $\mathbf{AlmFT}$.\end{proof} 
 
	\begin{corollary}\label{Cor:sumup3} $\mathsf{BIM}+\mathsf{MP}_1\vdash \mathbf{OI}([0,1]) \leftrightarrow \mathbf{AppFT}\leftrightarrow \mathbf{\Sigma}^0_1$-$\mathbf{ND}\leftrightarrow \mathbf{AlmFT}$. \end{corollary}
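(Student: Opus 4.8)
The plan is to chain together equivalences and implications already established in the excerpt, together with the two Markov-dependent implications of Theorems \ref{T:hurrah} and \ref{T:verylast}. Working in $\mathsf{BIM}+\mathsf{MP}_1$ throughout, I would organize the argument as a cycle of four implications: $\mathbf{OI}([0,1]) \rightarrow \mathbf{AppFT} \rightarrow \mathbf{AlmFT} \rightarrow \mathbf{\Sigma}^0_1\text{-}\mathbf{ND} \rightarrow \mathbf{OI}([0,1])$, plus noting that $\mathbf{AppFT}\rightarrow\mathbf{OI}([0,1])$ is already available unconditionally (Corollary \ref{Cor:sumup2}) so that all four statements collapse to one.

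For the first arrow, I would invoke Theorem \ref{T:hurrah}, which gives $\mathsf{BIM}+\mathsf{MP}_1\vdash \mathbf{OI}([0,1]) \rightarrow \mathbf{Contr(BW)}$, and then apply Corollary \ref{C:sumup3}, which shows $\mathsf{BIM}\vdash \mathbf{Contr(BW)}\leftrightarrow \mathbf{AppFT}$; composing these yields $\mathbf{OI}([0,1])\rightarrow\mathbf{AppFT}$ over $\mathsf{BIM}+\mathsf{MP}_1$. For the second arrow I would cite Theorem \ref{T:verylast}(ii) directly: $\mathsf{BIM}+\mathsf{MP}_1+\mathbf{AppFT}\vdash\mathbf{AlmFT}$. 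For the third arrow, I would observe that $\mathbf{AlmFT}$ implies $\mathbf{OI}([0,1])$ (remarked in Subsection \ref{SS:appalmostfans}, and in any case $\mathbf{AlmFT}\rightarrow\mathbf{AppFT}$ is immediate, whence $\mathbf{OI}([0,1])$ by Corollary \ref{Cor:sumup2}), and by Corollary \ref{Cor:sumup} $\mathbf{OI}([0,1])\leftrightarrow\mathbf{EnDec?!}$; then Theorem \ref{T:solovay}, which is stated over $\mathsf{BIM}+\mathsf{MP}_1$, gives $\mathbf{EnDec?!}\leftrightarrow\mathbf{\Sigma}^0_1\text{-}\mathbf{ND}$. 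Finally, for the fourth arrow, the same Theorem \ref{T:solovay} closes the loop: $\mathbf{\Sigma}^0_1\text{-}\mathbf{ND}\rightarrow\mathbf{\Sigma}^0_1\text{-}\mathbf{BI}\rightarrow\mathbf{OI}([0,1])$, the last step by Theorem \ref{T:ebioi01}.

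The only genuine content beyond bookkeeping is already packaged in Theorems \ref{T:hurrah} and \ref{T:verylast}(ii), so the main obstacle is simply making sure the cycle is closed and that no implication is used outside the theory in which it was proved. In particular I would be careful that $\mathbf{AlmFT}\rightarrow\mathbf{OI}([0,1])$ is invoked only through the unconditional chain $\mathbf{AlmFT}\rightarrow\mathbf{AppFT}\rightarrow\mathbf{OI}([0,1])$ (Subsection \ref{SS:appalmostfans} and Corollary \ref{Cor:sumup2}), which needs no extra assumptions, and that the Solovay equivalences of Theorem \ref{T:solovay} are precisely the statements bridging $\mathbf{EnDec?!}$, $\mathbf{\Sigma}^0_1\text{-}\mathbf{ND}$ and $\mathbf{\Sigma}^0_1\text{-}\mathbf{BI}$ in $\mathsf{BIM}+\mathsf{MP}_1$. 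The write-up would then be a short paragraph: ``By Theorem \ref{T:hurrah} and Corollary \ref{C:sumup3}, $\mathbf{OI}([0,1])\rightarrow\mathbf{AppFT}$; by Theorem \ref{T:verylast}(ii), $\mathbf{AppFT}\rightarrow\mathbf{AlmFT}$; since $\mathbf{AlmFT}\rightarrow\mathbf{AppFT}\rightarrow\mathbf{OI}([0,1])$ by Corollary \ref{Cor:sumup2}, and $\mathbf{OI}([0,1])\leftrightarrow\mathbf{EnDec?!}\leftrightarrow\mathbf{\Sigma}^0_1\text{-}\mathbf{ND}$ by Corollary \ref{Cor:sumup} and Theorem \ref{T:solovay}, all four statements are equivalent.''
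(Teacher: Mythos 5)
Your proposal is correct and follows essentially the same route as the paper, whose proof is simply the citation of Corollaries \ref{Cor:sumup} and \ref{Cor:sumup2} together with Theorems \ref{T:hurrah} and \ref{T:verylast} (plus the unconditional note $\mathbf{AlmFT}\rightarrow\mathbf{AppFT}$). Your explicit appeal to Theorem \ref{T:solovay} to bridge $\mathbf{EnDec?!}$, $\mathbf{\Sigma}^0_1$-$\mathbf{ND}$ and $\mathbf{\Sigma}^0_1$-$\mathbf{BI}$ in $\mathsf{BIM}+\mathsf{MP}_1$ is exactly the step needed to bring $\mathbf{\Sigma}^0_1$-$\mathbf{ND}$ into the cycle, and your care to keep each implication inside the theory in which it was proved is sound.
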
\begin{proof} Use Corollaries \ref{Cor:sumup} and \ref{Cor:sumup2} and Theorems \ref{T:last}, \ref{T:hurrah} and \ref{T:verylast}.  \end{proof} Some of the equivalences in Corollary \ref{Cor:sumup3} are studied in \cite{ilik0} and \cite{ilik}.
	
	\section{Some notations and conventions and basic facts}\label{S:notation}
\subsection{Finite sequences and infinite sequences of natural numbers}\label{SS:codfs} We assume that the language of $\mathsf{BIM}$ contains constants for the primitive recursive functions and relations 
 and that their defining
 equations have been added to the axioms of $\mathsf{BIM}$.
 In particular,  there is a constant $p$ denoting the function
 enumerating the prime numbers, $p(0)=2, p(1)=3, \ldots$. 
 
 For each $k$, there is a $k$-ary function symbol $\langle \;\rangle_k$    for the function  coding  sequences of natural numbers of length $k$ by
 natural numbers:
 \[\langle m_0, \ldots, m_{k-1} \rangle = \bigl(p(0)\bigr)^{m_0 }\cdot \dots \cdot 
\bigl(p(k-2))\bigr)^{m_{k-2} } \cdot \bigl(p(k-1)\bigr)^{m_{k-1} + 1} - 1\]

In practice, we omit the subscript $k$ and write $\langle\;\rangle$ for $\langle \; \rangle_k$. 

  $\mathit{length}(0) := 0$ and, for each $a>0$,   $length(a)$ is the greatest number $k\le a $ such that $p(k-1)$  divides $a + 1$.

 For each $a$, for each $i$, if $i +1< length(a) $, then $a(i)$ is the greatest number
 $q$ such that $(p(i))^{q}$ divides $a + 1$, and, if $i+1 = length(a)$,
 $a(i)$ is the  greatest number
 $q$ such that $(p(i))^{q +1}$ divides $a + 1$, and if $i\ge\mathit{length}(a)$, then $a(i) = 0$. Observe that 
 $a= \langle a(0), a(1), \dots, a(i -1) \rangle$, where $i = length(a)$.
 
  For each $m$, $\omega^m:=\{a\mid length(a)=m\}$.
  
  For each $m$,  $[\omega]^m:=\{a\in \omega^m\mid \forall i[i+1<m\rightarrow a(i)<a(i+1)]\}$. 
  
For each $a,b$, $a\ast b$ is the number $s$ satisfying $\mathit{length}(s) = \mathit{length}(a) + \mathit{length}(b)$ and, for each $n$, if $n<\mathit{length}(a)$, then $s(n) = a(n)$ and, if $\mathit{length}(a) \le n < \mathit{length}(s)$, then $s(n) = b\bigl(n-\mathit{length}(a)\bigr)$.

 For each $a$, for each $\alpha$, $a\ast \alpha$ is the element $\beta$ of $\omega^\omega$ satisfying, for each $n$, if $n<\mathit{length}(a)$, then $\beta(n) = a(n)$ and, if $\mathit{length}(a) \le n $, then $\beta(n) = \alpha\bigl(n-\mathit{length}(a)\bigr)$.
 
For each $a$, for each $n \le length(a)$ we define: $\overline a(n) = \langle
 a(0), \ldots, a(n-1) \rangle$. If confusion seems unlikely, we sometimes write:
 ``$\overline a n$'' and not: ``$\overline a (n)$''.

 For all $a, b$, 
 $a \sqsubseteq b\leftrightarrow\exists n \le \mathit{length}(b)[a = \overline b n]$, and 
  $a \sqsubset b\leftrightarrow(a \sqsubseteq b\;\wedge\; a \neq b)$.
  
  For each $n$, for all $a, b$, $a \neq_n b\leftrightarrow \exists j<n[j<\min\bigl(length(a), length(b)\bigr)\;\wedge\; a(j) \neq b(j)]$, and $a=_n b\leftrightarrow \neg(a\neq_n b)$.
  
  For each $\alpha$, for each $n$, $\overline{\alpha}(n) := \langle
 \alpha(0), \ldots \alpha(n-1) \rangle$. If confusion seems unlikely, we
 sometimes write: ``$\overline \alpha n$'' and not: ``$\overline \alpha (n)$''.
For each $s$, for each $\alpha$, $s\sqsubset \alpha\leftrightarrow \exists n[\overline \alpha n = s]$.
For each $s$, $\omega^\omega\cap s:=\{\alpha|s\sqsubset \alpha\}$. 
 $2^{<\omega}$ is the set of all natural numbers $s$ coding a \textit{finite binary sequence}, that is, such that, for all $n<\mathit{length}(s)$, $s(n) = 0 \; \vee \; s(n) =1$.
 \textit{Cantor space} $2^\omega$ is the set of all $\gamma$ such that $\forall n [\overline \gamma n \in 2^{<\omega}]$. For each $s$ in $2^{<\omega}$, $2^\omega\cap s :=\{\alpha \in 2^\omega|s \sqsubset \alpha\}$.
 
 For each $\alpha$, for each $s$, $\alpha \circ s $ is the element $t$ of $\omega$ such that $\mathit{length}(t) = \mathit{length}(s)$, and for all $i<\mathit{length}(t)$, $t(i) = \alpha\bigl(s(i)\bigr)$.
 
 For each $\alpha$, for each $\beta$, $\alpha \circ \beta $ is the element $\gamma$ of $\omega^\omega$ satisfying $\forall n[\gamma (n)= \alpha\bigl(\beta(n)\bigr)]$.
 
 For each $X\subseteq \omega$, $X^\omega:=\{\alpha\mid\forall n[\alpha(n)\in X]\}$. 
 
 For all $\alpha$, for all $n$, $\alpha^n$ is the element $\beta$ of $\omega^\omega$ satisfying $\forall m[\beta(m) = \alpha(\langle n, m\rangle)]$. 
 
 For each $\mathcal{X}\subseteq \omega^\omega$, $\mathcal{X}^\omega:=\{\alpha\mid \forall n[\alpha^n \in \mathcal{X}]\}$.
 
\subsection{Lexicographical ordering and Kleene-Brouwer-ordering.}\label{SS:ordering}

 $\;$\\For all $s$, $t$,
 $s<_{lex} t \leftrightarrow
 \exists i[ i< \mathit{length}(s)\;\wedge\; i<
 \mathit{length}(t)\;\wedge\;\overline s i = \overline t i \;\wedge\; s(i) < t(i)]$.
 
 For all $\alpha, \beta$, $\alpha <_{lex} \beta\leftrightarrow\exists n[\overline \alpha n <_{lex} \overline \beta n]$. 
 
 For all $\alpha$, for all $n$, for all $t$ in $\omega^n$, $\alpha<_{lex} t\leftrightarrow \overline \alpha n<_{lex}t$.

 For each  $\mathcal{X}\subseteq\omega^\omega$, for each $\gamma$, $\mathcal{X}_{<_{lex}\gamma}:=\{\alpha \in \mathcal{X}\mid\alpha <_{lex} \gamma\}$.
 
  For all $\mathcal{A},\mathcal{X}\subseteq\omega^\omega$, $\mathcal{A}$ is  \emph{progressive in $\mathcal{X}$} if and only if $\forall \gamma \in \mathcal{X}[\mathcal{X}_{<_{lex}\gamma} \subseteq \mathcal{A}\rightarrow \gamma \in \mathcal{A}]$.
 
   For all $s,t$,  $s<_{KB}t\leftrightarrow (t \sqsubset s\;\vee\; s <_{lex} t)$.
   The ordering $<_{KB}$ is called the \textit{Kleene-Brouwer ordering} of $\omega$, and sometimes the \textit{Lusin-Sierpi{\'n}ski ordering} of $\omega$.
   The Kleene-Brouwer ordering $<_{KB}$ is a decidable and linear ordering of $\omega$:
    $\mathsf{BIM}\vdash\forall s \forall t[s <_{KB}t \;\vee\; s=t\;\vee\; t<_{KB} s]$.
\subsection{Increasing sequences.}\label{SS:inc}

We define,  for each $n$,  $[\omega]^n:=\{s|\mathit{length}(s) = n \;\wedge\; \forall i[i+1<n \rightarrow s(i) < s(i+1)]\}$. We also define: $[\omega]^{<\omega}:=\bigcup\limits_{n \in \omega} [\omega]^n$ and $[\omega]^\omega:=\{\alpha|\forall n[\alpha(n) <\alpha(n+1)]\}$. \subsection{Bars and thin bars.}

For each subset $\mathcal{X}$ of $\omega^\omega$, for each subset $B$ of $\omega$,  \textit{$B$ is a bar in $\mathcal{X}$}, notation: $Bar_\mathcal{X}(B)$, if and only $\forall\alpha\mathcal{X}\exists n[\overline \alpha n \in B]$, and \textit{$B$ is a \emph{thin} bar in $\mathcal{X}$}, notation: $Thinbar_\mathcal{X}(B)$ if and only if $Bar_\mathcal{X}(B)\;\wedge\;\forall s \in B \forall t \in B[s\sqsubseteq t \rightarrow s=t]$.
\subsection{Decidable and enumerable subsets of $\omega$}\label{SS:decen}
For each  $\alpha$,  $D_\alpha:=\{i|\alpha(i)\neq 0\}$.  The set $D_\alpha$ is \textit{the subset of $\omega$ decided by $\alpha$}.   $A\subseteq\omega$ is  a \textit { decidable} subset of $\omega$ if and only if $\exists \alpha[A=D_\alpha]$. 
 For each $a$,   $D_a:=\{i|i < \mathit{length}(a)|a(i) \neq 0\}$. $X\subseteq \omega$ is  a {\it finite} subset of $\omega$ if and only if $\exists a[X=D_a]$. Note that, for each $\alpha$, $D_\alpha = \bigcup\limits_{n \in \omega} D_{\overline \alpha n}$. Note that, for each $\alpha$, $D_\alpha$ is a finite subset of $\omega$ if and only if $\exists m\forall n\ge m[\alpha(n)=0]$. 

For each $\alpha$,  $E_\alpha := \{n| \exists p [\alpha(p) = n+ 1] \}$. The set $E_\alpha$ is  \textit{the subset of $\omega$ enumerated by $\alpha$}.
 $A\subseteq\omega$ is an \textit{enumerable} subset of $\omega$, or $A$ {\it  belongs to the class $\mathbf{\Sigma}^0_1$}, if and only if  $\exists \alpha[A=E_\alpha]$.  
 For each $a$,   $E_a := \{n| \exists p < \mathit{length}(a)[a(p) = n+ 1] \}$.  Note that, for each $a$, $E_a$ is a finite subset of $\omega$ and,  for each $\alpha$, $E_\alpha = \bigcup\limits_{n \in \omega} E_{\overline \alpha n}$.
  \subsection{Open subsets of $\omega^\omega$} For every $\alpha$,  $\mathcal{G}_\alpha:=\{\gamma\mid\exists n[ \overline \gamma n \in D_\alpha]\}$.   $\mathcal{G}\subseteq\omega^\omega$ is an \textit{open} subset of $\omega^\omega$ if and only if $\exists \alpha[\mathcal{G}=\mathcal{G}_\alpha]$. 
   For each $t$ in $\omega$, $\mathcal{G}_t:=\{\gamma\mid \exists n[\overline \gamma n \in D_t]$.
 
 Note that, for every $\alpha$, $\mathcal{G}_\alpha = \bigcup\limits_{n \in \omega} \mathcal{G}_{\overline \alpha n}$. Also note that,  for every  $\mathcal{X}\subseteq\omega^\omega$,  $\forall \alpha[\mathcal{X} \subseteq\mathcal{G}_\alpha\leftrightarrow Bar_{\mathcal{X}}(D_\alpha)]$ and  $\forall t[\mathcal{X} \subseteq\mathcal{G}_t\leftrightarrow Bar_{\mathcal{X}}(D_t)]$. 
 
 \subsection{Spreads, fans and explicit fans}\label{SS:fans} For each $\beta$,  $\mathcal{F}_\beta:=\{\alpha\mid \forall n [\beta(\overline \alpha n) = 0]\}$.  
 $\mathcal{F}\subseteq \omega^\omega$ is a \textit{closed} subset of $\omega^\omega$ if and only if $\exists\beta[\mathcal{F}=\mathcal{F}_\beta]$.   $\beta$  \textit{is a spread-law}, notation $Spr(\beta)$,  if and only if $\forall s[\beta(s) = 0 \leftrightarrow \exists n[\beta(s\ast\langle n \rangle) =0]]$.   $\mathcal{F}\subseteq\omega^\omega$ is a \textit{spread} or a \textit{located-and-closed} subset of $\omega^\omega$ if and only if  
   $\exists \beta[Spr(\beta)\;\wedge\; \mathcal{F} =\mathcal{F}_\beta]$. $\mathcal{X}\subseteq \omega^\omega$ is {\it inhabited} if and only if $\exists \alpha[\alpha \in \mathcal{X}]$.  Note that,  for every $\beta$, if $Spr(\beta)$, then $\mathcal{F}_\beta$ is inhabited if and only if $\beta(\langle \;\rangle) = 0$, and $\mathcal{F}_\beta = \emptyset$ if and only if $\beta(\langle \;\rangle)\neq 0$. 
   $\beta$ \textit{is a finitary spread-law}, or \textit{a fan-law}, notation: $Fan(\beta)$, if and only if $Spr(\beta)$ and $\forall s\exists n \forall m[\beta(s\ast\langle m \rangle)=0\rightarrow m\le n]$.  $\beta$ \textit{is an explicit fan-law}, notation $Fan^+(\beta)$, if and only if $Spr(\beta)$ and $\exists \gamma \forall s \forall m[\beta(s\ast\langle m \rangle)=0\rightarrow m\le \gamma(s)]$. One may prove in $\mathsf{BIM}$ that, for every $\beta$, if $Fan(\beta)$, then $Fan^+(\beta)$ if and only if  $\exists \delta \forall n\forall s\in\omega^n[ \beta(s) = 0) \rightarrow s < \delta(n)]$.   $\mathcal{F}\subseteq \omega^\omega$ is an \textit{(explicit)  fan}  if and only if there exists an (explicit) fan-law $\beta$ such that  $\mathcal{F}=\mathcal{F}_\beta$.

 \subsection{Real numbers}\label{SS:reals} The development of real analysis in $\mathsf{BIM}$ has been described in \cite{veldman2011b}. Recall that in Subsection \ref{SS:bimaxioms},  we introduced the following notation: for each $n$, $n':= K(n)$ and $n'' := L(n)$, and for all $m, n$, $(m,n) := J(m, n)$. The last part of Axiom 2 then reads as follows: $\forall m \forall n[(m,n)'= m \;\wedge\; (m,n)'' =n\;\wedge \; n = (n', n'')]$. 
 
 For all $m,n$ in $\omega$, $m =_\mathbb{Z} n \leftrightarrow m'+ n'' = m'' + n'$ and $m <_\mathbb{Z} n \leftrightarrow m'+ n'' < m'' + n'$  and $m \le_\mathbb{Z} n \leftrightarrow m'+ n'' \le m'' + n'$ and $m+_\mathbb{Z} n := (m'+n'', m''+n')$ and $m-_\mathbb{Z} n := (m'-n'', m''-n')$ and $m\cdot_\mathbb{Z} n :=(m'\cdot n'+m''\cdot n'',m'\cdot n''+m''\cdot n')$ and $0_\mathbb{Z} :=(0,0)$ and $1_\mathbb{Z} :=(1,0)$. $\mathbb{Q}$ is the set of all $m$ such that $m''>_\mathbb{Z} 0_\mathbb{Z}$. For all $p,q$ in $\mathbb{Q}$, $p=_\mathbb{Q} q\leftrightarrow p'\cdot_\mathbb{Z} q'' =_\mathbb{Z} p''\cdot_\mathbb{Z} q'$ and $p<_\mathbb{Q} q\leftrightarrow p'\cdot_\mathbb{Z} q'' <_\mathbb{Z} p''\cdot_\mathbb{Z} q'$ and $p\le_\mathbb{Q} q\leftrightarrow p'\cdot_\mathbb{Z} q'' \le_\mathbb{Z} p''\cdot_\mathbb{Z} q'$ and $p+_\mathbb{Q} q := (p'\cdot_\mathbb{Z} q'' +_\mathbb{Z} p''\cdot_\mathbb{Z} q',p''\cdot_\mathbb{Z} q'')$ and $p-_\mathbb{Q} q := (p'\cdot_\mathbb{Z} q'' -_\mathbb{Z} p''\cdot_\mathbb{Z} q',p''\cdot_\mathbb{Z} q'')$and $p\cdot_\mathbb{Q} q:=(p'\cdot_\mathbb{Z} q', p''\cdot_\mathbb{Z} q'')$  and  $0_\mathbb{Q} :=(0_\mathbb{Z}, 1_\mathbb{Z})$ and $1_\mathbb{Q} :=(1_\mathbb{Z}, 1_\mathbb{Z})$.  $\mathbb{S}$ is the set of (the code numbers of) the \textit{rational segments}, i.e. $\forall s[s\in \mathbb{S} \leftrightarrow  (s' \in \mathbb{Q} \;\wedge\;s''\in \mathbb{Q} \;\wedge\; s' <_\mathbb{Q} s'')]$. For every $s$ in $\mathbb{S}$,  $\mathit{length}_\mathbb{S}(s) := s'' -_\mathbb{Q} s'$. For all $s,t$  in $\mathbb{S}$,  $s \sqsubseteq_\mathbb{S} t\leftrightarrow t' \le_\mathbb{Q} s' \le_\mathbb{Q} s'' \le_\mathbb{Q} t''$ and: $s \sqsubset_\mathbb{S} t\leftrightarrow t' <_\mathbb{Q} s' <_\mathbb{Q} s'' <_\mathbb{Q} t''$ and $s <_\mathbb{S} t\leftrightarrow s'' <_\mathbb{Q} t'$, and $s \le_\mathbb{S} t\leftrightarrow s' <_\mathbb{Q} t''$, and $s \;  \#_\mathbb{S} \; t\leftrightarrow (s <_\mathbb{S}t\;\vee\;t <_\mathbb{S} s)$, and $s 
\approx_\mathbb{S} t\leftrightarrow (s\le_\mathbb{S}t\;\wedge\;t \le_\mathbb{S} s)$.
Note that $\forall s \in \mathbb{S}\forall t\in\mathbb{S}[s \approx_\mathbb{S} t\leftrightarrow \exists u \in \mathbb{S}[u \sqsubset_\mathbb{S} s\;\wedge\;u \sqsubset_\mathbb{S}t]]$.
For all $s$ in $\mathbb{S}$, $double_\mathbb{S}(s)$ is the element $t$ of $\mathbb{S}$ satisfying $\frac{t'+t''}{2}=\frac{s'+s''}{2}$ and $length_\mathbb{S}(t)=2\cdot_\mathbb{Q}length_\mathbb{S}(s)$.

 The set $\mathcal{R}$ of the \textit{real numbers} is the set of all $\alpha$ such that $\forall n[\alpha(n)\in\mathbb{S}\;\wedge\;\alpha(n+1) \sqsubset_\mathbb{S} \alpha(n)]$ and $\forall m\exists n[
\mathit{length}_\mathbb{S}(\alpha(n)) <_\mathbb{Q} \frac{1}{2^m}]$.  For all $\alpha, \beta$ in $\mathcal{R}$,  $\alpha <_\mathcal{R}\beta\leftrightarrow \exists n[\alpha(n) <_\mathbb{S} \beta(n)]$ and
$\alpha \le_\mathcal{R} \beta\leftrightarrow \neg(\beta <_\mathcal{R} \alpha)$, and $\alpha =_\mathcal{R} \beta\leftrightarrow (\alpha \le_\mathcal{R} \beta\;\wedge\; \beta \le_\mathcal{R} \alpha)$.
For all $\alpha, \beta$ in $\mathcal{R}$, $[\alpha, \beta) := \{\gamma \in \mathcal{R}\mid 0_\mathcal{R} \le_\mathcal{R} \gamma <_\mathcal{R} \beta \}$, and $[\alpha, \beta] := \{\gamma \in \mathcal{R}\mid 0_\mathcal{R} \le_\mathcal{R} \gamma \le_\mathcal{R} \beta \}$.
For every $p$ in $\mathbb{Q}$,  $p_\mathcal{R}$ is the element of $\mathcal{R}$ satisfying: for each $n$, $p_\mathcal{R}(n) = (p-_\mathbb{Q}\frac{1}{2^n}, p+_\mathbb{Q}\frac{1}{2^n})$.  For each $s$ in $\mathbb{S}$, we define $\overline s := [(s')_\mathcal{R}, (s'')_\mathcal{R}]$.

\subsection{Coverings}\label{SS:cov} Let $\mathcal{X}$ be a subset of $\mathcal{R}$ and let $C$ be a subset of $\mathbb{S}$. We define: $C$ \textit{covers} $\mathcal{X}$, notation: $Cov_C(\mathcal{X})$,  if and only if $\forall \alpha\in\mathcal{X}\exists n\exists  s\in C[\alpha(n) \sqsubset_\mathbb{S} s]$.

  Let $C$ be a finite subset of $\mathbb{S}$, and let $s$ be an element of $\mathbb{S}$. Note that $C$ covers $\overline s=[(s')_\mathcal{R},(s'')_\mathcal{R}]$ if and only if  $\exists n\exists u[\mathit{length}(u) = n\;\wedge\;\forall i<n[u(i) \in C]\;\wedge\;\bigl(u(0)\bigr)'<_\mathbb{Q} s'<_\mathbb{Q}\bigl(u(0)\bigr)''\;\wedge\;\forall i<n-1[u(i) \approx_\mathbb{S} u(i+1)]\;\wedge\;\bigl(u(n-1)\bigr)'<_\mathbb{Q} s''<_\mathbb{Q}\bigl(u(n-1)\bigr)'']$.
  We thus may decide,  if $C$ covers $\overline s$ or not. Also note that, if $C$ covers $\overline s$, then $\exists n\forall \gamma \in \overline s\forall \delta\in \overline s[|\gamma -_\mathcal{R}\delta|<_\mathcal{R} \frac{1}{2^n}\rightarrow  \exists t \in C\exists m \exists n[ \gamma(m)\sqsubset_\mathbb{S}t \; \wedge\; \delta(n) \sqsubset_\mathbb{S}t]]$.

 \subsection{Open subsets of $\mathcal{R}$}\label{SS:open} For every $\alpha$ we let $\mathcal{H}_\alpha$ be the set of all $\gamma$ in $\mathcal{R}$ such that $\exists s \in \mathbb{S}\exists n[ s\in D_\alpha\;\wedge\;\gamma(n) \sqsubset_\mathbb{S} s]$.  A subset $\mathcal{H}$ of $\mathcal{R}$ is \textit{open} if and only if, for some $\alpha$, $\mathcal{H}=\mathcal{H}_\alpha$. 
   For each $t$ in $\omega$ we let $\mathcal{H}_t$ be the set of all $\gamma$ in $\mathcal{R}$ such that, for some $\exists s\exists n[ s\in D_t\;\wedge\;\gamma(n) \sqsubset_\mathbb{S} s]$.
  Note that,  for every $\alpha$, $\mathcal{H}_\alpha = \bigcup\limits_{n \in \omega} \mathcal{H}_{\overline \alpha n}$.
 Also note that, for every  $\mathcal{X}\subseteq\mathcal{R}$,  $\forall \alpha[\mathcal{X} \subseteq\mathcal{H}_\alpha\leftrightarrow Cov_{\mathcal{X}}(D_\alpha)]$ and  $\forall t[\mathcal{X} \subseteq\mathcal{H}_t\leftrightarrow Cov_{\mathcal{X}}(D_t)]$.
 
For all $\mathcal{A}\subseteq\mathcal{R}$, for all $\alpha, \beta$ in $\mathcal{R}$ such that $\alpha\le_\mathcal{R}\beta$, $\mathcal{A}$ is  \emph{progressive in $[\alpha,\beta]$} if and only if $\forall \gamma \in [\alpha,\beta ][[0_\mathcal{R}, \gamma)\subseteq \mathcal{A}]\rightarrow \gamma \in \mathcal{A}]$.

\end{document}